\newcommand{\abs}[1]{\left|{#1}\right|}
\newcommand{\as}{{\mathrm {As}}}
\newcommand{\As}{{\mathrm {As}}}
\newcommand{\JL}{{\mathrm {JL}}}
\newcommand{\GL}{\operatorname{GL}}
\newcommand{\SL}{\operatorname{SL}}
\newcommand{\Ad}{\operatorname{Ad}}
\newcommand{\sprod}[2]{\left\langle{#1},{#2}\right\rangle}
\newcommand{\ord}{\operatorname{ord}}
\newcommand{\Lie}{\operatorname{Lie}}
\renewcommand{\Re}{\operatorname{Re}}
\newcommand{\Irr}{\operatorname{Irr}}
\newcommand{\gen}{\operatorname{gen}}
\newcommand{\A}{\mathbb{A}}
\newcommand{\Z}{\mathbb{Z}}
\newcommand{\C}{\mathbb{C}}
\newcommand{\R}{\mathbb{R}}
\newcommand{\N}{\mathbb{N}}
\newcommand{\bs}{\backslash}
\newcommand{\diag}{\operatorname{diag}}
\newcommand{\Hom}{\operatorname{Hom}}
\newcommand{\Res}{\operatorname{Res}}
\newcommand{\Ind}{\operatorname{Ind}}
\newcommand{\Sym}{\operatorname{Sym}}
\newcommand{\Id}{{\operatorname{Id}}}
\newcommand{\Ord}{\operatorname{Ord}}
\newcommand{\triv}{{\bf{1}}}
\newcommand{\M}{\mathcal{M}}
\newcommand{\J}{\mathcal{J}}
\renewcommand{\O}{\mathcal{O}}
\newcommand{\sm}[4]{\left(\begin{smallmatrix}{#1}&{#2}\\{#3}&{#4}\end{smallmatrix}\right)}
\newcommand{\inv}{{\operatorname{inv}}}
\renewcommand{\P}{\mathcal{P}}
\newcommand{\D}{\mathcal{D}}
\newcommand{\BA}{\mathbb{A}}
\newcommand{\BZ}{\mathbb{Z}}
\newcommand{\BR}{\mathbb{R}}
\newcommand{\BN}{\mathbb{N}}
\newcommand{\BC}{{\mathrm {BC}}}
\renewcommand{\L}{\mathcal{L}}
\newcommand{\fra}{\mathfrak{a}}
\newcommand{\ra}{\rightarrow}
\newcommand{\nrd}{\textup{Nrd}}
\newcommand{\ve}{\varepsilon}
\newcommand{\DL}{\mathcal{L}_*}
\newcommand{\si}{\mathrm{SI}}
\newcommand{\esi}{\mathrm{ESI}}
\newcommand{\SI}{\mathcal{S}}
\newcommand{\ESI}{\mathcal{ES}}
\newcommand{\BH}{{\mathbb {H}}}
\newcommand{\St}{\mathrm{St}}
\newcommand{\st}{\mathrm{St}}
\newcommand{\CC}{\mathcal{C}}
\newcommand{\WD}{\mathrm{WD}}
\newcommand{\CM}{{\mathcal {M}}}
\newcommand{\CL}{{\mathcal {L}}}
\newcommand{\x}{{\mathfrak{x}}}
\newcommand{\GG}{\mathcal{G}}
\newcommand{\jl}{\textup{JL}}\newcommand{\steinberg}{\textup{St}}
\newtheorem{theorem}{Theorem}[section]
\newtheorem{lemma}[theorem]{Lemma}
\newtheorem{definition}[theorem]{Definition}
\newtheorem{proposition}[theorem]{Proposition}
\newtheorem{corollary}[theorem]{Corollary}
\theoremstyle{remark}
\newtheorem{remark}[theorem]{Remark}
\newtheorem{example}[theorem]{Example}
\newtheorem{cv}{Convention}[section]
\newcommand{\CE}{{\mathcal {E}}} 
\newcommand{\CP}{{\mathcal {P}}}
\newcommand{\itemname}[2]{\hypertarget{#1}{}\expandafter\gdef\csname #1\endcsname{#2}\textbf{(#2)}}
\newcommand{\namelink}[1]{\hyperlink{#1}{(\textbf{\csname #1\endcsname})}}
\begin{document}


\newtheorem{thm}{Theorem}[section] 
\newtheorem*{thm*}{Theorem} 
\newtheorem{cor}[thm]{Corollary}
\newtheorem{lem}[thm]{Lemma}  
\newtheorem{prop}[thm]{Proposition}
\newtheorem{prob}{Problem}[section]
\newtheorem{df}{Definition}[section]
\newtheorem{fct}{Fact}[section]
\newtheorem {conj}[thm]{Conjecture} \newtheorem{defn}[thm]{Definition}
\newtheorem{term}[thm]{Terminology}
\newtheorem {rem}[thm]{Remark} 
\newtheorem {notation}[thm]{Notation}
\newtheorem {claim}[thm]{Claim}


\title[Intertwining periods, L-functions and local-global principles]{Intertwining periods, L-functions and local-global principles for distinction of automorphic representations}
\author{Nadir Matringe}
\address{Nadir Matringe. Institute of Mathematical Sciences, NYU Shanghai, 3663 Zhongshan Road North Shanghai, 200062, China and
		Institut de Math\'ematiques de Jussieu-Paris Rive Gauche, Universit\'e Paris Cit\'e, 75205, Paris, France}
	\email{nrm6864@nyu.edu and matringe@img-prg.fr}

\author{Omer Offen}
\address{Omer Offen. Department of Mathematics, Brandeis University, Waltham MA 02453}
\email{offen@brandeis.edu}

\author{Chang Yang}
\address{Chang Yang. Key Laboratory of High Performance Computing and Stochastic Information Processing (HPCSIP), Hunan Normal University, School of Mathematics and Statistics, Changsha, 410081, China}
	\email{cyang@hunnu.edu.cn}



\begin{abstract} We provide a criterion for non-vanishing of period integrals on automorphic representations of a general linear group over a division algebra. We consider three different periods: linear periods, twisted-linear periods and Galois periods. Our criterion is a local-global principle, which is stated in terms of local distinction, a further local obstruction, and poles of certain global $L$-functions associated to the underlying involution via the Jacquet-Langlands correspondence. 

Our local-global principle follows from a new method, relying on the Maass-Selberg relations and a careful analysis of singularities of local and global intertwining periods. Our results generalize to inner forms, known results for split general linear groups. Moreover, our result for twisted linear periods is new even in the split situation. As a consequence of our local-global principle, we complete the proof of one direction of the Guo-Jacquet conjecture.
\end{abstract}

\maketitle

\goodbreak 

\tableofcontents

\goodbreak

\textbf{Dedication}. The first two named autors dedicate this work to the memory of their coauthor and collegue Chang Yang who passed away during the preparation of this manuscript. This project was initially started a long time ago by them, and left aside for a while. Chang joined the project after the first named author visited him in Hunan Normal University of Changsha in fall 2021, and his impact on putting it back on track has been decisive. As friends, we hope that it will honor his memory and wishes.

\section{Introduction} \label{intro}

The relative Langlands program explores relations between special values of automorphic $L$-functions, period integrals of automorphic forms and the images of functorial transfers. Following \cite{MR3764130} it has recently seen some new perspectives in \cite{BZ-S-V}. Families of examples of the above mentioned interrelations have been studied with a wide variety of methods. We mention a few of them and a rather incomplete list of references: integral representations of automorphic $L$-functions (see for example \cite{MR1044830}, \cite{MR1241129}, \cite{MR984899}, \cite{MR2718823}), the theta correspondence (see \cite{MR783511} or \cite{ChenGan} for a more recent example), the residue method introduced in \cite{MR1142486} (see \cite{MR4255059} for recent developments concerning special $L$-values as well as a historical survey), and probably the most powerful, Jacquet's relative trace formula (for example \cite{MR909385}, \cite{MR1277452}, \cite{MR3245011}, \cite{MR4298750}, \cite{MR4426741}, \cite{XueZhang}). 

Here we propose a new approach, relying on intertwining periods. These are certain meromorphic families of invariant linear forms on induced representations that appear naturally in the spectral side of the relative trace formula, and more directly in the computation of the regularized periods of Eisenstein series. 

Our approach has similarities with the residue method, however, both our perspective and our set-up are quite different from previous applications of the method. The residue method was introduced by Jacquet and Rallis in order to compute the period integral of a residual automorphic form in terms of  the residue of the period integral of a truncated Eisenstein series. In our current work we compute such a residue in order to study period integrals on the inducing data. Furthermore, every application of the method known to the authors relies on vanishing of the regularized period of the Eisenstein series for a generic complex parameter. In contrast, in our work, no such vanishing occurs and the regularized period is expressed in terms of an intertwining period.  

In order to explain our main results we introduce some further notation and terminology. Let $F$ be a number field with ring of adeles $\A$. 
Let $G$ be a reductive group defined over $F$ and $H$ a reductive subgroup. Denote by $A_G$ the maximal split torus in the center of $G$ and let $A_G^+=\Res_{F/\mathbb{Q}}(A_G)(\R_{>0})\hookrightarrow A_G(F\otimes \R)\hookrightarrow A_G(\A)$. The period integral
\[
\P_H(\phi)=\int_{(H(\A)\cap A_G^+)H(F)\bs H(\A)}\phi(h)\ dh
\]
converges for any cuspidal automorphic form on $A_G^+G(F)\bs G(\A)$ \cite{MR1233493}. 

An irreducible, cuspidal automorphic representation $\pi$ of $G(\A)$ is called \emph{$H(\A)$-distinguished} if its central character is trivial on $A_G^+$ and $\P_H$ does not vanish identically on $\pi$. 

Recall that $\pi$ is isomorphic to a restricted tensor product $\otimes_v' \pi_v$ over all places $v$ of $F$. We say that $\pi$ is \emph{locally $H$-distinguished} if the space $\Hom_{H(F_v)}(\pi_v,\C)$ of $H(F_v)$-invariant linear forms on $\pi_v$ is non-zero for every place $v$ of $F$. 

It is an easy observation that if $\pi$ is $H(\A)$-distinguished then it is locally $H$-distinguished. In this work we study several cases where the converse does not hold.

Let $D$ be a central division $F$-algebra of degree $d$, that is, so that $\dim_F(D)=d^2$. 

For $m\in \Z_{\ge 0}$ let $G_D(m)$ be the algebraic group defined over $F$ with group of rational points 
\[
G_D(m,F)=\GL_m(D)
\]
and set $D_\A=D\otimes_F \A$.
 The Jacquet-Langlands correspondence, established in \cite{MR2329758} and \cite{MR2684298} (relying on \cite{MR771672}) attaches to any irreducible discrete automorphic representation
$\pi$ of $G_D(m,\A)=\GL_m(D_\A)$ an irreducible discrete automorphic representation $\JL(\pi)$ of $G_F(dm,\A)=\GL_{dm}(\A)$.

We consider distinction problems related with inner forms of general linear groups. Our criterion for global distinction (non-vanishing of period integrals) is in terms of a combination of a global and a local condition. The global condition is expressed in terms of special values of $L$-functions. The local condition is a combination of 
local distinction (existence of invariant linear forms) and another local compatibility condition to the period subgroup. 

In \cite[Corollary 10.3]{MR2930996} a global distinction criterion is obtained for cuspidal representations on general linear groups over a quadratic extension and period integrals over unitary groups. When the unitary group is non-quasi-split a local obstruction of a similar nature occurs. Hence the criterion for distinction obtained in \cite[Corollary 10.3]{MR2930996} is of similar nature to that obtain here. Indeed, the condition there amounts to local distinction, a local obstruction, and a global condition, namely belonging to the image of the base change map, which can be restated as an appropriate Rankin-Selberg $L$-function having a pole, necessarily simple, at $s=1$. 

We point out that our local-global principal for twisted linear periods generalizes to central simple algebras a celebrated result of Waldspurger, \cite[Th\'eor\`eme 2]{MR783511} for quaternion algebras. We observe that Waldspurger's principle actually extends in two directions. One, up to using incidental isomorphisms, is the Gross-Prasad conjectures for special orthogonal groups \cite{MR1186476}, \cite{MR3202556}. The other more natural extension is the result that we obtain here. Furthermore, as a consequence we complete the proof of one direction of a conjecture of Guo and Jacquet.

In this paper, as we deal with inner forms, we moreover have to make use of the Jacquet-Langlands transfer, as in the following statements.

\subsection{The main result: Galois periods}
Let $E/F$ be a quadratic extension of number fields and $D$ a central division $F$-algebra of degree $d$. 
Let $H=G_D(m)$ and $G=\Res_{E/F}(H_E)$ be the Weil restriction of scalars of the base-change of $H$ to $E$.

\begin{theorem}\label{thm main gal odd}
Let $\pi$ be an irreducible, cuspidal automorphic representation of $G(\A)$ such that $\JL(\pi)$ is also cuspidal. If $d$ is odd then the following are equivalent:
\begin{enumerate}
\item\label{cond gal dist} $\pi$ is $H$-distinguished;
\item\label{cond local gal} $\pi$ is locally $H$-distinguished and the Asai $L$-function $L(s,\JL(\pi),\As^+)$ has a pole at $s=1$;
\item\label{cond global pole} the Asai $L$-function $L(s,\JL(\pi),\As^+)$ has a pole at $s=1$;
\item\label{cond gal jl dist} The irreducible, cuspidal, automorphic representation $\JL(\pi)$ of $\GL_{dm}(\A_E)$ is $\GL_{dm}(\A)$-distinguished. 
\end{enumerate}
When these conditions are satisfied the pole of $L(s,\JL(\pi),\As^+)$ at $s=1$ is simple.
\end{theorem}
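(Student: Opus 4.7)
The plan is to prove a cycle of implications: $(1)\Rightarrow (2)\Rightarrow (3)\Rightarrow (1)$, together with the equivalence $(3)\Leftrightarrow (4)$, and to read off the simplicity of the pole along the way.

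The equivalence $(3)\Leftrightarrow (4)$, and the simplicity of the pole, follow from Flicker's classical result on cuspidal automorphic representations of $\GL_n(\A_E)$: such a representation $\Pi$ is $\GL_n(\A)$-distinguished if and only if $L(s,\Pi,\As^+)$ has a pole at $s=1$, and this pole is then simple. Applied to $\Pi=\JL(\pi)$, cuspidal on $\GL_{dm}(\A_E)$ by assumption, this yields both the equivalence and the simplicity assertion. For $(1)\Rightarrow (2)$, local distinction at each place is immediate from the factorization of the global Galois period on $\pi\cong\otimes_v'\pi_v$, while the $L$-function condition can be obtained either by a Rankin--Selberg integral representation of $L(s,\JL(\pi),\As^+)$ through a Galois period on $\pi$, or by first establishing $(1)\Rightarrow (4)$ via the compatibility of $\JL$ with Galois distinction and then invoking $(3)\Leftrightarrow (4)$. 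The implication $(2)\Rightarrow (3)$ is tautological.

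The heart of the theorem, and the direction that completes one half of the Guo--Jacquet conjecture, is $(3)\Rightarrow (1)$. My plan is to apply the intertwining period framework developed earlier in the paper. Form an Eisenstein series $E(\phi_s)$ on the auxiliary group $\widetilde G=\Res_{E/F}(G_D(2m))$, attached to a section of the parabolic induction from the Levi $\Res_{E/F}(G_D(m)\times G_D(m))$ of $\pi\otimes\wt\pi\,|\nrd|^s$. By the general local--global decomposition of regularized periods into intertwining periods, the regularized $H$-type period of $E(\phi_s)$ equals a normalized Eulerian product of local intertwining periods, with a normalizing factor expressible in terms of $L(s,\JL(\pi),\As^+)$. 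Hypothesis $(3)$ provides a pole of this $L$-function at $s=1$, which forces a pole of the global intertwining period at the corresponding critical parameter; taking the residue produces a nonzero $H(\A)$-invariant linear form on $\pi$, i.e.\ the non-vanishing of the Galois period on $\pi$.

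The main obstacle is that the residue computation only delivers a nonzero period if every local intertwining period is nonvanishing at the critical parameter; this is the source of the additional local obstruction mentioned in the abstract, and it is precisely here that the hypothesis $d$ odd is used. In the odd case local Galois distinction is preserved by the local Jacquet--Langlands correspondence, so local $H$-distinction of $\pi_v$ at every place follows automatically from local distinction of $\JL(\pi_v)$, which is itself guaranteed at every place by $(3)$ via the local Flicker--Rallis classification. This both upgrades $(3)$ to $(2)$ and ensures the non-vanishing of each local intertwining period at the critical value, so that the residue of the global intertwining period is indeed a nonzero Galois period on $\pi$.
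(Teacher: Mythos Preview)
Your overall strategy matches the paper's (Theorem~\ref{thm::Main}): double the setup, compute the regularized $G_D(2m)$-period of an Eisenstein series on $\Res_{E/F}(G_D(2m))$, and compare with Asai $L$-factors. But two mechanisms are misidentified and $(3)\Rightarrow(1)$ has a gap. First, the residue of the open intertwining period $J_\sigma(s)$ is an invariant linear form on the \emph{induced} representation $\pi\times\pi$, not on $\pi$; it does not by itself produce a Galois period on $\pi$. The missing bridge is the Maass--Selberg relation (Section~\ref{sec::maass-selberg}, Theorem~\ref{thm pole of GIP}): because $M(0)=-\mathrm{Id}$ (Lemma~\ref{lem::Keys-Shahidi}), one gets $\lim_{s\to0}sJ_\sigma(s)=-2Z_\sigma$, where the closed-orbit term $Z_\sigma$ is built from the $H$-period on $\pi$ and is nonzero iff $\pi$ is $H$-distinguished. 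Second, the local obstacle is not \emph{nonvanishing} of the local intertwining periods at $s=0$: each $\J_{\pi_v\otimes\pi_v}(s)$ actually has a \emph{pole} there (Lemma~\ref{lem::intt-period-pole}). What matters is whether its order equals $\Ord_{s=0}\L(s,\pi_v,\theta_v)$; Theorem~\ref{thm mainloc} shows the former is at most the latter, with equality precisely when $\pi_v$ is $H_v$-\emph{compatible} (Definition~\ref{def compatible}). The reason you give for $d$ odd (``local Galois distinction is preserved by $\JL$'') in fact holds for all $d$ in the Galois case (Corollary~\ref{cor distinction-transfer-generic}); the actual point is that $d$ odd forces case \textbf{(Gal2)}, where compatibility is automatic (Lemma~\ref{lem compatible}), so the orders line up at every place and $J_\sigma(s)$ inherits the simple pole of $\L(s,\pi,\theta)$.

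A smaller issue: neither route you propose for the $L$-function half of $(1)\Rightarrow(2)$ works as stated. There is no Rankin--Selberg integral representing $L(s,\JL(\pi),\As^+)$ through a period on $\pi$ when $D\ne F$, and ``$(1)\Rightarrow(4)$ via compatibility of $\JL$ with global Galois distinction'' is precisely what the theorem asserts and cannot be assumed. In the paper $(1)\Leftrightarrow(2)$ comes directly from Theorem~\ref{thm::Main}, using the same order-of-pole bookkeeping in both directions (with the functional equation moving the pole from $s=0$ to $s=1$).
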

The case $d=1$ of the theorem is a consequence of the main results of \cite{MR984899} and \cite{MR1344660}. 
For $d$ even our criterion for distinction involves a local obstruction. We refer to the body of the work for its definition. For the case $d=2$ and $m=1$ it was already observed in \cite[Theorem 0.2]{MR1277452}. We say that a cuspidal, automorphic representation $\pi$ of $G$ is $H$-compatible if its local component $\pi_v$ is $H(F_v)$-compatible in the sense of Definition \ref{def compatible} (see also Corollary \ref{cor dist comp} and Remark \ref{rem dist comp}) for every place $v$ of $F$ that is inert in $E$. We remark that a representation of $G(F_v)$ may be $H(F_v)$-distinguished and not $H(F_v)$-compatible. 

\begin{theorem}\label{thm main gal even}
Let $\pi$ be an irreducible, cuspidal automorphic representation of $G(\A)$ such that $\JL(\pi)$ is also cuspidal. If $d$ is even then the following are equivalent:
\begin{enumerate}
\item\label{cond gal dist} $\pi$ is $H$-distinguished;
\item $\pi$ is locally $H$-distinguished and $H$-compatible and the Asai $L$-function $L(s,\JL(\pi),\As^+)$ has a pole at $s=1$;
\item $\pi$ is $H$-compatible and the Asai $L$-function $L(s,\JL(\pi),\As^+)$ has a pole at $s=1$.
\end{enumerate}
When these conditions are satisfied the pole of $L(s,\JL(\pi),\As^+)$ at $s=1$ is simple and furthermore
the irreducible, cuspidal, automorphic representation $\JL(\pi)$ of $\GL_{dm}(\A_E)$ is $G_F(dm)=\GL_{dm}$-distinguished. 
\end{theorem}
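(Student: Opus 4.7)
The plan is to adapt the strategy used for Theorem \ref{thm main gal odd} to the even case, where the additional local obstruction of $H$-compatibility must be incorporated at inert places. I would establish the cycle of implications $(1) \Rightarrow (2) \Rightarrow (3) \Rightarrow (1)$, together with the simplicity of the pole and the $\GL_{dm}$-distinction of $\JL(\pi)$.

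For $(1) \Rightarrow (2)$, local $H$-distinction at every place is immediate from the existence of a nonzero global $H$-period: projecting the global functional onto each tensor factor of the restricted tensor decomposition $\pi \cong \otimes_v' \pi_v$ yields a nonzero element of $\Hom_{H(F_v)}(\pi_v, \C)$. The pole of $L(s, \JL(\pi), \As^+)$ at $s = 1$ should follow from an integral representation whose residue at $s = 1$ is proportional to the $H$-period of $\pi$, together with the Jacquet-Langlands correspondence to transfer the inner form integral to the split form. The $H$-compatibility at inert places has to be extracted from the shape of the local functional produced by the global period: for $d$ even, not every element of $\Hom_{H(F_v)}(\pi_v,\C)$ is of a type that can arise as the local component of a global $H$-period, and only the compatible ones can, a fact detected by the local intertwining period analysis. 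The step $(2) \Rightarrow (3)$ is trivial.

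The heart of the argument is $(3) \Rightarrow (1)$. I would consider a parabolically induced representation, on a suitable doubled group, with inducing data built from $\pi |{\det}|^s$ and its Galois conjugate $\pi^\sigma |{\det}|^{-s}$; the associated Eisenstein series $E(\phi_s, g)$ has a regularized $H$-period which, following the general intertwining period formalism, can be written as a global intertwining period $J(s,\phi)$ that factors as an Euler product of local intertwining periods times a completed Asai $L$-factor of the form $L(2s+1, \JL(\pi), \As^+)$ (up to normalization). The function $J(s, \phi)$ is meromorphic in $s$. Under the hypothesis $(3)$, the Asai factor has a simple pole at the relevant point; at split places each local intertwining period reduces to local $H$-distinction of $\pi_v$ and is generically nonzero; at inert places, $H$-compatibility is designed precisely to guarantee non-vanishing of the local intertwining period. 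Taking the residue and choosing $\phi$ appropriately then produces a nonzero $H$-period for $\pi$, proving distinction.

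The main obstacle is the local analysis at inert places when $d$ is even: one must identify with precision the image of the local intertwining period in $\Hom_{H(F_v)}(\pi_v, \C)$, and show that non-vanishing of the local intertwining period is equivalent to $H(F_v)$-compatibility in the sense of Definition \ref{def compatible}. This is the essential new phenomenon absent in the odd case, and it must work uniformly enough to produce both $(1) \Rightarrow$ compatibility and compatibility $\Rightarrow$ non-vanishing at inert places in $(3) \Rightarrow (1)$. The final assertions are then formal: the simplicity of the Asai pole is dictated by the order of the residue computation, and the $\GL_{dm}$-distinction of $\JL(\pi)$ follows from the existence of the Asai pole via the Flicker-Kable characterization of $\GL_n$-distinguished cuspidal representations of $\GL_n(\A_E)$.
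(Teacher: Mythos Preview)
Your overall architecture matches the paper: the equivalence $(1)\Leftrightarrow(2)$ is exactly the local-global principle (Theorem~\ref{thm::Main}), whose proof uses the doubled setup, the Maass--Selberg relations for the regularized $H'$-period of the Eisenstein series on $G_{2m}$, and the factorization of the global open intertwining period $J_\sigma(s)$ into local ones whose unramified factor is $\L(s,\pi,\theta)/\DL(s,\pi,\theta)$. The implication $(2)\Rightarrow(3)$ is trivial, and the final assertions (simplicity of the pole, $\GL_{dm}$-distinction of $\JL(\pi)$) follow from Flicker and Flicker--Zinoviev as you say.

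There is, however, a genuine gap in your $(3)\Rightarrow(1)$. Condition $(3)$ does \emph{not} assume $\pi$ is locally $H$-distinguished, yet the entire local intertwining-period machinery (specifically Theorem~\ref{thm mainloc}) is stated for $\pi_v$ distinguished: the linear form $\J_{\pi_v\otimes\pi_v}(s)$ is only set up once $\pi_v\simeq\pi_v^*$, and the equality-of-pole-orders criterion is a statement about distinguished representations. You never explain where local distinction comes from. The paper does not go $(3)\Rightarrow(1)$ directly; it goes $(3)\Rightarrow(2)$: the Asai pole forces $\JL(\pi)$ to be globally $\GL_{dm}(\A)$-distinguished (Flicker, Flicker--Zinoviev), hence locally distinguished, and then Corollary~\ref{cor distinction-transfer-generic} (which relies on Beuzart-Plessis for square-integrable representations) transfers local distinction back from $\JL(\pi_v)$ to $\pi_v$. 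Only then can one invoke Theorem~\ref{thm::Main}. This step is not optional and is not contained in your sketch.

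A second, related inaccuracy: $H$-compatibility is not ``designed to guarantee non-vanishing of the local intertwining period''. The meromorphic family $\J_{\pi_v\otimes\pi_v}(s)$ is never identically zero; what compatibility controls is the \emph{order of the pole} at $s=0$. Theorem~\ref{thm mainloc} says $\Ord_{s=0}\J_{\pi_v\otimes\pi_v}(s)\le\Ord_{s=0}\L(s,\pi_v,\theta_v)$, with equality iff $\pi_v$ is $H_v$-compatible. If $\pi$ is incompatible at even one place, the local pole there is strictly smaller than the local $L$-factor pole, so after factorization the global $J_\sigma(s)$ cannot acquire a pole at $s=0$ even though $\L(s,\pi,\theta)$ does. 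This is the correct mechanism, and it is why compatibility appears on both sides of the equivalence.
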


For general $D$, Flicker and Hakim applied in \cite[Theorem 0.5]{MR1277452} a simple relative trace formula to prove a variant of these two theorems under some local restrictions. Our result removes these restrictions and further explicates the local obstruction. 
\subsection{The main result: Linear periods}\label{ss main lin}
Let $D$ be a central division $F$-algebra of degree $d$. Once again, the criterion for distinction involves a local obstruction. We say that an irreducible, cuspidal, automorphic representation $\pi$ of $G_D(2m,\A)$ is $G_D(m)\times G_D(m)$-compatible if its local component $\pi_v$ is $G_D(m,F_v)\times G_D(m,F_v)$-compatible in the sense of Definition \ref{def compatible} (see also Corollary \ref{cor dist comp} and Remark \ref{rem dist comp}) for every place $v$ of $F$. If $d$ is odd, by Lemma \ref{lem compatible}, $\pi$ is automatically $H$-compatible whenever $\JL(\pi)$ is cuspidal.
 
\begin{theorem}\label{thm main lin}
Let $\pi$ be an irreducible, cuspidal automorphic representation of $G_D(2m,\A)=\GL_{2m}(D_\A)$ such that $\JL(\pi)$ is also cuspidal. 

\begin{enumerate}
\item If  $d$ is odd then the following are equivalent:
\begin{enumerate}
\item\label{cond dist} $\pi$ is $G_D(m)\times G_D(m)$-distinguished;
\item\label{cond L} The central value $L(\frac12,\JL(\pi))$ is nonzero and the exterior square $L$-function $L(s,\JL(\pi),\wedge^2)$ has a pole at $s=1$.
\item\label{cond local lin} $\pi$ is locally $H$-distinguished, $L(\frac12,\JL(\pi))$ is nonzero and the exterior square $L$-function $L(s,\JL(\pi),\wedge^2)$ has a pole at $s=1$.
\item\label{cond lin jl dist} The irreducible, cuspidal, automorphic representation $\JL(\pi)$ of $\GL_{2dm}(\A)$ is $\GL_{dm}(\A)\times\GL_m(\A)$-distinguished. 
\end{enumerate}
\item If  $d$ is even then the following are equivalent:
\begin{enumerate}
\item\label{cond dist} $\pi$ is $G_D(m)\times G_D(m)$-distinguished;
\item\label{cond L} $\pi$ is locally $G_D(m)\times G_D(m)$-distinguished and $G_D(m)\times G_D(m)$-compatible, \[L(\frac12,\JL(\pi))\ne 0\] and the exterior square $L$-function $L(s,\JL(\pi),\wedge^2)$ has a pole at $s=1$.
\end{enumerate}
Furthermore, if these two equivalent conditions hold then the irreducible, cuspidal, automorphic representation $\JL(\pi)$ of $\GL_{2dm}(\A)$ is $\GL_{dm}(\A)\times \GL_{dm}(\A)$-distinguished. 
\item In either case, if the equivalent conditions hold then the pole of $L(s,\JL(\pi),\wedge^2)$ at $s=1$ is simple.
\end{enumerate}
\end{theorem}
This solves, in particular, \cite[Conjecture 1.1]{MR3299843} (which claims that if $\pi$ admits a linear period then so does $\JL(\pi)$). 

In the case $d=1$, it is well-known that
for an irreducible, cuspidal representation $\pi$ of $\GL_{2m}(\A)$ the following conditions are equivalent:
\begin{enumerate}
\item $\pi$ is $\GL_m(\A)\times\GL_m(\A)$-distinguished;
\item $L(\frac12,\pi)\ne 0$ and $L(s,\pi,\wedge^2)$ has a pole at $s=1$
\end{enumerate}
and when these conditions are satisfied the pole is simple.
This is a consequence of the work of Friedberg and Jacquet \cite{MR1241129}. Alternatively, see \cite[Theorem 4.7]{MR3372874} for a more direct approach.

\subsection{The main result: Twisted linear periods}
Let $E/F$ be a quadratic extension of number fields. Let $D$ be a central division $F$-algebra of degree $d$ and $m\in \N$ be such that $E$ imbeds in $\M_m(D)$ (the space of $m\times m$ matrices with entries in $D$). In particular, $dm$ is even. Fix such an imbedding and let $G=G_D(m)$ and $H=C_G(E)$ be the centralizer of $E$ in $G$. 

As in the previous cases the criterion for distinction involves a local obstruction. We say that a cuspidal, automorphic representation $\pi$ of $G$ is $H$-compatible if its local component $\pi_v$ is $H(F_v)$-compatible in the sense of Definition \ref{def compatible} (see also Corollary \ref{cor dist comp} and Remark \ref{rem dist comp}) for every place $v$ of $F$. Again, a representation of $G(F_v)$ may be $H(F_v)$-distinguished and not $H(F_v)$-compatible. We remark that when $md=2$ this does not happen.

\begin{theorem}\label{thm main twlin}
Let $\pi$ be an irreducible, cuspidal automorphic representation of $G_D(m,\A)=\GL_m(D_\A)$ such that $\JL(\pi)$ is also cuspidal. Then the following are equivalent:
\begin{enumerate}
\item\label{part globdist} $\pi$ is $H(\A)$-distinguished;
\item\label{part locond} $\pi$ is locally $H$-distinguished and $H$-compatible, $L(\frac12,\BC_F^E(\JL(\pi)))\ne 0$ and $L(s,\JL(\pi),\wedge^2)$ has a pole at $s=1$.
Here, $\BC_F^E$ stands for quadratic base-change. \end{enumerate}
\end{theorem}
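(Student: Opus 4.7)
The plan is to mirror the approach of Theorems \ref{thm main gal odd}--\ref{thm main lin}: realize $\P_H(\pi)$ as the leading term of the regularized $H$-period of a suitable Eisenstein series, rewrite that leading term as a global intertwining period, and factor the latter as an Euler product whose analytic behavior is governed by global $L$-functions. The twisted-linear period combines the geometric features of the linear-period case (Theorem \ref{thm main lin}) with the quadratic-base-change features of the Galois-period case (Theorems \ref{thm main gal odd}--\ref{thm main gal even}), and this is precisely why both $L(s,\JL(\pi),\wedge^2)$ and $L(s,\BC_F^E(\JL(\pi)))$ appear simultaneously in the criterion.

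First, I would use the embedding $E\hookrightarrow \M_m(D)$ to view $D^m$ as a module over $D\otimes_F E$ and fix a maximal parabolic $P=MU$ of $G=G_D(m)$ adapted to this structure, so that the Levi $M$ is a product of two factors of the form $G_D(m')$ and $M\cap H$ is a ``twisted diagonal'' copy of a smaller centralizer inside $M$. The open $P\backslash G/H$ orbits are then indexed in a way parallel to the linear case of Section \ref{ss main lin}, but carry an extra Galois-type structure coming from $E$. Next, following the construction behind Theorem \ref{thm main lin}, I would build Eisenstein series from cuspidal data on $M$ of the form $\sigma\otimes(\sigma^\vee|\det|^s)$ (suitably Galois-twisted) whose residue at the appropriate point produces $\pi$ via the standard exterior-square residue of Jacquet--Shalika, and compute their regularized $H$-period by unfolding along the open $H$-orbits on $P\backslash G$. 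The outcome is an identity expressing $\P_H(\pi)$, up to explicit nonzero constants, as a residue of a global intertwining period attached to the induced data.

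Finally, I would factor the global intertwining period as an Euler product of local intertwining periods and invoke the local results developed in the body of the paper: meromorphic continuation, unramified computation, and characterization of the zero/pole set at the distinguished point. The unramified computation expresses the Euler product, up to non-vanishing correction factors, in terms of $L(s,\JL(\pi),\wedge^2)$ and $L(s',\BC_F^E(\JL(\pi)))$ at appropriate parameters, so that non-vanishing of $\P_H(\pi)$ becomes equivalent to the conjunction of: (i) each local intertwining period is nonzero at the distinguished point, which is exactly local $H$-distinction together with the $H$-compatibility obstruction of Definition \ref{def compatible}; (ii) $L(s,\JL(\pi),\wedge^2)$ has a pole at $s=1$; and (iii) $L(\tfrac12,\BC_F^E(\JL(\pi)))\neq 0$.

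The main obstacle is this last step: at finite places where $E/F$ is inert, the quadratic twist and the division-algebra structure interact in the local intertwining integrals, and pinning down the precise order of vanishing or pole at the distinguished point, and matching its failure to $H$-compatibility rather than to $H$-distinction itself, will require a careful case analysis of the open $H(F_v)$-orbits on $P(F_v)\backslash G(F_v)$ and the stabilizers along each orbit. Once this local picture is secured, the equivalence between \eqref{part globdist} and \eqref{part locond} follows from the global intertwining-period identity by taking residues and using multiplicativity of the Euler factorization.
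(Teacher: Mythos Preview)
The paper's proof of Theorem \ref{thm main twlin} is a one-line deduction from the general local-global principle Theorem \ref{thm::Main}, so what you are really sketching is the proof of Theorem \ref{thm::Main} in the twisted-linear case. Your overall philosophy---Eisenstein series, regularized periods, Euler factorization of intertwining periods, local analysis of poles---is correct and matches the paper's. But the specific setup you describe has a genuine gap. You propose to work inside $G=G_D(m)$ itself: take a maximal parabolic $P=MU$ of $G$, build an Eisenstein series on $G$ from data $\sigma\otimes(\sigma^\vee|\det|^s)$ on $M$, and realize $\pi$ as a residue. But $\pi$ is \emph{cuspidal} on $G(\A)$; residues of Eisenstein series lie in the residual spectrum and can never produce a cuspidal representation. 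The paper instead \emph{doubles} the setup (Section \ref{sec::Global}): one passes to $(G',H')=(G_{2m},H_{2m})$, takes the maximal parabolic $P$ of $G'$ with Levi $M\simeq G_m\times G_m$, and forms the Eisenstein series $E(\varphi,s)$ on $G'(\A)$ induced from $\sigma=\pi\otimes\pi$ (after verifying $\pi\simeq\pi^*$). The $H_m$-period of $\pi$ then appears not as a residue of an Eisenstein series but as the \emph{closed-orbit} intertwining period $Z_\sigma$ in the Maass--Selberg relation for the regularized $H'$-period of $E(\varphi,s)$; Theorem \ref{thm pole of GIP} converts $H_m$-distinction of $\pi$ into the existence of a simple pole of the \emph{open} intertwining period $J_\sigma(s)$ at $s=0$.

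Your description of the local step is also slightly off. The criterion is not that each local factor $\J_{\sigma_v}(s)$ be nonzero at $s=0$, but that its \emph{order of pole} at $s=0$ equal that of the local $L$-factor $\L(s,\pi_v,\theta_v)$ (Theorem \ref{thm mainloc}): local $H$-distinction gives only the inequality $\le$, and $H$-compatibility is precisely the condition for equality. Summing these local orders over all places and combining with the unramified computation (Proposition \ref{prop ur comp}) yields $\Ord_{s=0}(J_\sigma(s))=\Ord_{s=0}(\L(s,\pi,\theta))$ under the hypotheses of \eqref{part locond}; the global analytic properties of the relevant $L$-functions (Theorem \ref{thm non vanishing s=1}) and the functional equation then translate the pole condition at $s=0$ into the stated conditions on $L(s,\JL(\pi),\wedge^2)$ at $s=1$ and $L(\tfrac12,\BC_F^E(\JL(\pi)))$.
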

This is a generalization of a famous local global principle \cite[Th\'eor\`eme 2]{MR783511} of Waldspurger for inner forms of $\GL_2$.
For the case $d=1$ it is proved in \cite[Corollary 1.3]{MR4255059}, using the residue method, that \eqref{part globdist} implies the $L$-value conditions that $L(\frac12,\BC_F^E(\JL(\pi)))\ne 0$ and $L(s,\JL(\pi),\wedge^2)$ has a pole at $s=1$. For $d\leq 2$, it is proved in \cite{MR4226986} and \cite{XueZhang} under local constraints. Next we explain how our result provides one implication of the Guo-Jacquet conjecture.

The Guo-Jacquet conjecture (see the conjecture in the introduction of \cite{MR1382478}) relates between the twisted linear and linear cases via the Jacquet-Langlands correspondence. The conjecture consists of two implications and our main result proves the first one. 
\begin{corollary}\label{cor gj} In the notation of Theorem \ref{thm main twlin} if $\pi$ is $H(\A)$-distinguished then both $\JL(\pi)$ and $\JL(\pi)\otimes \eta_{E/F}$ are $\GL_{dm/2}(\A) \times \GL_{dm/2}(\A)$-distinguished where $\eta_{E/F}$ is the quadratic idele class character of $\A^\times$ associated to $E/F$ by class field theory composed with determinant.
\end{corollary}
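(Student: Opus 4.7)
The plan is to reduce Corollary \ref{cor gj} to a direct combination of Theorem \ref{thm main twlin} with the split case ($d=1$) of Theorem \ref{thm main lin}, namely the Friedberg--Jacquet criterion recalled in the paragraph following the statement of Theorem \ref{thm main lin}. Assuming $\pi$ is $H(\A)$-distinguished, Theorem \ref{thm main twlin} already supplies the central-value nonvanishing
\[
L(\tfrac12,\BC_F^E(\JL(\pi)))\ne 0
\]
and the existence of a pole of $L(s,\JL(\pi),\wedge^2)$ at $s=1$; these are the only inputs needed.

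First, I would invoke the standard factorization of the base change $L$-function along the quadratic extension $E/F$,
\[
L(s,\BC_F^E(\JL(\pi)))=L(s,\JL(\pi))\,L(s,\JL(\pi)\otimes \eta_{E/F}).
\]
Both factors are $L$-functions of irreducible cuspidal automorphic representations of $\GL_{dm}(\A)$ (the twist $\JL(\pi)\otimes\eta_{E/F}$ is cuspidal because twisting by an idele class character preserves cuspidality), hence entire. Nonvanishing of the product at $s=\tfrac12$ therefore forces each factor to be nonzero there:
\[
L(\tfrac12,\JL(\pi))\ne 0\quad\text{and}\quad L(\tfrac12,\JL(\pi)\otimes \eta_{E/F})\ne 0.
\]

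Second, the exterior square $L$-function is insensitive to twists by quadratic characters: since $\wedge^2(\rho\otimes\chi)=(\wedge^2\rho)\otimes\chi^2$ and $\eta_{E/F}^2=1$, we have
\[
L(s,\JL(\pi)\otimes\eta_{E/F},\wedge^2)=L(s,\JL(\pi),\wedge^2),
\]
which inherits the pole at $s=1$. Thus both $\JL(\pi)$ and $\JL(\pi)\otimes\eta_{E/F}$ are irreducible cuspidal representations of $\GL_{dm}(\A)$ (with $dm$ even, as guaranteed in the setup of Theorem \ref{thm main twlin}) satisfying the hypotheses of the Friedberg--Jacquet criterion. Applying this criterion to each in turn yields that both are $\GL_{dm/2}(\A)\times \GL_{dm/2}(\A)$-distinguished, as claimed.

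There is no real obstacle here: the corollary is designed as a formal consequence of Theorem \ref{thm main twlin} combined with the well-known split case of linear period distinction. The only non-tautological ingredients are the factorization of the base change $L$-function (Artin formalism together with strong multiplicity one for $\GL_{dm}$) and the stability of $\wedge^2$ under quadratic twist, both of which are standard.
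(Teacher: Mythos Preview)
Your argument is correct and matches the paper's proof essentially line for line: the paper likewise invokes the factorization $L(s,\BC_F^E(\JL(\pi)))=L(s,\JL(\pi))\,L(s,\JL(\pi)\otimes\eta_{E/F})$ with both factors entire, the identity $L(s,\JL(\pi),\wedge^2)=L(s,\JL(\pi)\otimes\eta_{E/F},\wedge^2)$, and then the Friedberg--Jacquet criterion from the end of Section~\ref{ss main lin}. Your write-up simply makes explicit the justifications (entirety of cuspidal standard $L$-functions, $\wedge^2(\rho\otimes\chi)=(\wedge^2\rho)\otimes\chi^2$) that the paper leaves implicit.
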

\begin{proof}
Since
\[L(s,\BC_F^E(\JL(\pi)))=L(s,\JL(\pi))L(s,\JL(\pi)\otimes \eta_{E/F}),
\]
both factors on the right hand side are entire and moreover  
\[
L(s,\JL(\pi),\wedge^2)=L(s,\JL(\pi)\otimes \eta_{E/F},\wedge^2),
\]
the corollary is a direct consequence of Theorem \ref{thm main twlin} and the discussion at the end of Section \ref{ss main lin} (the Friedberg-Jacquet result for linear periods).
\end{proof}

In his paper \cite{MR1382478}, Guo suggests a relative trace formula approach to the problem. This approach was pursued in many subsequent works of which we mention \cite{MR3299843} and \cite{MR3805647}. We also mention a project started by Huajie Li, aiming to prove the full relative trace formula suggested by Guo. The project is currently pursued by Chaudouard and Li. See \cite{MR4424024}, \cite{MR4350885} and \cite{MR4681295} for results in this direction obtained at the Lie algebra level.

A generalization of the Guo-Jacquet conjecture was formulated in \cite[Conjucture 1.1]{XueZhang} to allow twists by characters in the special case where either $D=F$ or $D$ is quaternionic. Xue and Zhang suggest a new relative trace formula comparison. By comparing the elliptic parts they obtain both implications of their conjecture under some local restrictions. For the direct implication, our result removes the restriction on the dimension of $D$ as well as the local restrictions when the twisting character is trivial.

There is also a converse statement in the Guo-Jacquet conjecture (see also \cite[Conjecture 1.1 (ii)]{XueZhang}). We adress it in Section \ref{sec::GJconverse}. Our local-global principle allows us to get, under local assumptions, a partial version of this converse in Theorem \ref{thm::GJconverse}. If $D=F$ or $D$ is a quaternion algebra, then Xue and Zhang also obtained a partial version of this converse in \cite[Theorem 1.5]{XueZhang}, under local and global assumptions. Our local assumptions and their local assumptions are different.

\begin{theorem}
 Let $\pi$ be an irreducible, cuspidal automorphic representation of $\GL_n(\BA)$ where $n$ is even, and write $n=2^a b$ where $a\geq 1$ and $b$ is odd. Assume that:
\begin{enumerate}
\item  at all places $v$ of $F$ that are inert in $E$ (i.e. such that $E_v$ is a field)
\begin{itemize}
\item either $\pi_v$ is a discrete series,
\item or $\pi_v$ has odd essentially square-integrable support (see Definition \ref{df odd esi})\footnote{for example $\pi_v$ could be any unitary principal series induced from the Borel subgroup}. 
\end{itemize}
\item \label{global shal} $L(\frac12,\BC_F^E(\pi))\ne 0$ and $L(s,\pi,\wedge^2)$ has a pole at $s=1$. 
\end{enumerate}
Then either $\pi$ is $\GL_{n/2}(\A_E)$-distinguished or there exists a central $F$-division algebra $D$ of degree $2^a$ such that $E$ imbeds in $D$, and there exists  an irreducible, cuspidal automorphic representation $\pi'$ of 
$\GL_b(D_{\BA})$ with $\JL(\pi')=\pi$, such that $\pi'$ is $\GL_b(C_{\A_E})$-distinguished where $C$ is the centralizer of $E$ in $D$.
\end{theorem}

We believe that our method will have applications to study distinction for more symmetric pairs, both globally as in this paper, and locally as in \cite{epsilonintertwining}.

\subsection{Our technique of proof and the main local result}
Since the structure of proof of the main Theorems is similar in all three cases, we unify notation and consider Galois, linear and twisted linear periods at once.  However, along the way, some statements require a case by case consideration for the nuances in their proofs.

We fix a triple $(G,H,\theta)$ where the group $G$ and the period subgroup $H$ correspond to the set-up in one of our three main results above and $\theta$ is the involution such that $H=G^\theta$. We point out that in all cases there is a central $F'$-division algebra $\D$, with $F'=E$ in the Galois case and $F'=F$ otherwise, and $a\in \N$ such that $G(F)=\GL_a(\D)$. 
For an irreducible, cuspidal automorphic representation $\pi$ of $G(\A)$ such that $\JL(\pi)$ is also cuspidal we consider the product of $L$-functions
\[
\L(s,\pi,\theta)=\begin{cases}
L(2s,\JL(\pi),\As^+) & \text{for Galois periods} \\
L(s+\frac12,\JL(\pi))^2\, L(2s,\JL(\pi),\wedge^2) & \text{for linear periods} \\
L(s+\frac12,\BC_F^E(\JL(\pi)))\,L(2s,\JL(\pi),\wedge^2) & \text{for twisted linear periods}.
\end{cases}
\]
Our local-global principle, Theorem \ref{thm::Main}, is the equivalence of the following two conditions:
\begin{enumerate}
\item $\pi$ is $H(\A)$-distinguished;
\item $\pi$ is locally $H$-distinguished and $H$-compatible and $\L(s,\pi,\theta)$ has a pole at $s=0$.
\end{enumerate}
Theorems \ref{thm main gal odd}, \ref{thm main gal even}, \ref{thm main lin} and  \ref{thm main twlin} follow, by applying the functional equation of the corresponding $L$-functions and some further local results obtained along the way.

In order to prove Theorem \ref{thm::Main} we \emph{double} the set-up. Consider a triple $(G', H',\theta')$ of the same type as $(G,H,\theta)$ but with double the rank. In particular, $G'(F)=\GL_{2a}(\D)$. We consider the standard parabolic subgroup $P$ of $G'$ with Levi factor $M= G\times G$.
In all three cases there is a unique open $P$-orbit on $G'/H'$ and its stabilizer in $M$ is the $\theta$-twisted diagonal imbedding of $G$, $\{(g,\theta(g)):g\in G\}$ and there is a unique closed $P$-orbit on $G'/H'$ with stabilizer $H\times H$ in $M$. 

The technical heart of this work is a local result that we explain first. Fix a place $v$ of $F$ and let $\pi_v$ be a smooth, irreducible representation of $G_v=\GL_a(\D\otimes_F F_v)$ such that $\pi_v^\vee \simeq \pi_v^\theta$ (its contragradient is isomorphic to its $\theta$-twist). We similarly write $X_v=X(F_v)$ for any algebraic group $X$ defined over $F$. For $s\in \C$ let $\pi_v[s]$ be the twist of $\pi_v$ by $\abs{\cdot}^s\circ \nu$ where $\nu$ is the reduced norm on $G_v$. Let $I(s)=\pi_v[s]\times \pi_v[-s]$ be the representation of $G'_v$ obtained by normalized parabolic induction from $P_v$ and the representation $\pi_v[s]\otimes \pi_v[-s]$ of $M_v$.

The results of \cite{MR2401221} and \cite{MR1274587} imply that there exists a non-zero meromorphic family 
\[
J(s)=\J_{\pi_v\otimes \pi_v}(s)
\] 
of $H'_v$-invariant linear forms on $I(s)$. These are the local analogues of the global intertwining periods introduced in \cite{MR1625060}. Furthermore, restricted to the $H_v'$-invariant subspace of sections supported on the open $P$-orbit in $G'/H'$, $J(s)$ is holomorphic and non-zero at every $s\in \C$. Consequently, there exists $l\in \Z_{\ge 0}$ such that the leading term of $J(s)$ at $s=0$, namely, 
\[
\J_0=\underset{s\to 0}{\lim} s^l J(s)
\] 
is a non-zero $H'_v$-invariant linear form on $I(0)=\pi_v\times \pi_v$.
If $l>0$ then $\J_0$ is supported away from the open orbit. By analyzing the $P$-orbits on $G'/H'$ and applying the geometric lemma of Bernstein and Zelevinsky we can then deduce that $\Hom_{H_v}(\pi_v,\C)\ne 0$, that is, that $\pi_v$ is $H_v$-distinguished. Thus, if $\pi_v$ is not $H_v$-distinguished then $J(s)$ is holomorphic at $s=0$.
If $\pi_v$ is $H_v$-distinguished, the determination of $l$, the order of pole of $J(s)$ at $s=0$, is a delicate problem. 
Assume in addition that $\JL(\pi_v)$ is generic. Using the techniques developed in \cite{MR4308058} we show  in Section \ref{sec order local pole}
that $l$ is bounded by the order of pole at $s=0$ of the $L$-factor $\L(s,\pi_v,\theta_v)$ and we characterize the condition for equality between these two integers by the property we called $H_v$-compatibility above. Namely, we prove in Theorem \ref{thm mainloc} the following result.

\begin{theorem}
Let $\pi$ be an irreducible and distinguished generic unitary repreresentation of $G'$, or assume more generally that $\pi$ belongs to $\Pi_\D(-\frac12,\frac12)$ as in Definition \eqref{df exp 12}. Then we have the following inequality between the orders of the poles at $s=0$:
\[
\Ord_{s=0}(\J_{\pi\otimes \pi}(s))\le \Ord_{s=0}(\L(s,\pi,\theta)).
\]
Moreover equality holds if and only if $\pi$ is $H'$-compatible (see Definition \ref{def compatible}).
\end{theorem}

Our global treatment is inspired by \cite[Example 6]{MR1625060}. 
The global version of the above idea is encoded in the so called Maass-Selberg relations (see for example \cite{MR1625060}, \cite{MR2010737}). 
These relations can be viewed as a global version of the geometric lemma, taking into account the contribution of intertwining periods. 

Let $\pi$ be an irreducible, cuspidal automorphic representation of $G(\A)$ and let $I(\pi,s)=\pi[s]\times \pi^*[-s]$ be the representation of $G'(\A)$ obtained by normalized parabolic induction from $P(\A)$ and $\pi[s]\otimes \pi^*[-s]$ where $\pi^*=(\pi^\vee)^\theta$ is the $\theta$-twist of the contragradient of $\pi$ and, as in the local set-up, $\pi[s]$ is the twist of $\pi$ by $\abs{\cdot}\circ \nu$ and $\nu$ is the reduced norm on $G(\A)$. For $\varphi\in I(0)$ let $E(\varphi,s)$ be the corresponding Eisenstein series.
In this setup, the Maass-Selberg relations take the form
\[
\P_{H'}(\Lambda^TE(\varphi,s))=J(\varphi,s)+ \frac{e^{sT}}{s}I(\varphi)- \frac{e^{sT}}{s}I(M(s)\varphi).
\]
Here, $T$ is a positive enough truncation parameter, $\Lambda^T$ is Zydor's relative truncation operator with respect to $(G',H')$ defined in \cite{MR4411860}, 
 $J(s)$ is the open ($P$-orbit in $G'/H'$) intertwining period on $I(\pi,s)$, $I$ is the closed intertwining period on $I(0)$, which, if the central character of $\pi$ is moreover trivial on $A_G^+$, is zero except when $\pi$ is $H(\A)$-distinguished, and $M(s):I(\pi,s)\rightarrow I(\pi^*,-s)$ is the standard intertwining operator.
 
Assume that the central character of $\pi$ is trivial on $A_G^+$, and assume further that $\JL(\pi)$ is cuspidal and that $\pi^*=\pi$, (this equality is automatic if $\pi$ is locally $H$-distinguished thanks to strong multiplicity one and local results on distinction). A careful analysis of the Maass-Selberg relations multiplied by $s$ when $s\to 0$ shows that $\pi$ is $H(\A)$-distinguished if and only if the global intertwining period $J(s)$ has a (necessarily simple) pole at $s=0$, as we prove in Theorem \ref{thm pole of GIP}. 

As a consequence, for decomposable $\varphi$, by local multiplicity one and unramified computations (due to Jacquet-Lapid-Rogawski \cite{MR1625060} in the Galois case and Suzuki-Xue \cite{MR4721777} following Offen \cite{MR2060496} and Lapid-Offen \cite{MR3776281} in the other cases) the global intertwining period $J(\varphi,s)$
can be factorized as 
\[
J(\varphi,s)=J_S(\varphi_S,s)\frac{\L^S(s,\pi,\theta)}{\DL^S(s,\pi,\theta)}.
\] 
Here $S$ is a finite set of places of $F$ so that the data is non-archimedean and unramified outside of $S$, the subscript $S$ stands for the product over places in $S$ and the superscript for the corresponding partial $L$-functions away from $S$. The denominator is defined in terms of another $L$-function $\DL(s,\pi,\theta)$ prescribed to the data $\pi$ and $\theta$ in Section \ref{ss Lfactors for pairs}. It is well known that $\DL(s,\pi,\theta)$ is holomorphic and nonzero at $s=0$. If $\pi$ is locally $H$-distinguished, our main local results from Section \ref{sec order local pole} imply that the order of the pole at $s=0$ of $J_S(s)$ is at most that of $\L_S(s,\pi,\theta)$ and equality holds if and only if $\pi$ is $H$-compatible. Our local-global principle, Theorem \ref{thm::Main}, follows from the above discussion.

Finally, in Appendix \ref{app failure}, we prove the failure of the \emph{naive} local-global principle in the case of Galois periods. That is, we show existence of cuspidal automorphic representations that are locally $H$-distinguished but not $H(\A)$-distinguished.

\textbf{Acknowledgement.} We thank Rapha\"el Beuzart-Plessis for useful conversations leading to Appendix \ref{app failure}.

	\section{Notation and preliminaries}

	Let $F$ be either a number field-\emph{the global set-up} or a local field of characteristic zero-\emph{the local set-up}. When $F$ is a number field, denote by $\A = \A_F$ its ring of ad\`{e}les and let $F_v$ be the completion of $F$ with respect to a place $v$ of $F$. We further denote by $R_v=R\otimes_F F_v$ the completion of an $F$-vector space $R$. If $R$ is an $F$-algebra then $R_v$ is an $F_v$-algebra. Denote by $\abs{\cdot}$ the standard absolute value on $\A^*$ in the global set-up and on $F^*$ in the local set-up.

Let $D$ be a central simple $F$-algebra. For $a\in \N$ denote by $G_D(a)$ the algebraic group defined over $F$ such that its rational points are given by 
\[
G_D(a,F)=\GL_a(D).
\]

We denote by $e$ the identity element in a group. For integers $a,\, b$ let $[a,b]$ be the interval of all integers $x$ such that $a\le x\le b$.
For $r\in \N$ let $\mathfrak{S}_r$ be the permutation group on $[1,r]$.

For nonzero meromorphic functions $\alpha(s)$ and $\beta(s)$ on $\C$ we write $\alpha(s)\sim\beta(s)$ if the quotient $\alpha(s)/\beta(s)$ is a nowhere vanishing entire function. We further write $\alpha(s)\underset{A^\times}{\sim}\beta(s)$ if $\alpha(s)/\beta(s)$ belongs to the unit group $A^\times$ of a subring $A$ of the ring of meromorphic functions on $\C$. 

Let $\ell_\lambda$ be a family of linear forms on a complex vector space $V$ parameterized by a finite dimensional complex vector space $\lambda\in \fra$. We say that $\ell_\lambda$ is holomorphic at $\lambda=\lambda_0$ if $\lambda\mapsto \ell_\lambda(v)$ is holomorphic at $\lambda=\lambda_0$ for all $v\in V$ and we say that $\ell_\lambda$ is meromorphic at $\lambda=\lambda_0$ if there is a non-zero polynomial $p(\lambda)$ on $\fra$ such that $p(\lambda)\ell_\lambda$ is holomorphic at $\lambda=\lambda_0$. If $\fra=\C$ we write 
\[
\Ord_{\lambda=\lambda_0}(\ell_\lambda)=k\in \Z
\] 
if $(\lambda-\lambda_0)^k\ell_\lambda$ is holomorphic and not identically zero at $\lambda=\lambda_0$. We similarly define the order of pole $\Ord_{\lambda=\lambda_0}(f(\lambda))$ for any meromorphic function $f$ on $\C$.
\subsection{Generalities on reductive groups}
	
If $X$ is an algebraic variety defined over $F$, we sometimes write $X = X(F)$ for its $F$-points by abuse of notation. For an algebraic group $Q$ defined over $F$ we denote by $X^{\ast}(Q)$ the abelian group of $F$-rational characters of $Q$. We set $\fra_Q^{\ast} = X^{\ast}(Q) \otimes_{\BZ} \BR$ and let $\fra_Q = \Hom_{\BR} (\fra_Q^{\ast}, \BR)$ be its dual vector space. Let $\fra_\C=\fra\otimes_\R\C$ be the complexification of a real vector space $\fra$. We denote by $\delta_Q$ the modulus character of $Q(\BA)$ resp. $Q(F)$ when $F$ is a number field resp. a local field. 
	
	Let $G$ be a connected reductive group defined over $F$. Fix a maximal $F$-split torus $A_0$ of $G$ and a minimal parabolic subgroup $P_0$ of $G$ that contains $A_0$. Parabolic subgroups of $G$ containing $P_0$ resp. $A_0$ are called standard resp. semi-standard. If $P$ is a semi-standard parabolic subgroup of $G$, then it contains a unique Levi subgroup $M$ containing $M_0$, the centralizer of $A_0$ in $G$. Let $U$ be the unipotent radical of $P$, then $P = M \ltimes U$ is called the standard Levi decomposition of $P$. We denote by $A_P$ or $A_M$ the split center of $M$. By a standard Levi of $G$ we mean the Levi subgroup in the standard Levi decomposition of a standard parabolic subgroup. 
	
	In what follows, unless otherwise specified, by a parabolic (resp. Levi) subgroup we always mean a standard parabolic (resp. Levi) subgroup. By writing $P= M U$ we mean the standard Levi decomposition of $P$ with standard Levi $M$ and unipotent radicals $U$.
		
	Let $P = MU \subset Q = LV$ be two parabolic subgroups. There is a canonical direct sum decomposition $\fra_{M} = \fra_L \oplus \fra_M^L$. A similar decomposition holds for the dual space. Write $\fra_0$ and $\fra_0^*$ for $\fra_{M_0}$ and $\fra_{M_0}^*$ respectively. We denote by $R(A_M,L)$ (resp. $R(A_M,P \cap L)$) the set of roots of $A_M$ acting on the Lie algebra of $L$ (resp. of $P \cap L$). For $\alpha \in R(A_M,L)$ we write  $\alpha > 0$ if $\alpha \in R(A_M,P \cap L)$ and $\alpha < 0$ otherwise. Recall that $R(A_0,L)$ forms a root system and let $\Delta_0^L$ be its basis of simple roots with respect to $P_0 \cap L$. Let $\Delta_M^L$ be the set of non-zero restrictions to $A_M$ of the elements of $\Delta_0^L$. The set $\Delta_M^L$ forms a basis of $(\fra_M^L)^*$. We sometimes also denote $\Delta_M^L$ by $\Delta_P^Q$. When $L =G$, we often omit the superscript $G$. We also define the positive chamber
	\begin{align*}
		\fra_P^+ = \{ H \in \fra_P \mid \langle H, \alpha \rangle > 0, \forall \alpha \in \Delta_P \}.
	\end{align*}
	Note that $R(A_0,G)$ lies in $\fra_0^*$. Let $\rho_0 \in \fra_0^*$ be the half-sum of the positive roots of $A_0$ (counted with multiplicities), and $\rho_P$ be its projection on $\fra_{M}^*$. 
	
	
		
	Let $W = N_G(A_0) / M_0$ be the Weyl group of $G$ with respect to $A_0$. For a Levi subgroup $M$ of $G$ let $W^M = N_M(A_0) / M_0$ be the Weyl group of $M$ with respect to $A_0$. For two Levi subgroups $M$ and $M'$ let ${}_{M'}W_M$ be the set of Weyl elements $w\in W$ that are of minimal length in $W^M w W^{M'}$. It is a complete set of representatives for the double cosets $W^{M'} \backslash W /W^M$. For two Levi subgroups $M \subset L$ let $W^L(M)$ be the set of elements $w \in W^L$ such that $w$ is of minimal length in $wW^M$ and $w M w^{-1}$ is a standard Levi subgroup of $L$. Set $W(M) = W^G(M)$. According to \cite[I.1.7, I.1.8]{MR1361168}, one can decompose elements of $W(M)$ into products of elementary symmetries attached to simple roots in $\Delta_{M'}^G$ for Levi subgroups $M'$ of $G$ that are conjugate to $M$. In turn, this allows one to define a length function $\ell_M$ on $W(M)$. There is a unique element in $W^L(M)$ for which $\ell_M$ is maximal, and we denote it by $w_M^L$.
	
	When $F$ is a number field, we take a maximal compact subgroup $K$ of $G(\BA)$ that is adapted to $M_0$ (\cite[I.1.4]{MR1361168}). Let $P = MU$. We have the Harish-Chandra map $H_M : M(\BA) \ra \fra_{M}$ given by
	\begin{align*}
		e^{ \langle \chi, H_M(m) \rangle}  = \lvert \chi (m) \rvert, \quad \chi \in X^{\ast}(M), \ m \in M(\BA).
	\end{align*}
	We then extend $H_M$ to  $G(\BA)$ as the unique left $U(\BA)$-invariant and right $K$-invariant extension via the Iwasawa decomposition $G(\BA)=U(\BA)M(\BA)K$. The modulus character on $P(\BA)$ is given by $e^{ \langle 2\rho_P, H_M(\cdot) \rangle}$. We further denote by $G(\A)^1$ the kernel of $H_G$.
	
	When $F$ is a local field, we take a maximal compact subgroup $K$ of $G(F)$ that is adpated to $M_0$. Let $P = MU$. Similarly we have the Harish-Chandra map given by
	\begin{align*}
		e^{\langle \chi,H_M(m) \rangle}  = \lvert \chi (m) \rvert, \quad \chi \in X^{\ast}(M), \ m \in M(F).
	\end{align*}
	We then extend $H_M$ to a function on $G(F)$ that is left $U(F)$-invariant and right $K$-invariant. Likewise, the modulus function on $P(F)$ is given by $e^{ \langle 2 \rho_P, H_M(\cdot) \rangle}$.

\subsection{Representations}\label{sec::reps}
For an algebraic group ${\mathbf Q}$ defined over $F$ write in this section $Q$ for ${\mathbf Q}(F)$ if $F$ is local and for ${\mathbf Q}(\A)$ if $F$ is global. 
Let $\mathbf{G}$ be a reductive linear algebraic group defined over $F$.

When $F$ is $p$-adic, by a representation of $G$ we always mean a smooth admissible representation with coefficients in $\mathbb{C}$. When $F$ is archimedean, by a representation of $G$ we mean a smooth admissible Fr\'echet representation of moderate growth (see \cite{MR1013462} or \cite[Chapter 11]{MR1170566}). When $F$ is global, for automorphic representations, we follow \cite[Section 2.7]{MR4426741} for the notion of smooth automorphic representations of $G$, but only consider $K$-finite vectors in the space of such a representation since part of the literature that we use is written in this setting.  In particular their archimedean components are Harish-Chandra modules. On the other hand for our local results, we consider smooth admissible representations. The correspondence between these two versions of archimedean representations is given by the Casselman-Wallach completion functor (\cite[Chapter 11]{MR1170566}) in one direction, and by taking $K$-finite vectors in the other. 

When using local and global results together, especially when dealing with invariant linear forms, the coherence of these two approaches requires the results of \cite{MR1176208}, automatic continuity of invariant linear forms. However, when this confusion does not create any ambiguity, in order to simplify notation, we will sometimes identify smooth admissible representations and their underlying Harish-Chandra module. For example we say that the smooth admissible archimedean representation $\pi_v$ is an archimedean component of the automorphic representation $\pi$, when actually only its underlying Harish-Chandra module is. 

For a smooth representation $\pi$ of $G$ and a subgroup ${\bf H}$ of ${\bf G}$ we denote by $\Hom_H(\pi,\C)$ the space of (continuous) $H$-invariant linear forms on the space of $\pi$. If $\pi$ is a Harish-Chandra module we use the same notation for the space of linear forms on the space of $\pi$ that are $H\cap K$-invariant and such that their kernel contains the image of the action of $\Lie(H)$.

We tacitly use the following results throughout the paper:
\begin{itemize}
\item  Assume that ${\bf H}$ be the group of fixed points of an involution on ${\bf G}$. 
Let $\pi$ be a finite length smooth admissible representation of $G$ in the sense of \cite[Chapter 11]{MR1170566}, and let $\pi_{f}$ be the underlying Harish-Chandra module of $K$-finite vectors in $\pi$. Then restriction to $\pi_f$ induces an isomorphism
\[
\Hom_H(\pi,\C)\simeq \Hom_H(\pi_f,\C).
\]
 \item Let $\pi_s$ be a holomorphic family of smooth admissible representations ($s\in \C$) of finite length on the same vector space $V$. Consider a meromorphic family of continuous Linear forms $\ell_s$ on $V$. Then there exists a $K$-finite vector $v$ in $\pi_f$ such that
 \[
 \Ord_{s=0}(\ell_s)= \Ord_{s=0}(\ell_s(v)).
 \]
\end{itemize}
The first fact is a consequence of \cite[Th\'eor\`eme 1]{MR1176208}. Since any term in the Laurent expansion of $\ell_s$ is a continuous linear form (it is given by Cauchy's integral formula), the second fact is a consequence of the subspace of $K$-finite vectors in $V$.

Finally we observe that a cuspidal automorpic representation is unitary only up to a character twist.
	
\subsubsection{Parabolic induction}
Let ${\mathbf{P}}={\mathbf{MU}}$ be a parabolic subgroup of ${\mathbf{G}}$ and $\sigma$ a representation of $M$. Denote by $I_P^G(\sigma)$ the representation of $G$ defined by normalized parabolic induction. 

For $\lambda \in \fra_{M,\C}^*$ and $\varphi \in I_P^G(\sigma)$ write $\varphi_{\lambda} (g) = e^{ \langle \lambda, H_M(g) \rangle} \varphi (g)$ for the twist of $\varphi$ by $\lambda$. Let $I_P^G(\sigma,\lambda)$ be the representation of $G$ on the space of $I_P^G(\sigma)$ defined as \begin{align*}
		(I_P^G(g,\sigma,\lambda)\varphi)_{\lambda}(x) = \varphi_{\lambda} (xg).
	\end{align*}
	Let $\sigma[\lambda]$ denote the representation of $M$ on the space of $\sigma$ given by 
	\[\sigma[\lambda](m) = e^{\langle \lambda, H_M(m) \rangle} \sigma (m).\] The map $\varphi \mapsto \varphi_{\lambda}$ is an isomorphism of representations $I_P^G(\sigma,\lambda) \ra I_P^G(\sigma[\lambda])$.

	Let $Q = LV$ be a parabolic subgroup of $G$ containing $P$. Transitivity of parabolic induction is the natural isomorphism $F: I_P^G(\sigma) \ra I_Q^G(I_{P \cap L}^L (\sigma))$, $\varphi \mapsto F_{\varphi}$ of $G$-representations defined by
	\begin{align*}
		F_{\varphi}(g)(l) = \delta_Q^{-1/2}(l)\varphi(lg),\ \ \ l\in L,  g\in G.
	\end{align*}
	For $\lambda \in \fra^*_{L,\C}$, we have
	\begin{align}\label{formula::holo-section-transitivity}
		(F_{\varphi})_{\lambda}(g)(l) =  e^{ -\langle \lambda + \rho_Q , H_L(l) \rangle} \varphi_{\lambda}(lg).
	\end{align}
	For $g \in G$ and $\varphi \in I_P^G (\sigma)$, set $\varphi [g] : = F_{\varphi}(g) \in I_{P \cap L}^L (\sigma)$. 
\subsubsection{Intertwining operators}
Let ${\mathbf{P}}={\mathbf{MU}}$ be a parabolic subgroup of $\mathbf{G}$, $\sigma$ a representation of $M$ and
$w \in W(M)$ and choose a representative $n$ of $w$ in $N_G(A_0)$. Let $M' = wMw^{-1}$ and ${\mathbf{P'}}={\mathbf{M'U'}}$ be the corresponding parabolic subgroup. 
Let $w\sigma$ be the representation of $M'$ on the space of $\sigma$ given by $w\sigma(m)=\sigma(n^{-1}mn)$, $m\in M'$. (The isomorphism class of this representation is independent of $n\in w$).
We denote by 
\begin{align*}
	M(n,\sigma,\lambda) : I_P^G(\sigma,\lambda) \ra I_{P'}^G(w\sigma,w\lambda)
\end{align*} 
the standard intertwining operator defined by the meromorphic continuation of the integral
\begin{align}\label{formula::defn--intertwinner}
	( M(n,\sigma,\lambda)\varphi )_{w \lambda} (g) = \int  \varphi_{\lambda} (n^{-1} ug) du
\end{align} 
convergent for $\Re(\lambda)$ in some positive cone in $\fra_M^*$. Here the integral is over the quotient $U' \cap wUw^{-1} \backslash U' $ in the local case and the automorphic quotient ${\mathbf{U'}(F)}(U' \cap wUw^{-1}) \backslash U' $ in the global case.

\subsection{Symmetric pairs-generalities}\label{ss sym}

Let $F$ be a local field, ${\mathbf{G}}$ a reductive linear algebraic group defined over $F$ and $\theta$ an involution on ${\mathbf{G}}$. Let $G={\mathbf{G}}(F)$ and consider the associated symmetric $G$-space
\[
X=\{x\in G:x=\theta(x)^{-1}\}
\]
with the $G$-action $g\cdot x=gx\theta(g)^{-1}$, $g\in G$, $x\in X$.
For a subgroup $Q$ of $G$ let $Q_x$ denote the stabilizer of $x$ in $Q$. Note that for $x\in X$, $\theta_x=\Ad(x)\circ\theta$ is an involution on $G$ and $Q_x=Q^{\theta_x}$ is the subgroup of $Q$ fixed by $\theta_x$. 

Let $H=G^\theta=G_e$. We refer to $(G,H)$ as a symmetric pair, however, we often introduce the triplet $(G,H,\theta)$ and still refer to it as a symmetric pair.

We follow \cite{MR3541705} and recall the analysis of parabolic orbits on $X$ as well as consequences of the geometric lemma of Bernstein and Zelevinsky.

\subsubsection{Parabolic orbits}\label{ss paraborb}
Fix once and for all a $\theta$-stable maximal split torus $A_0$ and a minimal parabolic subgroup $P_0$ containing $A_0$. Let $w_\star\in W$ be such that $\theta(P_0)=w_\star P_0w_\star^{-1}$. Fix a representative  $n_\star$ of $w_\star$ in $G$ and let $\theta'=\Ad(n_\star^{-1})\circ \theta$ be the corresponding automorphism of $P_0$. It is not necessarily an involution of $\mathbf{G}$, however, $\theta'(P_0)=P_0$ and it defines an involution on $\fra_0^*$ and on $W$ that we still denote by $\theta'$.

Let $P=M U$ be a standard parabolic subgroup of $G$. The double coset space $P\bs G/H$ is in bijection with the $P$-orbits in $G\cdot e\subseteq X$.
In what follows we recall some generalities on $P$-orbits on $X$ from \cite[Section 3]{MR3541705}. 

For every $w\in {}_MW_{\theta'(M)}$ the group $M(w)=M\cap w\theta'(M)w^{-1}$ is a standard Levi subgroup of $G$. 
There is a map $\iota_P$ from $P\bs X$ to the subset of $w\in  {}_MW_{\theta'(M)}$ such that $w\theta'(w)=e$ characterized by the property that for $x\in X$ we have $\iota_P(P\cdot x)=w$\footnote{We deviate from the convention in \cite{MR3541705} where $w$ is replaced by $ww_\star$} if 
\[
Pxn_\star \theta'(P)=Pw\theta'(P).
\]
For $L=M(w)$ the intersection $P\cdot x\cap Lww_\star^{-1}$ is a unique $L$-orbit (and in particular non-empty). If $Q=LV$ is the parabolic subgroup of $G$
with Levi subgroup $L$ then for $y\in P\cdot x\cap Lww_\star^{-1}$ we have $P_y=Q_y=L_y\rtimes V_y$.

We call any element in $P\cdot x\cap Lww_\star^{-1}$ a $P$-\emph{good} representative of $P\cdot x$.

\subsection{The geometric lemma}
Assume that $F$ is a $p$-adic field. Let $P=MU$ be a parabolic subgroup of $G$.
By \cite[Section 1.5]{MR0425030}, we can order the $P$-orbits in $G\cdot e$ as $\{ P \cdot y_i\}_{i=1}^N$ in such a way  that $\cup_{j = 1}^i P\cdot y_j$ is open in $G\cdot e$ for all $i\in [1,N]$. That is, choosing $u_i\in G$ such that $u_i\cdot e=y_i$ we have that 
	\begin{align*}
		Y_i = \cup_{j = 1}^i P u_i H
	\end{align*}
	is open in $G$ for all $i = 1, \cdots,N$. 
	We further choose each representative $y_i$ to be $P$-good. 
	
	Let $\sigma$ be a representation of $M$ and 
	\begin{align*}
		V_i = \{  \varphi \in I_P^G(\sigma) \mid \textup{Supp}(\varphi)  \subset Y_i\}.
	\end{align*}
	By \cite[Proposition 4.1]{MR3541705}, we have
	\begin{align}\label{eq geom lem}
		\Hom_H(V_i / V_{i-1}, \C)  \cong  \Hom_{L_{y_i}} (r_{L,M}(\sigma),\delta_{Q_{y_i}}\delta_Q^{-1/2}),
	\end{align}
	where we set $w_i=\iota_P(P\cdot y_i)$ and $L=M(w_i)$ and let $Q$ be the parabolic subgroup of $G$ with Levi part $L$ and $r_{L,M}$ be the normalized Jacquet functor. 
	This isomorphism motivates the following definition.
	\begin{definition}
		We say that $P \cdot y_i$ is relevant to $\sigma$ if the vector space on the right hand side of \eqref{eq geom lem} is non-zero.
	\end{definition}

\subsection{Symmetric pairs for inner forms of $\GL$}\label{sec: symmetric pair}
	In this work we consider symmetric pairs in three arithmetic families: those associated with \emph{linear} periods, \emph{twisted linear} periods (also known as of Prasad and Takloo-Bighash type) and \emph{Galois} periods for inner forms of general linear groups. 
	We choose explicit realizations for those families of symmetric pairs in a way that is convenient for our analysis of parabolic orbits.

For $k\in \N$ and a ring $R$ denote by $\M_k(R)$ the ring of $k\times k$ matrices with entries in $R$.

	Let $E/F$ be a quadratic field extension and $D$ a central division $F$-algebra. Let $d\in\BN$ be the degree of $D$ over $F$, that is, the positive integer such that $d^2$ is the dimension of $D$ over $F$. Fix once and for all $\delta\in E$ such that $E=F[\delta]$ and $\delta^2\in F$ and set $\kappa=\delta^2$. 

		Recall that if $E$ imbeds in the central simple $F$-algebra $\M_m(D)$ of $m\times m$ matrices with entries in $D$ then, by the Skolem-Noether Theorem, such an imbedding is unique up to an inner automorphism by an element of $\GL_m(D)$. Furthermore, in the local set-up $E$ imbeds in $\M_m(D)$ if and only if $dm$ is even. In the global set-up, if $E$ imbeds in $\M_m(D)$ then $E_v$ imbeds in $\M_m(D_v)$ for any place $v$ of $F$ and in particular $dm$ is even.  Note further that $E$ naturally imbeds in $\M_2(F)$ as the centralizer of $\sm{0}{\kappa}{1}{0}$
and therefore always imbeds in $\M_m(D)$ if $m$ is even.

Set $D_E=D\otimes_F E$. It is a central simple $E$-algebra and it is a division algebra if and only if $E$ does not imbed in $D$. Consequently, in the twisted linear and Galois cases, our realization of the symmetric space depends on whether or not $E$ imbeds in $D$ (henceforth-case 1 and case 2 respectively). We set up some further notation dependent on the two cases whether or not such an imbedding exists.
	
{\bf Case 1:} Assume that $E$ imbeds in $D$. Fix once and for all such an imbedding and consider $E$ as a subalgebra of $D$. Let $C=C_D(E)=C_D(\delta)$ be the centralizer of $E$ (or equivalently, of $\delta$) in $D$. It is a central division $E$-algebra of degree $\frac{d}2$. 

By the Skolem-Noether theorem, the $E/F$-Galois action is realized by restriction to $E$ of an inner involution of $D$. That is, there exists $\ve\in D^\times$ such that $\Ad(\ve)(x):=\ve x \ve^{-1}$ is the Galois conjugate of $x$ for every $x\in E$. Note that this implies that $\ve^2\in C^\times$.
Fix such an $\ve$ once and for all.

Note that although $\ve$ is not in $C$, the automorphism $\Ad(\ve)$ of $D$ preserves $C$.
The algebra $D_E$ naturally identifies with $\M_2(C)$ and the Galois action on $D_E$ is realized in $\M_2(C)$ by $\Ad(\sm{0}{\ve}{\ve^{-1}}{0})$ (see \cite[Lemma 3.1]{MR3719517}). We emphasize that this is not an inner involution on $\M_2(C)$.  

{\bf Case 2:} Assume that $E$ does not imbed in $D$. Then $D_E$ is a central division $E$-algebra of degree $d$. 
It identifies with the centralizer of $\sm{0}{\kappa}{1}{0}$ in $\M_2(D)$.

We also consider the case $E=F\times F$. When considering the global Galois case, it will be used to describe the set up at places of $F$ that split in $E$. When considering linear periods it will allow an analogy with twisted linear periods. 

When $E=F\times F$, $E$ imbeds in $\M_2(F)$ as the centralizer of $\upsilon^\circ=\diag(1,-1)$ and $D_E=D\times D$  is the centralizer of $\upsilon^\circ$ in $\M_2(D)$. We refer to the involution $(x,y)\mapsto (y,x)$ on $D\times D$ as the $E/F$-Galois involution. Note that in this case $E$ does not imbed in $D$. Henceforth we consider this a part of Case 2.

In order to unify notation for all cases let $E$ be a degree two \'Etale $F$-algebra. 
When $E/F$ is a field extension write $\Res_{E/F}$ for the Weil restriction of scalars from $E$ to $F$. and let $Q_E$ be the base change from $F$ to $E$ of an algebraic group $Q$ defined over $F$. When $E=F\times F$ and $Q$ is an algebraic group defined over $F$ set $\Res_{E/F}(Q_E)=Q\times Q$.

\subsubsection{Explicit families of symmetric spaces for inner forms of $\GL$}\label{sss the families}
For the data $F,E, D$ where $E$ is a degree two \'Etale $F$-algebra and $D$ a central division $F$-algebra of degree $d$ we attach six families of triples 
\[
(G_m,H_m,\theta_m)=(G_m,H_m,\theta_m)_\x
\] for $m\in \BN$ and $\x\in \{(\mathbf{Lin}), (\mathbf{TL1}),((\mathbf{TL2})),(\mathbf{Gal1}),(\mathbf{Gal2}),(\mathbf{Grp})\}$. The data $F,E,D$ is suppressed from the notation. 

In all cases $G_m$ is a reductive algebraic group defined over $F$ with an involution $\theta_m$ such that $H_m=G_m^{\theta_m}$ is its group of fixed points. 
We refer to \cite{MR3430877}, \cite{MR3719517}, and \cite{MR3958071} for more details about the set up. 

For $a,k\in \BN$ and an $a\times a$ matrix $g$  let $[g]_k$ be the $ak\times ak$ matrix 
\[
[g]_k=\diag(g,\dots,g).
\]

\begin{description}
    \item[\itemname{lin}{Lin}]
{\bf Linear periods.} Let $G_m=G_D(2m)$ and 
\[
\theta_m=\Ad(\upsilon) \ \ \ \text{where}\ \ \  \upsilon=[\upsilon^\circ]_m\ \ \ \text{and}\ \ \ \upsilon^\circ=\diag(1,-1).
\]
Note that 
\[	
H_m=s_m\diag(G_D(m),G_D(m))s_m^{-1}
\]  
where $s_m$ is the permutation matrix corresponding to the permutation on $[1,2m]$ sending $k\in [1,m]$ to $2k-1$ and $m+j\in[m+1,2m]$ to $2j$.

In this case set $E=F\times F$ and recall that $D_E$ identifies with the centralizer of $\upsilon^\circ$ in $\M_2(D)$. Consequently,
\[
H_m(F)=\GL_m(D_E)\subseteq GL_{2m}(D)=G_m(F).
\]
    \item[\itemname{twlin1}{TL1}]
    {\bf Twisted linear periods-case 1.} Let $G_m=G_D(m)$ and 
\[
\theta_m=\Ad(\upsilon) \ \ \ \text{where}\ \ \  \upsilon=[\upsilon^\circ]_m\ \ \ \text{and}\ \ \ \upsilon^\circ=\delta.
\]
Note that
\[	
H_m(F)=\GL_m(C)\subseteq \GL_m(D)=G_m(F),
\]  
that is, $H_m=R_{E/F}(G_{C}(m))$. 
    \item[\itemname{gal2}{Gal2}]
  {\bf Galois periods-case 2.} Let $G_m=\Res_{E/F}(G_D(m)_E)$ and let $\theta_m$ be the Galois involution on $G_m$ so that $H_m=G_D(m)$. Note that $G_D(m)_E=G_{D_E}(m)$ and
\[
H_m(F)=\GL_m(D)\subseteq \GL_m(D_E)=G_m(F).
\]

    \item[\itemname{twlin2}{TL2}]
{\bf Twisted linear periods-case 2.} Let $G_m=G_D(2m)$ and 
\[
\theta_m=\Ad(\upsilon) \ \ \ \text{where}\ \ \  \upsilon=[\upsilon^\circ]_m\ \ \ \text{and}\ \ \ \upsilon^\circ=\begin{pmatrix}0 & \kappa \\ 1 & 0\end{pmatrix}.
\]
Recall that $D_E$ is identified with the centralizer of $\upsilon^\circ$ in $\M_2(D)$ and consequently
\[	
H_m(F)=\GL_m(D_E)\subseteq \GL_{2m}(D)=G_m(F).
\]

\item[\itemname{gal1}{Gal1}]
 {\bf Galois periods-case 1.} Recall that in this case $D_E$ is identified with $\M_2(C)$. 
This gives rise to the identification 
\[
G_D(m)_E\simeq G_C(2m).
\]
Let $G_m=\Res_{E/F}(G_C(2m))\simeq \Res_{E/F}(G_D(m)_E)$
and
\[
\theta_m=\Ad(\upsilon) \ \ \ \text{where}\ \ \  \upsilon=[\upsilon^\circ]_m\ \ \ \text{and}\ \ \ \upsilon^\circ=\begin{pmatrix}0 & \ve \\ \ve^{-1} & 0\end{pmatrix}.
\]
Since $D$ identified with the centralizer of $\upsilon^\circ$ in $\M_2(C)$, $\theta_m$ realizes on $G_m$ the Galois involution on $\Res_{E/F}(G_D(m)_E)$. With this identification $H_m=G_D(m)$. Thus,
\[
H_m(F)=\GL_m(D)\subseteq \GL_{2m}(C)=G_m(F).
\]

\item[\itemname{gp}{Grp}]
 {\bf The group case.} Let $G_m=G_D(m)\times G_D(m)$ and $\theta_m(x,y)=(y,x)$ so that $H_m$ is the diagonal imbedding of $G_D(m)$ in $G_m$. Note that for $E=F\times F$, $D$ imbeds diagonally in $D_E=D\times D$ and identifying $D$ with its image in $D_E$ in this way we have
\[
H_m(F)=\GL_m(D)\subseteq \GL_m(D_E)=G_m(F).
\]

\end{description}

\begin{remark}\label{rem arithmetic vs geometric}
While, from an arithmetic point of view the Galois case consists of cases \namelink{gal1} and \namelink{gal2} and the cases of twisted linear periods consist of cases \namelink{twlin1} and \namelink{twlin2}, from a geometric point of view, the structure of parabolic orbits on $G_m/H_m$ in case \namelink{twlin1} is closer to \namelink{gal2} and in case \namelink{twlin2} is closer to \namelink{gal1}. Note further that in the linear and twisted linear cases, $\theta_m$ is an inner involution.
\end{remark}
For the sake of uniform notation set
\begin{equation}\label{eq a&D}
a=\begin{cases} m & \text{in cases  \namelink{twlin1},\namelink{gal2}, \namelink{gp}} \\
2m & \text{in cases \namelink{lin}, \namelink{twlin2}, \namelink{gal1}}
\end{cases} \ \ \ \text{and}\ \ \ \D=\begin{cases} D & \text{in cases \namelink{lin}, \namelink{twlin1}, \namelink{twlin2}}\\ D_E & \text{in cases \namelink{gal2}, \namelink{gp}} \\ C & \text{in case \namelink{gal1}} \end{cases}
\end{equation}
so that in all cases $G_m(F)=\GL_a(\D)$. Also set
\begin{equation}\label{eq f'}
F'=\begin{cases} E&  \text{in cases  \namelink{gal1}, \namelink{gal2}, \namelink{gp}} \\
F & \text{in cases \namelink{lin},\namelink{twlin1}, \namelink{twlin2}}.
\end{cases} 
\end{equation}
Note that $\D$ is a central division $F'$-algebra except in the group case \namelink{gp} where $\D=D\times D$. 
In the $p$-adic setting let $\O_\D$ be the ring of integers of the $F'$-division algebra $\D$ except in case \namelink{gp} where we set $\O_\D=\O_D\times \O_D$.

\subsubsection{Local triples associated with a global triple}\label{sss local-global}\label{sss loc-glob triples}
There is some mixing of the different cases $\x\in \{\namelink{lin},\namelink{twlin1},\namelink{gal2},\namelink{twlin2},\namelink{gal1},\namelink{gp}\}$ when looking at local triples associated with a global triple as defined above. We now explain this relation.

Let $E/F$ be a quadratic extension of number fields and $D$ a central division $F$-algebra of degree $d$. Fix $\x\in \{\namelink{lin},\namelink{twlin1},\namelink{gal2},\namelink{twlin2},\namelink{gal1}\}$ and set 
\[
(G,H,\theta)=
(G_m,H_m,\theta_m)_\x. 
\]
Fix a place $v$ of $F$ and let $D_v=D\otimes_F F_v$. Then $D_v$ is a central simple $F_v$-algebra and there exists a divisor $k=k_v$ of $d$ and a central division $F_v$-algebra $R_v$ such that $D_v\simeq \M_k(R_v)$. We fix once and for all an identification $D_v = \M_k(R_v)$.

Let $G_v=G_{F_v}$ and let $\theta_v$ be the involution on $G_v$ induced from $\theta$ so that $H_v:=H_{F_v}=G_v^{\theta_v}$. Next, we explicate how $(G_v,H_v,\theta_v)$ is \emph{essentially} a triple of the form 
\[
(G_n,H_n,\theta_n)_{\x_v}
\]
associated with the data $F_v, E_v, R_v$ for some $n=n_v\in \N$ such that $m\mid n$ and a prescribed case $\x_v\in \{\namelink{lin},\namelink{twlin1},\namelink{twlin2},\namelink{gal1},\namelink{gal2},\namelink{gp}\}$. 
More precisely, there exists $g_v\in G(F_v)$ such that 
\begin{equation}\label{eq shifted triple}
(G_v,\Ad(g_v)(H_v), \Ad(g_v\theta_v(g_v)^{-1})\circ \theta_v)=(G_n,H_n,\theta_n)_{\x_v}
\end{equation} 
where
\[
\x_v=\begin{cases}
\namelink{lin} & \x=\namelink{lin} \text{ or } (\x\in \{(\namelink{twlin1},\namelink{twlin2}\} \text{ and }v\text{ splits in }E) \\
\namelink{twlin1} &\x\in \{(\namelink{twlin1},\namelink{twlin2}\}, v \text{ is inert in }E\text{ and } E_v\text{ imbeds in }R_v \\
\namelink{twlin2} &\x\in \{(\namelink{twlin1},\namelink{twlin2}\}, v \text{ is inert in }E\text{ and } E_v\text{ does not imbed in }R_v \\
\namelink{gp} & \x\in \{(\namelink{gal1},\namelink{gal2}\} \text{ and }v\text{ splits in }E \\
\namelink{gal1} &\x\in \{(\namelink{gal1},\namelink{gal2}\}, v \text{ is inert in }E\text{ and } E_v\text{ imbeds in }R_v \\
\namelink{gal2} &\x\in \{(\namelink{gal1},\namelink{gal2}\}, v \text{ is inert in }E\text{ and } E_v\text{ does not imbed in }R_v \end{cases}
\]
and $n=km$ unless either $\x=\namelink{twlin1}$ and $\x_v\in \{\namelink{lin},\namelink{twlin2}\}$ in which case $k$ is even and $n=mk/2$ or $\x=\namelink{twlin2}$ and $\x_v=\namelink{twlin1}$ in which case $n=2km$.

Let $a,\D$ and $F'$ be defined as in \eqref{eq a&D} and \eqref{eq f'} after Remark \ref{rem arithmetic vs geometric} with respect to the triple $(G_n,H_n,\theta_n)_{\x_v}$. That is, $G_n(F_v)=\GL_a(\D)$ where $\D$ is a division $F'$- algebra except in case \namelink{gp} where $\D=R_v\times R_v$.

There exists a finite set of places $T$ of $F$ containing the archimedean places such that we can choose $g_v\in \GL_a(\O_\D)$ for all $v\not\in T$.

\subsubsection{Maximal compact subgroup}
Let $(G,H,\theta)=(G_m,H_m,\theta_m)_\x$ for 
\[
\x\in \{\namelink{lin},\namelink{twlin1},\namelink{gal2},\namelink{twlin2},\namelink{gal1},\namelink{gp}\}
\] 
(see Section \ref{sss the families}).
We define a maximal compact subgroup $K$ of $G(F)$ in the local set-up and of $G(\A)$ in the global set-up as follows.

When $F$ is archimedean there exists a Cartan involution of $G$ commuting with $\theta$. We fix such an involution and let $K$ be its fixed point subgroup. Then $K$ is a $\theta$-stable maximal compact subgroup of $G(F)$. Explicitly, writing $G(F)=\GL_a(\D)$ where $\D\in \{\R,\C,\mathbb{H}\}$ we take the Cartan involution to be $g\mapsto {}^t \bar g^{-1}$ where the bar is induced from the standard conjugation on $\mathbb{H}$ restricted to $\D$ except in  case \namelink{gp} where $G(F)=\GL_m(D)\times \GL_m(D)$ with $D\in \{\R,\C,\mathbb{H}\}$ and we take the above involution in each coordinate.  

When $F$ is $p$-adic, a $\theta$-stable maximal compact subgroup does not always exist. Instead, set $K=\GL_a(\O_\D)$ where $a,\D$ are defined by \eqref{eq a&D} after Remark \ref{rem arithmetic vs geometric}. 

In the local set-up we note now that with these choices $K\cap H(F)$ is a maximal compact subgroup of $H(F)$.

When $F$ is a number field set $K=\prod_v K_v$ to be the product over all places $v$ of $F$ of a maximal compact subgroup $K_v$ of $G(F_v)$ chosen as follows. 
We follow the discussion in Section \ref{sss loc-glob triples} as well as its notation.  Applying the identification \eqref{eq shifted triple} let $K_v'$ be the maximal compact associated above to the local triple $(G_n,H_n,\theta_n)_{\x_v}$ and set $K_v=g_v^{-1}K_v' g_v$. 

Note that $K\cap H(\A)$ is a maximal compact subgroup of $H(\A)$. Furthermore, for all but finitely many $v$, in light of the last part of Section \ref{sss loc-glob triples} and in its notation we have that $K_v=\GL_a(\O_\D)$.

\subsubsection{Parabolic subgroups}
Let $(G,H,\theta)=(G_m,H_m,\theta_m)_\x$ for 
\[
\x\in \{\namelink{lin}, \namelink{twlin1},  \namelink{twlin2}, \namelink{gal1},\namelink{gal2},,\namelink{gp}\}
\] 
(see Section \ref{sss the families}).
Set \begin{align}\label{formula::defn--w-star}
		w_{\star}  = \begin{cases}
			e \quad &\x\in \{ \namelink{lin}, \namelink{twlin1}, \namelink{gal2}, \namelink{gp}\}\\
						[\left( \begin{smallmatrix}
				0& 1 \\
				1 & 0
			\end{smallmatrix} \right)]_m& \x\in \{\namelink{twlin2},\namelink{gal1}\}.
		\end{cases}
	\end{align}

For a subgroup $Q$ of $G$ let $Q_H=Q^\theta=Q\cap H$.
Let $A_0$ be the maximal $F$-split torus consisting of diagonal matrices in $G$ with diagonal entries in $F^{\times}$ (in the group case diagonal matrices means diagonal in both coordinates). Fix the minimal parabolic subgroup $P_0 = M_0 \ltimes U_0$ of $G$ containing $A_0$ with Levi subgroup $M_0$, the subgroup of diagonal matrices, and unipotent radical $U_0$, the subgroup of unipotent upper-triangular matrices in $G$. Note that in all cases $A_0$ is $\theta$-stable, $A_0^{\theta}$ is a maximal split torus in $H$ and $\theta (P_0) = w_{\star} P_0 w_{\star}^{-1}$. Furthermore, $P_0^\theta=M_0^\theta U_0^\theta$ is a minimal parabolic subgroup of $H$ and the map $P \mapsto P_H$ is a bijection between the set of $\theta$-stable standard (with respect to $P_0$) parabolic subgroups of $G$ and standard (with respect to $P_0^\theta$) parabolic subgroups of $H$. 
Note further that the automorphism $\theta'$ defined in Section \ref{ss sym} stabilizes any standard parabolic subgroup of $G$ and acts trivially on $\fra_0^*$ and on $W$.

Let $a$ and $\D$ be defined by \eqref{eq a&D} after Remark \eqref{rem arithmetic vs geometric}. For a composition $\alpha = (m_1,\cdots,m_t)$ of $a$, let $P_{\alpha}=M_{\alpha}U_{\alpha}$ be the standard parabolic subgroup of $G$ consisting of block upper triangular matrices with unipotent radical $U_{\alpha}$ and so that 
\[
M_{\alpha}(F) =\{\diag(g_1,\dots,g_t):g_i\in \GL_{m_i}(\D),\ i\in [1,t]\}.
\]
We also say that $P_{\alpha}$ is the parabolic subgroup of $G$ of type $\alpha$.

\subsubsection{Explication of parabolic induction}
Let $\D$ be a central division $F$-algebra and ${\mathbf{G}}=G_\D(a)$.
Set $\nu = \lvert \cdot \rvert \circ \nrd$ where $\nrd$ is the reduced norm on $G$. For a representation $\pi$ of $G$ and $s \in \C$, set $\pi[s] = \nu^s \otimes \pi$ to be the twist of $\pi$ by the charcter $\nu^s$. 
	
For a composition $\alpha=(a_1,\cdots,a_r)$ of $a$, let ${\mathbf{P}}={\mathbf{MU}}$ be the standard parabolic subgroup of type $\alpha$ with its standard Levi decomposition so that ${\mathbf{M}}={\mathbf{M_1} \times \cdots\times {\mathbf{M_r}}}$ with ${\mathbf{M_i}}=G_D(a_i)$, $i\in [1,r]$. Let $\sigma_i$ be a representation of $M_i$, automorphic if $F$ is global and set $\sigma = \sigma_1 \otimes \cdots \otimes \sigma_r$ for the corresponding (automorphic when $F$ is global) representation of $M$. We set 
	\begin{align*}
		\sigma_1 \times \cdots \times \sigma_r = I_P^G(\sigma).
	\end{align*}

\subsubsection{An auxiliary involution}\label{sss auxi} Note that in the cases \namelink{lin}, \namelink{twlin2} and \namelink{gal1} the involution $\theta_m$ is defined on $\GL_{2m}(\D)$ where $\D$ is defined by \eqref{eq a&D} after Remark \ref{rem arithmetic vs geometric}. In all cases, we define a related automorphism $\iota=\iota_k$ on $\GL_k(\D)$ for every $k\in \N$ as follows:
\[
\iota(g)=\begin{cases}
\gamma_kg\gamma_k^{-1} & \text{in case }\namelink{lin} \\
 \theta_k(g)& \text{in cases } \namelink{twlin1}\text{ and }\namelink{gal2}\\
g & \text{in case }\namelink{twlin2} \\
	\epsilon g\epsilon^{-1} & \text{in case }\namelink{gal1}.
\end{cases}
\]
Here $\gamma_k=\diag(1,-1,\dots,(-1)^{k-1})$.
For a representation $\pi$ of $\GL_k(\D)$ set 
\[
\pi^*=(\pi^\iota)^\vee.
\] 
Note that $\iota$ is an involution except in case $\namelink{gal1}$ where $\iota^2$ is an inner automorphism. Consequently, in all cases, for an irreducible representation $\pi$ of $\GL_k(\D)$ we have $(\pi^*)^*\simeq \pi$.

Given $m\in \N$ let $a$ be defined by \eqref{eq a&D} after Remark \ref{rem arithmetic vs geometric}. Then,  
in cases \namelink{lin}, \namelink{twlin1}, \namelink{gal2} we have $\iota_a=\theta_m$. Recall that $\theta_m$ is an inner automorphism in case \namelink{twlin2}. In case \namelink{gal1} we have that $\iota_a\circ \theta_m=\theta_m\circ\iota_a$ is an inner automorphism of $\GL_a(\D)$. Consequently, in cases \namelink{lin}, \namelink{twlin1}, \namelink{gal2}, \namelink{twlin2} and \namelink{gal1} we have 
\[
\pi^{\iota_a}\simeq \pi^{\theta_m}.
\]

\subsection{The Jacquet-Langlands correspondence}
	
For the local and global Jacquet-Langlands correspondence $\jl$, we refer the reader to the exposition in \cite[Section 4]{MR4308058} of the main reference \cite{MR2329758} completed by \cite{MR2684298}. 

Assume first that $F$ is local. Let $\D$ be a degree $d$ central simple $F$-algebra. Denote by $\nu$ the character of $\GL_m(\D)$ for any $m\in \BN$ defined as the absolute value of the reduced norm. We say that a representation of $\GL_m(\D)$ is \emph{essentially} $\P$ for some property $\P$, if $\nu^\alpha\pi$ satisfies the property $\P$ for some $\alpha\in \BR$. 

For an irreducible and essentially square integrable representation $\delta$ of $\GL_m(\D)$ there is a unique real number that we denote by $r(\delta)$ such that $\nu^{-r(\delta)}\delta$ is unitary. We also use the terminology \emph{discrete series} for essentially square-integrable. 

We denote by $\Pi(m,\D)$ the set of irreducible representations of $\GL_m(\D)$ and let
\[\Irr_\D=\sqcup_{m\in\BN} \Pi(m,\D).\] We further denote by $\Pi_{\esi}(m,\D)$ the subset of essentially square-integrable, by $\Pi_\si(m,\D)$ the subset of square integrable, and by $\Pi_c(m,\D)$ the subset of supercuspidal representations in $\Pi(m,\D)$. Then we set 
\[
\CC_\D=\sqcup_{m\in\BN} \Pi_c(m,\D),\ \ \ \SI_\D=\sqcup_{m\in\BN} \Pi_\si(m,\D)\ \ \ \text{and} \ \ \ \ESI_\D=\sqcup_{m\in\BN} \Pi_\esi(m,\D).
\]

The following results follow from \cite{MR771672,MR1040995,MR2329758,MR2684298}. The Jacquet-Langlands transfer is a bijection $\JL$ sending $\ESI_\D$ to $\ESI_F$ (and $\SI_D$ to $\SI_F$). In \cite{MR2329758}, Badulescu extends $\JL$ to a map from the Grothendieck group of finite length representations of $\GL_m(\D)$ to that of finite length representations of $\GL_{md}(F)$ for $m\in \N$. In particular $\JL$ sends an irreducible representation of $\GL_m(\D)$ to an element of the Grothendieck group, in fact, to a finite length representation of $\GL_{md}(F)$, which in general does not need to be irreducible (see \cite[Remarque 3.2]{MR2329758}). 

\begin{definition}
We say that $\pi\in \Irr_\D$ is generic if it is of the form $\delta_1\times \dots \times \delta_k$ with $\delta_i$ essentially square-integrable $i\in[1,k]$, and if moreover $\jl(\delta_1)\times \dots \times \jl(\delta_k)$ is irreducible and generic in the usual sense, that is, admits a Whittaker model. 
\end{definition}

Denote by $\Pi_{\gen}(m,\D)$ the subset of generic representations in $\Pi(m,\D)$, and set 
\[
\GG_\D=\sqcup_{m\in\BN} \Pi_{\gen}(m,\D).
\]

Then, as recalled in \cite[Section 2.3]{Sign}, the Jacquet-Langlands transfer between Grothendieck groups defined in \cite{MR2329758}, restricts to a map $\JL:\GG_\D\rightarrow \GG_F$ mapping $\Pi_{\gen}(m,\D)$ to $\Pi_{\gen}(dm,F)$ for every $m\in \BN$. It is simply described as follows. A representation $\pi\in \GG_\D$ is of the form $\pi=\delta_1\times \cdots \times \delta_k$ for a unique multiset $\{\delta_1,\dots,\delta_k\}$
of essentially square integrable representations in $\Irr_\D$. We then have
\begin{equation}\label{eq JLmult}
\JL(\pi)=\JL(\delta_1) \times \cdots\times \jl(\delta_k).
\end{equation}

 The generic Jacquet-Langlands transfer further satisfies the following properties.
\begin{itemize}
\item If $\D=F$ then $\JL:\GG_F\rightarrow \GG_F$ is the identity.
\item $\JL(\nu^s \pi)=\nu^s \JL(\pi)$, $\pi\in \GG_\D$.
\item $\JL(\pi^\vee)=\JL(\pi)^\vee$.
\end{itemize}

For an essentially square integrable representation $\delta$ of $\GL_m(\D)$ there is a minimal positive real number $\alpha_\delta$ such that $\delta\times \nu^{\alpha_\delta} \delta$ reduces. In fact, $\alpha_\delta\in \BN$ divides $d$. Set 
\[
\nu_\delta=\nu^{\alpha_\delta}.
\]
We recall that in the archimedean case square integrable representations of $\GL_n(\C)$ and of $\GL_n(\BH)$ exist only when $n=1$, and those of $\GL_n(\BR)$ only when $n=1,2$. 
Furthermore, $\nu_\delta=\nu$ for any $\delta\in \SI_\D$ in the archimedean case.


\begin{definition}\label{df exp 12} We denote by $\Pi_\D(-\frac12,\frac12)$ the class of representations of the form $\pi=\delta_1\times \cdots \times \delta_k$ where $\delta_i$ is irreducible, essentially square integrable and such that $\abs{r(\delta_i)}<\frac12$ for $i=1,\dots,k$. 
\end{definition} 

It is well-known that every irreducible, generic and unitary representation of $\GL_m(\D)$ is in $\Pi_\D(-\frac12,\frac12)$ and that any representation in $\Pi_\D(-\frac12,\frac12)$ is irreducible and generic. 

In the non-archimedean case we can be more explicit about the restriction of $\JL$ to $\SI_\D$. 

Recall from \cite{MR584084} that for $\rho\in \CC_F$ and $k\in \BN$ the representation 
\[
\nu^{\frac{1-k}2}\rho\times \nu^{\frac{3-k}2}\rho\times \cdots\times \nu^{\frac{k-1}2}\rho
\] 
admits a unique irreducible quotient that we denote by $\St_k(\rho)$. It is essentially square integrable and any essentially square integrable representation in $\Irr_F$ is obtained in this way for a unique $(\rho,k)$. Furthermore, $\St_k(\rho)\in \SI_F$ if and only if $\rho$ has a unitary central character. 

Let $\rho\in \CC_\D$. Since $\JL(\rho)$ is essentially square integrable, there exists a unique $k\in \BN$ and $\rho'\in \CC_F$ such that $\JL(\rho)=\St_k(\rho')$. In fact, it is known that $k=\alpha_\rho$.
For $a,b \in \BR$ with $t=b+1- a \in \BZ_{\ge 0}$, set
	\begin{align*}
		\Delta(\rho,a,b) = \{ \nu_{\rho}^a \rho, \nu_{\rho}^{a+1}\rho, \cdots, \nu_{\rho}^b \rho \}.
	\end{align*}
	Such a set is called a (cuspidal) segment. The representation 
	\begin{align*}
		\nu_{\rho}^a \rho \times \nu_{\rho}^{a+1}\rho \times \cdots \times \nu_{\rho}^b \rho
	\end{align*}
	has a unique irreducible quotient that we denote by $L(\Delta(\rho,a,b))$. This is an essentially square-integrable representation of 
	$\GL_{tm}(\D)$. Set
 \[
 \steinberg_n(\rho)=L(\Delta(\rho, \frac{1-n}2,\frac{n-1}2)).
 \]
Every essentially square-integrable representation of $\GL_k(\D)$ for some $k\in \N$ is of the form $\St_n(\rho)$ for a unique pair $(\rho,n)$ as above. We have $\St_n(\rho)\in \SI_D$ if and only if $\rho$ has a unitary central character.  Alternatively, it is also of the form 
$L(\Delta(\rho,a,b))$ as above for a unique triple $(\rho,a,b)$ with $\rho$ unitary.
Furthermore, 
\[
\jl(\St_n(\rho))=\St_{kn}(\rho') \ \ \ \text{and}\ \ \ \nu_\delta=\nu_\rho \ \ \ \text{for}\ \ \ \delta=\St_n(\rho).
\]

Assume now that $F$ is a number field. Let $\D$ be a central $F$-division algebra of degree $d$, $\si(\GL_a(\D_\A))$ the set of isomorphism classes of irreducible, square-integrable automorphic representations of $\GL_a(\D_\A)$ and $\CC(\GL_a(\D_\A))$ be the subset of classes of cuspidal representations.
	The Jacquet langlands correspondence is a transfer  
	\[\jl:\si(\GL_a(\D_\A))\rightarrow \si(\GL_{ad}(\BA)).\]  
	Set 
	\[\CC^*(\GL_a(\D_\A))=\jl^{-1}(\CC(\GL_{ad}(\BA))).\]
	It is a subset of $\CC(\GL_a(\D_\A))$. 
	For $\pi\in \mathcal{C}^*(\GL_a(\D_\A))$ and for each place $v$ of $F$, the local component $\pi_v$ is unitary and generic.
	We have
	\begin{equation}\label{eq JLglobaldecomp}
	\jl(\pi)_v=\jl(\pi_v)=\JL(\delta_1) \times \cdots\times \jl(\delta_k)
	\end{equation}
	if we write $\pi_v=\delta_1\times \cdots\times \delta_k$ as in \eqref{eq JLmult}.
	
\subsection{On normalized intertwining operators}\label{ss normint}	
Let $F$ be a local field and $\D$ a central Division $F$-algebra.
Let $G=\GL_a(\D)$ and $P=MU=P_{(a_1,\dots,a_k)}$ be the standard parabolic subgroup of $G$ of type $(a_1,\dots,a_k)$ a composition of $a$. The set $W(M)$ naturally identifies with $\mathfrak{S}_k$ viewed as the group of permutations of the blocks of $M$. For $w\in W(M)$ write $\inv(w)=\{(i,j): 1\le i<j\le k,\ w(i)>w(j)\}$. For irreducible representations $\pi_i$ of $\GL_{a_i}(\D)$, $i=1,\dots,k$ set $\pi=\pi_1\otimes \cdots\otimes \pi_k$ for the corresponding irreducible representation of $M$. For $\lambda=(\lambda_1,\dots,\lambda_k)\in \C^k\simeq \fra_{M,\C}^*$ let
\[
r(w,\pi,\lambda)=\prod_{(i,j)\in\inv(w)}\frac{L(\lambda_i-\lambda_j,\JL(\pi_i),\JL(\pi_j)^\vee)}{\epsilon(\lambda_i-\lambda_j,\JL(\pi_i),\JL(\pi_i),\JL(\pi_j)^\vee,\psi)L(1+\lambda_i-\lambda_j,\JL(\pi_i),\JL(\pi_j)^\vee)}
\]
as in \cite[Chapter 2, (2.1), (2.2) and (2.3)]{MR1007299} and consider the normalized intertwining operators
\[
N(w,\pi,\lambda)=r(w,\pi,\lambda)^{-1}M(w,\pi,\lambda).
\]
It follows from \cite[Chapter 2, Lemma 2.1]{MR1007299} that these intertwining operators satisfy the properties (R1)-(R8) of \cite[Theorem 2.1]{MR999488}. Consequently, the results of \cite[I.1 and I.2]{MR1026752} stated there for the case $\D=F$ are in fact valid, with the same proofs, for our more general context of inner forms of general linear groups. 
As a consequence we have the following results that will be useful for us.

\begin{lemma}\label{lem hol int op}
For $\pi_1,\pi_2\in \Pi_\D(-\frac12,\frac12)$ let $a_1, a_2\in \N$ be such that $\pi_i$ is a representation of $\GL_{a_i}(\D)$ and $M=M_{(a_1,a_2)}$.  Then $N(w_M,\pi_1\otimes \pi_2,(s,-s))$ is holomorphic at $s=0$ and $N(w_M,\pi_1\otimes \pi_2,0)$ is an isomorphism. 
\end{lemma}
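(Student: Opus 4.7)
My plan is a two-step reduction: first factor $N(w_M,\pi_1\otimes\pi_2,(s,-s))$ via multiplicativity (property (R2)), then use irreducibility of the relevant parabolically induced representations.

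Writing $\pi_i=\delta_{i,1}\times\cdots\times\delta_{i,k_i}$ with $\delta_{i,j}\in\ESI_\D$ and $|r(\delta_{i,j})|<\frac12$, I refine $M$ to the standard Levi $M'$ whose blocks correspond to the $\delta_{i,j}$. By transitivity of parabolic induction and property (R2) applied to a reduced expression of $w_M$ as a product of simple reflections of $W$, I obtain a factorization
\[
N(w_M,\pi_1\otimes\pi_2,(s,-s))=\prod_\alpha N(\sigma_\alpha,\delta_{1,j_\alpha}\otimes\delta_{2,l_\alpha},\mu_\alpha(s)),
\]
where each $\sigma_\alpha$ swaps two adjacent blocks of $M'$, one from $\pi_1$ and one from $\pi_2$, and $\mu_\alpha(s)$ is the restriction of $(s,-s)$ to the corresponding pair of coordinates. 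For each rank-one factor, the unnormalized intertwining operator has at most a simple pole at $s=0$ whose order equals that of the $L$-factor $L(2s,\JL(\delta_{1,j_\alpha})\times\JL(\delta_{2,l_\alpha})^\vee)$; the numerator of the normalization $r$ is exactly this $L$-factor, so the rank-one factor is holomorphic at $s=0$. This cancellation, classical on split general linear groups (Jacquet--Piatetski-Shapiro--Shalika, Shahidi), is imported via $\JL$.

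For invertibility at $s=0$, the hypothesis gives $|r(\delta_{1,j})-r(\delta_{2,l})|<1$ for all $j,l$. Applying $\JL$ sends the $\delta_{i,j}$ to a family of essentially square-integrable representations on general linear groups whose Zelevinsky segments are pairwise unlinked, because any linked pair of segments on a common cuspidal line has centers differing by at least $1$. Consequently $\delta_{1,j_\alpha}\times\delta_{2,l_\alpha}$ is irreducible for every $\alpha$, so each rank-one normalized factor at $s=0$ is a nonzero intertwiner between two irreducible representations, hence an isomorphism. Taking the product gives that $N(w_M,\pi_1\otimes\pi_2,0)$ is itself an isomorphism.

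The main obstacle is the rank-one holomorphy invoked above: one must track the cancellation between the poles of the unnormalized standard intertwining operator and the $L$-factor in the numerator of $r$ precisely, which relies on the compatibility of $\JL$ with Rankin--Selberg $L$-factors (built into the very definition of $r$ recalled above). Once this local analytic input is secured, the rest of the argument is formal.
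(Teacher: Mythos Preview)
Your proposal is correct and follows essentially the same approach as the paper: factor $N(w_M,\pi_1\otimes\pi_2,(s,-s))$ into a product of rank-one normalized intertwining operators indexed by pairs $(\delta_{1,j},\delta_{2,l})$ of essentially square-integrable constituents, and then verify holomorphy and bijectivity for each factor at $s=0$. The paper invokes property (R1) (rather than (R2)) for the factorization and, for the rank-one case, appeals directly to the lemma in \cite[I.2]{MR1026752} together with irreducibility of $\delta_{1,j}\times\delta_{2,l}$ (valid for inner forms by the remark preceding the lemma), whereas you unpack the mechanism by matching the pole of the unnormalized operator with that of the Rankin--Selberg $L$-factor in the numerator of $r$ and then deduce bijectivity from unlinkedness of the Zelevinsky segments; these are two phrasings of the same argument.
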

\begin{proof}
If $\pi_1$ and $\pi_2$ are essentially square integrable this follows from the irreducibility of $\pi_1\times \pi_2$ and the lemma in \cite[I.2]{MR1026752} (that is valid in our context as pointed out above). For the general case, write $\pi_1=\delta_1\times \cdots\times \delta_k$ and $\pi_2=\delta_{k+1}\times \cdots\times \delta_{k+l}$ where $\delta_i$ is essentially square integrable for $i=1,\dots,k+l$ and let $L$ be the standard Levi subgroup of $M$ such that $\delta=\delta_1\otimes \cdots\otimes \delta_{k+l}$ is a representation of $L$. Then we can decompose $w=w_t\cdots w_1$ as a product of $t=kl$ elementary symmetries so that $w_1\in W(L)$ and $w_i\in W(vLv^{-1})$ where $v=w_{i-1}\cdots w_1$ for $i>1$ and by the property (R1) of \cite[Theorem 2.1]{MR999488} we have
\[
N(w_M,\pi_1\otimes\pi_2,(s,-s))=N(w_t,w_{t-1}\cdots w_1\delta,w_{t-1}\cdots w_1\underline{s})\circ\cdots\circ N(w_1,\delta,\underline{s})
\]
where $\underline{s}=(\overbrace{\mathop{s,\dots,s}}\limits^k,\overbrace{\mathop{-s,\dots,-s}}\limits^l)$. Each of the $kl$ factors on the right hand side is holomorphic at $s=0$ and its value at $s=0$ is an isomorphism by the special case of the lemma already proved. The same therefore holds for the left hand side and the lemma follows.
\end{proof}

\subsection{Distinction and compatibility}
Let ${\mathbf{G}}$ be a reductive algebraic group defined over $F$ and ${\mathbf{H}}$ a subgroup. Let $G$ be ${\mathbf{G}}(F)$ if $F$ is local and ${\mathbf{G}}(\A)$ if $F$ is a number field and let $H$ be defined similarly.

\begin{definition}
Let $F$ be a local field and $\pi$ be a representation of $G$. We say that $\pi$ is $H$-distinguished if the space $\Hom_H(\pi,\C)$ of $H$-invariant linear forms on $\pi$ is non-zero. 
\end{definition}

Consider the global set-up. Let $Z_G$ be the center of $G$. We denote by 
\[
\P_{H}:\varphi\to \int_{(Z_G\cap H) {\mathbf{H}}(F)\backslash H} \varphi(h)dh
\] 
the $H$-period integral on the space of cuspidal automorphic representations of $G$ with central character trivial on $Z_G\cap H$, where $dh$ is the unique, up to scaling, right invariant measure on the quotient. It is convergent by \cite[Proposition 1]{MR1233493}.
\begin{definition}
Let $\pi$ be a cuspidal automorphic representation of $G$.
\begin{itemize}
\item We say that $\pi$ is $H$-distinguished if its central character is trivial on $Z_G\cap H$, and moreover if 
the period integral $\P_{H}$ does not identically vanish on the space of $\pi$. 
\item If $\pi$ is isomorphic to a restricted tensor product $\otimes'_v \pi_v$ over all places $v$ of $F$ of representations $\pi_v$ of ${\mathbf{G}(F_v)}$  then we say that $\pi$ is locally $H$-distinguished if $\pi_v$ is ${\mathbf{H}}(F_v)$-distinguished for every place $v$ of $F$.  
\end{itemize}
\end{definition}

{\bf A convention:}
We follow the following convention throughout the paper.
If $({\mathbf{G}},{\mathbf{H}},\theta)$ is defined by one of the cases \namelink{lin}, \namelink{twlin1},  \namelink{twlin2}, \namelink{gal1},\namelink{gal2}, so that $H$ is clear from the context, for a representation $\pi$ of $G$ that is $H$-distinguished we simply say that $\pi$ is distinguished. Furthermore, by convention, in cases \namelink{lin}, \namelink{twlin2} and \namelink{gal1} for $k$ odd no representation of $\GL_k(\D)$ is distinguished where $\D$ is defined by \eqref{eq a&D} after Remark \ref{rem arithmetic vs geometric} (in those cases the involution $\theta$ is only defined on $\GL_k(\D)$ for $k$ even).

\begin{definition}\label{def compatible}
Let $F$ be a local field and $(G,H,\theta)=(G_m(F),H_m(F),\theta_m)$ be defined by one of the cases \namelink{lin}, \namelink{twlin1},  \namelink{twlin2}, \namelink{gal1},\namelink{gal2}. Let $\pi$ be an irreducible and generic representation of $G$ and write $\pi\simeq \delta_1\times \cdots \times \delta_k$ where $\delta_i$ is essentially square integrable. We say that $\pi$ is $H$-incompatible if there exists $i$ such that $\delta_i$ is not distinguished but $\JL(\delta_i)$ is a representation of $\GL_k(F)$ for some even $k\in \N$ that is $\GL_{k/2}(F)\times \GL_{k/2}(F)$-distinguished in cases \namelink{lin}, \namelink{twlin1} and \namelink{twlin2}, respectively a representation of $\GL_k(E)$ for some $k\in \N$ that is $\GL_k(F)$-distinguished in cases \namelink{gal1} and \namelink{gal2}. Otherwise, we say that $\pi$ is $H$-compatible.
\end{definition}

\begin{lemma}\label{lem compatible}
In the notation of the above definition in case \namelink{gal2} and if $d$ is odd also in case \namelink{lin} every irreducible and generic representation of $G$ is $H$-compatible. 
\end{lemma}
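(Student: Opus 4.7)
By the definition of $H$-compatibility it suffices to show the following Jacquet-Langlands-compatibility statement: for every essentially square integrable representation $\delta$ of some $\GL_c(\D)$ --- with $\D = D_E$ in case \namelink{gal2} and $\D = D$ in case \namelink{lin} --- if $\JL(\delta)$ is distinguished in the $\GL$-sense prescribed by Definition \ref{def compatible}, then $\delta$ is itself distinguished by the corresponding subgroup of $H$. Writing $\delta = \St_n(\rho)$ with $\rho \in \CC_{\D}$ supercuspidal and $\JL(\rho) = \St_{\alpha_\rho}(\rho')$ for a supercuspidal $\rho'$ over $F' \in \{F,E\}$, we have $\JL(\delta) = \St_{n\alpha_\rho}(\rho')$ in $\ESI_{F'}$.

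In case \namelink{gal2}, $D_E$ is a central division $E$-algebra of degree $d$. The classification of Galois distinguished essentially square integrable representations of $\GL_{cd}(E)$ by $\GL_{cd}(F)$ due to Kable and Anandavardhanan--Rajan expresses distinction of $\JL(\delta) = \St_{n\alpha_\rho}(\rho')$ as a parity condition on $n\alpha_\rho$ together with a conjugate-self-duality/distinction condition on $\rho'$. The corresponding classification of essentially square integrable representations of $\GL_c(D_E)$ distinguished by $\GL_c(D)$ has the same form, and since $\JL$ commutes with the contragredient and with the $E/F$-Galois action (and matches the conjugate-self-duality of $\rho$ with that of $\rho'$), the two criteria coincide. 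Hence $\JL$-distinction of $\delta$ implies distinction of $\delta$, ruling out $H$-incompatibility.

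In case \namelink{lin} with $d$ odd, the hypothesis that $\JL(\delta)$ is $\GL_{cd/2}(F) \times \GL_{cd/2}(F)$-distinguished forces $cd$ to be even, and since $d$ is odd we conclude that $c$ is even. Matringe's classification of linear-period distinguished discrete series of $\GL_{cd}(F)$ then tells us that $n\alpha_\rho$ is even and $\rho'$ is self-dual up to the appropriate unramified twist. Since $\alpha_\rho \mid d$ is odd, $n$ itself is even; by the analogous inner-form classification (again compatible with $\JL$ on the supercuspidal data $\rho$), this yields that $\delta = \St_n(\rho)$ is $\GL_{c/2}(D) \times \GL_{c/2}(D)$-distinguished, as required.

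The main obstacle is the bookkeeping of parity and self-duality conditions across Jacquet-Langlands on both the supercuspidal and the Steinberg-exponent levels; once the relevant classifications are in place on both the split and the inner-form sides, the hypothesis ``$d$ odd'' in case \namelink{lin} and ``$D_E$ division'' in case \namelink{gal2} provides exactly the divisibility and parity alignment needed for the match to go through.
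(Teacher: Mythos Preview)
Your reduction to the statement ``$\JL(\delta)$ distinguished $\Rightarrow$ $\delta$ distinguished'' for essentially square-integrable $\delta$ is correct, and this is exactly the reduction the paper makes. However, your route to proving this implication has a genuine gap.

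First, you omit the archimedean case. The paper disposes of it in one line: there $\JL(\delta)=\delta$, so the implication is a tautology.

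More seriously, in the non-archimedean case your argument rests on matching an explicit split-side classification against ``the corresponding classification of essentially square integrable representations of $\GL_c(D_E)$ distinguished by $\GL_c(D)$'' (case \namelink{gal2}) and ``the analogous inner-form classification'' (case \namelink{lin}). These inner-form classifications are not independently available in the form you assume. In case \namelink{gal2}, the only result in the literature determining which discrete series of $\GL_c(D_E)$ are $\GL_c(D)$-distinguished is Beuzart-Plessis \cite[Theorem~1]{MR3858402}, and its content is precisely that distinction transfers under $\JL$ --- the very statement you are trying to deduce. The paper simply cites this theorem directly. In case \namelink{lin}, the paper does not go through a classification at all: it uses \cite[Theorem~3.20]{Sign} (linear distinction $\Leftrightarrow$ existence of a Shalika model, valid on both the split and the inner-form side) together with \cite[Theorem~6.1]{MR3953435} (Shalika models transfer under $\JL$). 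Your parity observation that $\alpha_\rho\mid d$ is odd, hence $n$ is even, is correct, but the final step --- that the inner-form linear-period classification matches the split one via $\JL$ on the supercuspidal data --- is not automatic; it is exactly what the Shalika-model route supplies. Moreover, your summary of Matringe's split-side criterion as ``$n\alpha_\rho$ even and $\rho'$ self-dual'' is an oversimplification: the actual criterion depends on the parity of the group on which $\rho'$ lives.

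In short, the paper's proof invokes direct transfer theorems; your plan presupposes inner-form classifications whose only known proofs pass through those same transfer theorems, so as written it is either circular or missing the key citations.
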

\begin{proof}
Let $\delta$ be an essentially square integrable representation of $\GL_t(D_E)$ in case \namelink{gal2} (respectively, of $\GL_t(D)$ in case \namelink{lin}) for some $t\in \N$ and note that $\JL(\delta)$ is a representation of $\GL_{td}(E)$ (respectively of $\GL_{td}(F)$). Consequently, it suffices to show that if $\JL(\delta)$ is $\GL_{td}(F)$-distinguished (respectively, $t$ is even and $\JL(\delta)$ is $\GL_{td/2}(F)\times \GL_{td/2}(F)$-distinguished) then $\delta$ is also distinguished. 

If $F$ is non-archimedean this follows from \cite[Theorem 1]{MR3858402} in case \namelink{gal2} and from the combination of \cite[Theorem 3.20]{Sign}  and \cite[Theorem 6.1]{MR3953435} in case \namelink{lin}.
In the archimedean case, since $\JL(\delta)$ is square integrable, in both cases we must have $\JL(\delta)=\delta$.
The lemma follows.
\end{proof}

\begin{remark}\label{rem compatibility of discrete series}
In the notation of Definition \ref{def compatible}, let $\D$ be given by \eqref{eq a&D} after Remark \ref{rem arithmetic vs geometric}, and $\delta$ be an irreducible essentially square integrable representation of $\GL_k(\D)$. The following sheds more light on the notion of compatibility. 
\begin{itemize}
\item In the Galois cases assume further that $\JL(\delta)$ is distinguished. Then $\delta$ is distinguished except if we are in case \namelink{gal1} and $k$ is odd. In other words $\pi$ as in Definition \ref{def compatible} is not compatible if and only if we are in case \namelink{gal1}, and one of the representations $\delta_i$ belongs to $\Pi_\esi(k_i,\D)$ for some odd $k_i$, but $\JL(\delta_i)$ is distinguished.
\item In the linear case assume further that $\JL(\delta)$ is distinguished (in particular, $kd$ is even). Then $\delta$ is distinguished except if $k$ is odd. In particular, if $d$ is odd then $\delta$ is distinguished. In other words $\pi$ as in Definition \ref{def compatible} is not compatible if and only if $d$ is even, and one of the representations $\delta_i$ belongs to $\Pi_\esi(k_i,\D)$ for some odd $k_i$, but $\JL(\delta_i)$ is distinguished.
\item The twisted linear cases do not afford such rigidity due to the $\epsilon$-dichotomy.
\end{itemize}
\end{remark}

\section{Local $L$-factors and global $L$-functions}

We begin this section by introducing the local $L$-factors that show up in this work and recalling relevant properties.

	\subsection{The local $L$-functions}\label{sec local L fct}
	
	Consider the local case and denote by $\WD_F$ the Weil-Deligne group. In the archimedean case it is simply the Weil group attached to $F$.
	
	As $F^\times$ is naturally a quotient of the Weil group (hence of $\WD_F$), for any character $\chi$ of $F^\times$ we still denote by $\chi$ the corresponding character of $\WD_F$ also attached by local class field theory.

Attached to an $F$-parameter $\phi$, that is, a finite dimensional semi-simple representation of $\WD_F$, there is an Artin $L$-factor $L(s,\phi)$ defined by Artin in the non-archimedean case. We refer to \cite[Appendix]{MR2533003} for the description of this $L$-factor in the archimedean case. 
In both cases, $L(s,\phi \oplus \phi')=L(s,\phi)L(s,\phi')$ and the description reduces to that for irreducible parameters.

Let $n\in \BN$ and let $\pi$ be an irreducible representation of $\GL_n(F)$. In the archimedean case the Langlands parameter $\phi_\pi$ is attached to $\pi$ in \cite{MR1011897}. In the non-archimedean case, the parameter $\phi_\pi$ is attached by the local Langlands correspondence obtained independently by \cite{MR1876802}, \cite{MR1738446} and \cite{MR3049932}. 
	
For any finite-dimensional representation $\mathbf{r}$ of $\GL_n(\C)$, the connected component of the Langlands dual group,  $\mathbf{r}(\phi_\pi)=\mathbf{r}\circ \phi_\pi$ is another $F$-parameter and this allows us to define the $L$-factor $L(s,\pi,\mathbf{r})=L(s,\mathbf{r}(\phi_\pi))$. In this work we apply this construction directly in the following three cases:
\begin{itemize}
\item the standard $L$-factor $L(s,\pi)=L(s,\pi,\mathrm{Std})$;
\item the exterior square $L$-factor $L(s,\pi,\wedge^2)$;
\item and the symmetric square $L$-factor $L(s,\pi, \Sym^2)$.
\end{itemize}
Here $\mathrm{Std}$ is the standard n-dimensional representation, $\wedge^2$ the exterior square $(n(n-1)/2)$-dimensional and $\Sym^2$ the symmetric square $(n(n+1)/2)$-dimensional representation of $\GL_n(\C)$.
We further consider other Artin $L$-factors in this work. For two irreducible representations $\pi_i$ of $\GL_{n_i}(F)$, $i=1,2$ the tensor product $\phi_{\pi_1}\otimes \phi_{\pi_2}$ is another $F$-parameter. Set
\begin{itemize}
\item the $L$-factor of pairs $L(s,\pi_1,\pi_2)=L(s,\phi_{\pi_1}\otimes \phi_{\pi_2})$.
\end{itemize}
Clearly, we have
\[
L(s,\pi_1,\pi_2)=L(s,\pi_2,\pi_1).
\]
It is also straightforward that the standard $L$-factor of an irreducible representation $\pi$ of $\GL_n(F)$ equals an $L$-factor for pairs, namely, we have
\[
L(s,\pi)=L(s,\pi,\triv_{F^\times})
\]
where $\triv_{F^\times}$ is the trivial character of $F^\times=\GL_1(F)$. Another simple observation is that
\begin{equation}\label{eq rs decomp}
L(s,\pi,\pi)=L (s,\pi, \wedge^2)L (s,\pi, \Sym^2).
\end{equation}

Let $E/F$ be a quadratic field extension. For any $E$-parameter $\phi$ let $\As^{+}(\phi)$ and $\As^{-}(\phi)$ be the $F$-parameters constructed respectively as the odd and even Asai lift of $\phi$ following \cite[p.26-27]{MR3202556}. As these authors point out, we have 
\[
\As^-(\phi)=\eta_{E/F}\otimes \As^{+}(\phi)
\] 
where $\eta_{E/F}$ is the quadratic character of $F^\times$ (considered as a character of $\WD_F$) attached to $E/F$ by local class field theory. 
For an irreducible representation $\pi$ of $\GL_n(E)$, set
\begin{itemize}
\item the Asai $L$-factors $L(s,\pi,\As^{\mathfrak{e}})=L(s,\As^{\mathfrak{e}}(\phi_\pi))$, $\mathfrak{e}\in\{+,-\}$.
\end{itemize}
We have
\[
L(s,\pi,\As^-)=L(s,\eta\otimes \pi,\As^+)
\]
for any extension $\eta$ of $\eta_{E/F}$ to a character of $E^\times$. In analogy with \eqref{eq rs decomp} we have
\begin{equation}\label{eq asai rs decomp}
L(s,\pi,\pi^\theta)=L (s,\pi, \As^+)L (s,\pi, \As^-)
\end{equation}
where $\theta$ is the $E/F$ Galois action.

There are two other standard methods to define local $L$-factors more directly without reference to the local Langlands correspondence. Shahidi $L$-factors are defined by the Langlands-Shahidi method
and Rankin-Selberg $L$-factors are defined as the ``greatest common divisors" of a certain family of zeta integrals.

In the $p$-adic case the Shahidi $L$-factors for pairs, exterior square, symmetric square and Asai $L$-factors are defined as a part of the general construction in \cite{MR1070599}. It is a consequence of \cite{MR1738446} for $L$-factors for pairs and of \cite{MR2595008} for exterior square, symmetric square and Asai $L$-factors that the Shahidi $L$-factors coincide with the corresponding Artin $L$-factors. We therefore do not use different notation for them and we use this fact throughout the paper to apply well-known properties of Shahidi $L$-factors to the corresponding Artin $L$-factors and vice-versa. 
Similarly, in the archimedean case Shahidi proved in \cite{MR816396} that the Artin $L$-factors are the correct factors that emerge in the Langlands-Shahidi method. 

In many cases that we consider, it is also known that the $L$-factors defined above coincide with the corresponding Rankin-Selberg $L$-factors. 
Again we will not give Rankin-Selberg $L$-factors special notation. Instead, we point out bellow when it is known that they coincide with the other type of $L$-factors and freely use this fact in the sequel, particularly, for square-integrable representations.

The Rankin-Selberg type $L$-factors for pairs are defined in \cite{MR701565} (see \cite{MR2533003} for the archimedean case). In the non-archimedean case Shahidi proved in \cite{MR729755} that they coincide with the Shahidi $L$-factors. In the archimedean case the results of \cite{MR2533003} and \cite{MR1159102} show that they coincide with Artin $L$-factors for pairs.

We only consider properties of Rankin-Selberg type Exterior squre, Symmetric square and Asai $L$-factors in the non-archimedean case. 

For exterior square $L$-factors, Jacquet and Shalika suggested in \cite{MR1044830} a family of Rankin-Selberg type integrals. It is proved in \cite{MR3008415} for square-integrable representations and later by \cite[Theorem 5.14]{MR4074055} for any irreducible representation that the Jacquet and Shalika type Rankin-Selberg $L$-factors for exterior square coincide with the corresponding Artin L-factors. Another family of Rankin-Selberg type integrals for the exterior square $L$-factor is considered in \cite{MR3430877} based on the global work of Bump and Friedberg. 

Rankin-Selberg type $L$-factors for the Symmetric square are defined in \cite{MR3582406} for any irreducible generic representation via integrals considered by Bump and Ginzburg. Yamana proves that for a squre-integrable irreducible representation, the Rankin-Selberg type symmetric square $L$-factor coincides with the corresponding Artin $L$-factor. See also \cite{MR3722261}.

When $E/F$ is a quadratic field extension, we refer the reader to \cite[Section 6.1]{MR4308058} for an introduction of Flicker's Rankin-Selberg theory of local Asai $L$-factors $L (s,\pi,\As^{\pm})$ associated to an irreducible generic representation $\pi$ of $\GL_n(E)$. In this case they coincide with Artin $L$-factors.

		\begin{cv}
	The local factor of the global Asai $L$-function at a place of $F$ that splits in $E$ is the $L$-factor for pairs. We therefore adopt the following standard convention. When $E=F\times F$ is the split 2-dimensional \'etale $F$-algebra and $\pi=\pi_1\otimes \pi_2$ where $\pi_i$ is an irreducible generic representation of 
	$\GL_n(F)$, $i=1,2$ we set 
	\[L (s,\pi,\As^{\mathfrak{\mathfrak{e}}})= L(s,\pi_1,\pi_2),\ \ \ \mathfrak{e}\in\{+,-\}.\]
	\end{cv}
	
	All the different possible definitions of the above $L$-factors come together with gamma factors attached to them. In each case, 
	the Langlands-Shahidi method also attaches an epsilon factor to each L-function considered above. More precisely, for $\psi$ a non-trivial character of $F$, the epsilon factor 
	$\epsilon(s,\pi, \star,\psi)$ is a unit of $\C[q^{\pm 1}]$ when $F$ is non-archimedean with residual field of size $q$, whereas it is a constant in $\C^\times$ when $F$ is archimedean.  Here, we are in one of the following three situations
 \begin{itemize} 
 \item $\star$ is either $\As^{+}$ or $\As^{-}$ and $\pi$ is an irreducible generic representation of $\GL_n(E)$ if $E$ is a degree two \'etale $F$-algebra (this includes the split case);
 \item or $\star$ is either $\wedge^2$ or $\Sym^2$ and $\pi$ is an irreducible generic representation of $\GL_n(F)$.
\item or $\star=\mathrm{Std}$ is the standard representation and $\pi$ is an irreducible generic representation of $\GL_n(F)$. In this case we simply omit $\mathrm{Std}$ from the notation. 
\end{itemize}

	One then sets 
	\[\gamma(s,\pi, \star, \psi):=\frac{\epsilon(\pi, \star,\psi)L(1-s,\pi^\vee, \star)}{L(s,\pi, \star)}.\]
	For most arguments in this paper, $\epsilon$-factors play no role. Hence instead of $\gamma$-factors we consider the corresponding quotient of $L$-factors. We set	
 \[
 \gamma_0(s,\pi, \star):=\frac{L(1-s,\pi^\vee, \star)}{L(s,\pi, \star)}.
 \] 
 
	The following properties, namely multiplicativity and relation to distinction in the case of discrete series representations allows one to precisely analyze the order of the poles at $s=0$ of these local factors.  
	
	The multiplicativity relation of such factors refer to their behaviour under parabolic induction, and we explicate it in the list below.
	
	\begin{theorem}\label{thm mult rel L fct} Let $F$ be a local field of characteristic zero.
	For  irreducible and generic representations $\pi$ of $\GL_n(F)$ and $\pi_i$ of $\GL_{n_i}(F)$, $i=1,2$ we have
	\begin{itemize}
	\item $L (s,\pi,\pi_1\times \pi_2) = L (s,\pi,\pi_1) L(s,\pi,\pi_2)$ and in particular $L (s,\pi_1\times \pi_2) = L (s,\pi_1) L(s,\pi_2)$.
	\item $L (s,\pi_1\times \pi_2,\star)  = L (s,\pi_1,\star) L(s,\pi_2,\star)L(s,\pi_1,\pi_2)$ where $\star$ is either $\wedge^2$ or $\Sym^2$.
	\end{itemize}
	Let $E/F$ be a quadratic field extension. For irreducible and generic representations $\pi_i$ of $\GL_{n_i}(E)$, $i=1,2$ we have
	\begin{itemize}
	\item $L (s,\pi_1\times \pi_2,\star)  = L (s,\pi_1,\star) L(s,\pi_2,\star)L(s,\pi_1,\pi_2^\vartheta)$ where $\star$ is either $\As^+$ or $\As^-$ and $\vartheta$ is the $E/F$-Galois action.
	\end{itemize}
	\end{theorem}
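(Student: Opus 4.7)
My plan is to exploit that, by the paragraph just before the theorem, all of these $L$-factors coincide with the corresponding Artin $L$-factors attached to a Langlands parameter. The multiplicativity statements then reduce to elementary manipulations with finite-dimensional representations of the Weil--Deligne group, together with compatibility of the local Langlands correspondence with parabolic induction.

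The first step is to recall that for irreducible generic representations, parabolic induction corresponds to direct sum on the Galois side: if $\pi_1\times\pi_2$ is the irreducible generic representation of $\GL_{n_1+n_2}(F)$ (which is the case under our hypothesis), then its Langlands parameter is
\[
\phi_{\pi_1\times\pi_2}=\phi_{\pi_1}\oplus\phi_{\pi_2}.
\]
This is established in \cite{MR1011897} in the archimedean case and follows from the characterization of the local Langlands correspondence in the non-archimedean case. Combined with additivity $L(s,\phi\oplus\phi')=L(s,\phi)L(s,\phi')$ of the Artin $L$-factor, the first bullet is immediate, since
\[
\phi_\pi\otimes\phi_{\pi_1\times\pi_2}=(\phi_\pi\otimes\phi_{\pi_1})\oplus(\phi_\pi\otimes\phi_{\pi_2}).
\]

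The second bullet uses the standard decompositions of exterior and symmetric squares of a direct sum,
\[
\wedge^2(V_1\oplus V_2)=\wedge^2 V_1\oplus\wedge^2 V_2\oplus(V_1\otimes V_2),
\]
\[
\Sym^2(V_1\oplus V_2)=\Sym^2 V_1\oplus\Sym^2 V_2\oplus(V_1\otimes V_2),
\]
applied with $V_i=\phi_{\pi_i}$. Additivity of $L$ then gives the claim, upon recognizing the cross term as $L(s,\pi_1,\pi_2)$.

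The third bullet is the most delicate. The Asai lift is a multiplicative (tensor) induction from $\WD_E$ to $\WD_F$, and the key input is the decomposition, valid for $E$-parameters,
\[
\As^+(\phi_1\oplus\phi_2)=\As^+(\phi_1)\oplus\As^+(\phi_2)\oplus\Ind_{\WD_E}^{\WD_F}(\phi_1\otimes\phi_2^\vartheta),
\]
with the analogous decomposition for $\As^-$. The cross term is an induced $F$-parameter whose Artin $L$-factor is, by the standard identity $L(s,\Ind_{\WD_E}^{\WD_F}\sigma)=L(s,\sigma)$ applied to $\sigma=\phi_{\pi_1}\otimes\phi_{\pi_2}^\vartheta$, equal to $L(s,\pi_1,\pi_2^\vartheta)$. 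Combined with additivity and with $\phi_{\pi_1\times\pi_2}=\phi_{\pi_1}\oplus\phi_{\pi_2}$ as $E$-parameters, this yields the stated formula. In the convention where $E=F\times F$, the Asai factor is defined to be the $L$-factor for pairs, so the identity reduces to a special case of the first bullet applied on each $\GL$-factor. The main obstacle is verifying the tensor-induction identity for $\As^\pm$; this is a formal consequence of the definition of multiplicative induction in \cite[p.26--27]{MR3202556}, obtained by decomposing the representation space $(V_1\oplus V_2)^{\otimes[\WD_F:\WD_E]}$ according to the action of coset representatives and isolating the summands fixed by $\WD_E$.
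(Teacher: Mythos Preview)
Your proof is correct and follows essentially the same approach as the paper: reduce to Artin $L$-factors via the local Langlands correspondence, use $\phi_{\pi_1\times\pi_2}=\phi_{\pi_1}\oplus\phi_{\pi_2}$, and decompose the relevant functor on a direct sum. Your treatment of the Asai case is in fact more careful than the paper's, which writes the cross term simply as $\phi_{\pi_1}\otimes\phi_{\pi_2^\vartheta}$ without making explicit (as you do) that this is an $E$-parameter whose $L$-factor matches that of the induced $F$-parameter by inductivity.
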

\begin{proof}
The parameter of $\pi_1\times \pi_2$ is a direct sum $\phi_{\pi_1\times \pi_2}=\phi_{\pi_1}\oplus \phi_{\pi_2}$. Consequently, 
\[
\phi_\pi\otimes \phi_{\pi_1\times \pi_2}=(\phi_\pi\otimes \phi_{\pi_1})\oplus (\phi_\pi\otimes \phi_{\pi_2})
\]
and 
\[
\star(\phi_{\pi_1\times \pi_2})= \begin{cases} \star(\phi_{\pi_1})\oplus \star(\phi_{\pi_2})\oplus (\phi_{\pi_1}\otimes \phi_{\pi_2} )& \star=\wedge^2 \text{ or }\Sym^2\\
\star(\phi_{\pi_1})\oplus \star(\phi_{\pi_2})\oplus(\phi_{\pi_1}\otimes \phi_{\pi_2^\vartheta}) & \star=\As^+ \text{ or }\As^-.\end{cases}\]
The multiplicative relations are therefore immediate from the definition of Artin $L$-factors of direct sums. 
\end{proof}	
For the following lemmas in the archimedean case it will be helpful to recall some explicit information about $L$-factors for square-integrable representations.

The following discussion can be read off the appendix of \cite{MR2533003}.
Every unitary character $\mu$ of $\BR^\times$ is of the form $\mu=\mu_{t,\epsilon}$ where $\epsilon=0,1$, $t\in \BR$ and $\mu(x)=\eta(x)^\epsilon\abs{x}^{it}$ where $\eta$ is the sign character. We have
\[
L(s,\mu_{t,\epsilon})=\pi^{-\frac{s+it+\epsilon}2}\Gamma(\frac{s+it+\epsilon}2).
\]

Every unitary character $\mu$ of $\C^\times$ is either of the form $\mu=\mu_{t,m}$ or $\mu=\mu_{t,m}^\theta$ where $m\in\BZ_{\ge 0}$, $t\in \BR$ and $\mu_{t,m}(z)=z^m(z\bar z)^{it-\frac{m}2}$. Here $\mu^\theta(z)=\mu(\bar z)$. We have
\[
L(s,\mu_{t,m})=L(s,\mu_{t,m}^\theta)=2(2\pi)^{-s-it-\frac{m}2}\Gamma(s+it+\frac{m}2).
\]

Every irreducible square-integrable representation $\pi$ of $\GL_2(\BR)$ has Langlands parameter $\phi_\pi=\Ind_{W_\C}^{W_\BR}(\Omega)$ for some unitary character $\Omega$ of $\C^\times\simeq W_\C$ such that $\Omega^\theta\ne \Omega$.  In this case
\[
L(s,\pi)=L(s,\Omega).
\]
Furthermore, the following are equivalent:
\begin{itemize}
\item $\pi$ has trivial central character;
\item $\Omega=\mu_{0,m}$ or $\Omega=\mu_{0,m}^\theta$ for some odd integer $m\ge 1$;
\item $\phi_\pi$ is symplectic (has its image in $\SL_2(\C)$).
\end{itemize}

\begin{lemma}\label{lem std}
Let $\delta_i$ be an irreducible essentially square integrable representation of $\GL_{n_i}(F)$, $i=1,\dots,k$ such that $r(\delta_i)>-\frac12$ for all $i$ and $\pi=\delta_1\times \cdots\times \delta_k$ is an irreducible representation of $\GL_n(F)$. Then
$L(s,\pi)$ is holomorphic at $s=\frac12$. 
\end{lemma}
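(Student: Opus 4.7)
My plan is a two-step reduction to a standard pole-location fact for $L$-factors of square-integrable representations.

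First, I would apply the multiplicativity of the $L$-factor for pairs (Theorem \ref{thm mult rel L fct}) with one argument equal to the trivial character $\triv_{F^\times}$, which yields $L(s,\pi)=\prod_{i=1}^k L(s,\delta_i)$. So it suffices to show each $L(s,\delta_i)$ is holomorphic at $s=\tfrac12$. For each $i$, write $\delta_i = \nu^{r_i}\delta_i^0$ with $r_i=r(\delta_i)>-\tfrac12$ and $\delta_i^0 \in \SI_F$ square integrable. Then $L(s,\delta_i)=L(s+r_i,\delta_i^0)$, so the lemma reduces to the following claim: for any square-integrable $\delta^0$, the standard $L$-factor $L(s,\delta^0)$ is holomorphic on the half-plane $\Re(s)>0$. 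Indeed, granting this, $L(s+r_i,\delta_i^0)$ is holomorphic whenever $\Re(s+r_i)>0$, which holds at $s=\tfrac12$ since $r_i>-\tfrac12$.

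The core step is the claim itself, which I would treat case by case. In the non-archimedean case, every square-integrable representation is of the form $\delta^0=\St_m(\rho)$ where $\rho$ is a unitary supercuspidal of some $\GL_d(F)$. The standard formula $L(s,\St_m(\rho))=L\bigl(s+\tfrac{m-1}{2},\rho\bigr)$ reduces us to $L(s,\rho)$, which equals $1$ if $d>1$ and equals $L(s,\chi)$ for some unitary character $\chi$ of $F^\times$ if $d=1$; the latter has poles only on the line $\Re(s)=0$. Hence poles of $L(s,\delta^0)$ lie in $\Re(s)\le -\tfrac{m-1}{2}\le 0$. In the archimedean case, I would invoke the explicit description recalled above: every square-integrable representation of $\GL_1(\BR)$, $\GL_1(\C)$, or $\GL_2(\BR)$ (and these are the only archimedean possibilities, by the paper's preceding discussion) has standard $L$-factor of the form $L(s,\mu)$ for a unitary character $\mu$ of $\BR^\times$ or $\C^\times$. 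The formulas
\[
L(s,\mu_{t,\epsilon})=\pi^{-(s+it+\epsilon)/2}\Gamma\bigl(\tfrac{s+it+\epsilon}{2}\bigr),\qquad L(s,\mu_{t,m})=L(s,\mu_{t,m}^\theta)=2(2\pi)^{-s-it-m/2}\Gamma\bigl(s+it+\tfrac{m}{2}\bigr)
\]
show that all poles have $\Re(s)\le 0$.

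I do not anticipate a real obstacle: everything here is a direct consequence of results and formulas already assembled in the excerpt (multiplicativity, the classification of essentially square-integrable representations of $\GL_n(F)$ and their $L$-factors, and the explicit archimedean $\Gamma$-factor formulas). The only mild care needed is to make sure the strict inequality $r(\delta_i)>-\tfrac12$ is preserved in the chain of shifts, since equality would permit a pole exactly at $s=\tfrac12$.
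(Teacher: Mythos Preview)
Your proposal is correct and follows essentially the same route as the paper: reduce via multiplicativity to a single essentially square-integrable factor, shift by $r(\delta_i)$ to a unitary square-integrable representation, and then verify that $L(s,\delta^0)$ is holomorphic for $\Re(s)>0$. The paper handles the non-archimedean case by citing \cite[Theorem 8.2]{MR701565} directly, whereas you spell it out via the formula $L(s,\St_m(\rho))=L(s+\tfrac{m-1}{2},\rho)$; the archimedean case is treated identically in both.
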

\begin{proof}
Applying the multiplicativity of $L$-factors (Theorem  \ref{thm mult rel L fct}) it suffices to show that $L(s,\pi)$ is holomorphic for $\Re(s)>0$ and $\pi$ square-integrable. This follows from the proof of \cite[Theorem 8.2]{MR701565} in the non-archimedean case.  In the archimedean case it follows directly from the definition, the above description of $L$-factors of discrete series for $\GL_1(F)$, $F=\BR$ or $\C$ and for $\GL_2(\BR)$ and holomorphicity of $\Gamma(s)$ for $\Re(s)>0$. 
\end{proof}

\begin{lemma}\label{lem cusprs}
Consider the non-archimedean case. Let $\pi_i$ be an irreducible cuspidal and unitary representation of $\GL_{n_i}(F)$, $i=1,2$. Then $L(s,\pi_1,\pi_2)$ is holomorphic at $s=s_0$ whenever $\Re(s_0)\ne 0$. 
\end{lemma}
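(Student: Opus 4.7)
The plan is to appeal to the explicit form of the Rankin--Selberg $L$-factor $L(s,\pi_1,\pi_2)$ for pairs of supercuspidal representations, as established by Jacquet--Piatetski-Shapiro--Shalika in \cite{MR701565}. Recall that for irreducible supercuspidal representations $\pi_1, \pi_2$ of $\GL_{n_1}(F), \GL_{n_2}(F)$, one has two cases. If $n_1 \ne n_2$, then the local Rankin--Selberg integrals are in fact polynomials in $q^{\pm s}$, and the greatest common divisor $L(s,\pi_1,\pi_2)$ equals $1$; in this case the lemma is trivial. If $n_1 = n_2 = n$, then
\[
L(s,\pi_1,\pi_2) = \prod_{s_0} (1 - q^{-(s-s_0)})^{-1},
\]
where the product runs over the finite set of $s_0 \in \C$, modulo $\frac{2\pi i}{\log q}$, for which $\pi_2 \simeq \pi_1^\vee \otimes \nu^{s_0}$.

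Thus, the only possible poles of $L(s,\pi_1,\pi_2)$ occur at $s = s_0$ where $\pi_2 \simeq \pi_1^\vee \otimes \nu^{s_0}$. At any such $s_0$, the assumed unitarity of $\pi_1$ (hence of $\pi_1^\vee$) and of $\pi_2$ forces the character $\nu^{s_0}$ to be unitary on $\GL_n(F)$, which happens precisely when $\Re(s_0) = 0$. Consequently, $L(s,\pi_1,\pi_2)$ is holomorphic off the imaginary axis, proving the lemma.

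The argument is essentially immediate once the structure of the supercuspidal Rankin--Selberg $L$-factor is invoked; there is no genuine obstacle. The only subtlety is bookkeeping: making sure the sign conventions for $s_0$ match the definition of $L(s,\pi_1,\pi_2)$ used throughout the paper (with the shift by $\nu^{s_0}$ on $\pi_1^\vee$ rather than on $\pi_2$, etc.), but this has no effect on the final conclusion that $\Re(s_0)=0$.
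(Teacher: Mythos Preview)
Your argument is correct and is essentially the same as the paper's: both appeal to \cite[Proposition 8.1]{MR701565}, and you have simply unpacked what that proposition says about the supercuspidal Rankin--Selberg $L$-factor. The paper's proof is a one-line citation; yours spells out the content, but the substance is identical.
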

\begin{proof}
This is an immediate consequence of \cite[Proposition 8.1]{MR701565}.
\end{proof}

\begin{lemma}\label{lem pairs}
Let $\pi$ be an irreducible and essentially square-integrable representation of $\GL_n(F)$. Then
$L(s,\pi,\pi^\vee)$ has a simple pole at $s=0$. 
\end{lemma}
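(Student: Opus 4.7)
My plan is to reduce to the case where $\pi$ is unitary square-integrable by a twist, and then to read off the pole order at $s=0$ from the Langlands parameter of $\phi_\pi\otimes\phi_{\pi^\vee}$. Writing $\pi=\nu^{r(\pi)}\pi_0$ with $\pi_0\in\Pi_\si(n,F)$, the compatibility $\phi_{\nu^r\pi_0}=|\cdot|^r\otimes\phi_{\pi_0}$ of the local Langlands correspondence with twisting forces the shifts to cancel,
\[
\phi_\pi\otimes\phi_{\pi^\vee}=\phi_{\pi_0}\otimes\phi_{\pi_0^\vee},
\]
so $L(s,\pi,\pi^\vee)=L(s,\pi_0,\pi_0^\vee)$ and we may assume $\pi$ is unitary square-integrable.

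For the non-archimedean case, I would write $\pi=\St_k(\rho)$ with $\rho$ unitary supercuspidal on $\GL_m(F)$ and $n=km$. Then $\phi_\pi=\phi_\rho\boxtimes\mathrm{Sp}(k)$, so
\[
\phi_\pi\otimes\phi_{\pi^\vee}=(\phi_\rho\otimes\phi_\rho^\vee)\boxtimes(\mathrm{Sp}(k)\otimes\mathrm{Sp}(k)).
\]
Since $\phi_\rho$ is an irreducible representation of $W_F$, Schur's lemma gives $\phi_\rho\otimes\phi_\rho^\vee=\triv\oplus V'$ where $V'$ has no trivial summand; and Clebsch--Gordan for the Steinberg factors gives $\mathrm{Sp}(k)\otimes\mathrm{Sp}(k)=\bigoplus_{\ell=0}^{k-1}\mathrm{Sp}(2\ell+1)$ without Tate twists. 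The only summand contributing a pole at $s=0$ is $\triv\boxtimes\mathrm{Sp}(1)=\triv$, producing the simple pole $L(s,\triv)=(1-q^{-s})^{-1}$. The remaining pieces $\mathrm{Sp}(2\ell+1)$ with $\ell\ge 1$ contribute $L$-factors with poles at $s=-\ell\ne 0$, while each $V'\boxtimes\mathrm{Sp}(2\ell+1)$ yields $L(s+\ell,V')$ which is finite and non-zero at $s=0$: for $\ell\ge 1$ by unitarity of $\rho$ (Frobenius eigenvalues on $V'$ have absolute value one, so $L(s,V')$ has poles only on $\Re(s)=0$), and for $\ell=0$ because $V'$ has no trivial summand, so the eigenvalue $1$ does not occur for Frobenius on $V'^I$. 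This argument also reproves Lemma~\ref{lem cusprs} for the special case $\pi_2=\pi_1^\vee$.

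For the archimedean case, unitary square-integrable representations appear only on $\GL_1(\BR)$, $\GL_1(\C)$ and $\GL_2(\BR)$, and the lemma follows in each case from a direct Gamma-factor computation using the explicit parameters recalled before Lemma~\ref{lem std}. For a unitary character of $\BR^\times$ or $\C^\times$ one checks that $\phi_\pi\cdot\phi_{\pi^\vee}$ is the trivial character, yielding $L$-factor $\pi^{-s/2}\Gamma(s/2)$ or $2(2\pi)^{-s}\Gamma(s)$ with a simple pole at $s=0$. For $\pi$ a discrete series of $\GL_2(\BR)$ with parameter $\Ind_{W_\C}^{W_\BR}(\Omega)$, the Mackey formula gives
\[
\phi_\pi\otimes\phi_{\pi^\vee}=\triv\oplus\eta\oplus\Ind(\Omega(\Omega^\theta)^{-1}),
\]
with $\eta$ the sign character of $\BR^\times$; the defining condition $\Omega^\theta\ne\Omega$ ensures $\Omega(\Omega^\theta)^{-1}$ is non-trivial, so its $L$-factor is finite at $s=0$, and the simple pole comes solely from the trivial summand (the sign contribution $\Gamma((s+1)/2)$ being finite and non-zero at $s=0$).

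The main book-keeping obstacle is verifying that the non-trivial pieces of the Weil--Deligne tensor product contribute no poles at $s=0$. In the $p$-adic case this rests on combining Schur's lemma for the cuspidal parameter, Clebsch--Gordan for the Steinberg factor, and unitarity of $\rho$; in the $\GL_2(\BR)$ case it is the discreteness condition $\Omega\ne\Omega^\theta$ that plays the analogous role.
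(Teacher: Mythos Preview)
Your argument is correct. In the archimedean case your computation via Mackey and explicit $\Gamma$-factors is exactly what the paper's one-line remark (``$\phi_\pi\otimes\phi_{\pi^\vee}$ contains the trivial representation with multiplicity one'') unpacks to, so there is no real difference there.

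In the non-archimedean case you take a genuinely different route from the paper. The paper simply invokes \cite[Theorem 8.2 and Proposition 8.1]{MR701565}, i.e.\ the Rankin--Selberg theory of Jacquet--Piatetski-Shapiro--Shalika, which computes $L(s,\St_k(\rho),\St_k(\rho)^\vee)$ directly in terms of $L(s,\rho,\rho^\vee)$ and locates the pole. You instead pass to the Weil--Deligne side via the local Langlands correspondence, decompose $\phi_\pi\otimes\phi_{\pi^\vee}$ using Schur's lemma and Clebsch--Gordan, and read off the pole from the unique trivial summand. This is a legitimate alternative: the step ``Frobenius eigenvalues on $(V')^I$ have absolute value one'' can be justified without circularity by noting that any irreducible summand of $V'$ with nonzero inertia invariants must be an unramified character $\sigma$, and $\sigma\hookrightarrow\operatorname{End}(\phi_\rho)$ forces $\phi_\rho\simeq\sigma\otimes\phi_\rho$, whence $\sigma$ has finite order by comparing determinants. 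Your approach has the virtue of making the pole structure transparent on the Galois side and of being uniform with the archimedean argument; the paper's citation to Jacquet--Piatetski-Shapiro--Shalika is shorter and avoids invoking the local Langlands correspondence, which for supercuspidals is a considerably deeper input than the Rankin--Selberg computation. Your remark that the argument ``reproves Lemma~\ref{lem cusprs}'' for $\pi_2=\pi_1^\vee$ is a slight overclaim: what you actually establish is that $L(s,\rho,\rho^\vee)$ has its poles on $\Re s=0$, which is the content of that special case rather than an independent proof of it.
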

\begin{proof}
In the non-archimedean case this follows from \cite[Theorem 8.2 and Proposition 8.1]{MR701565}. In the archimedean case, if $\pi$ is a character of $F^\times$ for $F=\R$ or $F=\C$ then this is immediate from the fact that $\Gamma(s)$ has a simple pole at $s=0$. If $\pi$ is on $\GL_2(\R)$ then it is a simple observation that $\phi_\pi\otimes \phi_{\pi^\vee}$ contains the trivial representation with multiplicity one and the lemma follows.
\end{proof}
\begin{lemma}\label{lem extasai}
Let $F'=F$ (resp. $E$) and let $\pi$ be an irreducible and essentially square integrable representation of $\GL_n(F')$. Let $\pi'=\pi$ (resp. the $E/F$-Galois conjugate of $\pi$). If $0<\abs{r(\pi)}<\frac12$ then $L(s,\pi,\pi')$ is holomorphic at $s=0$ and in particular so are $L(s,\pi,\wedge^2)$ and $L(s,\pi,\Sym^2)$ (resp. $L(s,\pi,\As^+)$ and $L(s,\pi,\As^-)$).
\end{lemma}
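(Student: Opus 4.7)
The plan is to write $\pi = \nu^{r}\pi_0$ with $r := r(\pi)\in\R$ and $\pi_0$ the unitary twist, a square integrable representation of $\GL_n(F')$. Set $\pi_0' := \pi_0$ in the non-Galois case and $\pi_0' := \pi_0^\theta$ in the Galois case (both square integrable and unitary). Since $\nu$ is Galois invariant on $E^\times$, in both cases
\[
L(s,\pi,\pi') = L(s + 2r,\pi_0,\pi_0'),
\]
so the task reduces to showing that $s_0 = 2r$ is not a pole of $L(\cdot,\pi_0,\pi_0')$. The assertions for $\wedge^2$, $\Sym^2$ (respectively $\As^\pm$) then follow at once from the factorizations \eqref{eq rs decomp} and \eqref{eq asai rs decomp}, since the individual Artin $L$-factors on the right-hand side are products of local Euler factors that cannot conspire to cancel a pole.

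The central claim I plan to establish is that the real poles of $s\mapsto L(s,\pi_0,\pi_0')$ lie in $\Z_{\le 0}$; since the hypothesis $|r|<\tfrac12$ forces $2r\in(-1,0)\cup(0,1)$, which is disjoint from $\Z$, this gives the lemma. In the non-archimedean case I would write $\pi_0 = \St_n(\rho_1)$ and $\pi_0' = \St_n(\rho_2)$ with $\rho_i$ cuspidal unitary, and use the standard formula (either from Jacquet's Rankin-Selberg analysis in \cite{MR701565} or, equivalently, from the explicit form of the Langlands parameter $\phi_\rho\otimes \Sym^{n-1}$) expressing $L(s,\pi_0,\pi_0')$ as a product $\prod_j L(s+j,\rho_1,\rho_2)$ over integer shifts $j\in\{0,1,\dots,n-1\}$. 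Since $\rho_1,\rho_2$ are cuspidal unitary, Lemma~\ref{lem cusprs} forces each factor to have poles only on $\Re(s+j)=0$, so every real pole of the product is an integer, and a direct check against the shift range shows each such integer is in $\Z_{\le 0}$.

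The archimedean case I would dispatch through the explicit list of square integrable representations. For characters of $\R^\times$ or $\C^\times$, the product $\pi_0\cdot\pi_0'$ is itself a character and the formulas for $L$-factors of unitary characters recalled before Lemma~\ref{lem std} show immediately that its real poles lie in $\Z_{\le 0}$. For a square integrable representation of $\GL_2(\R)$ whose parameter is $\Ind_{W_\C}^{W_\R}(\Omega)$, I plan to apply Frobenius reciprocity to decompose
\[
\phi_{\pi_0}\otimes \phi_{\pi_0'} \;=\; \Ind_{W_\C}^{W_\R}(\Omega\Omega_1) \;\oplus\; \Ind_{W_\C}^{W_\R}(\Omega\Omega_1^\theta),
\]
where $\Omega_1$ is the character giving $\phi_{\pi_0'}$, and then note that the second summand factors through the norm and so contributes characters of $\R^\times$; in each case the explicit $\Gamma$-factor shows its real poles are non-positive integers. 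The only delicate point, and the sole reason for the bound $|r|<\tfrac12$, is to rule out precisely the pole at $s+2r=-1$ (equivalently $2r=-1$) that would otherwise occur in the boundary case $r=-\tfrac12$; the openness of the hypothesis avoids this and completes the argument.
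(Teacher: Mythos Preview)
Your argument is correct and follows essentially the same route as the paper's proof: both reduce, via the shift $s\mapsto s+2r$, to locating the real poles of the pair $L$-factor for the unitary building blocks, using \cite[Theorem~8.2]{MR701565} together with Lemma~\ref{lem cusprs} in the non-archimedean case and the explicit $\Gamma$-factor description in the archimedean case. The only cosmetic difference is that the paper works directly with $\pi$ rather than first normalizing to the unitary $\pi_0$; note also that the hypothesis $0<|r|$ (not just $|r|<\tfrac12$) is what excludes the pole at $2r=0$, so your closing remark slightly understates its role.
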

\begin{proof}
Consider the non-arcimedean case and write $\pi=\St_k(\rho)$ where $\rho$ is cuspidal. Note that $r(\pi)=r(\rho)=r(\pi')$. It follows from Lemma \ref{lem cusprs} and \cite[Theorem 8.2]{MR701565} that $L(s,\pi,\pi')$ is holomorphic at $s=0$ unless $2r(\rho)+k$ is an integer in $[1,k]$. However, by assumption $-1<2r(\rho)\ne 0$. 

In the archimedean case, $\pi$ is either a character of $\GL_1(F')$ or an essentially square integrable representation of $\GL_2(\R)$. Every irreducible component of $\phi_\pi\otimes \phi_{\pi'}$ is of the form $\phi=\phi_{\nu^{2r(\pi)}\tau}$ where $\tau$ is irreducible and square-integrable. It follows that $L(s,\phi)=L(s+2r(\pi),\tau)$ and since $0<\abs{2r(\pi)}<1$ it follows from the explicit description of $L$-factors for square-integrable representations and the location of poles of $\Gamma(s)$ that $L(s,\pi,\pi')$ is holomorphic at $s=0$.

The rest of the lemma follows from \eqref{eq rs decomp} (resp. \eqref{eq asai rs decomp}).

\end{proof}

\begin{lemma}\label{lem dqr int@1rs}
For $\pi_1,\pi_2\in \Pi_F(-\frac12,\frac12)$ we have that $L(s,\pi_1,\pi_2)$ is holomorphic at $s=1$.
\end{lemma}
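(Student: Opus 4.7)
The plan is to reduce to the case of essentially square-integrable representations via multiplicativity, then control the location of the poles by a real-part argument using what is already available in the paper.

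First, I would apply the multiplicativity of $L$-factors for pairs (Theorem \ref{thm mult rel L fct}): writing $\pi_i=\delta_1^{(i)}\times\cdots\times\delta_{k_i}^{(i)}$ with each $\delta_j^{(i)}$ essentially square-integrable satisfying $|r(\delta_j^{(i)})|<\tfrac12$, we obtain
\[
L(s,\pi_1,\pi_2)=\prod_{j,l}L(s,\delta_j^{(1)},\delta_l^{(2)}),
\]
so it suffices to treat a single pair $(\delta_1,\delta_2)$ of essentially square-integrable representations with $|r(\delta_i)|<\tfrac12$. Writing $\delta_i=\nu^{r_i}\delta_i^{(0)}$ with $\delta_i^{(0)}$ square-integrable with unitary central character and $r_i=r(\delta_i)$, we have
\[
L(s,\delta_1,\delta_2)=L(s+r_1+r_2,\delta_1^{(0)},\delta_2^{(0)}),
\]
and the target point becomes $s=1+r_1+r_2$, whose real part is $>1-1=0$ since $|r_1+r_2|<1$.

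In the non-archimedean case, I would write $\delta_i^{(0)}=\St_{n_i}(\rho_i^{(0)})$ with $\rho_i^{(0)}$ unitary cuspidal, and use the standard multiplicativity formula (as in \cite[Theorem 8.2]{MR701565}) to factor
\[
L(s,\delta_1^{(0)},\delta_2^{(0)})=\prod_{j=0}^{\min(n_1,n_2)-1}L\!\left(s+\tfrac{n_1+n_2-2}{2}-j,\rho_1^{(0)},\rho_2^{(0)}\right),
\]
where every shift $c_j=\tfrac{n_1+n_2-2}{2}-j$ lies in $\{\tfrac{|n_1-n_2|}{2},\ldots,\tfrac{n_1+n_2-2}{2}\}\subset\mathbb{R}_{\ge 0}$. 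Evaluated at $s=1+r_1+r_2$, each argument has real part $1+r_1+r_2+c_j>0$, so by Lemma \ref{lem cusprs} every factor is holomorphic there.

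In the archimedean case, $\delta_i^{(0)}$ is either a unitary character of $F^\times$ or a square-integrable representation of $\GL_2(\mathbb{R})$ with unitary central character. Using the explicit Langlands parameters, each irreducible summand of $\phi_{\delta_1^{(0)}}\otimes\phi_{\delta_2^{(0)}}$ is again of the unitary form $\mu_{0,\epsilon}$, $\mu_{0,m}$ or $\mathrm{Ind}_{W_\mathbb{C}}^{W_\mathbb{R}}(\mu_{0,m})$; the corresponding $L$-factors, given by the explicit Gamma expressions recalled before Lemma \ref{lem std}, have all their poles at real parts $\le 0$. Since $\Re(1+r_1+r_2)>0$, the factor $L(s,\delta_1^{(0)},\delta_2^{(0)})$ is holomorphic at $s=1+r_1+r_2$, whence the claim follows. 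The main routine point is tracking the non-negativity of the shifts $c_j$ in the Steinberg formula; everything else is bookkeeping.
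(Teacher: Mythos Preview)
Your proof is correct and follows essentially the same route as the paper: reduce by multiplicativity to essentially square-integrable pairs, shift by $r_1+r_2$ to unitary square-integrable representations, and use that the pair $L$-factor for unitary square-integrable $\delta_1,\delta_2$ is holomorphic for $\Re(s)>0$. The only cosmetic difference is that the paper cites this last fact directly from the proof of \cite[Theorem~8.2]{MR701565} in the non-archimedean case, whereas you re-derive it by writing out the Steinberg product formula and invoking Lemma~\ref{lem cusprs}; the archimedean arguments are identical.
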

\begin{proof}
By the multiplicativity, Theorem \ref{thm mult rel L fct}, it suffices to consider the case where  $\pi_1,\pi_2\in \Pi_F(-\frac12,\frac12)$ are essentially square integrable. Since $L(s,\nu^{\alpha_1}\pi_1, \nu^{\alpha_2}\pi_2)=L(s+\alpha_1+\alpha_2,\pi_1,\pi_2)$ this case follows from the fact that if $\delta_1$ and $\delta_2$ are (unitary) square-integrable then $L(s,\delta_1,\delta_2)$ is holomorphic whenever $\Re(s)>0$. In the non-archimedian case this is the displayed statement (6) in the proof of  \cite[Theorem 8.2]{MR701565}. In the archimedean case it is a simple observation that $\phi_{\delta_1}\otimes \phi_{\delta_2}$ decomposes as a direct sum of parameters of unitary square integrable representations. The lemma follows from the direct description of the associated $L$-factors and holomorphicity of $\Gamma(s)$ for $\Re(s)>0$.
\end{proof}

\begin{lemma}\label{lem asai L}
Let $\pi$ be an irreducible and essentially square integrable representation of $\GL_n(E)$ such that $\abs{r(\pi)}<\frac12$.
Then $\pi$ is $\GL_n(F)$-distinguished if and only if $L(s,\pi,\As^+)$ has a pole at $s=0$ and in this case the pole is simple.
\end{lemma}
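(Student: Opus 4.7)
The plan is to reduce both directions of the equivalence to the case of a unitary square-integrable $\pi$, and there to appeal to the known classification of distinguished discrete series by their Asai $L$-function. In the non-archimedean case this is the theorem of Anandavardhanan--Kable--Tandon; in the archimedean case $F=\R$, $E=\C$, essentially square-integrable representations of $\GL_n(\C)$ exist only for $n=1$, so $\pi$ is a character of $\C^\times$ and the claim reduces to an explicit $\Gamma$-function computation for $L(s,\chi|_{\R^\times})$. Write $\pi=\abs{\cdot}_E^{r}\circ\nrd\cdot\pi_0$ with $r=r(\pi)$ and $\pi_0$ unitary square-integrable. The one-dimensional identity $\As^+(\abs{\cdot}_E^{r})=\abs{\cdot}_F^{2r}$, combined with the compatibility of tensor (multiplicative) induction with character twists, yields
\[L(s,\pi,\As^+)=L(s+2r,\pi_0,\As^+),\]
which is the bridge between the general case and the unitary case.

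First I would treat \emph{distinguished $\Rightarrow$ simple pole at $s=0$}. Distinction forces the central character $\omega_\pi$ to be trivial on the subgroup $F^\times\hookrightarrow E^\times$ of scalar matrices in $\GL_n(E)$. Decomposing $\omega_\pi|_{F^\times}=\abs{\cdot}_F^{2nr}\cdot\omega_{\pi_0}|_{F^\times}$ and separating the unitary factor $\omega_{\pi_0}|_{F^\times}$ from the positive-real factor $\abs{\cdot}_F^{2nr}$, both must be trivial separately, so $r=0$. Hence $\pi=\pi_0$ is unitary square-integrable and distinguished, and the cited classification delivers the simple pole of $L(s,\pi,\As^+)$ at $s=0$.

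For the converse, the displayed twisting identity converts a pole of $L(s,\pi,\As^+)$ at $s=0$ into a pole of $L(s,\pi_0,\As^+)$ at $s=2r$, with $\abs{2r}<1$ by hypothesis. The crucial input is that for the unitary square-integrable $\pi_0$, the Asai $L$-function is holomorphic on the strip $0<\Re(s)<1$ and its only possible pole on $\Re(s)=0$ is at $s=0$. Holomorphicity in $\Re(s)>0$ follows from the convergence range of Flicker's Rankin-Selberg integral; absence of poles elsewhere on $\Re(s)=0$ follows from the factorization $L(s,\pi_0\times\pi_0^\theta)=L(s,\pi_0,\As^+)\,L(s,\pi_0,\As^-)$ combined with the pole structure of Rankin-Selberg $L$-factors of unitary square-integrables, cf.\ Lemma \ref{lem pairs} and Lemma \ref{lem dqr int@1rs}. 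This forces $r=0$, and the unitary case then completes the proof.

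The hard part will be pinning down the precise pole locus of $L(s,\pi_0,\As^+)$ for $\pi_0$ unitary square-integrable on the strip $\abs{\Re(s)}<1$; once this is in hand, the rest of the argument is either the central-character bookkeeping above or a direct appeal to the distinction classification for the unitary case.
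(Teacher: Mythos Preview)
Your plan is essentially the paper's proof: reduce to the unitary case, quote Anandavardhanan--Kable--Tandon in the $p$-adic case, and do the $n=1$ character computation over $\C/\R$ in the archimedean case. The paper also argues the non-unitary case by saying ``$\pi$ is clearly not distinguished'' (central character) and ``$L(s,\pi,\As^+)$ is holomorphic at $s=0$'' by Lemma~\ref{lem extasai}, and derives simplicity from the factorization \eqref{eq asai rs decomp} together with Lemma~\ref{lem pairs} and Flicker's $\pi^\theta\simeq\pi^\vee$ for distinguished discrete series.

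There is, however, a genuine gap in your converse step. You assert that for unitary square-integrable $\pi_0$ the Asai factor is holomorphic on $0<\Re(s)<1$ and that its only possible pole on $\Re(s)=0$ is at $s=0$; but $2r$ is \emph{real} and may be negative, so you also need holomorphy on $(-1,0)$, which your stated region does not cover. Moreover, the lemmas you cite do not do the job: Lemma~\ref{lem pairs} concerns $L(s,\pi_0,\pi_0^\vee)$, not $L(s,\pi_0,\pi_0^\theta)$, and these coincide only once you already know $\pi_0^\theta\simeq\pi_0^\vee$, i.e.\ conjugate self-duality, which is part of what you are trying to prove. The clean fix is exactly what the paper does in Lemma~\ref{lem extasai}: use the explicit Jacquet--Piatetski-Shapiro--Shalika formula $L(s,\St_k(\rho_0),\St_k(\rho_0^\theta))=\prod_{i=0}^{k-1}L(s+i,\rho_0,\rho_0^\theta)$ together with Lemma~\ref{lem cusprs} (poles of cuspidal Rankin--Selberg factors lie only on $\Re(s)=0$) to see that $L(s,\pi_0,\pi_0^\theta)$, hence $L(s,\pi_0,\As^+)$, is holomorphic at every real $s$ with $0<|s|<1$. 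That forces $r=0$ and closes the argument. Your appeal to ``convergence of Flicker's integral'' for $\Re(s)>0$ is also not the right mechanism; what actually gives holomorphy there is that $L(s,\pi_0,\As^+)$ divides $L(s,\pi_0,\pi_0^\theta)$ and the latter is pole-free for $\Re(s)>0$ by the tempered bound.
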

\begin{proof}
In the archimedean case, $\pi$ is a character of $\C^\times$ and $L(s,\pi,\As^+)=L(s,\chi)$ where $\chi$ is the restriction of $\pi$ to $\BR^\times$. Note that $\pi$ is $\BR^\times$-distinguished if and only if $\chi=\triv_{\BR^\times}$ and $L(s,\chi)$ has a pole at $s=0$ if and only if $\chi=\triv_{\BR^\times}$ in which case the pole is simple. The statement follows.

Consider the non-archimedean case. If $\pi$ is unitary then the equivalence of conditions is \cite[Corollary 1.5]{MR2063106}. Otherwise, $\pi$ is clearly not distinguished and it follows from Lemma \ref{lem extasai} that $L(s,\pi,\As^+)$ is holomorphic at $s=0$. The simplicity of the pole follows from \eqref{eq asai rs decomp}, Lemma \ref{lem pairs} and \cite[Proposition 12]{MR1111204}.
\end{proof}
\begin{lemma}\label{lem lin period}
Let $\pi$ be an irreducible and essentially square integrable representation of $\GL_{k}(F)$ such that $\abs{r(\pi)}<\frac12$. 
\begin{enumerate}
\item\label{part odd} If $k$ is odd then $L(s,\pi,\wedge^2)$ is holomorphic at $s=0$.
\item\label{part even} If $k=2n$ is even then $\pi$ is $\GL_n(F)\times \GL_n(F)$-distinguished if and only if $L(s,\pi,\wedge^2)$ has a pole at $s=0$ and in this case the pole is simple.
\end{enumerate}
\end{lemma}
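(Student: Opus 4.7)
The proof mirrors the structure of Lemma \ref{lem asai L}, with the Asai L-function replaced by the exterior square and Kable's dichotomy replaced by its linear-period analog. I would begin with the archimedean case. Essentially square integrable representations of $\GL_k(F)$ for $F$ archimedean exist only for $k=1$ (over $\R$ or $\C$) or $k=2$ (over $\R$), so part \eqref{part odd} reduces to $k=1$ and part \eqref{part even} to $(k,n)=(2,1)$. For $k=1$, $\wedge^2$ is zero-dimensional and $L(s,\pi,\wedge^2)\equiv 1$. For $k=2$, $\wedge^2\phi_\pi=\det\phi_\pi$ corresponds via class field theory to the central character $\omega_\pi$, so $L(s,\pi,\wedge^2)=L(s,\omega_\pi)$ has a simple pole at $s=0$ if and only if $\omega_\pi$ is trivial. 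Necessity of $\omega_\pi=1$ for distinction by $T=\R^\times\times\R^\times$ follows from the central character constraint; sufficiency can be established by constructing a $T$-invariant linear form on the Kirillov model via the Mellin-type integral $W\mapsto\int W(\diag(a,1))\,d^\times a$, renormalized by $L(1/2,\pi)^{-1}$, which is nonzero since archimedean $L$-factors never vanish.

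For the non-archimedean case of part \eqref{part odd}, write $\pi=L(\Delta(\rho,a,b))$ with $\rho\in\CC_F$ a unitary supercuspidal representation of $\GL_m(F)$, $n=b-a+1$, and $r(\pi)=(a+b)/2$. The Langlands parameter factors as $\phi_\pi=\nu^{r(\pi)}\cdot\phi_\rho\boxtimes\operatorname{Sym}^{n-1}\operatorname{std}_2$. Since $k=nm$ is odd, both $n$ and $m$ are odd. Expanding gives
\[
\wedge^2\phi_\pi=\nu^{2r(\pi)}\bigl(\wedge^2\phi_\rho\otimes\operatorname{Sym}^2(\operatorname{Sym}^{n-1}\operatorname{std}_2)\oplus\operatorname{Sym}^2\phi_\rho\otimes\wedge^2(\operatorname{Sym}^{n-1}\operatorname{std}_2)\bigr).
\]
Because $n$ is odd, $\operatorname{Sym}^{n-1}\operatorname{std}_2$ is an orthogonal representation of $\SL_2(\C)$, so its $\wedge^2$ carries no trivial subrepresentation. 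Because $m$ is odd, $\phi_\rho$ is of odd dimension and cannot be symplectic, so $L(s,\rho,\wedge^2)$ is entire by Shahidi's self-dual dichotomy. Combined with $|r(\pi)|<1/2$, a direct bookkeeping of cuspidal supports shows that $L(s,\pi,\wedge^2)$ has no pole at $s=0$.

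For the non-archimedean case of part \eqref{part even} with $k=2n$, I would first dispose of the non-unitary range: if $0<|r(\pi)|<1/2$, the central character has nontrivial absolute value on the center, obstructing $\GL_n(F)\times\GL_n(F)$-distinction, while Lemma \ref{lem extasai} guarantees holomorphicity of $L(s,\pi,\wedge^2)$ at $s=0$, so both sides of the equivalence fail. For unitary square integrable $\pi$, the equivalence between $\GL_n(F)\times\GL_n(F)$-distinction and the presence of a pole of $L(s,\pi,\wedge^2)$ at $s=0$ is the exterior-square analog of Kable's theorem used in Lemma \ref{lem asai L}; it follows from the known characterization of Shalika-model existence in terms of the exterior square L-function (via the Jacquet--Shalika integral representation) together with the equivalence between Shalika and linear models for discrete series. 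Simplicity of the pole then follows from the factorization \eqref{eq rs decomp}: $L(s,\pi,\pi^\vee)$ has a simple pole at $s=0$ by Lemma \ref{lem pairs}, $L(s,\pi,\operatorname{Sym}^2)$ has no zeros (local $L$-factors never vanish), and the product identity forces $L(s,\pi,\wedge^2)$ to have a simple pole and $L(s,\pi,\operatorname{Sym}^2)$ to be holomorphic non-vanishing at $s=0$.

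The main obstacle is the non-archimedean case of part \eqref{part even} for unitary square integrable $\pi$, where I invoke the exterior-square analog of Kable's dichotomy. This is a nontrivial input requiring the Jacquet--Shalika integral representation and the equivalence between Shalika and linear-period distinction for essentially square integrable representations; precisely identifying and citing the right reference is the delicate part of the write-up.
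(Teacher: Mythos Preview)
Your overall structure matches the paper's proof: the archimedean reductions to $k\in\{1,2\}$, the non-unitary disposal via Lemma \ref{lem extasai}, and the simplicity argument via \eqref{eq rs decomp} and Lemma \ref{lem pairs} are handled the same way. The two substantive differences are as follows. For non-archimedean odd $k$ you argue on the parameter side via the decomposition of $\wedge^2(\phi_\rho\boxtimes\operatorname{Sym}^{n-1})$, whereas the paper quotes explicit identities from \cite{MR4074055}: first $L(s,\rho,\wedge^2)=1$ for cuspidal $\rho$ of odd rank (Theorem~3.6 there), then the product formula for $L(s,\St_{2t+1}(\rho),\wedge^2)$ (Theorem~5.12 there), after which \eqref{eq rs decomp} and Lemma \ref{lem cusprs} finish. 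Your computation amounts to rederiving this factorization, but the claim that $L(s,\rho,\wedge^2)$ is \emph{entire} by Shahidi's dichotomy is too strong---the dichotomy only excludes a pole at $s=0$; you still need holomorphy at the shifted points $2r(\pi)+2j$, which for $j\ge 1$ requires \eqref{eq rs decomp} and Lemma \ref{lem cusprs}. The paper's route via Jo avoids this bookkeeping entirely. For the non-archimedean unitary case of part \eqref{part even}, the precise reference you are looking for is \cite[Proposition 3.4]{MR3619910} (see also \cite[Theorem 6.2]{MR3953435}), which the paper cites directly rather than passing through Shalika models; for simplicity of the pole the paper invokes \cite[Theorem 1.1]{MR1394521} where you use non-vanishing of local $L$-factors, and both arguments implicitly rely on $\pi\simeq\pi^\vee$ once a pole of $L(s,\pi,\wedge^2)$ at $s=0$ is assumed.
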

\begin{proof}
Consider first the case where $k$ is odd. If $k=1$ then $L(s,\pi,\wedge^2)=1$. This takes care, in particular, of the archimedean case. Consider the non-archimedean case. If  $\pi$ is cuspidal then $L(s,\pi,\wedge^2)=1$ by \cite[Theorem 3.6]{MR4074055}. Otherwise, write $\pi=\St_{2t+1}(\rho)$ for $t\in \N$ and $\rho$ cuspidal. Applying in addition \cite[Theorem 5.12]{MR4074055} we have
\[
L(s,\pi,\wedge^2)=\prod_{j=0}^t L(s+2j,\rho,\wedge^2) \cdot \prod_{j=1}^t L(s+2j-1,\rho,\Sym^2)=\prod_{j=1}^t L(s+2j-1,\rho,\Sym^2)
\]
which is holomorphic at $s=0$ by \eqref{eq rs decomp} and Lemma \ref{lem cusprs}. For the rest of the proof we assume that $k$ is even. 

In the archimedean case $\pi$ is a representation of $\GL_2(\R)$. Let $\Omega$ be the character of $W_\C=\C^\times$ such that $\phi_\pi=\Ind_{W_\R}^{W_\C}(\Omega)$. Then $L(s,\pi,\wedge^2)=L(s,\chi)$ where $\chi$ is the character of $\BR^\times$ given by $\chi(t)=\Omega(t^{\frac12})$, $t>0$ and $\chi(-1)=-\Omega(-1)$. Thus $L(s,\pi,\wedge^2)$ has a pole at $s=0$ if and only if $\chi=\triv_{\R^\times}$ if and only if $\pi$ has a trivial central character and when this is the case the pole is simple. It is a simple observation that $\pi$ is $\GL_1(\R)\times \GL_1(\R)$-distinguished if and only if it has a trivial central character. (If $\pi$ has a trivial central character, integration over the torus $\diag(a,1)$, $a\in \R^\times$ converges on the Whittaker model of $\pi$ and defines a non-zero $\GL_1(\R)\times \GL_1(\R)$-invariant linear form.) The lemma follows in this case. 

Consider the non-archimedean case. If $\pi$ is unitary the equivalence of the conditions is \cite[Proposition 3.4]{MR3619910}  (see also \cite[Theorem 6.2]{MR3953435}). If $\pi$ is not unitary then it is clearly not $\GL_n(F)\times \GL_n(F)$-distinguished and $L(s,\pi,\wedge^2)$ is holomorphic at $s=0$ by Lemma  \ref{lem extasai}.
The simplicity of the pole follows from \eqref{eq rs decomp}, Lemma \ref{lem pairs} and \cite[Theorem 1.1]{MR1394521}. 
\end{proof}

	\subsection{The global $L$-functions}
	
 Here $F$ is a number field and $G=G_m(\A)$ where $G_m$ is one of the groups defined in cases \namelink{lin}, \namelink{twlin1},  \namelink{twlin2}, \namelink{gal1},\namelink{gal2}. The global $L$-functions under consideration in this paper are by definition the completed $L$-functions obtained by meromorphic continuation of the product over all places of $F$ of the local, previously defined, $L$-factors which is known to converge in some right half plane of $\C$. For $\pi\in \mathcal{C}^*(G)$ we consider the:
\begin{itemize}
	\item Standard $L$-function: \[L (s,\JL(\pi))  = \prod_{v} L (s,\JL(\pi_{v}));\]
	\item Asai $L$-functions: \[L (s,\JL(\pi),\as^{\mathfrak{e}})  =\prod_{v} L (s,\JL(\pi_{v}),\as^{\mathfrak{e}}),  \ \ \  \mathfrak{e}\in\{+,-\}\]  in the Galois cases \namelink{gal1} and \namelink{gal2}; 
	\item Exterior-square $L$-function: \[L (s,\JL(\pi), \wedge^2) =\prod_{v} L (s,\JL(\pi_{v}),\wedge^2);\]
	\item Symmetric-square $L$-function: \[L (s,\JL(\pi), \Sym^2)  = \prod_{v} L (s,\JL(\pi_{v}),\Sym^2).\]
	\end{itemize}

	We will also use the following global functional equations, which are available thanks to the Langlands-Shahidi method (see \cite[Theorem 7.7]{MR1070599}).
	
	\begin{theorem}\label{thm gfe LS}
	With the above notation, there are nowhere vanisihing entire functions on $\C$ denoted by the letter $\epsilon$ below, such that: 
	\begin{itemize}
	\item For standard L functions \[L (1-s,\JL(\pi)^\vee)=\epsilon(s,\JL(\pi))L (s,\JL(\pi))\] 
	\item  in the Galois cases \namelink{gal1} and \namelink{gal2} we have 
	\[L (1-s,\JL(\pi)^\vee, \as^\mathfrak{e})=\epsilon(s,\JL(\pi), \as^\mathfrak{e})L (s,\JL(\pi), \as^\mathfrak{e}),\ \ \ \mathfrak{e}\in\{+,-\}\] 
	\item in cases \namelink{lin}, \namelink{twlin1} and \namelink{twlin2} we have
	\[L (1-s,\JL(\pi)^\vee, \wedge^2)=\epsilon(s,\JL(\pi), \wedge^2)L (s,\JL(\pi), \wedge^2),\] and 
	\[L (1-s,\JL(\pi), \Sym^2) = \epsilon(s,\JL(\pi), \Sym^2) L (s,\JL(\pi), \Sym^2) .\]
	\end{itemize}
	\end{theorem}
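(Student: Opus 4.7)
The plan is to deduce each of the three functional equations from Shahidi's general global functional equation for L-functions arising via the Langlands-Shahidi method \cite[Theorem 7.7]{MR1070599}. The first step is to observe that since $\pi \in \CC^*(G)$, the Jacquet-Langlands transfer $\JL(\pi)$ is a cuspidal automorphic representation of a general linear group: of $\GL_{dm}(\A)$ in cases \namelink{lin}, \namelink{twlin1}, \namelink{twlin2} and of $\GL_{dm}(\A_E)$ in the Galois cases \namelink{gal1}, \namelink{gal2}. Consequently, the completed L-functions in the statement are Langlands L-functions $L(s,\JL(\pi),r)$ attached to a cuspidal automorphic representation of a general linear group and to the finite-dimensional representations $r \in \{\mathrm{Std},\wedge^2,\Sym^2,\As^+,\As^-\}$ of the corresponding Langlands dual group.

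Next I would identify each $r$ as one of the irreducible summands of the adjoint action of the L-group of a Levi $\mathbf{M}$ on the Lie algebra of the unipotent radical of a maximal parabolic $\mathbf{P}$ in an ambient quasi-split reductive group: the exterior square via the Siegel parabolic of a split even special orthogonal group with Levi $\GL_n$; the symmetric square via the Siegel parabolic of $\Sp_{2n}$ with Levi $\GL_n$; the Asai representation via the Siegel parabolic of the quasi-split unitary group attached to $E/F$, whose Levi is $\Res_{E/F}(\GL_n)$; and the standard representation via the maximal parabolic in $\GL_{n+1}$ with Levi $\GL_n \times \GL_1$ (alternatively by Godement-Jacquet). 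In each case, applying \cite[Theorem 7.7]{MR1070599} to $\JL(\pi)$ viewed as a cuspidal automorphic representation of the corresponding Levi yields both meromorphic continuation and a functional equation, with global $\epsilon$-factor defined as the product over all places of the local Shahidi $\epsilon$-factors. That this product is nowhere vanishing and entire on $\C$ is inherited from the corresponding local properties recalled in Section \ref{sec local L fct}.

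The main point to verify is that the Shahidi L-factors appearing in the resulting global functional equations coincide with the Artin L-factors used to define our global L-functions. For L-factors of pairs this coincidence is due to \cite{MR729755} in the non-archimedean case and \cite{MR2533003}, \cite{MR1159102} in the archimedean case; for exterior-square, symmetric-square and Asai factors in the non-archimedean case it is \cite{MR2595008}; in the archimedean case it follows from \cite{MR816396}. Each of these identifications is already recalled in Section \ref{sec local L fct}. The one subtlety, and thus the main bookkeeping obstacle, is our convention declaring the local Asai factor at a place of $F$ that splits in $E$ to be the corresponding L-factor of pairs; at such places Shahidi's machinery reduces to the theory for pairs and the two local functional equations match, so no incompatibility occurs. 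Taking the product of the local identifications over all places, Shahidi's global functional equation transcribes verbatim into the three displayed identities.
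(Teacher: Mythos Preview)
Your proposal is correct and follows exactly the approach of the paper, which simply cites \cite[Theorem 7.7]{MR1070599} without further elaboration. You have supplied the details the paper omits: the identification of each $L$-function with a Langlands--Shahidi $L$-function attached to a specific parabolic, and the verification (via the references already gathered in Section \ref{sec local L fct}) that the local Shahidi factors agree with the Artin factors used in the definitions.
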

	
We will also use the functional equation of the corresponding partial L-functions. For this we fix a non-trivial automorphic character $\psi=\otimes'_v \psi_v$ of $\A$, for $v$ varying in the set of places of $F$. For $S$ a finite set of places of $F$ and $\pi$ as above, we set 
\[L^S(s,\JL(\pi),\star):=\prod_{v\notin S} L (s,\JL(\pi_{v}),\star).\] The following version of the global functional equation follows from the equality  
\[\epsilon(s,\JL(\pi),\star)= \prod_{v} \epsilon(s,\JL(\pi_v),\star, \psi_v)\] and the triviality of $\epsilon(s,\JL(\pi_v),\star, \psi_v)$ for $v$ outside a large enough set of places of $F$ containing all archimedean ones.

\begin{corollary}\label{cor gfe LS}
With notation as in Theorem \ref{thm gfe LS}, there exists a finite set of places $S_0$ of $F$ containing all archimedean places, such that for any finite set $S_0\subseteq S$: 
\[L^S(s,\JL(\pi),\star)=L^S(1-s,\JL(\pi)^\vee,\star)\prod_{v\in S} \gamma (s,\JL(\pi_{v}),\star,\psi_v) .\]
\end{corollary}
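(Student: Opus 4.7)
The plan is to deduce the partial functional equation directly from the global functional equation in Theorem \ref{thm gfe LS} by separating out the local factors at places in $S$. First I would choose $S_0$ to consist of all archimedean places, all finite places where $\pi_v$ is ramified, and all finite places where the additive character $\psi_v$ is ramified. At every $v \notin S_0$ the representation $\JL(\pi_v)$ is unramified (since the Jacquet--Langlands transfer preserves the unramified places outside a finite set) and $\psi_v$ has conductor $\mathcal{O}_{F_v}$; by the standard computation of local $\epsilon$-factors for unramified data coupled with an unramified additive character, one has $\epsilon(s,\JL(\pi_v),\star,\psi_v)=1$ for all such $v$, for each of the choices $\star\in\{\mathrm{Std},\wedge^2,\Sym^2,\As^\pm\}$. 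Consequently, for every $S\supseteq S_0$ the paper's product formula $\epsilon(s,\JL(\pi),\star)=\prod_v \epsilon(s,\JL(\pi_v),\star,\psi_v)$ collapses to a finite product over $S$.

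Next I would split the completed $L$-functions into a finite piece $L_S(s,\JL(\pi),\star):=\prod_{v\in S}L(s,\JL(\pi_v),\star)$ and the partial $L$-function $L^S(s,\JL(\pi),\star)$, and similarly for $\JL(\pi)^\vee$. Substituting into the global functional equation of Theorem \ref{thm gfe LS} gives
\[
L_S(1-s,\JL(\pi)^\vee,\star)\,L^S(1-s,\JL(\pi)^\vee,\star)=\Bigl(\prod_{v\in S}\epsilon(s,\JL(\pi_v),\star,\psi_v)\Bigr)L_S(s,\JL(\pi),\star)\,L^S(s,\JL(\pi),\star).
\]
Solving for $L^S(s,\JL(\pi),\star)$ and regrouping the finite-place factors yields
\[
L^S(s,\JL(\pi),\star)=L^S(1-s,\JL(\pi)^\vee,\star)\prod_{v\in S}\frac{\epsilon(s,\JL(\pi_v),\star,\psi_v)\,L(1-s,\JL(\pi_v)^\vee,\star)}{L(s,\JL(\pi_v),\star)}.
\]
By the definition of the local gamma factor recalled before the corollary, each factor in the last product is precisely $\gamma(s,\JL(\pi_v),\star,\psi_v)$, which gives the claimed identity.

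There is no real obstacle here: the only genuinely non-formal input is the triviality of the local $\epsilon$-factors outside a finite set of places, which in turn relies on knowing that the bad-place set for the Langlands--Shahidi $\epsilon$-factors of $\JL(\pi)$ (for each of the relevant symmetric/exterior square or Asai constructions) is finite. That finiteness, however, is built into the Langlands--Shahidi machinery invoked in Theorem \ref{thm gfe LS} and is standard; once it is in hand, the rest of the argument is the elementary algebraic manipulation above.
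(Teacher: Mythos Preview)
Your approach is correct and is exactly the one the paper itself uses: the corollary is deduced from Theorem~\ref{thm gfe LS} together with the product formula $\epsilon(s,\JL(\pi),\star)=\prod_v \epsilon(s,\JL(\pi_v),\star,\psi_v)$ and the triviality of the local $\epsilon$-factors outside a finite set of places. Your write-up simply fills in the routine algebraic details that the paper leaves implicit.
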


Let $G=G_m(\A)$ be defined by one of the cases \namelink{lin}, \namelink{twlin1},  \namelink{twlin2}, \namelink{gal1},\namelink{gal2} with the extra assumption that $D=F$. That is, $G=\GL_a(\A_E)$ in the Galois cases \namelink{gal1} and \namelink{gal2} and $G=\GL_a(\A)$ otherwise where $a$ is defined by \eqref{eq a&D} after Remark \ref{rem arithmetic vs geometric}.
	\begin{theorem}\label{thm non vanishing s=1}
	Assume that $D=F$ and let $\pi\in \CC(G)$. Then:
	\begin{itemize}
	\item in the Galois cases \namelink{gal1} and \namelink{gal2} we have  
	\[0\leq \Ord_{s=0}(L (s, \pi , \as^\pm ))\ \ \ \text{and}\ \ \ \Ord_{s=0}(\ L (s, \pi , \as^+))+\Ord_{s=0}(L (s, \pi , \as^-))\leq 1,\] where the second inequality is an equality if and only if 
	$\pi^\theta\simeq \pi^\vee$; 
	\item in cases \namelink{lin}, \namelink{twlin1} and \namelink{twlin2} we have 
	\[
	0\leq \Ord_{s=0}(L (s, \pi , \wedge^2 )), \ 0\leq \Ord_{s=0}(L (s, \pi , \Sym^2 ))\] and
	\[ \Ord_{s=0}(L (s, \pi , \wedge^2))+\Ord_{s=0}(L (s, \pi , \Sym^2))\leq 1,\] where the last inequality is an equality if and only if 
	$\pi\simeq \pi^\vee$. 
	\item If $a\ne 1$ then the standard $L$-function $L(s, \pi )$ is entire on $\C$. 
	\end{itemize}
	\end{theorem}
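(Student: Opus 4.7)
The plan is to combine three inputs: global multiplicativity of $L$-factors (the global analogue of Theorem \ref{thm mult rel L fct}), the analytic properties of Jacquet--Piatetski-Shapiro--Shalika Rankin--Selberg $L$-functions on pairs of general linear groups, and Shahidi's non-vanishing theorem at the edge of the critical strip together with the functional equations of Theorem \ref{thm gfe LS}.

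For the Galois cases, I would start from the global factorization
\[
L(s,\pi,\pi^\vartheta)=L(s,\pi,\As^+)\,L(s,\pi,\As^-),
\]
where $\vartheta$ is the non-trivial $E/F$-Galois automorphism; this holds place by place by \eqref{eq asai rs decomp} together with the convention at split places. The left-hand side is the completed Rankin--Selberg $L$-function of a pair of cuspidal representations of $\GL_a(\A_E)$, so by Jacquet--Piatetski-Shapiro--Shalika it is entire unless $\pi^\vartheta\simeq \pi^\vee$, in which case its only poles are simple poles at $s=0$ and $s=1$. In particular $\Ord_{s=0}$ of the left-hand side equals $1$ exactly when $\pi^\vartheta\simeq \pi^\vee$, and $0$ otherwise. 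Combining Shahidi's non-vanishing of $L(s,\pi^\vee,\As^\pm)$ on the line $\Re(s)=1$ with the functional equation of Theorem \ref{thm gfe LS} then yields $L(0,\pi,\As^\pm)\ne 0$, i.e.\ $\Ord_{s=0}(L(s,\pi,\As^\pm))\ge 0$. Summing these two orders recovers $\Ord_{s=0}$ of the product, and both the upper bound and the criterion for equality follow.

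The exterior-square and symmetric-square statements follow by the same argument, this time starting from $L(s,\pi,\pi)=L(s,\pi,\wedge^2)\,L(s,\pi,\Sym^2)$, with the condition $\pi^\vartheta\simeq \pi^\vee$ replaced by $\pi\simeq \pi^\vee$ and Shahidi's non-vanishing invoked for $L(s,\pi,\wedge^2)$ and $L(s,\pi,\Sym^2)$. Finally, that $L(s,\pi)$ is entire whenever $a\ge 2$ is the Godement--Jacquet theorem for the standard $L$-function of a cuspidal representation of $\GL_a$. The main delicate point throughout is to locate and apply the correct non-vanishing input on the line $\Re(s)=0$ for each of the three families of Langlands--Shahidi $L$-functions; once that is in hand, the remainder of the argument is a straightforward comparison of pole orders across the multiplicativity identities.
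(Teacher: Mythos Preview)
Your proposal is correct and follows essentially the same route as the paper: the paper also uses the factorizations \eqref{eq rs decomp} and \eqref{eq asai rs decomp}, the Jacquet--Shalika analytic properties of pair $L$-functions, the functional equations of Theorem~\ref{thm gfe LS}, and Godement--Jacquet for the third item. The only cosmetic difference is that the paper first transfers everything to $s=1$ via the functional equation and then invokes non-vanishing there (citing Gelbart--Lapid rather than Shahidi), whereas you apply non-vanishing at $s=1$ and then transfer to $s=0$; these are the same argument.
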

	\begin{proof}
	The third point follows from \cite{MR0342495}. 
	For the first two points, applying the functional equations of Theorem \ref{thm gfe LS} we may replace $\Ord_{s=0}$ by $\Ord_{s=1}$ throughout.
	The non-vanishing of each of the $L$-functions (the inequalities $\ge 0$) is an immediate consequence of \cite[Theorem 1]{MR2230919}. The remaining two inequalities with the criteria for equality follow from the factorizations \eqref{eq rs decomp} and \eqref{eq asai rs decomp}
together with the analytic properties of $L$-functions of pairs established in \cite{MR623137} and \cite{MR618323}. 
	\end{proof}

\subsection{The $L$-factors attached to the symmetric pair}\label{ss Lfactors for pairs}

Let $(G,H,\theta)=(G_m,H_m,\theta_m)_\x$ for $\x\in \{\namelink{lin},\namelink{twlin1},\namelink{gal2},\namelink{twlin2},\namelink{gal1},\namelink{gp}\}$ (in the global set up we never consider the case $\x=\namelink{gp}$). For an irreducible representation $\pi$ of $G(F)$ if $F$ is local and in $\mathcal{C}^*(G(\A))$ in the global case, we consider the product of $L$-functions $\L(s,\pi,\theta)$ defined as follows.
	\begin{itemize}
	\item The Bump-Friedberg $L$-function
	\[\L(s,\pi,\theta)=L(s+\frac12,\JL(\pi))^2L(2s,\JL(\pi),\wedge^2)\]
	in the linear period case \namelink{lin};
	\item  The Guo-Jacquet $L$-function \[\L(s,\pi,\theta)=L(s+\frac12,\BC_{F}^{E}(\JL(\pi)))L(2s,\JL(\pi),\wedge^2)\]
	in the twisted linear period cases \namelink{twlin1} and \namelink{twlin2};
	\item  The even Asai $L$-function
	\[\L(s,\pi,\theta)=L(2s,\JL(\pi),\As^+)\]
	in the Galois cases \namelink{gal1}, \namelink{gal2}, \namelink{gp}. 
	\end{itemize}
In the twisted linear cases $\BC_{F}^{E}$ stands for quadratic Base-Change and 
\[L(s,\BC_{F}^{E}(\JL(\pi))=L(s,\JL(\pi))L(s,\eta_{E/F}\otimes \JL(\pi))\] where 
$\eta_{E/F}$ is the quadratic character attached to the quadratic extension $E/F$ by class field theory (local or global). 

We further attach to the data $(\pi,\theta)$ the auxiliary $L$-function
\[
\DL(s,\pi,\theta)=\begin{cases}
L(2s+1,\JL(\pi),\Sym^2) & x\in\{\namelink{lin},\namelink{twlin1},,\namelink{twlin2}\}\\
L(2s+1,\JL(\pi),\As^-) & x\in\{\namelink{gal1}, \namelink{gal2}, \namelink{gp} \}.
\end{cases}
\]

For uniformity of notation we often follow the following convention, we write
\[
L^+(s,\Pi)=\begin{cases} L(s,\Pi,\wedge^2) &\text{ in cases }\namelink{lin}, \namelink{twlin1} \text{ and }\namelink{twlin2}\\ L(s,\Pi,\As^+) &\text{ in cases }\namelink{gal1},  \namelink{gal2} \text{ and } \namelink{gp}\end{cases} 
\]
and
\[
L^-(s,\Pi)=\begin{cases} L(s,\Pi,\Sym^2) &\text{ in cases }\namelink{lin}, \namelink{twlin1} \text{ and }\namelink{twlin2}\\ L(s,\pi,\As^-) &\text{ in cases }\namelink{gal1},  \namelink{gal2} \text{ and } \namelink{gp} \end{cases}
\]
in the local (resp. global) set-up, for an irreducible (resp. an irreducible cuspidal automorphic) representation $\Pi$ of $G(F)$ (resp. $G(\A)$) for some $n\in \N$.  

Using the above mentioned properties of local $L$-factors, one can deduce the exact order of pole at $s=0$ of $\L(s,\pi,\theta)$ for a distinguished representation $\pi\in \Pi_\D(-\frac12,\frac12)$.	
	\begin{theorem}\label{thm order pole local L}
	Assume $F$ is local and let $(G,H,\theta)=(G_m(F),H_m(F),\theta_m)_\x$, 
	\[
	\x\in \{\namelink{lin},\namelink{twlin1},\namelink{twlin2},\namelink{gal1},\namelink{gal2},\namelink{gp}\}.
	\] 
	Let $\D$ be defined by \eqref{eq a&D} after . Write $\vartheta$ for the identity automorphism of $F$ in cases \namelink{lin}, \namelink{twlin1} and \namelink{twlin2} and for the $E/F$-Galois involution in cases \namelink{gal1} and \namelink{gal2}.
	
\begin{enumerate}
\item\label{part disc} For an irreducible, square integrable and distinguished representation $\pi$ of $G$ we have 
\[
\Ord_{s=0}(\L(s,\pi,\theta))=1.
\]
\item\label{part basic} For an irreducible representation $\pi$ of $G$ of the form $\pi=\tau\times \tau^*$ where $\tau$ is essentially square integrable, not distinguished and such that $\abs{r(\tau)}<\frac12$ we have
\[
\Ord_{s=0}(\L(s,\pi,\theta))=\begin{cases} 1 & \pi \text{ is }H\text{-compatible (See Definition \ref{def compatible})}\\
3 & \text{otherwise}.
\end{cases}
\]
\item\label{part multiplicative} Let $\pi=\pi_1\times \pi_2$ be a representation of $G$ and $\pi_1,\pi_2\in \Pi_\D(-\frac12,\frac12)$ with $\pi_i$ a representation of $G_{m_i}(F)$ (so that $\L(s,\pi_i,\theta)$ is defined), $i=1,2$. Then
\[
\Ord_{s=0}(\L(s,\pi,\theta))=k_1+k_2+k
\]
where
$k_i=\Ord_{s=0}(\L(s,\pi_i,\theta)),\ i=1,2\ \ \ \text{and} \ \ \ k=\Ord_{s=0}(L(s,\JL(\pi_1),\JL(\pi_2)^\vartheta)$.
\item\label{part denominatorL} For $\pi\in \Pi_\D(-\frac12,\frac12)$, $\Ord_{s=0}(\DL(s,\pi,\theta))=0$.
\end{enumerate}
	\end{theorem}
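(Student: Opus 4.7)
The plan is to establish the four parts in the order (3), (4), (1), (2), so that the multiplicativity established in~(3) drives the analysis in~(2) and~(4). Part~(3) unwinds directly from Theorem~\ref{thm mult rel L fct}: in the linear case for instance, the factorizations
\[
L(s+\tfrac12,\JL(\pi_1\times\pi_2))=L(s+\tfrac12,\JL(\pi_1))L(s+\tfrac12,\JL(\pi_2))
\]
and
\[
L(2s,\JL(\pi_1\times\pi_2),\wedge^2)=L(2s,\JL(\pi_1),\wedge^2)L(2s,\JL(\pi_2),\wedge^2)L(2s,\JL(\pi_1),\JL(\pi_2))
\]
combine to give $\L(s,\pi_1\times\pi_2,\theta)=\L(s,\pi_1,\theta)\L(s,\pi_2,\theta)L(2s,\JL(\pi_1),\JL(\pi_2))$. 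In the Galois cases the analogous identity uses Asai multiplicativity and the cross factor becomes $L(2s,\JL(\pi_1),\JL(\pi_2)^\vartheta)$; in the twisted linear case one further uses $L(s,\BC_F^E(\Pi))=L(s,\Pi)L(s,\eta_{E/F}\otimes\Pi)$. Since $\Ord_{s=0}(L(2s,\cdot,\cdot))=\Ord_{s=0}(L(s,\cdot,\cdot))$, this yields~(3) with $\vartheta=\id$ outside the Galois cases.

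For Part~(4), multiplicativity of the $\Sym^2$ (resp.\ $\As^-$) factor applied to $\JL(\pi)=\JL(\delta_1)\times\cdots\times\JL(\delta_r)$ reduces the claim to showing that $L(1,\JL(\delta_i),\Sym^2)$ (resp.\ $L(1,\JL(\delta_i),\As^-)$) and the cross factors $L(1,\JL(\delta_i),\JL(\delta_j)^\vartheta)$ are holomorphic and non-vanishing. The cross factors are covered directly by Lemma~\ref{lem dqr int@1rs}. For the self-factors, \eqref{eq rs decomp} (resp.\ \eqref{eq asai rs decomp}) shows they divide $L(s,\JL(\delta_i),\JL(\delta_i))$ (resp.\ $L(s,\JL(\delta_i),\JL(\delta_i)^\vartheta)$), to which Lemma~\ref{lem dqr int@1rs} applies once more since $\JL(\delta_i)^{(\vartheta)}\in\Pi_{F'}(-\tfrac12,\tfrac12)$ as well. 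Non-vanishing is automatic from the shape of local $L$-factors.

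For Part~(1), $\pi$ and hence $\JL(\pi)$ are unitary square integrable. Lemma~\ref{lem std} renders the standard factor $L(s+\tfrac12,\JL(\pi))$ (and, in the twisted linear case, also $L(s+\tfrac12,\eta_{E/F}\otimes\JL(\pi))$) holomorphic at $s=0$, so
\[
\Ord_{s=0}(\L(s,\pi,\theta))=\Ord_{s=0}(L(2s,\JL(\pi),\wedge^2))\quad\text{or}\quad\Ord_{s=0}(L(2s,\JL(\pi),\As^+)).
\]
By Lemmas~\ref{lem lin period} and~\ref{lem asai L}, this order is either $0$ or $1$, and equals $1$ precisely when $\JL(\pi)$ is distinguished by the appropriate block-diagonal $\GL$ subgroup (resp.\ by the fixed-field subgroup). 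The distinction-transfer results for square integrable representations (as cited in the proof of Lemma~\ref{lem compatible}) propagate the distinction of $\pi$ to that of $\JL(\pi)$, forcing order exactly $1$.

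For Part~(2), apply Part~(3) with $\pi_1=\tau$ and $\pi_2=\tau^*$:
\[
\Ord_{s=0}(\L(s,\pi,\theta))=\Ord_{s=0}(\L(s,\tau,\theta))+\Ord_{s=0}(\L(s,\tau^*,\theta))+\Ord_{s=0}(L(s,\JL(\tau),\JL(\tau^*)^\vartheta)).
\]
A case-by-case tracking of the involution $\iota$ of Section~\ref{sss auxi}, using $\pi^\iota\simeq\pi^{\theta_m}$, yields $\JL(\tau^*)^\vartheta\simeq\JL(\tau)^\vee$ in all cases, so Lemma~\ref{lem pairs} forces the cross factor to have a simple pole at $s=0$. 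For the first two summands, Lemma~\ref{lem std} kills the standard factors at $s=0$; in the non-unitary range $0<|r(\tau)|<\tfrac12$, Lemma~\ref{lem extasai} gives holomorphicity of the $\wedge^2$ or $\As^+$ factor and $\tau$ is automatically $H$-compatible (since $\JL(\tau)$ is then non-unitary hence not distinguished), so these summands vanish; in the unitary range $r(\tau)=0$, Lemmas~\ref{lem lin period} and~\ref{lem asai L} combined with Definition~\ref{def compatible} show the summands equal $0$ in the $H$-compatible case and $1$ otherwise. Since $\tau$ and $\tau^*$ share distinction and compatibility status, summing yields the total of $1$ or $3$. The main obstacle, beyond the bookkeeping, is the case-by-case verification of $\JL(\tau^*)^\vartheta\simeq\JL(\tau)^\vee$ in the Galois and twisted linear setups and, in Part~(1), the invocation of the appropriate local distinction-transfer result for discrete series; both inputs are however available in the existing literature.
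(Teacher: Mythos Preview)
Your proposal is correct and follows essentially the same route as the paper: multiplicativity (Theorem~\ref{thm mult rel L fct}) for parts~(3) and~(4), reduction to the $\wedge^2$/$\As^+$ factor via Lemma~\ref{lem std} followed by distinction-transfer for $\JL(\pi)$ in part~(1), and the identity $\JL(\tau^*)^\vartheta\simeq\JL(\tau)^\vee$ together with Lemmas~\ref{lem pairs}, \ref{lem lin period}, \ref{lem asai L} in part~(2). Two small caveats worth tightening: your appeal to ``the proof of Lemma~\ref{lem compatible}'' for distinction-transfer only covers cases~\namelink{gal2} and~\namelink{lin} with $d$ odd, whereas the paper invokes \cite{MR3858402}, \cite{MR4226986}, \cite{MR4659866}, \cite{Sign}, \cite{MR3953435} to handle all six cases; and your use of Part~(3) with $\pi_1=\tau$ is formally illicit in cases~\namelink{lin}, \namelink{twlin2}, \namelink{gal1} when $\tau$ lives on $\GL_m(\D)$ with $m$ odd (so not on any $G_{m_i}(F)$), which is why the paper applies Theorem~\ref{thm mult rel L fct} directly to $L^+(s,\JL(\pi))$ rather than routing through Part~(3).
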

	\begin{proof}
The last part of the theorem follows from Lemma \ref{lem dqr int@1rs} and the factorizations
\eqref{eq rs decomp} and \eqref{eq asai rs decomp}. We proceed with the first three parts.	
	
Since both $\JL$ and quadratic base-change are equivariant with respect to unramified twists, Lemma \ref{lem std} applies to both $\JL(\pi)$ and $\BC_F^E(\JL(\pi))$ and we conclude that $L(s,\JL(\pi))$ (respectively, $L(s,\BC_F^E(\JL(\pi)))$) is holomorphic at $s=\frac12$ in case \namelink{lin} (respectively, in cases \namelink{twlin1} and \namelink{twlin2}).
We conclude that
in the first three parts of the theorem 
\[
\Ord_{s=0}(\L(s,\pi,\theta))=\Ord_{s=0}(L^+(s,\JL(\pi))).
\]

Consider first the case that $\pi$ is square-integrable and distinguished. Then $\JL(\pi)$ is also square integrable. 
In the Galois cases \namelink{gal1} and \namelink{gal2} we claim that $\JL(\pi)$ is a representation of $\GL_{md}(E)$ that is $\GL_{md}(F)$-distinguished. The theorem then follows from Lemma \ref{lem asai L} in this case. In the remaining cases we claim that $\JL(\pi)$ is a representation of $\GL_{2k}(F)$ that is $\GL_k(F)\times \GL_k(F)$-distinguished for some $k$. The theorem then follows from Lemma \ref{lem lin period}\eqref{part even} in this case.

Consider the Galois cases first. If $F$ is archimedean then $\pi$ is a character of $\C^\times$ that is trivial on $\R^\times$ and $\JL(\pi)=\pi$ is therefore distinguished. If $F$ is non-archimedean it follows from \cite[Theorem 1]{MR3858402} that $\JL(\pi)$ is distinguished.  

Consider now the twisted linear cases \namelink{twlin1} and \namelink{twlin2}. If $F$ is archimedean, $(G,H)$ is either $(\BH^\times,\C^\times)$ or $(\GL_2(\BR),\C^\times)$. It follows from \cite[Theorem 1.1]{MR4659866}  that $\phi_{\JL(\pi)}$ is symplectic. By the equivalent conditions preceding Lemma \ref{lem std}, $\JL(\pi)$ is a representation in the discrete series of $\GL_2(\R)$ with a trivial central character. It is therefore $\R^\times \times \R^\times$-distinguished (see the proof of Lemma \ref{lem lin period}). 
 If $F$ is non-archimedean it follows from \cite[Theorem 1.1]{MR4226986} that $\JL(\pi)$ is a representation of $\GL_{2k}(F)$ that is $\GL_k(F)\times \GL_k(F)$-distinguished where $k=dm/2$ in case  \namelink{twlin1} and $k=dm$ in case \namelink{twlin2}. 
 
 Finally, consider the linear case \namelink{lin}. If $F$ is archimedean then $G=\GL_2(\R)$ and $\pi=\JL(\pi)$ is distinguished by assumption. In the non-archimedean case it follows from 
 \cite[Theorem 3.20]{Sign} that $\pi$ has a Shalika model, therefore from \cite[Theorem 6.1]{MR3953435} that $\JL(\pi)$ has a Shalika model and consequently from \cite[Theorem 3.20]{Sign} that $\JL(\pi)$ is $\GL_k(F)\times \GL_k(F)$-distinguished.

This completes the case that $\pi$ is square integrable. We now consider the case where $\pi=\tau\times \tau^*$. We claim that 
\begin{equation}\label{eq iota jl sym}
\JL(\tau^*)^\vartheta\simeq \JL(\tau)^\vee.
\end{equation}
Indeed, in all cases except case \namelink{gal1} this is straightforward from the definition of $\iota$ and $\tau^*$  in Section \ref{sss auxi} and the fact that $\JL$ commutes with taking contragradient and Galois conjugation (the latter follows from the character identity in \cite[Th\'eor\`eme 2.3]{MR2329758} characterizing $\JL$). In case \namelink{gal1} $G=\GL_{2m}(C)$ and $\tau$ is a representations of $\GL_m(C)$. Let $\Pi=\tau \times \tau$. It is an irreducible representation of $G$ and as pointed out in Section \ref{sss auxi} it satisfies 
\[
\Pi^\iota\simeq \Pi^\theta.
\]
It follows from the character identity \cite[Th\'eor\`eme 3.1(a)]{MR2329758} characterizing $\JL$ that
\[
\JL(\Pi^\iota)\simeq \JL(\Pi^\theta)\simeq \JL(\Pi)^\vartheta.
\]
That is
\[
\JL(\tau^\iota)\times \JL(\tau^\iota)\simeq \JL(\tau)^\vartheta \times \JL(\tau)^\vartheta
\]
and since $\JL(\Pi^\iota)$ is generic it uniquely determines the multiset of essentially square integrable representations that it is induced from. That is, we conclude that  
\[
\JL(\tau^\iota)\simeq \JL(\tau)^\vartheta.
\]
Dualizing and applying $\vartheta$ we deduce \eqref{eq iota jl sym}.
It now follows from Theorem \ref{thm mult rel L fct} that
\[
L^+(s,\JL(\pi))=L^+(s,\JL(\tau))L^+(s,\JL(\tau^*))L(s,\JL(\tau),\JL(\tau)^\vee).
\]
It follows from Lemma \ref{lem pairs} that
\[
\Ord_{s=0}(L(s,\JL(\tau),\JL(\tau)^\vee))=1.
\]
If $\JL(\tau)$ is a representation of $\GL_{2k}(F)$ in the linear and twisted linear cases (respectively, of $\GL_t(E)$ in the Galois cases), in light of \eqref{eq iota jl sym}, $\JL(\tau)$ is distinguished with respect to $GL_k(F)\times \GL_k(F)$ (respectively, $GL_t(F)$) if and only if the same holds for $\JL(\tau^*)$. Consequently, the theorem in this case now follows from 
 Lemma \ref{lem lin period} (respectively, Lemma \ref{lem asai L}).
 
 Finally, consider the last part of the theorem where $\pi=\pi_1\times \pi_2$. Since $\JL(\pi)=\JL(\pi_1)\times \JL(\pi_2)$ the theorem in this case is immediate from the multiplicativity property of $L^+(s,\pi)$ in  Theorem \ref{thm mult rel L fct}.

\end{proof}

	\section{Distinguished standard modules}\label{sec::Dist}

Let $F$ be a local field. Let $(G,H,\theta)=(G_m,H_m,\theta_m)$ be defined by one of the cases \namelink{lin}, \namelink{twlin1},  \namelink{twlin2}, \namelink{gal1},\namelink{gal2} of \S\ref{sec: symmetric pair} and by abuse of notation write $\theta(g)$ for $\theta_k(g)$ for $g\in G_k(F)$ for any $k\in \BN$. Let $a$ and $\D$ be defined by \eqref{eq a&D} so that $G=\GL_a(\D)$. 

We use the notation introduced in Section \ref{ss sym}. In particular, $X=\{x\in G: x=\theta(x)^{-1}\}$ is a $G$-space and the map $gH\mapsto g\theta(g)^{-1}$ identifies $G/H$ with $G\cdot e$. We point out that in the cases that we consider, the auxiliary involution $\theta'$ is the identity on $W$.

\subsection{Parabolic orbits and stabilzers for $G/H$}\label{ss orbits}

Let $P=M U$ be a standard parabolic subgroup of $G$. Recall that the double coset space $P\bs G/H$ is in bijection with the $P$-orbits in $G\cdot e\subseteq X$.
A complete set of $P$-good representatives for $P$-orbits in $G\cdot e$  as well as explication of the stabilizer $M_x$ for each orbit representative $x$ can be deduced from the analysis in \cite{MR3719517} in cases \namelink{gal1} and \namelink{gal2}, from \cite{Sign} in case \namelink{lin}, and from \cite{MR3958071} in cases \namelink{twlin1} and \namelink{twlin2}. The analysis in all of the above references is written in the non-archimedean case, however, its archimedean analog holds similarly. We formulate the results for our general local set-up.


For the explicit description of orbit representatives we introduce some further notation. 
Let $\alpha=(a_1,\dots,a_r)$ be the composition of $a$ associated with $P$ and let $I(\alpha)$ be the set of $r\times r$ symmetric matrices $s=(a_{i,j})$ with entries in $\BZ_{\ge 0}$ such that $\sum_{j=1}^r a_{i,j}=a_i$ for all $i=1,\dots,r$. 
By ordering the indices $i,j$ in lexicographic order, we associate to $s$ a composition of $a$ into at most $r^2$ parts (we omit the zero entries of $s$) which is a refinement of $\alpha$ and denote by $P_s=M_sU_s$ the corresponding parabolic subgroup of $G$ of type $s$ contained in $P$. 
Write elements of $M_s$ as $\diag(g_{i,j})$ where $g_{i,j}\in \GL_{a_{i,j}}(\D)$.

Identifying $W$ with permutation matrices in $G$, let $w_s\in {}_MW_M$ be the involution characterized by 
\[
w_s\diag(g_{i,j})w_s=\diag(g_{j,i}), \ \ \ \diag(g_{i,j})\in M_s.
\]
Note that $M_s=M(w_s)$ (see Section \ref{ss paraborb}).
For $s=(a_{i,j})\in I(\alpha)$ let 
\[
L(s)=\{(a_{i,i}^+,a_{i,i}^-)_{i=1}^r\in ((\BZ_{\ge 0})^2)^r:\ \sum_{i=1}^r a_{i,i}^+=\sum_{i=1}^r a_{i,i}^-, \  a_{i,i}^++a_{i,i}^-=a_{i,i}, i=1,\dots,r\}
\] 
and also set
\[
K(\alpha)=\{s=(a_{i,j})\in I(\alpha),\ a_{i,i} \ \text{is even for} \ i=1,\dots,r\}.
\]

The set of $P$-orbits in $G\cdot e$ is parametrized by a 
set $J(\alpha)$ defined as follows:
\begin{itemize}
\item in case \namelink{lin} $J(\alpha)=\{(\frak{s},l),\ \frak{s}\in  I(\alpha), \ l\in L(\frak{s})\}$;
\item in cases \namelink{twlin1} and \namelink{gal2} $J(\alpha)=I(\alpha)$;
\item and in cases \namelink{twlin2} and \namelink{gal1} $J(\alpha)=K(\alpha)$.
\end{itemize}

For $s\in J(\alpha)$, we write $\frak{s}=(a_{i,j})$ for its first coordinate, i.e. $s=\frak{s}$ except in case \namelink{lin}, where  $s=(\frak{s},l)$.
Set $M_s=M_{\frak{s}}$ and $w_s=w_{\frak{s}}$. 

A set of representatives $\{x_s\}_{s\in J(\alpha)}$ for the $P$-orbits in $G\cdot e$ may be chosen so that $x_s\in M_s w_sw_\star$ ($P$-good) and the stabilizer $M_{x_s}$ consists of $\diag(g_{i,j})\in M_s$ such that:
\begin{enumerate}
	\item $g_{j,i} =  g_{i,j}$ for all $i \ne j$ in cases \namelink{lin} and \namelink{twlin2};
	\item $g_{j,i} =  \theta(g_{i,j})$ for all $i\ne j$ in cases \namelink{twlin1} and \namelink{gal2};
	\item $g_{j,i} =  \epsilon g_{i,j} \epsilon^{-1}$ for all $i \ne  j$ in case \namelink{gal1};
	\item $\vartheta_i(g_{i,i})=g_{i,i}$ for all $i$,  where  $\vartheta_i=\theta$ in all cases except \namelink{lin} where $\vartheta_i=\Ad(\diag(I_{a_{i,i}^+},-I_{a_{i,i}^-}))$ and $s=(\frak{s},l)$ with $l=(a_{i,i}^+,a_{i,i}^-)_{i=1}^r\in L(\frak{s})$. 
	\end{enumerate}

\subsection{Some consequences on distinction of parabolic induction}

We continue to use the notation of Section \ref{ss orbits}.
First, recall the following direct application of the geometric lemma of Bernstein and Zelevinsky (see \cite[Proposition 4.1]{MR3541705}).
\begin{lemma}\label{lem gemlem}
Consider the non-archimedean case. Let $\rho$ be a representation of $M$. 
If $I_P^G(\rho)$ is $H$-distinguished then there exists $s\in J(\alpha)$ such that
\[
\Hom_{M_{x_s}}(r_{M_s,M}(\rho),\delta_{P_{x_s}}\delta_P^{-1/2})\ne 0
\] 
where $r_{M_s,M}$ is the normalized Jacquet functor. \qed
\end{lemma}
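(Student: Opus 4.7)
The plan is to combine a standard filtration argument with the geometric lemma already quoted in \eqref{eq geom lem}. Order the $P$-orbits on $G\cdot e$ as $\{P\cdot y_i\}_{i=1}^N$ so that each $Y_i=\cup_{j\le i} Py_iH$ is open in $G$, and each $y_i$ is chosen to be $P$-good, as recalled just before \eqref{eq geom lem}. Set $V_i=\{\varphi\in I_P^G(\rho):\mathrm{Supp}(\varphi)\subset Y_i\}$, so that
\[
0=V_0\subset V_1\subset\cdots\subset V_N=I_P^G(\rho)
\]
is an $H$-stable finite filtration of $I_P^G(\rho)$.

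Assuming $I_P^G(\rho)$ is $H$-distinguished, pick a nonzero $\ell\in\Hom_H(I_P^G(\rho),\C)$. Let $i$ be minimal with $\ell|_{V_i}\ne 0$. Then $\ell|_{V_i}$ vanishes on $V_{i-1}$ by minimality, so it descends to a nonzero element of $\Hom_H(V_i/V_{i-1},\C)$. Applying the isomorphism \eqref{eq geom lem} with $w_i=\iota_P(P\cdot y_i)$, $L=M(w_i)$, and $Q=LV$ the standard parabolic with Levi $L$, we obtain
\[
\Hom_{L_{y_i}}\!\left(r_{L,M}(\rho),\,\delta_{Q_{y_i}}\delta_Q^{-1/2}\right)\ne 0.
\]

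Finally, translate this into the parametrization of Section \ref{ss orbits}. The $P$-orbits on $G\cdot e$ are parametrized by $J(\alpha)$: to the orbit $P\cdot y_i$ corresponds some $s\in J(\alpha)$, and under this correspondence one has $L=M_s$, $P\cdot y_i=P\cdot x_s$, and hence $L_{y_i}=M_{x_s}$ and $Q_{y_i}=P_{x_s}$ up to $P$-conjugacy (irrelevant for the \Hom-space). This gives
\[
\Hom_{M_{x_s}}\!\left(r_{M_s,M}(\rho),\,\delta_{P_{x_s}}\delta_P^{-1/2}\right)\ne 0,
\]
as required.

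The argument has no real obstacle: the only inputs are the existence of the $H$-stable filtration by open pieces, the trivial pigeonhole that a nonzero invariant form survives on some graded piece, and the already-stated form \eqref{eq geom lem} of the Bernstein--Zelevinsky geometric lemma in our symmetric setting (for which the work is reducing $\Hom_H$ on the induced Schwartz-type space of one orbit to a $\Hom$-space on the fiber); the mild bookkeeping consists in matching the notation $(y_i,L,Q)$ from \eqref{eq geom lem} with the $(x_s,M_s,P_s)$ parametrization from Section \ref{ss orbits}.
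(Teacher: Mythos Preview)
Your argument is correct and is exactly the filtration/pigeonhole reduction underlying \cite[Proposition 4.1]{MR3541705}, which is all the paper invokes (the lemma carries only a $\qed$). Note only that in your translation step you quietly pass from $\delta_Q^{-1/2}$ as written in \eqref{eq geom lem} to $\delta_P^{-1/2}$ as in the lemma; the paper makes the same silent shift between those two displays, so this is a notational wrinkle in the source rather than a gap in your reasoning.
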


We further recall the following identity of modulus functions. In the references bellow the identity is formulated in the non-archimedean case but holds similarly in the archimedean case.
\begin{lemma}\label{lem mod}
Let $s\in J(\alpha)$. In case \namelink{lin}, assume furthermore that $s=(\frak{s},l)$ with $\frak{s}\in K(\alpha)$ and  $l=(a_{i,i}^+,a_{i,i}^-)_{i=1}^r$ with $a_{i,i}^+=a_{i,i}^-$ for all $i=1,\dots,r$.
Then $\delta_{P_{x_s}}\delta_P^{-1/2}$ is identically one on $M_{x_s}$.
\end{lemma}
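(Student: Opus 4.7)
The plan is a direct computation of both $\delta_{P_{x_s}}$ and $\delta_P$ as characters of $M_{x_s}$, using the explicit refined block structure attached to $s$.

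First I would invoke the Levi decomposition $P_{x_s}=M_{x_s}\ltimes V_{x_s}$ with $V_{x_s}=(U_s)_{x_s}$, and identify $\operatorname{Lie}(V_{x_s})$ as the $\vartheta_s$-fixed subspace of $\operatorname{Lie}(U_s)=\bigoplus_{p<q}\M_{a_p,a_q}(\D)$, the sum ranging over pairs of refined block indices in lex order. The involution $\vartheta_s$ permutes the refined indices through the swap $(i,j)\leftrightarrow(j,i)$, so the ordered pairs $(p,q)$ with $p<q$ split into three types: (i) swap-pairs both landing in $U_s$, contributing a parametrizing copy of a single block; (ii) pairs fixed as unordered pairs (both $p,q$ diagonal in $\mathfrak{s}$), contributing the $\vartheta_s$-fixed subspace of that block; (iii) pairs whose $\vartheta_s$-image lies in $\overline{U_s}$, contributing nothing to $V_{x_s}$.

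Second, I would compute $|\det_F\operatorname{Ad}(m)|_{\operatorname{Lie}(V_{x_s})}|$ for $m\in M_{x_s}$ block by block, using the basic identity $|\det_F L_g|=|\nu(g)|^d$ for left multiplication on $\D^n$ (and its right-multiplication analogue). Swap-pair contributions yield $|\nu(g_p)|^{d a_q}|\nu(g_q)|^{-d a_p}$. For fixed-pair contributions, the crucial input in case \namelink{lin} is the balanced splitting $a_{i,i}^+=a_{i,i}^-$: it forces $g_{i,i}$ into $\vartheta_i$-block-diagonal form, so that the $\vartheta_s$-fixed subspace of $\M_{a_{i,i},a_{j,j}}(\D)$ is exactly the ``block-diagonal'' half, producing the factor $|\nu(g_{i,i})|^{d a_{j,j}/2}|\nu(g_{j,j})|^{-d a_{i,i}/2}$. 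In the other cases, the involution $\vartheta_s$ restricted to a fixed block yields the analogous halving automatically, since the stabilizer conditions on $g_{i,i}$ symmetrize the $\pm 1$-eigenspaces.

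Finally, I would expand $\delta_P(m)^{1/2}$ via $\delta_P(\operatorname{diag}(h_1,\ldots,h_r))=\prod_{i<i'}|\nu(h_i)|^{d a_{i'}}|\nu(h_{i'})|^{-d a_i}$ with $|\nu(h_i)|=\prod_j|\nu(g_{i,j})|$, and exploit the symmetries $|\nu(g_{i,j})|=|\nu(g_{j,i})|$ and $a_{i,j}=a_{j,i}$ on $M_{x_s}$ to collapse the cross terms. Matching the resulting expression term by term against the product obtained from the block-by-block computation of $\delta_{P_{x_s}}$ then yields the desired identity.

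The main obstacle is the bookkeeping for the fixed-pair blocks: the balanced condition in case \namelink{lin} (and the analogous symmetry in the other cases) must interact precisely with the $\pm$-eigenspace decomposition of $\vartheta_s$ acting on the diagonal blocks, so that the action of $\operatorname{Ad}(m)$ produces exactly the factor $1/2$ in the exponent needed to match $\delta_P^{1/2}$.
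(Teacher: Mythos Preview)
Your approach is correct in outline and is, in fact, essentially the computation carried out in the references the paper cites. The paper's own proof is not a computation at all: it simply invokes \cite[Proposition~4.3.2]{MR2010737} for case \namelink{gal2}, \cite[Corollary~6.9]{MR3541705} or \cite[Proposition~3.3]{MR3719517} for case \namelink{gal1}, \cite[(5.3)]{MR4338222} for cases \namelink{twlin1} and \namelink{twlin2}, and \cite[Lemma~3.7(a)]{Sign} for case \namelink{lin}. Each of those references performs a block-by-block modulus computation of exactly the kind you sketch.

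So the difference is one of packaging rather than mathematics. Your route is self-contained and makes transparent why the balanced condition $a_{i,i}^+=a_{i,i}^-$ is needed in case \namelink{lin}: without it the $\vartheta_s$-fixed subspace of the diagonal--diagonal blocks $\M_{a_{i,i},a_{j,j}}(\D)$ does not carry the correct ``half'' exponent, and the match with $\delta_P^{1/2}$ fails. The paper's route is shorter but opaque, deferring all insight to the cited sources. One small point to be careful about in your write-up: your type~(iii) blocks (those whose $\vartheta_s$-image lands in $\overline{U_s}$) do occur, because $\theta_{x_s}$ need not preserve $U_s$ as a whole even though $P_{x_s}=Q_{x_s}=L_{x_s}\ltimes V_{x_s}$; your conclusion that they contribute nothing to $\operatorname{Lie}(V_{x_s})$ is correct, but it is worth stating explicitly that $V_{x_s}=U_s\cap G_{x_s}$ rather than $U_s^{\theta_{x_s}}$ in any sense that presupposes $\theta_{x_s}$-stability of $U_s$.
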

\begin{proof}
The lemma follows from \cite[Proposition 4.3.2]{MR2010737} in case \namelink{gal2} and from \cite[Corollary 6.9]{MR3541705} or \cite[Proposition 3.3]{MR3719517} in case \namelink{gal1}. In cases \namelink{twlin1} and \namelink{twlin2} it is \cite[(5.3)]{MR4338222} (see also Remark 5.4 in ibid.). Finally, in case \namelink{lin} it follows from \cite[Lemma 3.7 (a)]{Sign}.
\end{proof}

\begin{corollary}\label{cor orbit cont}
Consider cases \namelink{twlin1}, \namelink{twlin2}, \namelink{gal1},  \namelink{gal2}. Let $s=(a_{i,j})\in J(\alpha)$ and let $\rho_{i,j}$ be an irreducible essentially square-integrable representation of $\GL_{a_{i,j}}(\D)$ for every $i,j$. For the representation $\rho=\otimes \rho_{i,j}$ of $M_s$ the following are equivalent 
\begin{enumerate}
\item $\Hom_{M_{x_s}}(\rho,\delta_{P_{x_s}}\delta_P^{-1/2})\ne 0$
\item $\rho$ is $M_{x_s}$-distinguished
\item $\rho_{j,i}^\theta\simeq \rho_{i,j}^\vee$ for all $i\ne j$ and $\rho_{i,i}$ is distinguished for all $i$.
\end{enumerate}
\end{corollary}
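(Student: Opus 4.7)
The plan is to combine Lemma \ref{lem mod} with the explicit description of $M_{x_s}$ from Section \ref{ss orbits} and a tensor-product analysis of invariant linear forms.

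First, (1) $\Leftrightarrow$ (2) should follow at once from Lemma \ref{lem mod}: in each of the four cases \namelink{twlin1}, \namelink{twlin2}, \namelink{gal1}, \namelink{gal2}, the modular character $\delta_{P_{x_s}}\delta_P^{-1/2}$ is trivial on $M_{x_s}$, so the two $\Hom$-spaces in (1) and (2) literally coincide.

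For (2) $\Leftrightarrow$ (3), I would introduce $\sigma$ for the auxiliary involution that controls the off-diagonal blocks of the stabilizer as described in Section \ref{ss orbits}: $\sigma=\theta$ in cases \namelink{twlin1} and \namelink{gal2}, $\sigma=\Ad(\epsilon)$ in case \namelink{gal1}, and $\sigma=\operatorname{id}$ in case \namelink{twlin2} (which is compatible with the $\rho^\theta$ notation used in the statement since $\theta$ is inner in case \namelink{twlin2}, so $\rho^\theta\simeq \rho$ on irreducible representations of $\GL_{a_{i,j}}(\D)$). Reading off the conditions defining $M_{x_s}\subset M_{\mathfrak{s}}$, the group decomposes as a direct product
\[
M_{x_s} \simeq \prod_{i<j} \GL_{a_{i,j}}(\D) \times \prod_i \GL_{a_{i,i}}(\D)^{\theta},
\]
where the $(i,j)$-factor sits in the $(i,j)$ and $(j,i)$ blocks via $g \mapsto (g,\sigma(g))$. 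Accordingly, $\rho$ becomes an external tensor product: $\GL_{a_{i,j}}(\D)$ acts on $\rho_{i,j} \otimes \rho_{j,i}$ by $g \mapsto \rho_{i,j}(g) \otimes \rho_{j,i}(\sigma(g))$ for $i<j$, and $\GL_{a_{i,i}}(\D)^{\theta}$ acts by restriction on $\rho_{i,i}$.

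The next step is to factor $\Hom_{M_{x_s}}(\rho,\C)$ across this external product, reducing non-vanishing to simultaneous non-vanishing on each factor. For the diagonal factor this is by definition distinction of $\rho_{i,i}$. For the off-diagonal factor, I would use the identification
\[
\Hom_{\GL_{a_{i,j}}(\D)}(\rho_{i,j} \otimes \rho_{j,i}^\sigma, \C) \simeq \Hom_{\GL_{a_{i,j}}(\D)}(\rho_{i,j},(\rho_{j,i}^\sigma)^\vee),
\]
which by irreducibility of $\rho_{i,j}$ and $\rho_{j,i}$ together with Schur's lemma is non-zero precisely when $\rho_{j,i}^\sigma \simeq \rho_{i,j}^\vee$, i.e.~when the pairwise condition in (3) holds.

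The only genuinely technical point will be the external tensor factorization of $\Hom_{M_{x_s}}(\rho,\C)$. In the non-archimedean setting this is routine via smooth duality. In the archimedean setting it uses the Casselman--Wallach framework together with the automatic continuity of invariant linear forms from \cite{MR1176208}, exactly as tacitly invoked in Section \ref{sec::reps}. Past that formality the argument is a direct reading of the geometric data already assembled.
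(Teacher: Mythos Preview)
Your proposal is correct and follows exactly the route the paper takes: the paper's own proof is a single sentence stating that the result is ``an immediate consequence of Lemma~\ref{lem mod} and the explication of $M_{x_s}$ in Section~\ref{ss orbits}.'' You have simply unpacked that sentence --- Lemma~\ref{lem mod} gives (1)$\Leftrightarrow$(2), and the block decomposition of $M_{x_s}$ together with Schur's lemma gives (2)$\Leftrightarrow$(3) --- and flagged the archimedean tensor-factorization as the only non-formal point, which is fair.
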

\begin{proof}
This is an immediate consequence of Lemma \ref{lem mod} and the explication of $M_{x_s}$ in Section \ref{ss orbits}.
\end{proof}
For linear periods the analog of the Corollary is more complicated to formulate. Since the results we need in this case have been established in \cite{Sign} we only recall that a crucial role is played by the modulus computation \cite[Lemma 3.7]{Sign} and the following result. 
\begin{lemma}\label{lem lin sqrint}
Let $k\in \N$ and $\pi$ be an irreducible essentially square-integrable representation of $\GL_k(\D)$. If 
\[
\Hom_{\GL_c(\D)\times \GL_{k-c}(\D)}(\pi,\chi)\ne 0
\]
for some $0\le c\le m$ and character $\chi$ of $\GL_c(\D)\times \GL_{k-c}(\D)$ then either $k=2c$ or $k=1$ and $\pi=\chi$.
\end{lemma}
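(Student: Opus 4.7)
My plan is to dispose of boundary cases directly, then in the non-archimedean main case reduce to an orbit analysis on the standard module attached to $\pi$.

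In the archimedean setting, essentially square-integrable representations of $\GL_k(\D)$ occur only for $k=1$ (any $\D$) or for $k=2$ with $\D=\R$, and the lemma holds by inspection in each. Assume $F$ non-archimedean henceforth. If $c\in\{0,k\}$ or $k=1$, the hypothesis forces $\pi$ to coincide with the one-dimensional character $\chi$ on $\GL_k(\D)$; but for $k\ge 2$ an essentially square-integrable representation of $\GL_k(\D)$ is never one-dimensional, since the matrix coefficients of a character $\eta\circ\nrd$ are never $L^2$ on the non-compact quotient $\GL_k(\D)/Z$, regardless of central-character twist. So we are reduced to $k\ge 2$, $1\le c\le k-1$, and must show $k=2c$.

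Write $\pi=\St_n(\rho)$ with $\rho\in\CC_\D$ a cuspidal representation of $\GL_t(\D)$ and $k=nt$. Since $\pi$ is the unique irreducible quotient of the standard module
\[
I=\nu_\rho^{(1-n)/2}\rho\times\nu_\rho^{(3-n)/2}\rho\times\cdots\times\nu_\rho^{(n-1)/2}\rho,
\]
we have $\Hom_{H_c}(\pi,\chi)\hookrightarrow\Hom_{H_c}(I,\chi)$, where $H_c=\GL_c(\D)\times\GL_{k-c}(\D)$. Apply a Bernstein-Zelevinsky-style Mackey filtration to $I|_{H_c}$ through the double cosets $H_c\backslash\GL_k(\D)/Q$, where $Q$ is the parabolic of type $(t,\ldots,t)$ defining the induction. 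These double cosets are parameterized by $2\times n$ non-negative integer matrices with column sums $t$ and row sums $c$ and $k-c$; each contributes a term to $\Hom_{H_c}(I,\chi)$ involving Jacquet modules of the factors $\nu_\rho^{(2i-n-1)/2}\rho$ along proper parabolics of $\GL_t(\D)$. Cuspidality of $\rho$ forces each column to concentrate its entire mass in a single row, so each of the $n$ copies of $\rho$ is absorbed entirely into either $\GL_c(\D)$ or $\GL_{k-c}(\D)$; this forces $t\mid c$, and combined with the one-dimensionality of $\chi$ (which would otherwise have to receive a cuspidal $\rho$ of dimension $>1$ as a character quotient), forces $t=1$.

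With $t=1$ and $\pi=\St_n(\rho)$ on $\GL_n(\D)$, matching the exponents $\{(2i-n-1)/2\}_{i=1}^n$ on the two sides against the rigid character $\chi=\chi_1\otimes\chi_2$ yields a symmetric pairing of the $n$ twists, from which $c=n-c$, i.e., $k=2c$, follows. The main technical difficulty is this final exponent-matching step, which requires carefully ruling out unbalanced assignments of the twists between the two sides given only the rigid character data $\chi_1,\chi_2$. An alternative, sidestepping much of this bookkeeping, is to invoke known structural results on linear-type distinction for discrete series of $\GL_n(\D)$ (in the spirit of Friedberg-Jacquet and their extensions to inner forms via Shalika-model theory, as used elsewhere in \cite{Sign}) to directly conclude the balanced condition.
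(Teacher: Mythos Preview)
Your archimedean reduction matches the paper's: square-integrability forces $k\le 2$ (with $\D=\R$ if $k=2$), and the case-check is immediate. The paper then handles the non-archimedean case by direct citation of \cite[Theorem~3.8]{Sign}, so your attempt at a self-contained Mackey argument is a genuinely different route.

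However, your orbit analysis has a structural gap. The parameterization of $H_c\backslash\GL_k(\D)/Q$ by $2\times n$ matrices with prescribed row and column sums is the parameterization of $P_c\backslash\GL_k(\D)/Q$, where $P_c$ is the \emph{parabolic} with Levi $H_c$; for the Levi itself there are strictly more double cosets. Already for $k=2$, $t=1$, $n=2$, $c=1$ your count gives two orbits while the diagonal torus has three orbits on $\mathbb{P}^1$; for $k=4$, $t=2$, $n=2$, $c=2$ you list three orbits but $\GL_2\times\GL_2$ has six orbits on $\mathrm{Gr}(2,4)$. The missing orbits include the open one, whose stabilizer is a diagonally embedded copy of (a product of) $\GL_t(\D)$'s, and whose Mackey contribution to $\Hom_{H_c}(I,\chi)$ involves pairings of the inducing factors $\rho_i$ against each other rather than proper Jacquet modules of $\rho$. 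These open-orbit terms are precisely the ones that survive when $\rho$ is cuspidal of degree $t\ge 2$; your assertion that cuspidality ``forces each column to concentrate in a single row'' and hence ``forces $t=1$'' is therefore false. Concretely, a Steinberg $\St_2(\rho)$ on $\GL_{2t}$ with $\rho$ cuspidal on $\GL_t$ can be $\GL_t\times\GL_t$-distinguished, yet your filtration (as written) has every graded piece vanishing.

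Even if one repairs the orbit count, the final ``exponent-matching'' step you flag as the main difficulty is genuinely nontrivial and is essentially the content of the cited result; the alternative you propose (invoking Shalika-model or Friedberg--Jacquet-type input for inner forms) is exactly what \cite{Sign} packages.
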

\begin{proof}
In the archimedean case we must have either $k=1$ or $k=2$ and $\D=\R$ and the statement is straightforward since characters of $\GL_2(\R)$ are not essentially square-integrable. The non-archimedean case is \cite[Theorem 3.8]{Sign}.
\end{proof}

The next Theorem is a characterization of distinguished standard modules on $G$. 

A standard module on $G$ is a representation $S=\delta_1\times \cdots\times \delta_t$ where $\delta_i$ is an essentially square-integrable representation of $\GL_{a_i}(\D)$ for $i=1,\dots,t$ and $r(\delta_1)\ge \cdots\ge r(\delta_t)$. 
The representation $S$ admits a unique irreducible quotient $\pi$.  The multi-set $\{\delta_1,\dots,\delta_t\}$ is uniquely determined by $\pi$. This gives a bijection, Langlands classification, from multi-sets of essentially square-integrable representations to irreducible representations. We point out, however, that $S\simeq \delta_{\sigma(1)} \times \cdots \times \delta_{\sigma(t)}$ for potentially many $\sigma\in \mathfrak{S}_t$. (For all $\sigma$ if $\pi$ is generic.) The realization of $S$ matters in the application we have in mind.

In this section, in the non-archimedean case, a cuspidal  segment $\Delta(\rho,x,y)$ will always be presented with $\rho$ unitary. We write $L(\Delta(\rho,x,y)) \preccurlyeq L(\Delta(\rho',x',y')) $ if $y'\ge y$ and if equality holds then also $x'\ge x$.

We say that an ordered multi-set of essentially square-integrable representations $(\delta_1,\dots,\delta_t)$ is right aligned if $\delta_j \preccurlyeq \delta_i$ whenever $i<j$.
When this is the case the induced representation $\delta_1\times\cdots \times \delta_t$ is isomorphic to the standard module associated to the multi-set $\{\delta_1,\dots,\delta_t\}$. 

The explication of the Jacquet module of an essentially square integrable representation is key to the following lemma. We recall it here. Let $\rho$ be an irreducible cuspidal (unitary) representation of $\GL_m(\D)$, $\delta=L(\Delta(\rho,x,y))$ be an essentially square-integrable representation of $\GL_k(\D)$, $\alpha=(c_1,\dots,c_t)$ a composition of $k$ and $M_\alpha=\GL_{c_1}(\D)\times \cdots\times \GL_{c_t}(\D)$ the associated Levi subgroup of $\GL_k(\D)$. Then $r_{M_\alpha,\GL_k(\D)}=0$ unless $m\mid c_i$ for all $i=1,\dots,t$ in which case
\begin{multline}\label{eq jm}
r_{M_\alpha,\GL_k(\D)}(\delta)=\delta_1\otimes \cdots \otimes \delta_t, \ \ \ \text{where} \ \ \ \delta_i=L(\Delta(\rho, d_i+1,d_{i-1})),\\ \text{with}\ \ \ d_0=y, \ d_i=d_{i-1}-\frac{c_i}m,\,i=1,\dots,t.
\end{multline}

We further freely use that
\[
L(\Delta(\rho,a,b))^\vee=L(\Delta(\rho^\vee,-b,-a))\ \ \ \text{and}\ \ \ L(\Delta(\rho,a,b))^\iota=L(\Delta(\rho^\iota,a,b))
\]
(see Section \ref{sss auxi}) so that 
\[
L(\Delta(\rho,a,b))^*=L(\Delta(\rho^*,-b,-a)
\]

and that if $L(\Delta(\rho,a,b))$ is a distinguished representation of $G$ then it is unitary, that is, $a+b=0$.
\begin{lemma}\label{lem admiss contribute}
Assume that $F$ is non-archimedean. Let $(\delta_1,\dots,\delta_r)$ be a right aligned multi-set of essentially square-integrable representations so that $\delta=\delta_1\otimes \cdots \otimes \delta_r$ is an essentially square-integrable representation of the Levi $M$ of $G$. If  $s\in J(\alpha)$ is such that
\[
\Hom_{M_{x_s}}(r_{M_s,M}(\delta),\delta_{P_{x_s}}\delta_P^{-1/2})\ne 0
\] 
then $M_s=M$, that is, $\frak{s}$ is a monimial matrix and in case \namelink{lin} furthermore, $s=(\frak{s},l)$ with $\frak{s}\in K(\alpha)$ and  $l=(a_{i,i}^+,a_{i,i}^-)_{i=1}^r$ with $a_{i,i}^+=a_{i,i}^-$ for all $i=1,\dots,r$. In particular, if $\frak{s}=(a_{i,j})$ and $\sigma\in S_r$ is the involution such that $a_{i,\sigma(i)}=a_i$ then 
\begin{itemize}
\item $\delta_{\sigma(i)}^\theta\simeq \delta_i^\vee$ whenever $i\ne \sigma(i)$ and
\item $\delta_i$ is distinguished whenever $i=\sigma(i)$.
\end{itemize}
\end{lemma}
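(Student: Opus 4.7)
The plan is to use the explicit Jacquet module formula \eqref{eq jm} to convert the non-vanishing hypothesis into pairing conditions between sub-segments of the $\delta_i$'s, and then to invoke the right-alignment to rule out all non-monomial $\frak{s}$. Concretely, writing $\delta_i = L(\Delta(\rho_i, x_i, y_i))$ with $\rho_i$ unitary cuspidal on $\GL_{m_i}(\D)$ and $\frak{s}=(a_{i,j})$, formula \eqref{eq jm} forces $m_i\mid a_{i,j}$ for all $i,j$ and gives
\[
r_{M_\frak{s}\cap \GL_{a_i}(\D),\,\GL_{a_i}(\D)}(\delta_i) \;=\; \rho_{i,1}\otimes\cdots\otimes\rho_{i,r},
\]
where each $\rho_{i,j}=L(\Delta(\rho_i, d_{i,j}+1, d_{i,j-1}))$ is a consecutive sub-segment of $\delta_i$ with right endpoints determined by the partial sums of the cuts $a_{i,k}/m_i$. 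In cases \namelink{twlin1}, \namelink{twlin2}, \namelink{gal1}, \namelink{gal2}, Corollary \ref{cor orbit cont} then turns the hypothesis into the following constraints: $\rho_{j,i}^\theta\simeq\rho_{i,j}^\vee$ for $i\ne j$, and $\rho_{i,i}$ distinguished for all $i$. In case \namelink{lin}, the same pairing together with analogous constraints on the diagonal blocks comes out of \cite[Lemma 3.7]{Sign} combined with Lemma \ref{lem lin sqrint}.

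The crux will be to show that $\frak{s}$ is monomial. I would argue by contradiction: if some row $i_0$ has two nonzero off-maximum entries $a_{i_0,j_1},\,a_{i_0,j_2}>0$ (with $j_1<j_2$), then $\rho_{i_0,j_1}$ and $\rho_{i_0,j_2}$ are two distinct proper sub-segments of $\delta_{i_0}$; through the matching $\rho_{j_k,i_0}^\theta\simeq\rho_{i_0,j_k}^\vee$ they must correspond to specifically placed sub-segments of $\delta_{j_1}$ and $\delta_{j_2}$ on the $\rho_{i_0}^{\vee\theta}$-line, with right endpoints dictated by the cuts $a_{i_0,k}$ and $a_{j_k,\ell}$. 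Distinction (hence unitarity) of each $\rho_{i,i}$ pins down each $y_i$ in terms of these cuts. Comparing right endpoints of the matched sub-segments against the right-alignment $\delta_j\preccurlyeq\delta_i$ for $i<j$ will then yield the contradiction. The main obstacle is this delicate segment-endpoint bookkeeping and verifying that it rules out non-monomial $\frak{s}$ uniformly across the three families of symmetric pairs.

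Once monomiality is established, $\frak{s}$ is the permutation matrix of an involution $\sigma\in\mathfrak{S}_r$ with $a_{i,\sigma(i)}=a_i$, and the matching specializes directly to $\delta_{\sigma(i)}^\theta\simeq\delta_i^\vee$ for $i\ne\sigma(i)$ and $\delta_i$ distinguished for $i=\sigma(i)$, as claimed. In case \namelink{lin} there is a final step: I would apply Lemma \ref{lem lin sqrint} to each diagonal block $\delta_i$ (with $i=\sigma(i)$), which by Lemma \ref{lem mod} and the stabilizer description of Section \ref{ss orbits} is distinguished by $\GL_{a_{i,i}^+}(\D)\times\GL_{a_{i,i}^-}(\D)$ against an explicit modulus character, to conclude either $a_{i,i}^+=a_{i,i}^-$ or the degenerate case $a_i=1$; a short analysis of the modulus character in the latter case (where $\delta_i$ must equal the modulus character, forcing its endpoints to coincide with the balanced case) rules it out, giving $\frak{s}\in K(\alpha)$ with the claimed structure of $l$.
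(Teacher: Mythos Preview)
Your overall strategy matches the paper's: unwind the Jacquet module via \eqref{eq jm}, translate the non-vanishing into segment-matching constraints through Corollary \ref{cor orbit cont} (resp.\ the analogous linear-period input), and play these off against right-alignment to force $\frak{s}$ to be monomial.

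There is, however, a structural point in the monomiality argument that you have not pinned down and that the paper exploits essentially. The paper argues by \emph{induction on $r$}, always looking at the \emph{first} row. The reason this matters: if $a_{1,j_1},a_{1,j_2}\ne 0$ with $j_1<j_2$, then the corresponding matched pieces are $\rho_{j_1,1}$ and $\rho_{j_2,1}$, which, being in \emph{column $1$}, are the \emph{first} sub-segments of $\delta_{j_1}$ and $\delta_{j_2}$; hence their right endpoints are exactly $e_{j_1}$ and $e_{j_2}$. The duality then gives $e_{j_1}=-\gamma<-\alpha=e_{j_2}$, contradicting $e_{j_1}\ge e_{j_2}$ from right-alignment. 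For a general row $i_0$ as in your sketch, the matched pieces $\rho_{j_k,i_0}$ sit in column $i_0$ and their right endpoints depend on the earlier cuts $a_{j_k,1},\dots,a_{j_k,i_0-1}$, which you do not control; so the contradiction does not come for free. The paper also needs a second step once row $1$ has a unique nonzero entry at column $j$: if $a_1<a_j$ (so row $j$ has further nonzero entries), one again uses that the column-$1$ piece $\rho_{j,1}$ has right endpoint related to $e_1$, and right-alignment gives the contradiction. Your sketch should be recast in this inductive, row-$1$-first form.

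For the final \namelink{lin} step, the paper simply invokes the proof of \cite[Theorem 3.12]{Sign} rather than redoing the modulus/diagonal analysis. Your direct argument via Lemma \ref{lem lin sqrint} is in the same spirit, but your treatment of the degenerate case $a_i=1$ is hand-wavy; note that with the paper's convention no representation of $\GL_1(\D)$ is distinguished in case \namelink{lin}, so the point is to rule out this branch altogether (not to recover $a_{i,i}^+=a_{i,i}^-$ from it), and that requires the explicit modulus computation of \cite[Lemma 3.7]{Sign}.
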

\begin{proof}
We show that $M_s=M$. The remainder of the lemma follows from the proof of \cite[Theorem 3.12]{Sign} in case \namelink{lin} and is immediate from Corollary \ref{cor orbit cont} in the other cases.  

We freely use the explicit description of $M_{x_s}$ in Section \ref{ss orbits}. 
Write $\delta_i=L(\Delta(\rho_i,b_i,e_i))$, $i=1,\dots,r$ and for each $i$ let $r_{M_{\alpha_i},\GL_{a_i}(\D)}(\delta_i)=\delta_{i,1}\otimes \cdots\otimes \delta_{i,r}$ for the composition $\alpha_i=(a_{i,1},\dots,a_{i,r})$ of $a_i$ associated to the $i$th row of $\frak{s}$. We proceed by induction.  Assume by contradiction that $a_{1,j}\ne 0$ and $a_{1,j'}\ne 0$ for some $j<j'$. Then there exist $b_1\le \alpha \le \beta<\gamma\le \delta\le e_1$ such that $\delta_{1,j}=L(\Delta(\rho_1,\gamma,\delta))$ and $\delta_{i,j'}=L(\Delta(\rho_i,\alpha,\beta))$. Applying Corollary \ref{cor orbit cont} and the remarks preceding this lemma it follows that $e_j=-\gamma$ and $e_{j'}=-\alpha$. Since $-\gamma<-\alpha$ this contradicts the right aligned assumption.
Consequently, $a_{1,j}\ne 0$ for a unique $j$. If $j=1$ the lemma follows by induction on the right aligned multiset $(\delta_2,\dots,\delta_r)$. Assume that $j>1$ so that $a_1=a_{1,j}=a_{j,1}\le a_j$. If $a_1=a_j$ then similarly the lemma follows by induction on $(\delta_2,\dots,\delta_{j-1},\delta_{j+1},\dots,\delta_r)$.
Assume by contradiction that $a_1<a_j$ and note that since $\delta_1^*=L(\Delta(\rho_1^*,-e_1,-b_1))\simeq \delta_{j,1}$ if $k>1$ is such that $a_{j,k}\ne 0$ then we must have $e_k>e_1$, once again, a contradiction to the right aligned assumption. The lemma follows.
\end{proof}

The following theorem is the classification of distinguished standard modules. In most cases it is already known. More precisely, in the non-archimedean case all cases are already written except case \namelink{gal1}: \cite[Proposition 10.3]{MR4308058} in case \namelink{gal2},  \cite[Theorem 1.3]{MR4157701} in cases \namelink{twlin1} and \namelink{twlin2}, and \cite[Theorem 3.12]{Sign} in case \namelink{lin}. Most archimedean cases are also essentially proved already, and we refer to the proof below for the precise references. We recall the steps for convenience of the reader.

	\begin{theorem}\label{thm classif dist S}
	Let $S$ be a standard module of $G$ associated to the multiset $\{\delta_1,\dots,\delta_r\}$ of essentially square integrable representations. Then $S$ is distinguished if and only if there exists an involution $p \in \mathfrak{S}_r$ such that $\delta_i^* \cong \delta_{p(i)}$ for all $i$, and $\delta_i$ is distinguished if $p(i) = i$.
	\end{theorem}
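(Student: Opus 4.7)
The plan is to prove both directions separately. The ``only if'' direction uses the geometric lemma and the Jacquet-module analysis of Lemmas \ref{lem gemlem} and \ref{lem admiss contribute}, while the ``if'' direction constructs an explicit $H$-invariant form on $S$.

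For necessity in the non-archimedean case, I would first realize $S$ by a right-aligned ordering of the multiset $\{\delta_1,\dots,\delta_r\}$ within its Langlands class. If $S$ is distinguished, Lemma \ref{lem gemlem} yields an orbit parameter $s\in J(\alpha)$ with $\Hom_{M_{x_s}}(r_{M_s,M}(\delta),\delta_{P_{x_s}}\delta_P^{-1/2})\ne 0$ for $\delta = \delta_1\otimes\cdots\otimes \delta_r$. Right-alignment then allows me to apply Lemma \ref{lem admiss contribute}, forcing $\mathfrak{s}$ to be monomial (with the additional balance $a_{i,i}^+=a_{i,i}^-$ in case \namelink{lin}). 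Let $p=\sigma \in \mathfrak{S}_r$ be the involution associated to this monomial matrix. From the same lemma, $\delta_{p(i)}^\theta\simeq \delta_i^\vee$ when $p(i)\ne i$, and $\delta_i$ is distinguished when $p(i)=i$. Finally, I convert $\delta_{p(i)}^\theta\simeq \delta_i^\vee$ to $\delta_{p(i)}\simeq \delta_i^*$ using the comparison between $\iota_{a_i}$ and $\theta$ from Section \ref{sss auxi}; in cases \namelink{twlin2} and \namelink{gal1}, where $\iota$ and $\theta$ agree only up to an inner automorphism, the identification is preserved at the level of isomorphism classes of irreducible representations.

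For sufficiency, I would group the factors according to the involution $p$ into unordered pairs $\{\delta_i,\delta_{p(i)}\}$ with $p(i)\ne i$ and singletons $\delta_i$ with $p(i)=i$. Reordering the factors accordingly within the standard module's Langlands class (which preserves the isomorphism class of the induced representation, and can be justified by inducing in stages through $\theta$-stable standard Levi subgroups when the $r$-invariants coincide), it suffices to check that each pair $\delta_i\times \delta_{p(i)} = \delta_i\times \delta_i^*$ is distinguished for the rank-$(a_i+a_{p(i)})$ version of the symmetric pair. In the associated symmetric space, the open $P$-orbit has stabilizer the twisted diagonal $\{(g,\theta(g))\}$, and the canonical pairing between $\delta_i$ and $\delta_i^\vee$ (identified up to twist with $\delta_i^*$ via Section \ref{sss auxi}) produces a non-zero invariant functional on this stabilizer, which propagates to an $H$-invariant form on the induced representation via the open-orbit intertwining period evaluated at its regular value.

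The archimedean cases are addressed by the references cited immediately before the theorem statement for cases \namelink{gal2}, \namelink{twlin1}, \namelink{twlin2}, and \namelink{lin}. Case \namelink{gal1} can be handled uniformly with the non-archimedean argument by translating the Jacquet-module analysis to the $(\mathrm{Lie}\,G, K)$-module setting and using automatic continuity from \cite{MR1176208} to identify continuous $H$-invariant forms on the smooth representation with invariant functionals on the underlying Harish-Chandra module. The main obstacle I anticipate is the rigorous justification of the right-aligned reordering in the non-archimedean case: since Lemma \ref{lem admiss contribute} requires a right-aligned realization of the standard module, one must check that the Langlands class of $S$ admits such a realization and that the Jacquet-module computation \eqref{eq jm} is legitimately applied to it. A secondary subtlety is the exact matching between $\iota$ and $\theta$ in cases \namelink{twlin2} and \namelink{gal1}, where these differ by an inner automorphism.
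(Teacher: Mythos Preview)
Your treatment of the necessity (``only if'') direction in the non-archimedean case matches the paper's proof exactly: realize $S$ by a right-aligned ordering, invoke Lemma~\ref{lem gemlem} to produce a relevant orbit, and then apply Lemma~\ref{lem admiss contribute} to force $\mathfrak{s}$ monomial and extract the involution $p$ with the required properties. Your handling of the $\iota$/$\theta$ mismatch in cases \namelink{twlin2} and \namelink{gal1} is also correct, since these differ only by an inner automorphism and so agree on isomorphism classes.

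The gap is in your sufficiency argument. Your plan is to reorder the factors so that each pair $\{\delta_i,\delta_i^*\}$ becomes adjacent, then induce in stages from a $\theta$-stable Levi whose blocks are the pairs and the singletons. But this reordering is not generally available within the Langlands class of $S$: since $r(\delta_i^*)=-r(\delta_i)$, a pair with $r(\delta_i)>0$ cannot be made adjacent in any ordering satisfying the decreasing condition $r(\delta_{\sigma(1)})\ge\cdots\ge r(\delta_{\sigma(r)})$ as soon as some other $\delta_j$ has $0<|r(\delta_j)|<|r(\delta_i)|$. Your parenthetical ``when the $r$-invariants coincide'' does not save this, because that hypothesis is typically false for a pair $(\delta_i,\delta_i^*)$ unless both are unitary. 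The paper avoids this entirely: rather than reordering, it observes that the involution $p$ directly determines a monomial $s\in J(\alpha)$ with $M_s=M$, checks via the explicit stabilizers that $\delta_1\otimes\cdots\otimes\delta_r$ is $M_{x_s}$-distinguished (using, for the fixed points $p(i)=i$, that distinguished $\delta_i$ satisfies $\delta_i\simeq\delta_i^*$, a fact the paper establishes first), and then invokes \cite[Theorem~5.4]{MR4679384}, which constructs a non-zero $H$-invariant form on $I_P^G(\sigma)$ from any $M_x$-invariant form on $\sigma$ for an admissible $x$ satisfying the modulus assumption. This is precisely the intertwining-period construction you allude to, but applied to the orbit associated to $p$ on the full Levi rather than to individual two-block pairs.

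For the archimedean direction the paper does essentially what you propose: cite the known cases and assert that \namelink{gal1} goes through by the same double-coset analysis as in \cite{MR3719517}; your proposal to pass through the Harish--Chandra module via \cite{MR1176208} is reasonable but would need the archimedean geometric lemma (Chen--Sun) rather than a literal translation of the Bernstein--Zelevinsky Jacquet-module argument.
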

	\begin{proof}
First we establish that if $\delta$ is a distinguished, irreducible and essentially square-integrable representation then $\delta\simeq \delta^*$. In the non-Galois cases \namelink{lin}, \namelink{twlin1} and \namelink{twlin2} this is in fact known for all irreducible representations (\cite[Corollary 5.8 and Theorem 6.7]{MR4338222} in cases \namelink{twlin1} and \namelink{twlin2} and  \cite[Appendix A]{Sign} in case \namelink{lin}). Consider the Galois cases \namelink{gal1} and \namelink{gal2}. In the archimedean case, $\delta$ is a one-dimensional character of $\C^\times$ and the statement is straightforward. In the non-archimedean case, if $G=\GL_k(E)$ and $H=\GL_k(F)$ for some $k\in \BN$ then this is \cite[Proposition 12]{MR1111204}. For the general case, it follows from \cite[Theorem 1]{MR3858402} that $\JL(\delta)$ is also distinguished and therefore $\JL(\delta)\simeq \JL(\delta)^*$. The injectivity of the Jacquet-Langlands correspondence implies that $\delta\simeq \delta^*$.

Note that the existence of the involution $p$ implies the existence of $s\in J(\alpha)$ such that $M_s=M$ and $\delta_1 \otimes \cdots\otimes \delta_r$ is an $M_{x_s}$-distinguished representation of $M$.
It now follows from \cite[Theorem 5.4]{MR4679384} that $S$ is distinguished.

For the `if' part of the theorem, consider first the archimedean case.
It follows from \cite[Theorem D3-appendix]{Sign} in case \namelink{lin}, from \cite[Theorem 1.2]{MR4659866} in cases \namelink{twlin1} and \namelink{twlin2} and from \cite[Theorem 1.2]{MR3358053} in case \namelink{gal2} but a similar proof holds up to obvious modifications in case \namelink{gal1} in view of the double coset and stabilizers comptutations in \cite{MR3719517}. This is being written up by the second named author's students, Alan Hou and Tudor Popescu.
In the non-archimedean case it is immediate from Lemmas \ref{lem gemlem} and \ref{lem admiss contribute}.
	\end{proof}
	 
	\begin{corollary}\label{cor distinction-transfer-generic}
Let $\pi$ be an irreducible, generic representation of $G$. 
\begin{enumerate}
\item If $\pi$ is essentially square-integrable then in cases \namelink{gal1}, \namelink{gal2} and \namelink{lin} we have that $\pi$ is distinguished if and only if $\JL(\pi)$ is distinguished.
\item In case \namelink{gal2}, and if $d$ is odd also in case \namelink{lin} we have that $\pi$ is distinguished if and only if $\JL(\pi)$ is distinguished.
\item In case \namelink{gal1}, and if $d$ is even also in case \namelink{lin} we have that if $\pi$ is distinguished then $\JL(\pi)$ is distinguished.
\end{enumerate}
	\end{corollary}
	\begin{proof}
Consider first the case where $\pi$ is essentially square-integrable. In the archimedean case we must have $\pi=\JL(\pi)$ and the result is trivial. Consider the non-archimedean case. In the Galois cases the result follows from \cite[Theorem 1]{MR3858402}. In case \namelink{lin} it is a consequence of the combination of \cite[Theorem 1.2]{MR3953435} and  \cite[Corollary 3.4]{Sign}. The first of the two references shows that $\pi$ has a Shalika model if and only if $\JL(\pi)$ does. The second implies that $\pi$ has a Shalika model if and only if it has a linear model and similarly for $\JL(\pi)$.

For $\pi$ generic the corollary now follows from \eqref{eq JLmult}, Theorem \ref{thm classif dist S} and the fact that $\JL(\delta^*)=\JL(\delta)^*$ for $\delta$ essentially square-integrable. 
	\end{proof}
	
\begin{remark}
\begin{enumerate}
\item
In case \namelink{gal1} and if $d$ is even in case \namelink{lin} the converse implication of Corollary \ref{cor distinction-transfer-generic} may fail for the following reason, if $\pi=\delta_1\times \cdots\times \delta_k$ where some $\delta_i$ is a representation of $\GL_t(\D)$ for $t$ odd and yet $\JL(\delta_i)$ is distinguished. See Remark \ref{rmk counter example dist} for such an example.
\item In cases \namelink{twlin1} and \namelink{twlin2}) the analog of Corollary \ref{cor distinction-transfer-generic} is well-known to be false due to the epsilon dichotomy phenomenon.
\end{enumerate}
\end{remark} 

For our local global principle, we are in fact only interested in compatible generic representations (Definition \ref{def compatible}) which are distinguished. Thanks to Theorem \ref{thm classif dist S}, they can be described in a very precise manner as follows. Before, we recall that Remark \ref{rem compatibility of discrete series} explains even further when discrete series occuring in the essentially square-integrable support of a generic representation can contribute to its $H$-incompatibility. We also recall the notation $\mathcal{D}$ from Equation \eqref{eq a&D} after Remark \ref{rem arithmetic vs geometric}, and that $d$ is the degree of $D$.

\begin{corollary}\label{cor dist comp}
Let $\pi$ be an irreducible generic and distinguished representation of $G$. According to Theorem \ref{thm classif dist S},  
\[
\pi\simeq \delta_1\times \delta_1^* \cdots \times \delta_k \times \delta_k^* \times \tau_1 \times \dots \times \tau_\ell,
\] 
for some integers $k,\ell\ge 0$ and $\delta_i,\tau_j\in \ESI_\D$, $i\in [1,k]$, $j\in [1,\ell]$ and furthermore each $\tau_j$ is distinguished whereas no $\delta_i$ is distinguished. If each $\delta_i\in \Pi_{\ESI}(m_i,\D)$ then $\pi$ is $H$-compatible if and only if  for $i\in[1,k]$ the representation $\JL(\delta_i)$ is not $\GL_{dm_i/2}(F)\times \GL_{dm_i/2}(F)$-distinguished in cases \namelink{lin}, \namelink{twlin1} and \namelink{twlin2}, respectively not $\GL_{d'm_i}(F)$-distinguished in cases \namelink{gal1} and  \namelink{gal2} where $d'$ equals $d/2$ in case \namelink{gal1} and equals $d$ in case \namelink{gal2} is the degree of $\D$ over $E$.
 
\end{corollary} 

\begin{remark}\label{rem dist comp}
In cases \namelink{lin}, \namelink{twlin1} and \namelink{twlin2}, the condition 
\[
\JL(\delta_i) \   \text{is not} \ \GL_{dm_i/2}(F)\times \GL_{dm_i/2}(F)-\text{distinguished}
\] 
can be restated as 
\[
\text{the Langlands parameter of}\ \JL(\delta_i) \ \text{is not symplectic},
 \]
 whereas in cases \namelink{gal1} and \namelink{gal2}, the condition 
 \[
 \JL(\delta_i) \ \text{is not}\ \GL_{d'm_i}(F)-\text{distinguished}
 \] 
 can be restated as 
 \[
 \text{the Langlands parameter of}\ \JL(\delta_i)\ \text{is not conjugate-orthogonal}. 
 \]
\end{remark}

Another consequence of Theorem \ref{thm classif dist S} is the completion of the following result.

\begin{theorem}\label{thm mult 1 and ssduality}
Let $\pi$ be an irreducible distinguished representation of $G$. Then
\begin{enumerate}
\item $\dim(\Hom_H(\pi,\C))= 1$ and
\item $\pi\simeq \pi^*$. 
\end{enumerate}
\end{theorem}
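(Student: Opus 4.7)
The plan is to deduce both statements from Theorem \ref{thm classif dist S} together with the multiplicity-one theorems for distinguished essentially square-integrable representations available in the literature for each of our five cases. Let $S = \delta_1 \times \cdots \times \delta_r$ be the standard module having $\pi$ as its unique irreducible (Langlands) quotient, with $(\delta_1,\ldots,\delta_r)$ in right-aligned order. Any non-zero $H$-invariant form on $\pi$ pulls back along $S \twoheadrightarrow \pi$ to a non-zero form on $S$, so $S$ is distinguished, and Theorem \ref{thm classif dist S} produces an involution $p \in \mathfrak{S}_r$ with $\delta_{p(i)} \simeq \delta_i^*$ for every $i$ and $\delta_i$ itself distinguished whenever $p(i)=i$.

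For assertion (2), I would use that the Langlands classification bijects irreducible representations of $G$ with multisets of essentially square-integrable data. Applying $*$ to $S$, note that the involution $\iota$ preserves the standard block-diagonal Levi up to inner automorphism in all five cases (it is inner on each block in cases \namelink{lin}, \namelink{twlin1}, \namelink{twlin2}, and agrees with the Galois action up to inner automorphism in cases \namelink{gal1}, \namelink{gal2}), while contragredient reverses the order of parabolic induction on general linear groups, so
\[
S^* \;\simeq\; \delta_r^* \times \delta_{r-1}^* \times \cdots \times \delta_1^*.
\]
The inducing multiset of $S^*$ is $\{\delta_i^*\}_i = \{\delta_{p(i)}\}_i = \{\delta_i\}_i$, identical to that of $S$. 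By the Langlands classification the attached irreducible representation is the same, whence $\pi^* \simeq \pi$.

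For assertion (1), I would analyse $\Hom_H(S,\C)$ via the Bernstein--Zelevinsky filtration attached to the $P$-orbit stratification of $X$, and then extract the piece surviving the passage to the Langlands quotient. By Lemma \ref{lem admiss contribute}, in the non-archimedean case only monomial orbits $s \in J(\alpha)$ can contribute, each parametrised by an admissible involution $\sigma \in \mathfrak{S}_r$; by Corollary \ref{cor orbit cont} and the case-\namelink{lin} analogue extracted from \cite{Sign}, the contribution of such an orbit factorises as
\[
\bigotimes_{i<\sigma(i)} \Hom\bigl(\delta_i,\delta_{\sigma(i)}^\vee\bigr) \;\otimes\; \bigotimes_{\sigma(i)=i} \Hom_{H_{\delta_i}}(\delta_i,\C),
\]
each factor being one-dimensional by Schur's lemma and by the multiplicity one for distinguished essentially square-integrable representations known in each of our cases, respectively. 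The archimedean case reduces to the above via Casselman--Wallach globalisation together with the automatic continuity of \cite{MR1176208}.

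The hard part will be that when the multiset $\{\delta_i\}$ has repeated entries several admissible involutions $\sigma$ can coexist, so $\dim \Hom_H(S,\C)$ may a priori exceed one. To conclude, I would argue that among these contributions only the one attached to the open $P$-orbit on $X$ descends to a non-zero form on the Langlands quotient $\pi$, the contributions from the remaining (non-open) orbits being supported in $\Ker(S \twoheadrightarrow \pi)$. This mirrors the standard-module arguments in \cite{Sign,MR4338222} and their Galois analogues available in \cite{MR3719517,MR4308058}.
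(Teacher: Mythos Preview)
Your argument for part (2) is correct and is essentially the paper's own proof in the Galois cases (in the non-Galois cases the paper simply cites \cite{MR4338222} and \cite{Sign}, which amounts to the same thing).

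For part (1), however, your route diverges from the paper's and has a genuine gap. The paper does \emph{not} attempt to extract multiplicity one from an orbit analysis of the standard module. In the non-Galois cases it cites \cite{MR4338222} and \cite{Sign} directly. In the Galois cases it proceeds via the Gelfand--Kazhdan machinery of Aizenbud--Gourevitch \cite{MR2553879}: one first uses part (2) to get $\dim\Hom_H(\pi,\C)=\dim\Hom_H(\pi^\vee,\C)$, and then shows that $(G,H,\theta)$ is a GK-pair (tame by \cite[Theorem 7.6.2]{MR2553879} since it is a Galois pair, and good by \cite[Corollary A2(1)]{MR1277452}), hence a GP2-pair, which gives multiplicity one for all irreducible $\pi$ at once.

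Your proposed alternative---bounding $\Hom_H(S,\C)$ by summing one-dimensional orbit contributions and then claiming that only the open-orbit piece survives on the Langlands quotient---does not go through as stated. First, the geometric lemma gives a filtration, not a splitting: knowing that each graded piece is at most one-dimensional does not identify which forms on $S$ factor through $\pi$. Second, your assertion that ``the contributions from the remaining (non-open) orbits are supported in $\Ker(S\twoheadrightarrow\pi)$'' is not a statement about the filtration in the right direction; forms attached to non-open orbits are exactly those vanishing on $V_1$ (the open-orbit piece), and there is no a priori reason such a form cannot factor through $\pi$. Third, the references you invoke for the ``Galois analogues'' (\cite{MR3719517}, \cite{MR4308058}) treat the Steinberg representation and the classification of distinguished standard modules respectively; neither carries out the descent-to-Langlands-quotient argument you describe for a general irreducible $\pi$. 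Finally, your input ``multiplicity one for distinguished essentially square-integrable representations known in each of our cases'' is itself, in the Galois inner-form setting, most cleanly obtained from the very Gelfand-pair argument the paper uses---so invoking it as a black box risks circularity.
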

\begin{proof}
This is \cite[Corollary 5.8 and Theorem 6.7]{MR4338222} in cases \namelink{twlin1} and \namelink{twlin2} and  \cite[Appendix A]{Sign} in case \namelink{lin}. 

Consider the Galois cases \namelink{gal1} and \namelink{gal2}. 
We first prove that $\pi\simeq \pi^*$. Suppose $\pi$ is the unique ireducible quotient of the standard module $S=\delta_1\times \cdots\times \delta_r$ associated to the multiset $\{\delta_1,\dots,\delta_r\}$ of essentially square-integrable representations. Then $S$ is also distinguished and it therefore follows from Theorem \ref{thm classif dist S} that $\{\delta_1,\dots,\delta_r\}$ is stable under $\delta\mapsto \delta^*$. Since $\pi^*$ is the unique irreducible quotient of the standard module $\delta_r^*\times \cdots\times \delta_1^*$ it follows that $\pi^*\simeq \pi$. 

We therefore have
\[
\dim(\Hom_{H}(\pi,\C))=\dim(\Hom_{H}(\pi^*,\C))=\dim(\Hom_{H}(\pi^\vee,\C)).
\]
(The second equality is since $H=\theta(H)$ and $\pi^*\simeq (\pi^\vee)^\theta$).
In order to prove the multiplicity one result, it therefore suffices to show that $(G,H,\theta)$ is a GP2-pair (see \cite[Definition 8.1.2]{MR2553879}).
By \cite[Corollary 8.1.6]{MR2553879} it suffices to show that it is a GK-pair (see \cite[Definition 7.1.8]{MR2553879}). 
By \cite[Theorem 7.6.2]{MR2553879} any Galois symmetric space is tame (see \cite[Definition 7.3.1]{MR2553879}).
By \cite[Remark 7.3.2 and Theorem 8.1.5]{MR2553879} it suffices to show that $(G,H,\theta)$ is good (see \cite[Definition 7.1.6]{MR2553879}). This follows from  \cite[Corollary A2(1)]{MR1277452} which is formulated in the non-archimedean case, but its proof is valid verbatim in the archimedean case.
\end{proof}

	\begin{remark}\label{rmk counter example dist}
	In case \namelink{lin} and when the degree $d$ of $D$ is even, it can happen that $\jl(\pi)$ is distinguished for generic $\pi$ without $\pi$ being distinguished. The most trivial example is to take $\chi$ a quadratic character of $D^\times$. Then $\jl(\chi)=\st_d(\chi)$ which is known to be $\GL_{d/2}(F)\times \GL_{d/2}(F)$-distinguished (see \cite{MR3168918}). A slightly more elaborate example, where distinction for $\pi$ would make sense, is as follows. As observed in \cite{MR2654780}, for two different quadratic characters $\chi$ and $\chi'$ of $D^\times$ the induced representation $\pi=\chi\times \chi'$ of $G=\GL_2(D)$ is not $D^\times \times D^\times$-distinguished whereas $\jl(\pi)=\st_d(\chi)\times \st_d(\chi')$ is $\GL_d(F)\times \GL_d(F)$-distinguished. 
	\end{remark}

For the next proposition we use the following convention. In cases \namelink{lin}, \namelink{twlin2} and \namelink{gal1} when $a=2m$, for an odd integer $k$ we recall that every representation of $\GL_k(\D)$ is not distinguished.

	\begin{proposition}\label{prop open contribution two discrete} Assume that $F$ is non-archimedean.
	Let $a=a_1+a_2$ and $\delta_i$ be an essentially square-integrable representation of $\GL_{a_i}(\D)$, $i=1,2$ and assume that $r(\delta_1)\ge r(\delta_2)$ and that at least one of $\delta_1,\delta_2$ is not distinguished. Let $S=\delta_1\times \delta_2$ be the corresponding standard module of $G$
and $S^\circ$ the $H$-invariant subspace of sections in $S$ supported on the unique open $(P_{(a_1,a_2)},H)$-double coset. Then $ \Hom_H (S^\circ , \C)$ is one dimensional if $a_1=a_2$ and $\delta_2\simeq \delta_1^*$ and zero otherwise and the restriction map $\ell\mapsto \ell|_{S^\circ}: \Hom_H (S , \C) \rightarrow  \Hom_H (S^\circ , \C) $ is a bijection.
		\end{proposition}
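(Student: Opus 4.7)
The plan is to apply the Bernstein--Zelevinsky geometric lemma to the filtration of $S$ by $(P,H)$-orbits and combine it with the classification of distinguished standard modules in Theorem \ref{thm classif dist S}. By Section \ref{ss orbits}, the $(P,H)$-double cosets in $G$ are parametrized by $J((a_1,a_2))$. The unique open double coset corresponds to the element whose $\mathfrak{s}$ is ``as off-diagonal as possible'': when $a_1=a_2$ this is the permutation matrix $\mathfrak{s}=\left(\begin{smallmatrix}0 & a_1\\ a_1 & 0\end{smallmatrix}\right)$, so that $M_s=M$; when $a_1\ne a_2$ the open orbit has $\mathfrak{s}$ not a permutation matrix, so $M_s\subsetneq M$.

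Since the ordered multiset $(\delta_1,\delta_2)$ is right-aligned (as $r(\delta_1)\ge r(\delta_2)$), Lemma \ref{lem admiss contribute} implies that only orbits $s$ with $M_s=M$, i.e., with $\mathfrak{s}$ a permutation matrix, can contribute non-trivially to the geometric lemma. There are at most two such orbits: the diagonal $\operatorname{diag}(a_1,a_2)$ (always present) and the anti-diagonal (present only when $a_1=a_2$). For the diagonal orbit, Corollary \ref{cor orbit cont}, or its case \namelink{lin} analog using Lemmas \ref{lem mod} and \ref{lem lin sqrint}, shows that a non-trivial contribution requires both $\delta_1$ and $\delta_2$ to be distinguished, which is excluded by hypothesis. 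For the anti-diagonal orbit (requiring $a_1=a_2$), the same corollary gives a non-zero, one-dimensional contribution precisely when $\delta_2^\theta\simeq \delta_1^\vee$, equivalently $\delta_2\simeq \delta_1^*$.

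This directly computes $\Hom_H(S^\circ,\C)$: it is one-dimensional when $a_1=a_2$ and $\delta_2\simeq\delta_1^*$, and zero otherwise. For the restriction map, its kernel is $\Hom_H(S/S^\circ,\C)$, and the filtration of $S/S^\circ$ has successive quotients corresponding to the non-open orbits, each contributing zero by the above analysis; hence the restriction is injective. In the ``bad'' cases both sides vanish and the map is trivially a bijection. In the ``good'' case, Theorem \ref{thm classif dist S} applied with the involution $p=(1\,2)\in\mathfrak{S}_2$ (for which $\delta_1^*\simeq\delta_2$ and there is no fixed point) shows that $S$ itself is distinguished, giving $\dim\Hom_H(S,\C)\ge 1$; combined with the injectivity just proved and the one-dimensionality of $\Hom_H(S^\circ,\C)$, this forces $\dim\Hom_H(S,\C)=1$ and the restriction map to be a bijection.

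The main technical obstacle is the case-by-case verification of the geometric lemma's vanishing, especially in case \namelink{lin} where $J(\alpha)$ carries the extra parameter $l\in L(\mathfrak{s})$ and the modulus computation of Lemma \ref{lem mod} is more delicate. In that case one argues as in the proof of Theorem \ref{thm classif dist S}, invoking Lemma \ref{lem lin sqrint} to rule out contributions from essentially square-integrable representations whose Jacquet-module constituents might otherwise appear to satisfy a partial distinction requirement without the full representation being distinguished.
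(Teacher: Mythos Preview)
Your proof is correct and follows essentially the same route as the paper: both apply Lemma~\ref{lem admiss contribute} to see that the only $s\in J((a_1,a_2))$ with a nonzero contribution in the geometric lemma is the anti-diagonal one (requiring $a_1=a_2$ and $\delta_2\simeq\delta_1^*$), and both deduce from this the dimension of $\Hom_H(S^\circ,\C)$ together with injectivity of the restriction map. Your separate invocation of Corollary~\ref{cor orbit cont} and Lemmas~\ref{lem mod},~\ref{lem lin sqrint} is not wrong but is redundant, since these are already absorbed into the proof of Lemma~\ref{lem admiss contribute} (including the case \namelink{lin}, handled there via \cite[Theorem 3.12]{Sign}).

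One point where you go slightly beyond the paper: the paper's proof only establishes the embedding $\Hom_H(S,\C)\hookrightarrow\Hom_H(S^\circ,\C)$ and then asserts ``the lemma follows''. You supply the missing surjectivity step explicitly, by invoking Theorem~\ref{thm classif dist S} with the involution $p=(1\,2)$ to exhibit a nonzero element of $\Hom_H(S,\C)$ in the one-dimensional case. This is a genuine clarification: injectivity alone does not give bijectivity when the target is one-dimensional, and the paper's applications in \S\ref{sss nonarchpf} and Proposition~\ref{prop::Product case} in fact only use injectivity, so the gap is harmless there --- but your argument is what is needed to justify the proposition as stated.
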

\begin{proof}
Let $M$ be the Levi subgroup of $G$ of type $(a_1,a_2)$ so that $\delta=\delta_1\otimes \delta_2$ is a representation of $M$. It follows from Lemma \ref{lem admiss contribute} that 
\[
\Hom_{M_{x_s}}(r_{M_s,M}(\delta),\delta_{P_{x_s}}\delta_P^{-1/2})=0
\]
unless $a_1=a_2$, $\delta_2\simeq \delta_1^*$ and $\frak{s}=\begin{pmatrix} 0 & a_1 \\ a_1 & 0\end{pmatrix}$ in which case $P\cdot x_s$ is the unique open $P$-orbit in $G\cdot e$ and by irreducibility of $\delta_1$,  $\Hom_{M_{x_s}}(r_{M_s,M}(\delta),\delta_{P_{x_s}}\delta_P^{-1/2})=\Hom_{M_{x_s}}(\delta,\C)$ is one dimensional. (Here $M_{x_s}=\{(g,g^\iota): g\in \GL_{a_1}(\D)\}$, see Section \ref{sss auxi}.)  As a consequence of the geometric lemma \cite[Proposition 4.1]{MR3541705}  (see \eqref{eq geom lem}), restriction to $S^\circ$ defines an imbedding  $\Hom_H(S,\C) \hookrightarrow \Hom_H(S^\circ,\C)\simeq \Hom_{M_{x_s}}(\delta,\C)$.
The lemma follows.
\end{proof}

\subsection{An application of the archimedean geometric lemma}\label{ss archprop}

For the special case where $\delta_2\simeq \delta_1^*$ we require an archimedean analog of Proposition \ref{prop open contribution two discrete}.
We apply an archimedean analog of the geometric lemma for Nash groups in the language introduced by Chen and Sun in \cite{MR4211018}.

\subsubsection{Induced representations as vector bundles}

We introduce some of the language from \cite{MR4211018} specialized to real reductive groups. Let $G$  be the group of $\BR$-points of a real reductive group, and $P$ be a closed algebraic subgroup of $G$. Let $(\sigma,V_{\sigma})$ be a smooth Frechet representation of $P$. 
Consider the right $P$-action on $G\times V_\sigma$ given by
\[(g,v)\cdot p= (gp,\sigma(p^{-1})v).\] 
It commutes with the left $G$-action $x\cdot (g,v)=(xg,v)$, $x,g\in G$, $v\in V_\sigma$.
Denote by 
\[G\times^P  V_{\sigma}\] the space of $P$-orbits in $G\times V_{\sigma}$. It is a $V_{\sigma}$-bundle over $G/P$ with associated projection 
defined by 
\[(g,v)P\mapsto gP: G\times^P  V_{\sigma} \to G/P.\] 
It is proved in  \cite[Section 3.3]{MR4211018}  that this is a tempered bundle (see \cite[Definition 2.14]{MR4211018}). The space \[\mathcal{S}(G/P,G\times^P  V_{\sigma})\] of Schwartz sections is defined in \cite[Section 6.1]{MR4211018}. It consists of sections \[s:G/P\to G\times^P  V_{\sigma}\] of the bundle satisfying certain regularity properties. Following \cite[Proposition 6.3]{MR4211018} there is an action of $G$ on $\mathcal{S}(G/P,G\times^P  V_{\sigma})$
given by $\rho(g)s=g\cdot s(g^{-1} \ \cdot )$, which confers $\mathcal{S}(G/P,G\times^P  V_{\sigma})$ a $G$-module structure. The content of 
\cite[Proposition 6.7]{MR4211018}  is that the map 
\[f\to [gP\to (g,f(g^{-1}))P]\] identifies the smooth Frechet representation of $G$ induced from $(P,\sigma)$ with non-normalized induction with the $G$-module $\mathcal{S}(G/P,G\times^P  V_{\sigma})$, that is, \[\Ind_P^G(\sigma)\simeq \mathcal{S}(G/P,G\times^P  V_{\delta_P^{1/2}\sigma})\] 
where $\Ind_P^G$ stands for normalized induction.
\subsubsection{The filtration}\label{sss filtration}

Assume here that $H$ is a symmetric subgroup of $G$ and $P$ a parabolic subgroup of $G$ such that there is a unique open $(P,H)$-double coset in $G$ and let $u\in G$ be such that $U_0=PuH$ is this unique $(P,H)$-double coset. We enumerate the other double cosets $Pu_1H,\dots, Pu_rH$ and assume that they are ordered so that
\[U_i:=U_0 \cup_{j=1}^i Hu_jP\]
is open for $i=1,\dots,r$.

As in \cite[Section 1.6 (13)]{MR4211018} one has a closed linear embedding given by extension by zero 
\[\mathcal{S}(U_0/P,(G\times^P  V_{\delta_{P}^{1/2}\sigma})_{|U_0})\hookrightarrow\mathcal{S}(G/P,G\times^P  V_{\delta_{P}^{1/2}\sigma}).\]

Denote by $V_0$ the $H$-invariant subspace of $\Ind_P^G(\sigma)$ of sections supported in $U_0$. Then under the $G$-module isomorphism 
\[\mathcal{S}(G/P,G\times^P  V_{\delta_{P}^{1/2}\sigma})\simeq \Ind_P^G(\sigma),\] we get an $H$-module isomorphism 
\[\mathcal{S}(U_0/P,(G\times^P V_{\delta_{P}^{1/2}\sigma})_{|U_0})\simeq V_0.\]

As in \cite{MR4659866} the quotient 
\[Q:=\mathcal{S}(G/P,G\times^P V_{\delta_{P}^{1/2}\sigma})/\mathcal{S}(U_0/P,(G\times^P  V_{\delta_{P}^{1/2}\sigma})_{|U_0})\] is equipped with a filtration. More precisely, as in \cite[Introduction]{MR4659866}, $Q$ first has a finite filtration with respective subquotients $Q_1, \cdots , Q_r$ where each $Q_i$ corresponds to the representative $u_i$. Explicitly: 
\[Q_i=\mathcal{S}(U_i/P,G\times^P  \delta_{P}^{1/2}V_{\sigma})_{|U_i})/\mathcal{S}(U_{i-1}/P,(G\times^P  \delta_{P}^{1/2}V_{\sigma})_{|U_{i-1}}).\]
Moreover each $Q_i$ admits an infinite filtration with consecutive subquotients $Q_{i,k}$ for $k\in \BN$, where explicitly 
\[Q_{i,k}=\mathcal{S}(H/P_i,H\times^{P_i} V_{\delta_P^{1/2} \sigma}\otimes S_{i,k}).\] 
Here 
\[P_i=P\cap u_iHu_i^{-1}\ \ \ \text{and} \ \ \ S_{i,k}=Sym^k((\mathfrak{g}/\mathfrak{h}+\Ad(u_i)\mathfrak{p})_{\C}^\vee)\]  
where $\mathfrak{g}$, $\mathfrak{h}$ and $\mathfrak{p}$ are the respective Lie algebras of $G$, $H$ and $P$.
Observe that \[Q_{i,0}=\mathcal{S}(H/P_i,H\times^{P_i}  V_{\delta_P^{1/2}\sigma}) .\]

\subsubsection{Consequences in special cases relevant to us}

Assume that $G=G_m(F)$ and $H=H_m(F)$ where $(G_m,H_m)$ are defined by one of the cases \namelink{lin}, \namelink{twlin1},  \namelink{twlin2}, \namelink{gal1},\namelink{gal2} of \S\ref{sec: symmetric pair} and let $a$ and $\D$ be as in \eqref{eq a&D} after Remark \ref{rem arithmetic vs geometric}. Let $\pi$ be a standard module of $G$ of the form $\pi=\delta\times \delta^*$ where $\delta$ is irreducible essentially square integrable and $r(\delta)\ge 0$. In particular, either $a=2$ or $\D=\R$ and $a=4$.

Let $P=MU$ be the parabolic subgroup of $G$ of type $(\frac{a}2,\frac{a}2)$ and $\sigma=\delta\otimes \delta^*$ so that $\pi=\Ind_P^G(\sigma)$. 
It easily follows from Section \ref{ss orbits} that there is a unique open $(P,H)$-double coset in $G$.
In the next proposition and its proof we freely apply the notation of Section \ref{sss filtration}. 

Note that $m\in\{1,2\}$. If $m=1$, by convention, we say that any representation of $\GL_{\frac{a}2}(\D)$ is not distinguished. 
\begin{proposition}\label{prop arch geom lem}
With the above notation if $\delta$ is not distinguished then the restriction map $\ell\mapsto \ell|_{V_0}: \Hom_H (\pi , \C) \rightarrow  \Hom_H (V_0 , \C) $ is injective.
\end{proposition}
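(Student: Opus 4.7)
The plan is to establish vanishing of $\Hom_H(\pi/V_0,\C)$, which is equivalent to the claimed injectivity of the restriction map since its kernel consists exactly of $H$-invariant continuous linear forms that factor through the quotient $Q=\pi/V_0$. I would invoke the filtration of $Q$ recalled in Section \ref{sss filtration}. Using left-exactness of $\Hom_H(-,\C)$ in the Chen--Sun category of Schwartz sections of tempered Nash bundles, it suffices to prove that $\Hom_H(Q_{i,k},\C)=0$ for every $i\geq 1$ and every $k\geq 0$.

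Next, I would apply the archimedean Frobenius reciprocity for Schwartz induction to identify
\[
\Hom_H(Q_{i,k},\C)\simeq \Hom_{M_{x_{s}}}\bigl(r_{M_{\mathfrak{s}},M}(\sigma)\otimes S_{i,k},\,\delta_{P_{x_s}}\delta_P^{-1/2}\bigr),
\]
where $s=s_i\in J((a/2,a/2))$ indexes the orbit $Pu_iH$, and $r_{M_{\mathfrak{s}},M}$ is taken in the appropriate archimedean sense (Casselman's Jacquet module or a direct geometric restriction to the Levi structure in play). Since $a\in\{2,4\}$ and $\delta$ is irreducible and essentially square integrable, $\delta$ is either a character of $\D^\times$ for $\D\in\{\R,\C,\BH\}$ when $a=2$, or a discrete series of $\GL_2(\R)$ when $a=4$ and $\D=\R$. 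For each non-open orbit indexed by $s\neq s_{\mathrm{open}}$, I would combine the explicit description of $M_{x_s}$ from Section \ref{ss orbits}, the modulus computation of Lemma \ref{lem mod} and its variants, and the characterization of distinguished standard modules in Theorem \ref{thm classif dist S}, to show that non-vanishing of the displayed $\Hom$ space would force $\delta$ itself to be distinguished, contradicting the hypothesis.

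The hard part I expect is twofold. First, one has to make the archimedean analog of the geometric lemma fully rigorous at the level of each filtration quotient, controlling the normal-bundle twist $S_{i,k}$ uniformly in $k$, and verifying that the character contributed by $S_{i,k}$ does not accidentally combine with $\delta_{P_{x_s}}\delta_P^{-1/2}$ to produce an invariant functional absent when $k=0$. Second, for the non-monomial orbits occurring only in the $a=4$ case, the restriction $r_{M_{\mathfrak{s}},M}(\sigma)$ is non-trivial and one must rule out that a character quotient of the discrete series $\delta$ becomes $M_{x_s}$-distinguished once the $S_{i,k}$ twist is accounted for; this is incompatible with $\delta$ being irreducible essentially square integrable but not distinguished, but the verification requires tracking unitarity parameters carefully. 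Once these two points are settled, the vanishing of every $\Hom_H(Q_{i,k},\C)$ follows and the proposition is proved.
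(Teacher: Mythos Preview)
Your overall strategy---show $\Hom_H(Q,\C)=0$ via the Chen--Sun filtration of $Q=\pi/V_0$---is exactly the paper's. But the execution you sketch diverges from the paper in a way that leaves a genuine gap.

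First, the Frobenius-type identification you write,
\[
\Hom_H(Q_{i,k},\C)\simeq \Hom_{M_{x_s}}\bigl(r_{M_{\mathfrak{s}},M}(\sigma)\otimes S_{i,k},\delta_{P_{x_s}}\delta_P^{-1/2}\bigr),
\]
is not what Shapiro's lemma gives in this archimedean setting. The group $P_i=P\cap u_iHu_i^{-1}$ is not in general a parabolic of $H$ with Levi related to $M_{\mathfrak{s}}$, and no Jacquet functor appears; Shapiro yields $\Hom_{P_i}(\delta_P^{1/2}\sigma\otimes S_{i,k},\delta_{P_i})$ (equivalently the dual of $H_0(P_i,\delta_P^{1/2}\sigma\otimes S_{i,k})$). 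The paper works entirely in Schwartz homology $H_0$ and dualizes only at the end.

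Second, and more seriously, your plan to handle the twists $S_{i,k}$ for all $k\ge 0$ by ``tracking the character'' is not a viable strategy: there are infinitely many $k$, and $S_{i,k}$ is a genuine $P_i$-representation, not a character. The paper does not attempt this. Instead it invokes structural results that kill all $k>0$ at once: in cases \namelink{twlin1}, \namelink{twlin2}, \namelink{gal2} (and the remaining \namelink{gal1}, \namelink{twlin2} subcase) it verifies conditions (A)--(D) of \cite[\S5.3]{MR4659866} and applies their Theorem 5.8; in case \namelink{lin} it cites \cite[Appendix, Lemma D.4]{Sign}. Only after this reduction does one treat $Q_{i,0}$ via Shapiro.

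Third, your case analysis misses a subtlety in the linear case $(G,H)=(\GL_2(\D),\D^\times\times\D^\times)$. Here $m=1$, so by convention $\delta$ is never distinguished and your ``contradiction with non-distinction'' argument has no content. The paper instead observes that the non-open orbits are closed with $P_i=H$, so Shapiro gives $H_0(H,\delta_P^{1/2}\sigma)$, and the vanishing comes from the unitarity assumption $r(\delta)\ge 0$ combined with the genuine $\delta_P^{1/2}$ twist---not from non-distinction of $\delta$.
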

\begin{proof}
If either $(G,H)=(\GL_2(\R),\C^\times)$ or $(G,H)=(\GL_2(\C),\BH^\times)$ then $G=PH$ so that $\pi=V_0$ and the proposition is straightforward. This concludes case \namelink{gal1} and in case \namelink{twlin2} leaves only the case where $(G,H)=(\GL_4(\R),\GL_2(\C))$. 

Consider either this case or cases \namelink{lin}, \namelink{twlin1} or \namelink{gal2}. We observe that $P$ is $\theta$-stable. Furthermore, call a $(P,H)$-double coset admissible if it contains a representative $v$ such that $v\cdot e$ is $P$-admissible. One can explicitly show that if $Pu_i H$ is admissible then $u_i$ can be chosen so that it normailzes $H$ so that $P_i=P\cap H$ is independent of the admissible double coset. 

We claim that the Schwartz homology spaces satisfy
\begin{equation}\label{eq schwartz homology}
H_0(H,Q_i)=0,\ i=1,\dots,r.
\end{equation}

We first treat the remaining cases except \namelink{lin}. Note that in those cases, the assumption on $\delta$ implies that $\sigma$ is not $M\cap H$-distinguished and furthermore, all $(P,H)$-double cosets are admissible. 
The conditions (A), (B), (C), (D) of \cite[Section 5.3]{MR4659866} are satisfied and applying  \cite[Theorem 5.8]{MR4659866} we conclude that 
\[H_0(H,Q_{i,k})=\{0\}, \ \forall k>0.\]
Furthermore, by Shapiro's lemma \cite[Lemma 3.7]{MR4659866} and Lemma \ref{lem mod}, we have \[H_0(H,Q_{i,0})=H_0(P\cap H,\sigma),\] and by \cite[Lemma 3.8]{MR4659866}, the continuous dual of 
$H_0(P\cap H,\sigma)$ is \[H_0(P\cap H,\sigma)^\vee\simeq \Hom_{P\cap H}(\sigma,\C)=0.\] 
Hence by the proof of \cite[Lemma 5.2]{MR4659866}, this implies that for each $i=1,\dots,r$
\[\dim H_0(H,Q_i)\leq \dim H_0(H,Q_{i,0})=0,\] and therefore \eqref{eq schwartz homology} follows.

Next we consider the linear cases \namelink{lin}. Condition (A) of \cite[Section 5.3]{MR4659866} is no longer satisfied, however, 
 it follows from the proof of \cite[Appendix, Lemma D.4]{Sign} that 
all homology spaces $H_0(H,Q_{i,k})$ are equal to zero except if $k=0$ and $Pu_iH$ is admissible. 
For the case $(G,H)=(\GL_4(\R),\GL_2(\R)\times \GL_2(\R))$ the representatives of admissible orbits may be chosen to normalize $H$ and the assumption on $\delta$ implies that $\sigma$ is not $M\cap H$-distinguished. In this case \eqref{eq schwartz homology} follows now in the same way as in the previous cases. The remaining cases are $(G,H)=(\GL_2(\D), \D^\times\times \D^\times)$ for $\D\in \{\R,\C,\BH\}$ where $\delta$ is any irreducible representation such that $r(\delta)\ge 0$. In these cases $r=2$ and the two non-open double cosets are closed and $P_i=H$, $i=1,2$. Shapiro's lemma \cite[Lemma 3.7]{MR4659866} now gives
\[H_0(H,Q_{i,0})=H_0(H,\delta_P^{\frac12}\sigma),\ i=1,2.\] 
and by \cite[Lemma 3.8]{MR4659866}, the continuous dual of $H_0(H,\delta_P^{\frac12}\sigma)$ is
\[H_0(H,\delta_P^{\frac12}\sigma)^\vee\simeq \Hom_{H}(\delta_P^{\frac12}\sigma,\C)=0.\] 
The vanishing follows since $r(\delta)\ge 0$. 

This establishes \eqref{eq schwartz homology} in all cases and we conclude that $H_0(H,Q)=0$.
Applying  \cite[Lemma 3.8]{MR4659866} once more we conclude that $\Hom_H(Q,\C)=0$ and the proposition readily follows. 

\end{proof}

\section{Local intertwining periods: preliminaries}
	
Here we go back to the notation of Section \ref{ss sym} for a general symmetric space in the local set-up.
A systematic study of local intertwining periods has been carried out in \cite{MR4679384}. Here we recall and slightly extend some of our results. 

Let $P=MU$ be a parabolic subgroup of $G$. An element $x\in X$ is $P$-\emph{admissible} if $\theta_x(M)=M$. In this case $\theta_x$ acts as an involution on $\fra_{M,\C}^*$ and we denote by $(\fra_{M,\C}^*)_x^\pm$ its $\pm1$-eigenspace.
We say that a $P$-admissible $x$ satisfies the \textit{modulus assumption} if the following holds:
	
	\[\delta_{P_x}=\delta_P^{1/2}|_{M_x}.\]
	
	For a $P$-admissible $x \in G\cdot e$ satisfying the modulus assumption, take $u \in G$ such that $u \theta(u)^{-1} = x$, $\ell \in \Hom_{M_x} (\sigma, \C)$ and $\lambda \in (\fra_{M,\C}^{\ast})_x^- $. The \emph{intertwining period}, attached to $x,\ell,
	\sigma$ and $\lambda$, is a linear form on $I_P^G(\sigma)$ defined by the meromorphic continuation of the integral
	
	\begin{align}\label{formula::defn--intt periods}
		J_P^G(\varphi;x,\ell,\sigma,\lambda) = \int_{ u^{-1}Pu\cap H \backslash H} \ell (\varphi_{\lambda} (uh)) dh = \int_{P_x \backslash G_x} \ell (\varphi_{\lambda}(g u)) dg.
	\end{align}
Note that the definition does not depend on the choice of $u$. It is easy to check that the integral is formally well-defined thanks to the modulus assumption satisfied by $x$. By \cite[Theorem 5.3]{MR4679384}, the above integral is absolutely convergent when $\Re(\lambda)$ is in a certain cone in $(\fra_{M}^{\ast})^-_x$ and admits a meromorphic continuation to $\lambda \in (\fra_{M,\C}^{\ast})^-_x$. For $m\in M$, let $x' = m \cdot x$. Note that $u'\theta(u')^{-1}=x'$ for $u'=mu \in G$ and $\ell \circ \sigma(m) \in \Hom_{M_{x'}} (\sigma,\C)$. Then by definition one has
	\begin{align}\label{formula::intt-period--change-in-orbit}
		J_P^G(\varphi;x,\ell,\sigma,\lambda)  = e^{\langle \lambda + \rho_P, H_M(m) \rangle} J_P^G (\varphi; x',\ell \circ \sigma(m), \sigma,\lambda).
	\end{align}

	Singularities of intertwining periods will play an essential role in our local and global results. We say that $J_P^G(x,\ell,\sigma,\lambda)$ is holomorphic at $\lambda=\lambda_0$ if $J_P^G(\varphi;x,\ell,\sigma,\lambda)$ is holomorphic at $\lambda=\lambda_0$ for every $\varphi \in I_P^G(\sigma)$. Otherwise, we say that $J_P^G(x,\ell,\sigma,\lambda)$ has a singularity at $\lambda=\lambda_0$.
		
We begin with the following simple observation in the non-archimedean case. 
\begin{lemma}\label{lem::holomorphy}
Suppose that $F$ is $p$-adic. Let $\sigma$ be a representation of $M$ so that every $P$-orbit in $G\cdot e$ that is relevant to $\sigma$ is open in $G\cdot e$. Then for any $x\in G\cdot e$ that is $P$-admissible and satisfies the modulus assumption and for $\ell\in \Hom_{M_x} (\sigma, \C)$ we have that $J_P^G(x,\ell,\sigma,\lambda)$ is holomorphic at $\lambda=0$.
\end{lemma}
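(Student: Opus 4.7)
The plan is to leverage the fact that, under the hypotheses of $P$-admissibility and the modulus assumption together with non-vanishing of $\ell$, the orbit $P\cdot x$ is automatically relevant to $\sigma$. The assumption of the lemma then forces $P\cdot x$ to be open, and for open admissible orbits one checks that the defining integral converges absolutely for all values of $\lambda$, ruling out any singularity at $\lambda=0$.

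For the first step, if $\ell = 0$ then $J_P^G(\cdot;x,\ell,\sigma,\lambda)$ vanishes identically and there is nothing to prove, so assume $\ell\ne 0$. Since $x$ is $P$-admissible, $\theta_x$ stabilizes $M$. Choosing a $P$-good representative of $P\cdot x$ as recalled in Section \ref{ss paraborb}, the Weyl element $w_x = \iota_P(P\cdot x)$ then normalizes $M$, so that $L := M(w_x) = M\cap w_x\theta'(M)w_x^{-1}$ equals $M$. Consequently, in the geometric lemma identification \eqref{eq geom lem} the normalized Jacquet functor $r_{L,M}$ reduces to the identity, and the modulus twist $\delta_{Q_y}\delta_Q^{-1/2}$ restricted to $M_x$ coincides with $\delta_{P_x}\delta_P^{-1/2}|_{M_x}$, which is trivial by the modulus assumption. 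The relevance space of $P\cdot x$ therefore simplifies to $\Hom_{M_x}(\sigma,\C)$, which contains $\ell$ and is in particular non-zero. By the hypothesis of the lemma, $P\cdot x$ must then be open in $G\cdot e$.

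For the second step, openness of $P\cdot x$ is equivalent to openness of the double coset $PuH$ in $G$. The natural injection $u^{-1}Pu\cap H\bs H \hookrightarrow P\bs G$, $h\mapsto Puh$, then has open image, which has finite volume since $P\bs G$ is compact in the $p$-adic topology. For $\lambda$ in any bounded subset of $(\fra_{M,\C}^*)_x^-$, the exponential factor $e^{\langle\lambda,H_M(uh)\rangle}$ is uniformly bounded on this compact domain while $\ell\circ\varphi_\lambda$ is locally constant. Hence the integral \eqref{formula::defn--intt periods} converges absolutely for every $\lambda \in (\fra_{M,\C}^*)_x^-$, defining an entire function of $\lambda$; in particular it is holomorphic at $\lambda = 0$.

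The main technical point will be the verification of the identity $M(w_x) = M$ for $P$-admissible $x$, which rests on the explicit analysis of good orbit representatives and must be checked uniformly across the various families of symmetric pairs in play; once that combinatorial fact is granted, the rest of the argument proceeds as sketched.
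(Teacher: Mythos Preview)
Your second step contains a genuine error that invalidates the argument. Knowing that $P\cdot x$ is open does \emph{not} imply that the integral \eqref{formula::defn--intt periods} converges absolutely for every $\varphi\in I_P^G(\sigma)$ and every $\lambda$. The quotient $P_x\backslash G_x$ embeds as an \emph{open} subset of the compact space $P\backslash G$, but open subsets of compact spaces are typically not compact, and the height function $H_M(uh)$ is unbounded on $H$ in general. Hence the exponential factor $e^{\langle\lambda,H_M(uh)\rangle}$ is not uniformly bounded, and the integral requires meromorphic continuation outside a cone in $\Re(\lambda)$---this is exactly the content of \cite[Theorem 5.3]{MR4679384} cited just before the lemma. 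Your claim that ``the rest of the argument proceeds as sketched'' once openness is established is therefore false.

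What \emph{is} true, and what the paper's proof exploits, is that for sections $\varphi$ whose support in $P\backslash G$ lies inside the union of open orbits, the integrand in \eqref{formula::defn--intt periods} has compact support in $P_x\backslash G_x$, so the integral converges absolutely for all $\lambda$ and is holomorphic. The paper then argues by contradiction: if $J_P^G(x,\ell,\sigma,\lambda)$ had a pole at $\lambda=0$, its leading term would be a non-zero $H$-invariant linear form on $I_P^G(\sigma)$ vanishing on all sections supported on open orbits; by the filtration \eqref{eq geom lem} such a form forces some non-open orbit to be relevant, contradicting the hypothesis. Your first step (that $P$-admissibility together with $\ell\ne 0$ makes $P\cdot x$ itself relevant, hence open) is plausible and perhaps recoverable, but it is not enough: you still have to control the behaviour of the intertwining period on \emph{all} sections, and for that one cannot avoid the leading-term argument.
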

	\begin{proof}
		For any $\varphi\in I_P^G(\sigma)$ with support contained in the union of open $(P,H)$-double cosets, the integrand in \eqref{formula::defn--intt periods} vanishes outside a compact domain and the integral is therefore absolutely convergent and hence holomorphic at any $\lambda\in (\fra_{M,\C}^{\ast})_x^- $. If $J_P^G(x,\ell,\sigma,\lambda)$ is not holomorphic at $\lambda =0$, then its leading term (along any line through $0$) is an $H$-invariant linear form on $I_P^G(\sigma)$ that vanishes on the $H$-subspace of functions supported on the open $P$-orbits. The assumption now contradicts \eqref{eq geom lem}.
	\end{proof}

	We recall the functional equations satisfied by intertwining periods as well as their compatibility with transitivity of parabolic induction. We introduce a directed, labeled graph $\mathfrak{G}$ as in \cite{MR3541705} which is a close variant of the graph considered in \cite{MR2010737}. The vertices of $\mathfrak{G}$ are the pairs $(M,x)$, where $M$ is a standard Levi subgroup of $G$ and $x \in X$ is $P$-admissible. The edges of $\mathfrak{G}$ are given by
	\begin{align}\label{eq edge}
		(M,x) \stackrel{n}{\searrow} (M_1,x_1) 
	\end{align}
	if there is $\alpha \in \Delta_P$ with $- \alpha \ne \theta_x(\alpha) < 0$ such that $n \in s_{\alpha} M$ where $s_{\alpha} \in W(M)$ is the elementary symmetry associated to $\alpha$, $M_1 = nMn^{-1}$ and $x_1 = n \cdot x$. Note that $(M_1)_{x_1} = n M_x n^{-1}$ and that
	\[\delta_{P_x}\delta_P^{-1/2}(m)=\delta_{(P_1)_{x_1}}\delta_{P_1}^{-1/2}(nmn^{-1})\] for all $m\in M_x$ by \cite[Corollary 6.5]{MR3541705}. In particular the modulus assumption is satisfied by $x$ if and only if it is satisfied by $x_1$, or in other words the modulus assumption is satisfied by one vertex of the graph $\mathfrak{G}$ if and only if it is satsified by its connected component. 
	
There are two types of extreme vertices that we now describe. We say that a vertex $(M,x)$ is minimal if there exists a a standard Levi subgroup $L \supset M$ such that $\iota_P(P\cdot x)= w_M^L$ and $w_M^L(\alpha) = - \alpha $ for all $\alpha \in \Delta_P^Q$ (see  \cite{MR2010737}); on the other hand, we say that a vertex $(M,x)$ is maximal if there exists a standard Levi subgroup $L \supset M$ such that $\iota_P(P\cdot x) = w_L^G$ and $  w_L^G (\alpha) = \alpha$ for all $\alpha \in \Delta_P^Q$ (see \cite{MR4679384}). In both cases, the standard Levi subgroup $L$ is uniquely determined by the vertex. We refer the reader to the references above for a more detailed study of these two notions. 
	\begin{example}
		In case \namelink{gal2}, let $\alpha = (a_1,\cdots,a_r)$, $s \in J(\alpha)$ and $w_s \in G$ be as in section \ref{ss orbits}. Note that, in fact, $w_s\in X$. When $s$ is monomial, it can be naturally viewed as an involution of $\mathfrak{S}_r$ (permuting the blocks of $M_\alpha$), which we denote by $p(s)$. Then $(M,w_s)$ is minimal if and only if $s$ is monomial and $p(s)$ is a product of disjoint transpositions of the form $(j,j+1)$; $(M,w_s)$ is maximal if and only if $s$ is monomial and $p(s)$ is of the form $(1,r)(2,r-1)\cdots(k,r+1-k)$ for some $0 \leqslant k \leqslant r/2$.
	\end{example}

	Now we can state a compatibility property of transitivity of parabolic induction with intertwining periods. 
	
	\begin{proposition}\label{prop::intt-period++PI}
		Let $P = MU$. Let $(M,x)$ be a vertex on the graph $\mathfrak{G}$ such that $x$ satisfies the modulus assumption, and $\sigma$ a finite length representation of $M$. Let $u\in G$ be such that $u \theta(u)^{-1} = x$ and $\ell \in \Hom_{M_x}(\sigma,\C)$.
\begin{enumerate}
		\item\label{part closed fe}Suppose that there is a parabolic subgroup $ Q = LV$ containing $P$ such that $L$ and $P \cap L$ are $\theta_x$-stable and that $\theta_x(Q) = Q^-$, the parabolic subgroup opposite to $Q$. Define $\Lambda_{\ell} \in \Hom_{L_x} (I_{P \cap L}^L \sigma,\C)$ by
		\begin{align}\label{formula::defn--Lambda-ell}
			\Lambda_{\ell} (f) = \int_{(P \cap L)_x \backslash L_x} \ell ( f (h)) dh.
		\end{align}
		Then, for all $\lambda \in (\fra_{L,\C}^{\ast})^-_x$, 
		\begin{align}\label{formula::intt-period++PI-maximal}
			J_P^G(\varphi;x,\ell,\sigma,\lambda) = J_Q^G(F_{\varphi};x,\Lambda_{\ell},I_{P \cap L}^L \sigma,\lambda).
		\end{align}
		In particular, if $(M,x)$ is maximal, such a $Q$ can be taken as the parabolic subgroup with Levi $L$ in the definition of maximality.
		
		\item\label{part open fe} Suppose that there is a parabolic subgroup $Q = LV$ containing $P$ such that $Q$ and $L$ are $\theta_x$-stable and $\theta_x( P \cap L) = (P \cap L)^-$, the parabolic subgroup of $L$ opposite to $ P \cap L$. Then, for all $\lambda \in (\fra_{M,\C}^{\ast})^-_x$,
		\begin{align}\label{formula::intt-period++PI-minimal-I}
			J_P^G (\varphi;x,\ell,\sigma,\lambda) = \int_{Q_x \backslash G_x} \int_{M_x \backslash L_x} \ell (((I_P^G(gu,\sigma,\lambda)\varphi)[e])_{\lambda}(l)) dl dg.
		\end{align}
		In particular, if $(M,x)$ is minimal, such a $Q$ can be taken as the parabolic subgroup with Levi $L$ in the definition of minimality.
\end{enumerate}	\end{proposition}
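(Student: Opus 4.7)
The plan is to prove both parts by factorizing the integration domain $P_x \bs G_x$ through the intermediate subgroup $Q_x$, applying Fubini on the cone of absolute convergence of the intertwining period, and then invoking meromorphic continuation to extend the identity to all $\lambda$.

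For part \eqref{part closed fe}, the first step is to analyze the stabilizers. The assumption $\theta_x(Q) = Q^-$ forces $V \cap \theta_x(V) = V \cap V^- = \{e\}$, so $V_x = \{e\}$ and $Q_x = L_x$. Combined with the $\theta_x$-stability of $P \cap L$, this yields $P_x = (P \cap L)_x$, and hence a tower $P_x \subseteq L_x \subseteq G_x$. In the cone of convergence, choosing compatible Haar measures, we get
\[
J_P^G(\varphi; x, \ell, \sigma, \lambda) = \int_{L_x \bs G_x} \int_{(P \cap L)_x \bs L_x} \ell(\varphi_\lambda(lgu)) \, dl \, dg.
\]
Using \eqref{formula::holo-section-transitivity} to rewrite $\varphi_\lambda(lgu)$ in terms of $(F_\varphi)_\lambda(gu)(l)$, the inner integral is recognized as $\Lambda_\ell$ applied to the section $(F_\varphi)_\lambda(gu) \in I_{P \cap L}^L\sigma[\lambda]$. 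Since $Q_x = L_x$, the outer integral is exactly the defining integral \eqref{formula::defn--intt periods} for $J_Q^G(F_\varphi; x, \Lambda_\ell, I_{P \cap L}^L \sigma, \lambda)$. Meromorphic continuation gives the identity for all $\lambda$. The ``in particular'' assertion for a maximal vertex follows from the structure theory of such vertices in \cite{MR4679384}: the parabolic $Q$ with Levi $L$ attached to $w_L^G$ acting trivially on $\Delta_P^Q$ satisfies precisely $\theta_x(Q) = Q^-$ and the $\theta_x$-stability hypotheses.

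For part \eqref{part open fe}, the situation is dual. Now $\theta_x(Q) = Q$ gives $Q_x = L_x \cdot V_x$, whereas $\theta_x(P \cap L) = (P \cap L)^-$ inside the $\theta_x$-stable Levi $L$ forces $(P \cap L)_x = M_x$. The relevant factorization is $P_x \backslash G_x \to Q_x \backslash G_x$ with fibers parametrized by $M_x \backslash L_x$. A change of variables $g = lg'$ with $l$ running over $M_x \bs L_x$ inside $Q_x \bs G_x$, combined with the definition $(I_P^G(g u,\sigma,\lambda)\varphi)_\lambda(x) = \varphi_\lambda(x g u)$ of the translation and the identification $\psi[e] = F_\psi(e) \in I_{P \cap L}^L\sigma$, rewrites the inner integrand as $\ell(((I_P^G(gu,\sigma,\lambda)\varphi)[e])_\lambda(l))$. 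Fubini and meromorphic continuation then yield \eqref{formula::intt-period++PI-minimal-I}. The ``in particular'' statement reduces to checking that the parabolic $Q$ attached to a minimal vertex $(M,x)$ satisfies $\theta_x(P \cap L) = (P \cap L)^-$ and $\theta_x$-stability of $L$ and $Q$, again standard from \cite{MR2010737, MR4679384}.

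The main obstacle is the careful bookkeeping of modulus characters and $\lambda$-twists. The exponential $e^{\langle \lambda + \rho_Q, H_L(\cdot)\rangle}$ appearing in \eqref{formula::holo-section-transitivity} must be absorbed precisely by the normalization of Haar measures in the factorization $dp_x\bs dg = (dq_x \bs dg)(d(p \cap l)_x \bs dl_x)$, which in turn requires the modulus assumption $\delta_{P_x} = \delta_P^{1/2}|_{M_x}$ together with its compatibility (via \cite[Corollary 6.5]{MR3541705}) with edges in $\mathfrak{G}$, so that the analogous modulus identity holds for the intermediate vertex $(L, x)$. Once this is verified, the iterated integral identifications become formal, and the proof reduces to tracking the $\lambda$-twists through the transitivity-of-induction isomorphism $F$ and the translation action $I_P^G(\cdot,\sigma,\lambda)$.
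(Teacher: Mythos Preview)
Your proposal is correct and follows essentially the same approach as the paper. In fact, the paper's own proof is merely a pointer to \cite[Theorem 5.3]{MR4679384} for part \eqref{part closed fe} and \cite[Corollary 1]{MR4508342} for part \eqref{part open fe}, noting that those arguments carry over verbatim to the slightly more general hypotheses here; your sketch of the integration-in-stages argument with the modulus bookkeeping is precisely what those references do, so you have actually written out more than the paper does.
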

	\begin{proof}
		When $(M,x)$ is a maximal vertex, the proof of \eqref{formula::intt-period++PI-maximal} is given in the proof of \cite[Theorem 5.3]{MR4679384}. The proof can be carried over without modification to the general situation. When $(M,x)$ is minimal, \eqref{formula::intt-period++PI-minimal-I} is proved in the proof of \cite[Corollary 1]{MR4508342}\footnote{The paper is in the p-adic setup but the proof carries verbatim to the archimedean case}. The proof also carries over to the general situation.
	\end{proof}
	
	By integration in stages we have the following functional equation which relates the intertwining periods attached to two adjacent vertices in the graph $\mathfrak{G}$. 
	
	\begin{proposition}\label{prop::intt-period++FE+Adjacent}
		Let $P = M\ltimes U$ and $P_1 = M_1\ltimes U_1$ be two parabolic subgroups of $G$. Assume that $(M,x) \stackrel{n}{\searrow} (M_1,x_1)$ is an edge on the graph $\mathfrak{G}$ and $\alpha \in \Delta_M$ is such that $n \in s_{\alpha}M$. Let $\sigma$ be a representation of $M$ and $\ell \in \Hom_{M_x}(\sigma,\delta_x)$. Then for $\varphi \in I_P^G(\sigma)$ and $\lambda \in (\fra_{M,\C}^{\ast})^-_x$ we have
		\begin{align}
			J_P^G(\varphi; x, \ell,\sigma,\lambda)  =  J_{P_1}^G ( M(n,\sigma,\lambda)\varphi;x_1, \ell,s_{\alpha}\sigma,s_{\alpha}\lambda).
		\end{align} 
	\end{proposition}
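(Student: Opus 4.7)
The plan is to verify the identity by direct manipulation of the defining integrals and integration in stages, together with the definition of the standard intertwining operator. First, I would fix a $u \in G$ with $u\theta(u)^{-1} = x$ and set $u_1 = nu$. A straightforward calculation, using that $x_1 = n\cdot x = nx\theta(n)^{-1}$, yields $u_1\theta(u_1)^{-1} = x_1$, so $u_1$ is a legitimate base point for the intertwining period at $x_1$. Note also that $G_{x_1} = nG_xn^{-1}$ and $(M_1)_{x_1} = nM_xn^{-1}$, as recorded in the definition of the graph $\mathfrak{G}$.

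Next, I would unfold the right-hand side. Writing
\[
(M(n,\sigma,\lambda)\varphi)_{s_\alpha\lambda}(gu_1)=\int_{(U_1\cap nUn^{-1})\backslash U_1}\varphi_\lambda(n^{-1}u'gnu)\,du',
\]
the intertwining period $J_{P_1}^G(M(n,\sigma,\lambda)\varphi;x_1,\ell,s_\alpha\sigma,s_\alpha\lambda)$ becomes, in the region where everything converges absolutely, an iterated integral over $(P_1)_{x_1}\backslash G_{x_1}$ composed with the integration defining $M(n,\sigma,\lambda)$. After the change of variables $g\mapsto n^{-1}gn$ this turns into an integral over the appropriate subgroup of $G_x$, with the $U_1$-integral conjugated into an integral over $n^{-1}U_1 n \cap U$. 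The claim then amounts to the Fubini-type statement that these combined domains reassemble exactly into the quotient $P_x\backslash G_x$ defining $J_P^G(\varphi;x,\ell,\sigma,\lambda)$.

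To verify this last point cleanly, I would reduce to a rank-one computation by integration in stages. Let $L$ be the standard Levi of $G$ containing $M$ determined by the root $\alpha$ (so $\dim\fra_M^L=1$ and $s_\alpha\in W^L$), and let $L'\supseteq L$ be the smallest standard Levi also containing $-\theta_x(\alpha)$; both $L$ and $L'$ are $\theta_x$-stable because the edge relation forces $\theta_x(\alpha)<0$ and $\theta_x(\alpha)\ne -\alpha$. Applying Proposition~\ref{prop::intt-period++PI}\eqref{part closed fe} (with $Q$ the parabolic subgroup with Levi $L'$) to both sides, and using that the standard intertwining operator $M(n,\sigma,\lambda)$ for $n\in s_\alpha M$ commutes with parabolic induction from $P\cap L'$ to $L'$ via the isomorphism $F$ of \eqref{formula::holo-section-transitivity}, the functional equation on $G$ is reduced to the same statement on $L'$. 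Inside $L'$, the edge $(M,x)\searrow (M_1,x_1)$ is ``elementary,'' and the identity boils down to a rank-one Bruhat decomposition: writing $L'_x$ via its decomposition relative to $(P\cap L')_x$ and the root subgroup $U_\alpha$, the integral defining $J_{P\cap L'}^{L'}$ splits precisely as an $(P\cap L')_x$-invariant integral over $U_\alpha\cap L'_x$ followed by the remaining variables, matching the integral of $M(n,\sigma,\lambda)\varphi$ against $\ell$ on the other side.

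The main technical obstacle will be verifying that the modulus characters align correctly under the change of variables in Steps $2$–$3$, particularly checking that $\delta_{(P_1)_{x_1}}\delta_{P_1}^{-1/2}$ transports to $\delta_{P_x}\delta_P^{-1/2}$ via conjugation by $n$ (which is provided by \cite[Corollary 6.5]{MR3541705} and the explicit statement in the paragraph following \eqref{eq edge}), and that the quotient measures on the various unipotent and reductive domains fit together without an extra factor. Once these compatibilities are verified in the cone of absolute convergence, the identity extends to all $\lambda\in(\fra_{M,\C}^*)_x^-$ by meromorphic continuation.
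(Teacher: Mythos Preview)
The paper does not give its own argument here; it simply cites \cite[Theorem 5.3]{MR4679384}, noting beforehand that the identity follows ``by integration in stages.'' Your overall strategy---unfold both sides in the cone of absolute convergence, match the integration domains, then extend by meromorphic continuation---is the right one and agrees with that description.

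However, your reduction step via Proposition~\ref{prop::intt-period++PI}\eqref{part closed fe} does not work as written. Two concrete issues:
\begin{itemize}
\item Your claim that $L$ (the Levi with $\Delta_M^L=\{\alpha\}$) is $\theta_x$-stable is false. For $L$ to be $\theta_x$-stable one would need $\theta_x(\alpha)\in\{\pm\alpha\}$, but the edge condition forces $\theta_x(\alpha)<0$ and $\theta_x(\alpha)\ne-\alpha$, so $\theta_x(\alpha)\notin\{\pm\alpha\}$.
\item More seriously, applying Proposition~\ref{prop::intt-period++PI}\eqref{part closed fe} with $Q$ the parabolic of Levi $L'$ requires the hypothesis $\theta_x(Q)=Q^-$. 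You have not verified this, and it fails in general: nothing in the edge relation controls how $\theta_x$ acts on simple roots $\beta\in\Delta_M\setminus\Delta_M^{L'}$, so $\theta_x(\beta)$ may well be positive. (A similar obstruction prevents invoking part~\eqref{part open fe} instead, since that requires $Q$ itself to be $\theta_x$-stable.)
\end{itemize}
What does go through is a direct decomposition of $P_x\backslash G_x$ that separates out exactly the unipotent integration defining $M(n,\sigma,\lambda)$; this is the argument carried out in \cite{MR4679384} (in the spirit of \cite[\S6--7]{MR2010737}). Your first two paragraphs sketch this correctly; the attempted shortcut through Proposition~\ref{prop::intt-period++PI} in the third paragraph is where the argument breaks down.
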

	\begin{proof}
		This is proved in \cite[Theorem 5.3]{MR4679384}.
	\end{proof}
	
\section{Singularities of local intertwining periods}\label{sec local intper}

Let $F$ be a local field. Let $(G',H')=(G_m(F),H_m(F))$ where $(G_m,H_m,\theta_m)_\x$ is defined by one of the cases $\x\in \{\namelink{lin}, \namelink{twlin1},\namelink{twlin2},\namelink{gal1},\namelink{gal2},\namelink{gp}\}$. We continue to write $\theta$ for $\theta_k$ for all $k\in \N$. In this section we double the setup by letting $(G,H)=(G_{2m}(F),H_{2m}(F))$ and study intertwining periods with respect to the parabolic $P=MU$ of $G$ with Levi part $M=G'\times G'$ and choice of $x\in G\cdot e$ such that $\iota_P(P\cdot x)=w_M^G$. 

Let $a$ and $\D$ be defined by \eqref{eq a&D} after Remark \ref{rem arithmetic vs geometric}, so that $G'=\GL_a(\D)$, $G=\GL_{2a}(\D)$ and $M$ is the Levi subgroup of $G$ of type $(a,a)$. 
Let 
\[
w=\begin{pmatrix} & I_a \\ I_a & \end{pmatrix}\in G
\]
represent $w_M^G$ (in case \namelink{gp} $w\in H=\GL_a(\D)$ is embedded diagonally in $G=H\times H$). It is a simple computation that in all cases $w\in G\cdot e$ and $\theta(w)=w$. Furthermore, we explicate the stabilizer
\begin{equation}\label{eq mw}
P_w=M_w=\left\{\diag(g,\theta(g)):g\in G'\right\}.
\end{equation}
In particular, an irreducible representation $\sigma$ of $M$ is $M_w$-distinguished if and only if $\sigma=\sigma_1\otimes \sigma_2$ where $\sigma_1$ is an irreducible representation of $G'$ and $\sigma_2\simeq\sigma_1^*$ and in this case $\Hom_{M_w}(\sigma,\C)$ is one dimensional.

For a representation $\sigma$ of $M$ that is $M_w$-distinguished and $\ell\in \Hom_{M_w}(\sigma,\C)$ we study in this chapter the intertwining period $J_P^G(w,\ell,\sigma,\lambda)$, $\lambda\in (\fra_{M,\C}^*)_w^-$. Since $(\fra_{M,\C}^*)_w^-$ is one dimensional, it will be more convenient to identify it with $\C$. Let $\varpi\in (\fra_M^*)_w^-$ be such that
\[
e^{\sprod{\varpi}{H_M(\diag(g_1,g_2))}}=\nu(g_1g_2^{-1}),\ \ \ g_1,g_2\in G'
\]
and identify $\C$ with $(\fra_{M,\C}^*)_w^-$ via $s\mapsto s\varpi$. Throughout this section we often write $s$ for $s\varpi$ so that $J_P^G(w,\ell,\sigma,s)=J_P^G(w,\ell,\sigma,s\varpi)$. Furthermore, when $\sigma$ is irreducible, we choose once and for all a non-zero $\ell_\sigma$ in the one dimensional space $\Hom_{M_w}(\sigma,\C)$ and write
\[
\J_\sigma(s)=J_P^G(w,\ell_\sigma,\sigma,s).
\]
By varying $\ell_\sigma$ we only rescale $\J_\sigma(s)$ by a non-zero scalar. Unless otherwise specified, our results will be independent of the choice of $\ell_\sigma$.
It is straightforward that if $\ell_{\sigma[t]}=\ell_\sigma$ then
\begin{equation}\label{eq shiftJ}
\J_{\sigma[t]}(\varphi_t,s)=\J_\sigma(\varphi,s+t),\ s,t\in \C.
\end{equation}

Recall that by Theorem \ref{thm mult 1 and ssduality}, for an irreducible, distinguished representation $\pi$ of $G'$ we have $\pi\simeq \pi^*$ and therefore $\J_{\pi\otimes \pi}(s)$ makes sense.
We point out a useful observation.  
	It relies on the explication of stabilizers $M_w$ for the open $P$-orbit $P\cdot w$ (see \eqref{eq mw}) and 
\[
M_e=H' \times H'
\]	
for the closed $P$-orbit $P\cdot e$.
	\begin{lemma}\label{lem::intt-period-pole}
		Let $\pi$ be an irreducible, distinguished representation of $G'$ such that $\pi\times \pi$ is an irreducible representation of $G$.
Then $\J_{\pi\otimes \pi}(s)$ has a pole at $s=0$. Furthermore, let $k=\Ord_{s=0}(\J_\sigma(s))\in \N$. Then there exists $0\ne L\in \Hom_{M_e} (\pi \otimes \pi, \C)$ such that
		\begin{align*}
			\lim_{s \ra 0} s^k \J_{\pi\otimes \pi}(s) = J_P^G(e,L,\pi \otimes \pi,0).
		\end{align*}
	\end{lemma}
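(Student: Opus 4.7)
The plan is to exploit one-dimensionality of $\Hom_H(\pi\times\pi,\C)$ together with the analysis of the intertwining period restricted to the open-orbit subspace. First I would fix data for the closed orbit: since $\pi$ is $H'$-distinguished, choose a nonzero $L_\pi\in\Hom_{H'}(\pi,\C)$ and set $L:=L_\pi\otimes L_\pi$, which spans the one-dimensional space $\Hom_{M_e}(\pi\otimes\pi,\C)$ (note $M_e=H'\times H'$ by \eqref{eq mw}). The closed-orbit intertwining period $J_P^G(e,L,\pi\otimes\pi,0)$ is then well-defined: the modulus assumption holds by Lemma \ref{lem mod}, and since $\theta_e=\theta$ acts trivially on $\fra_M^*$ in all our cases (the involution on each factor $G'$ being either inner or Galois), we have $(\fra_{M,\C}^*)^-_e=0$, so the only parameter available is $\lambda=0$.

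For the leading-term identification, put $k:=\Ord_{s=0}\J_{\pi\otimes\pi}(s)$ and let $T:=\lim_{s\to 0}s^k\J_{\pi\otimes\pi}(s)$ be the leading Laurent coefficient, a nonzero $H$-invariant linear form on $\pi\times\pi$ by definition of $k$. The key observation is that, restricted to the $H$-subspace $V_0$ of sections supported on the open $(P,H)$-double coset $PwH$, the integrand defining $\J_{\pi\otimes\pi}(\cdot,s)$ has compact support modulo $P_w$ (cf.\ the proof of Lemma \ref{lem::holomorphy}), so the family is entire on $V_0$. Hence if $k\ge 1$ then $T|_{V_0}=0$; by the geometric lemma \eqref{eq geom lem} in the $p$-adic case (resp.\ the archimedean analog of Section \ref{ss archprop}) combined with Lemma \ref{lem mod}, $T$ descends to an $H$-invariant form on $(\pi\times\pi)/V_0$, a space classified by $\Hom_{M_e}(\pi\otimes\pi,\C)=\C\cdot L$. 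After absorbing a nonzero scalar into $L$ one obtains $T=J_P^G(e,L,\pi\otimes\pi,0)$, which is the second claim of the lemma once the existence of the pole has been established.

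It therefore remains to verify $k\ge 1$, and this is the main obstacle. I would argue by contradiction: were $\J_{\pi\otimes\pi}(s)$ holomorphic at $s=0$, then $\J_{\pi\otimes\pi}(0)$ would be an $H$-invariant linear form on $\pi\times\pi$ whose restriction to $V_0$ is the nonzero open-orbit form $\ell_{\pi\otimes\pi}$. By the hypothesis that $\pi\times\pi$ is irreducible and distinguished, Theorem \ref{thm mult 1 and ssduality} forces $\Hom_H(\pi\times\pi,\C)$ to be one-dimensional, so $\J_{\pi\otimes\pi}(0)$ spans it and in particular every nonzero $H$-invariant linear form on $\pi\times\pi$ must have nonzero restriction to $V_0$. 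To contradict this I would exhibit the closed-orbit form $J_P^G(e,L,\pi\otimes\pi,0)$ as a nonzero $H$-invariant linear form that vanishes on $V_0$: the nonvanishing is a consequence of the nonvanishing of $L$, evaluated against a test section concentrated near the closed orbit via an Iwasawa-type approximation; the vanishing on $V_0$ will follow by unfolding the closed-orbit integral on a section supported on the open orbit and exploiting that $w\notin P$.

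The delicate technical step is this last vanishing: verifying that the closed-orbit integral of a section supported in the open double coset $PwH$ is identically zero, despite the fact that $w\in H$ makes $PwH\cap H$ nonempty. I expect this to reduce to a careful matching of the two Haar measures (on $(P\cap H)\backslash H$ for the closed-orbit side and on $P_w\backslash G_w$ for the open-orbit side) together with the observation that the $P\cap H$-equivariant section produced by $\varphi\in V_0$ transforms under a character of $P\cap H$ distinct from the modulus $\delta_{P_e}\delta_P^{-1/2}$ dictating $L$-invariance. Once this vanishing is in hand, the multiplicity-one argument above yields the contradiction, giving $k\ge 1$ and completing both claims of the lemma.
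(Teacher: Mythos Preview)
Your argument is essentially the paper's, but the final paragraph worries about a non-issue created by a notational slip. You write the open $(P,H)$-double coset as $PwH$, but $w$ is an element of the symmetric space $X$, not a double-coset representative in $G$; the open double coset is $P\eta H$ where $\eta\in G$ satisfies $\eta\theta(\eta)^{-1}=w$. Since $H\subseteq PH$ trivially and $PH,\,P\eta H$ are distinct $(P,H)$-double cosets, one has $H\cap P\eta H=\emptyset$. Thus any $\varphi\in V_0$ vanishes identically on $H$, and the closed-orbit integral $J_P^G(e,L,\pi\otimes\pi,0)(\varphi)=\int_{(P\cap H)\backslash H}L(\varphi(h))\,dh$ is zero for the most elementary reason. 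There is nothing delicate here; the character/measure matching you anticipate never arises.

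With that correction, your proof and the paper's coincide: the closed-orbit form is a nonzero element of the one-dimensional space $\Hom_H(\pi\times\pi,\C)$ (Theorem~\ref{thm mult 1 and ssduality}) that annihilates $V_0$, while $\J_{\pi\otimes\pi}(s)|_{V_0}$ is holomorphic and nonzero at $s=0$; holomorphy of $\J_{\pi\otimes\pi}$ at $0$ would therefore produce two linearly independent elements of a one-dimensional space, giving the pole. The leading term, being a nonzero element of the same one-dimensional space, is then a scalar multiple of the closed-orbit form. Your appeal to the geometric lemma to classify $\Hom_H((\pi\times\pi)/V_0,\C)$ is therefore also unnecessary (and in case \namelink{lin} it would require ruling out the other admissible orbits, which you have not done): multiplicity one already pins down the leading term.
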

	\begin{proof}
		In case \namelink{gal2} when $F$ is $p$-adic, this is \cite[Proposition 10.9]{MR4308058}. The same proof holds in all cases and we recall it for convenience.
It follows from Theorem \ref{thm mult 1 and ssduality} that $\Hom_H(\pi\times \pi,\C)$ is one-dimensional. Furthermore, the closed orbit intertwining period $J_P^G(e,L,\pi \otimes \pi,0)$ is a non- zero element of $\Hom_H(\pi\times \pi,\C)$ for $0\ne L\in \Hom_{M_e} (\pi \otimes \pi, \C)$ that vanishes on sections supported on the unique open $(P,H)$-double coset in $G$. Since $\J_{\pi\otimes \pi}(s)$ restricted to the $H$-invariant subspace of $I_P^G(\pi\otimes\pi,s)$ of such sections is holomorphic and non-zero we conclude that $\J_{\pi\otimes \pi}(s)$ has a pole at $s=0$. Since the leading term $\lim_{s \ra 0} s^k \J_{\pi\otimes \pi}(s)$ is a non-zero element of $\Hom_H(\pi\times \pi,\C)$ the lemma follows. 
	\end{proof}

In the rest of this section we study an explicit functional equation satisfied by the linear form $\J_\sigma(s)$ as well as the order of its pole at $s=0$.
For the sake of some of our arguments, we consider $\sigma$ that is parabolically induced and similar intertwining periods for the inducing data. For this purpose it is more convenient to choose a different representative $x\in M\cdot w$.

Let 
\begin{equation}\label{eq def good x}
x=u\cdot w=\begin{pmatrix} & \gamma^{-1} \\ \gamma & \end{pmatrix},\ \ \ u=\diag(I_a,\gamma)\in M
\end{equation}
where we set $\gamma=I_a$ in cases \namelink{lin}, \namelink{twlin1}, \namelink{gal2} and \namelink{gp}, $\gamma=[\upsilon^\circ]_m$ in case \namelink{twlin2} and $\gamma=\epsilon[\upsilon^\circ]_m$ in case \namelink{gal1} (for $\upsilon^\circ$ see Section \ref{sss the families}). We observe that
\begin{equation}\label{eq mwx}
P_x=M_x=\left\{\diag(g,\iota(g)):g\in G'\right\}
\end{equation}
(for $\iota$ see Section \ref{sss auxi}).
Let $L'$ be a Levi subgroup of $G'$ of type $(a_1,\dots,a_k)$ and $L=L'\times L'$ the corresponding Levi subgroup of $G$. Then $\theta_x(L)=L$ and 
\[
L_x=\{\diag(g_1,\dots,g_k,\iota(g_1),\dots,\iota(g_k)): g_i\in \GL_{a_i}(\D),\ i\in [1,k]\}.
\]
An irreducible representation of $L$ is $L_x$-distinguished if and only if it has the form 
\[
\sigma_1\otimes \cdots\otimes \sigma_k\otimes \sigma_1^*\otimes \cdots\otimes \sigma_k^*
\]
where $\sigma_i$ is an irreducible representation of $\GL_{a_i}(\D)$, $i\in [1,k]$. In this case $\Hom_{L_x}(\sigma,\C)$ is one dimensional.

\subsection{Explicit functional equations}
It is often the case that intertwining periods satisfy functional equations that are not accounted for by Proposition \ref{prop::intt-period++FE+Adjacent}.
This is the case at hand for $\J_\sigma(s)$. In fact, we need the functional equation in a slightly more general set-up. Let $Q'=L'V'$ be a parabolic subgroup of $G'$ of type $(a_1,\dots,a_k)$  and $Q=LV$ be the parabolic of $G$ with Levi $L=L'\times L'$. Note that $(\fra_{L,\C}^*)^-_x=\{(\lambda,-\lambda):\lambda\in \fra_{L',\C}^*\}$.	
	\begin{proposition}\label{prop::FE--MultiOne}
		Let $\sigma$ be an irreducible representation of $L$ that is $L_x$-distinguished. Then there exists a meromorphic function $\alpha_\sigma'(\lambda)$ on $(\fra_{L,\C}^*)^-_x$ such that for $\ell\in \Hom_{L_x}(\sigma,\C)$ we have
\[
\alpha_\sigma'(\lambda)J_Q^G(x,\ell,\sigma,\lambda)=J_Q^G(x,\ell,w\sigma,-\lambda)\circ M(w,\sigma,\lambda).
\]
In particular, when $L'=G'$ (i.e. $Q=P$) there is a meromorphic function $\alpha_\sigma(s)\underset{\C^\times}{\sim} \alpha_\sigma'(s)$ such that
\[
\alpha_\sigma(s)\J_\sigma(s)=\J_{w\sigma}(-s)\circ M(w,\sigma,s).
\]

	\end{proposition}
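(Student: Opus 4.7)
The plan is to use the uniqueness principle: both sides of the claimed functional equation are $H$-invariant meromorphic families of linear forms on $I_Q^G(\sigma, \lambda)$, and this space of invariants is generically one-dimensional, so the two families must be proportional by a meromorphic scalar $\alpha_\sigma'(\lambda)$.

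First I would verify that the RHS makes sense. The standard intertwining operator $M(w, \sigma, \lambda) : I_Q^G(\sigma, \lambda) \to I_Q^G(w\sigma, w\lambda)$ is a meromorphic family of $G$-equivariant maps, and for $\lambda \in (\fra_{L,\C}^{\ast})^-_x$ we have $w\lambda = -\lambda$, since $w$ swaps the two $L'$-factors of $L = L' \times L'$. Thus $J_Q^G(x, \ell, w\sigma, -\lambda) \circ M(w, \sigma, \lambda)$ is a meromorphic family of $H$-invariant linear forms on $I_Q^G(\sigma, \lambda)$. That the same $\ell$ may be used as an $L_x$-invariant form on both $\sigma$ and $w\sigma$ follows from the identification of the underlying vector spaces and the fact that the canonical $L_x$-invariant form pairs the $i$-th factor with its $*$-dual.

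Next I would establish the multiplicity-one statement. For $\lambda$ generic in the affine subspace $(\fra_{L,\C}^{\ast})^-_x$, the representation $I_Q^G(\sigma, \lambda)$ is irreducible; this is standard for inner forms of $\GL$, where the reducibility locus is a finite union of affine hyperplanes described via cuspidal segment comparisons and one can avoid it. Since $J_Q^G(x, \ell, \sigma, \lambda)$ is a non-zero form, for instance because its restriction to sections supported on the open $(Q, H)$-orbit near $x$ is non-zero by the integral formula of Proposition \ref{prop::intt-period++PI}(\ref{part open fe}) applied to the minimal vertex $(L, x)$, the representation is distinguished, and Theorem \ref{thm mult 1 and ssduality} gives $\dim \Hom_H(I_Q^G(\sigma, \lambda), \C) = 1$ for generic $\lambda$.

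With these in hand, the proportionality follows. Non-vanishing of the RHS also holds for generic $\lambda$, since $M(w, \sigma, \lambda)$ is an isomorphism for generic $\lambda$ by Section \ref{ss normint}, and $J_Q^G(x, \ell, w\sigma, -\lambda)$ is non-zero by the same open-orbit argument. Hence $\alpha_\sigma'(\lambda)$ exists as a non-zero meromorphic function on $(\fra_{L,\C}^{\ast})^-_x$. Specializing to $L = M$ along the line $\lambda = s\varpi$ yields $\alpha_\sigma(s) \underset{\C^\times}{\sim} \alpha_\sigma'(s)$; any discrepancy is a non-zero constant reflecting the freedom in identifying $\ell$ with the chosen $\ell_\sigma$, both living in a one-dimensional space. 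The main technical point is generic irreducibility of $I_Q^G(\sigma, \lambda)$ along the specific subspace $(\fra_{L,\C}^{\ast})^-_x$; while standard in character, one may reduce via Jacquet--Langlands to the split case where the reducibility is explicitly classified.
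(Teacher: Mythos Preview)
Your proof is correct and follows essentially the same route as the paper's: both sides are meromorphic families of $H$-invariant forms on $I_Q^G(\sigma,\lambda)$, generic irreducibility together with the multiplicity-one result of Theorem~\ref{thm mult 1 and ssduality} forces proportionality. The paper cites \cite[Theorem 5.4]{MR4679384} directly for distinction (hence non-vanishing) at all $\lambda$, whereas you sketch an open-orbit argument; both are fine. One small point: for the passage from $\alpha_\sigma'$ to $\alpha_\sigma$ in the case $Q=P$, the discrepancy is not only in the choice of $\ell_\sigma$ but also in the change of base point from $x$ to $w$ (recall $\J_\sigma$ is defined at $w$, not at $x$); the paper handles this via the change-in-orbit formula \eqref{formula::intt-period--change-in-orbit}, which you should invoke rather than attributing the constant solely to the normalization of $\ell_\sigma$.
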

\begin{proof}
The representation $I_Q^G(\sigma,\lambda)$ is irreducible for a generic $\lambda$ and distinguished for all $\lambda$ in $(\fra_{L,\C}^*)^-_x$ by \cite[Theorem 5.4]{MR4679384}. Since both sides are $H$-invariant linear forms on $I_Q^G(\sigma,\lambda)$ for a generic $\lambda$ the functional eqution follows from Theorem \ref{thm mult 1 and ssduality}. The last part of the proposition further applies \eqref{formula::intt-period--change-in-orbit}.
\end{proof}


The main goal of this subsection is to explicitly relate the proportionality factor $\alpha_\sigma(s)$ to local $L$-factors when $Q=P$. The argument is global. 
First, we generalize \cite[Lemma 4.1]{MR4308058} on globalization of characters.

\begin{lemma}\label{lm globalizing char}
Let $k$ be a global field with adele ring $\BA_k$, and let $S$ be a finite set of places of $k$. We set $k_S:=\prod_{v\in S} k_v$, where $k_v$ is the completion of $k$ at $v$. For any character $\chi_S$ of $k_S^\times$ there exists an automorphic character $\mu=\prod_v \mu_v$ of $\BA_k^\times$ such that $\mu_S^{-1}\chi_S$ is unramified (i.e. trivial on the maximal compact subgroup of $k_S^\times$) where $\mu_S=\prod_{v\in S}\mu_v$. 
\end{lemma}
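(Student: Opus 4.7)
My plan is to deduce the lemma from Pontryagin duality applied to the idele class group $C_k:=\mathbb{A}_k^\times/k^\times$. Let $K_v^{\max}$ denote the maximal compact subgroup of $k_v^\times$ and set $K_S:=\prod_{v\in S}K_v^{\max}$. First I would embed $K_S$ into $\mathbb{A}_k^\times$ as the subgroup of ideles equal to $1$ outside $S$; this realizes $K_S$ as a compact (hence closed) subgroup of $\mathbb{A}_k^\times$. The key elementary observation is that, under this embedding, $K_S\cap k^\times=\{1\}$, because any $\alpha\in k^\times$ that equals $1$ at every place outside $S$ must be $1$ globally. Hence the composition $K_S\hookrightarrow \mathbb{A}_k^\times\twoheadrightarrow C_k$ is a continuous injection from a compact group to a Hausdorff group, so it is a topological embedding onto a closed subgroup of $C_k$.

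Next I would invoke Pontryagin duality for locally compact abelian groups: since $K_S$ sits as a closed subgroup of $C_k$, the restriction map on Pontryagin duals $\widehat{C_k}\twoheadrightarrow \widehat{K_S}$ is surjective. The given character $\chi_S$ of $k_S^\times$ restricts to a character of the compact group $K_S$, which is automatically unitary and thus defines an element $\eta_S\in\widehat{K_S}$.

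I would then simply lift $\eta_S$ to a unitary character $\mu\in\widehat{C_k}$. Viewed as a character of $\mathbb{A}_k^\times$, $\mu$ is continuous and trivial on $k^\times$, so it is an automorphic character $\mu=\prod_v\mu_v$ (continuity forces $\mu_v$ to be unramified at almost all $v$). By construction, for each $v\in S$ the local component $\mu_v$ agrees with $\chi_{S,v}$ on $K_v^{\max}$, so $\mu_S^{-1}\chi_S$ is trivial on the maximal compact subgroup of $k_S^\times$, which is precisely the desired unramifiedness.

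The only point requiring verification is the closedness of $K_S$ inside $C_k$, i.e.\ that $K_S\cap k^\times=\{1\}$ in the chosen embedding; this is immediate. Everything else is a standard application of Pontryagin duality, so no genuine obstacle arises. Should a less abstract argument be preferred, one can instead construct $\mu$ directly by extending $\chi_{S,v}|_{K_v^{\max}}$ to an arbitrary character of $k_v^\times$ for each $v\in S$, declaring the local character to be trivial at all $v\notin S$ lying above some finite set of rational primes, and correcting by a continuous character of $C_k$ to ensure triviality on $k^\times$; but Pontryagin duality packages this cleanly.
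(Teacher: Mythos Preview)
Your proof is correct and follows essentially the same approach as the paper: embed the maximal compact $K_S$ of $k_S^\times$ as a compact subgroup of the idele class group $\mathbb{A}_k^\times/k^\times$ and extend $\chi_S|_{K_S}$ to an automorphic character via Pontryagin duality. You supply more detail than the paper (in particular the verification that $K_S\cap k^\times=\{1\}$, which is indeed immediate since $k\hookrightarrow k_v$ is injective for any single $v\notin S$), but the argument is the same.
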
	
\begin{proof}
Denote by $k_S^0$ the maximal compact subgroup of $k_S^\times$. Then the natural map from $k_S^0$ to $\BA_k^\times/k^\times$ identifies 
 $k_S^0$ with a compact subgroup of $\BA_k^\times/k^\times$, hence $(\chi_S)_{|k_S^0}$ extends to a character $\mu$ of $\BA_k^\times/k^\times$ by Pontryagin duality, and the result follows. 
\end{proof}

As in \cite{MR4308058}, the unramified formula for local open intertwining periods plays a crucial role in the argument. Luckily it has already been proved in all cases that we consider. 

\begin{proposition}\label{prop ur comp}
Let $\D=F$ be a local field of characteristic zero, and $E/F$ be an $F$-\'Etale algebra of dimension $2$. We allow $E$ to be a field only when $F$ is $p$-adic, in which case $E/F$ is assumed to be unramified. Hence when $F$ is archimedean, we are in cases \namelink{lin} or \namelink{gp}, and when $F$ is non-archimedean we are in cases \namelink{lin}, \namelink{gal2}, \namelink{twlin2} or \namelink{gp}. In case \namelink{twlin2} assume further that $\delta\in \O_E^\times$. Let $\pi$ be an irreducible, generic, unramified representation of $G_a(F)$, 
and set $\sigma:=\pi \otimes \pi^*$. Then, for $\varphi_0$ the normalized spherical function in $\pi\times \pi^*$, we have  
\[\J_{\sigma}(\varphi_0,s)=\frac{\L(s,\pi,\theta)}{\DL(s,\pi,\theta)}.\]
\end{proposition}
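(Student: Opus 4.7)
The plan is to reduce, case by case in $\x$, to an unramified open-orbit intertwining period formula already in the literature, and then to verify that the resulting Euler factor matches $\L(s,\pi,\theta)/\DL(s,\pi,\theta)$ as prescribed in Section \ref{ss Lfactors for pairs}. The hypothesis $\D=F$ restricts us to $\x\in\{\namelink{lin},\namelink{gp},\namelink{gal2},\namelink{twlin2}\}$; in cases \namelink{gal2} and \namelink{twlin2} the extension $E/F$ is assumed unramified (or split), and the condition $\delta\in\O_E^\times$ in case \namelink{twlin2} guarantees that the orbit representative $x$ from \eqref{eq def good x} is integral. Since $\J_\sigma(\varphi_0,s)$ is a meromorphic function of $s$ whose defining integral is absolutely convergent in a positive cone, it suffices to prove the identity on that cone.

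First I would normalize: the functional $\ell_\sigma$ is chosen as the canonical $M_w$-invariant pairing $\pi\otimes\pi^*\to\C$ sending a pair of normalized spherical vectors to $1$, and I pass from $w$ to $x=u\cdot w$ with $u=\diag(I_a,\gamma)$ as in \eqref{eq def good x}. Under the hypotheses of the proposition we have $\gamma\in K\cap G'$, so $H_M(u)=0$ and by \eqref{formula::intt-period--change-in-orbit} no extra scalar is introduced by this change of representative; the integral is then over $M_x\backslash G_x$ with $M_x$ described by \eqref{eq mwx}. In each case the relevant unramified computation now reads as follows. In case \namelink{gp} the open orbit integral is the standard local Rankin--Selberg zeta integral for pairs on the spherical vector, evaluating to $L(2s,\pi,\pi^*)/L(2s+1,\pi,\pi^*)$, which matches $\L/\DL$ via the convention $L(s,\pi_1\otimes\pi_2,\As^\pm)=L(s,\pi_1,\pi_2)$ fixed in Section \ref{sec local L fct}. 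In case \namelink{gal2} this is the Jacquet--Lapid--Rogawski formula \cite{MR1625060}, producing exactly $L(2s,\pi,\As^+)/L(2s+1,\pi,\As^-)$. In case \namelink{lin} it is the formula of Offen \cite{MR2060496}, refined by Lapid--Offen \cite{MR3776281}, yielding $L(s+\tfrac12,\pi)^2L(2s,\pi,\wedge^2)/L(2s+1,\pi,\Sym^2)$, the Bump--Friedberg ratio. In case \namelink{twlin2} it is the Suzuki--Xue formula \cite{MR4721777} (following the strategy of the two previous references) producing $L(s+\tfrac12,\BC_F^E(\pi))L(2s,\pi,\wedge^2)/L(2s+1,\pi,\Sym^2)$, the Guo--Jacquet ratio.

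The main obstacle is a bookkeeping one: each of the cited works uses its own normalization of Haar measures on $M_x\backslash G_x$, of the identification $\C\simeq(\fra_{M,\C}^*)_x^-$, of the orbit representative $u$, and of the spherical section and functional. One must verify that under the conventions fixed in Section \ref{ss normint} and in the present section no scalar discrepancy survives; this can be pinned down either by the asymptotic $\J_\sigma(\varphi_0,s)\to 1$ as $\Re(s)\to+\infty$, or by a single low-rank comparison. A secondary point is that in the archimedean cases (only \namelink{lin} and \namelink{gp} are allowed), the above references are stated in the $p$-adic setting, so one must spell out the analogous computation by expanding the spherical section via Harish-Chandra's integral representation of the zonal spherical function and reducing, after induction in stages to principal series from the Borel, to a product of Beta-type integrals; the $L$-factors that emerge are then read off from the Langlands parameter of $\pi$ and seen by inspection to match $\L(s,\pi,\theta)/\DL(s,\pi,\theta)$.
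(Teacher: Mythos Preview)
Your proposal is correct and follows essentially the same approach as the paper: both reduce to the unramified open-orbit formulas already in the literature, citing Jacquet--Lapid--Rogawski for \namelink{gal2}, Offen/Lapid--Offen (via Suzuki--Xue) for \namelink{lin} and \namelink{twlin2}, and the Rankin--Selberg identity for \namelink{gp}. The paper's proof is slightly more streamlined in that it invokes the compatibility of intertwining periods with transitivity of parabolic induction (\cite[Proposition 3.7]{LuMatringe}) as a single reduction device rather than treating it as a bookkeeping issue, and it does not separate out the archimedean cases---the cited references (in particular \cite[Proposition 4.5]{MR4721777} and \cite[8.8.2--8.8.3]{MR4308058}) already cover them, so your proposed direct Beta-integral computation is unnecessary.
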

\begin{proof}
In view of the compatibility of intertwining periods with transitivity of parabolic induction (\cite[Proposition 3.7]{LuMatringe}), the statement has been proved in \cite[Theorem 36]{MR1625060}, \cite[8.8.3]{MR4308058} and \cite[8.8.2]{MR4308058} in cases \namelink{gal2} and \namelink{gp}, and in \cite[Proposition 4.5]{MR4721777} in cases \namelink{lin} and \namelink{twlin2} (\cite[Proposition 4.5]{MR4721777} is actually a translation of the results in \cite{MR2060496} and \cite{MR3776281}). 
\end{proof}

The Jacquet-Langlands correspondence extends locally and globally in the most obvious manner to products of general linear groups over division algebras. We need the following result on globalization of disrcete series, slightly generalizing \cite[Corollary 4.1]{MR4308058}.

\begin{lemma}\label{lm globalizing disc}
Let $F$ be a local field of characteristic zero, and let $\mathcal{D}$ be a finite dimensional division algebra with center $F$. Let $a_1,\dots,a_r$ be positive integers, and let $\delta$ be an irreducible, essentially square-integrable representation of 
$L':=\GL_{a_1}(\mathcal{D})\times \dots \times \GL_{a_r}(\mathcal{D}).$ Let $k$ be a number field with adele ring $\BA_k$, and let $v_0$ be a place of $k$ such that $k_{v_0}=F$. Finally let $D$ be a division algebra with center $k$ such that $D_{v_0}=\mathcal{D}$ and split at all archimedean places different from $v_0$. Then there exists an irreducible, cuspidal automorphic representation $\Delta$ of 
$L'_{\BA_k}:=\GL_{a_1}(D_{\BA_k})\times \dots \times \GL_{a_r}(D_{\BA_k})$ such that:
\begin{enumerate}
\item $\Delta_{v_0}=\delta$.
\item $\Delta_v$ is an unramified generic principal series for all archimedean places $v$ of $k$ such that $v\neq v_0$.
\item $\JL(\Delta)$ is cuspidal. 
\end{enumerate}
In particular, $\JL(\Delta)_{v}=\JL(\Delta_{v})$ for any place $v$ of $k$, hence $\JL(\Delta)_{v_0}=\JL(\delta)$. 
\end{lemma}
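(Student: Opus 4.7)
The plan is to reduce the statement to the single-factor case and then invoke (a slight refinement of) \cite[Corollary 4.1]{MR4308058}. First, a cuspidal automorphic representation of a product of reductive groups over $k$ is exactly a tensor product of cuspidal automorphic representations of each factor, and the Jacquet--Langlands correspondence is compatible with such products in the obvious sense. Hence it suffices to construct, for each $i\in[1,r]$, a cuspidal automorphic representation $\Delta_i$ of $\GL_{a_i}(D_{\BA_k})$ such that $(\Delta_i)_{v_0}\simeq \delta_i$, $(\Delta_i)_v$ is an unramified generic principal series for every archimedean place $v\ne v_0$, and $\JL(\Delta_i)$ is cuspidal; the representation $\Delta=\Delta_1\otimes\cdots\otimes \Delta_r$ then satisfies all three required properties and the final assertion $\JL(\Delta)_v=\JL(\Delta_v)$ is just the local/global compatibility of $\jl$ recalled in \eqref{eq JLglobaldecomp}.

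Next, the single-factor case is carried out as in \cite[Corollary 4.1]{MR4308058} via a simple trace formula argument (cf.\ Deligne--Kazhdan--Vign\'eras, used also in \cite{MR2329758}). I would choose a global test function $f=\prod_v f_v$ on $\GL_{a_i}(D_{\BA_k})$ with the following local constraints:
\begin{itemize}
\item at $v_0$, $f_{v_0}$ is a pseudo-coefficient of $\delta_i$ (existing in the $p$-adic case by Kazhdan and in the archimedean case by Clozel--Delorme), so that $\tr \delta_i(f_{v_0})=1$ and $f_{v_0}$ annihilates the tempered dual outside the Langlands packet of $\delta_i$;
\item at every archimedean $v\ne v_0$, the algebra $D_v$ splits, and I take $f_v$ to be a bi-$K_v$-finite function whose orbital character kernel vanishes outside the unramified generic principal series; concretely, $f_v$ can be chosen as a truncation of a suitable spherical function picking out a prescribed unramified parameter;
\item at one auxiliary non-archimedean place $v_1$ at which $D_{v_1}$ is split, $f_{v_1}$ is a matrix coefficient of a supercuspidal representation, which is the standard device forcing the spectral support of $f$ to consist only of cuspidal representations whose $v_1$-component is that supercuspidal, so in particular of representations whose Jacquet--Langlands transfer is cuspidal;
\item at all remaining places $f_v$ is the unit in the spherical Hecke algebra.
\end{itemize}
The conditions on $f_{v_1}$ and $f_{v_0}$ ensure that the geometric side of the simple trace formula reduces to the elliptic regular contribution, and standard globalization arguments (choosing the elliptic element carefully, as in Henniart or \cite{MR2329758}) make the geometric side nonzero. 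Hence the spectral side is nonzero, providing a cuspidal $\Delta_i$ with $(\Delta_i)_{v_0}\simeq \delta_i$ and $(\Delta_i)_{v_1}$ supercuspidal; the latter forces $\JL(\Delta_i)$ to be cuspidal via the character identity of \cite[Th\'eor\`eme 2.3]{MR2329758}, and the bi-$K_v$-finiteness at the other archimedean places forces $(\Delta_i)_v$ to lie in the prescribed unramified principal series.

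The main obstacle in this scheme is purely bookkeeping rather than conceptual: one must ensure that the three simultaneous local constraints at $v_0$, at $v_1$, and at the remaining archimedean places are jointly realizable by a global test function with nonvanishing elliptic regular contribution. This is precisely the content of the argument already established in \cite[Corollary 4.1]{MR4308058} for a single $\GL_{n}(D_v)$-factor; the passage to a product $L'$ requires no new idea beyond Lemma \ref{lm globalizing char} for adjusting central characters. I would therefore present the proof as a reduction to \cite[Corollary 4.1]{MR4308058}, explicitly recording how the globalizations $\Delta_i$ can be chosen independently and then assembled into $\Delta=\otimes_i\Delta_i$.
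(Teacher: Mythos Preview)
Your reduction to the single-factor case is harmless, but the rest of the sketch diverges from what \cite[Corollary 4.1]{MR4308058} actually does and leaves a real gap at the archimedean places. That corollary (and the paper's proof here) does \emph{not} proceed via a Deligne--Kazhdan--Vign\'eras simple trace formula on $\GL_{a_i}(D_{\A_k})$; it works on the split group $\GL_{da_i}$ and invokes Shin's automorphic Plancherel density theorem \cite[Theorem 5.13]{MR3004076} for the semisimple group $\SL_{da_i}$. Shin's theorem is what provides the archimedean control: one prescribes the archimedean components to lie in a small open set of tempered unramified principal series, and density guarantees a cuspidal $\Delta_0$ hitting that set. One then arranges $\Delta_{0,v}$ to be essentially square-integrable at every place where $D$ ramifies (this is the role of the set $S_2$ in the paper's proof), lifts from $\SL$ to $\GL$ via \cite{MR2918491}, fixes the central character using Lemma \ref{lm globalizing char}, and finally transfers back to $\GL_{a_i}(D_{\A_k})$ using \cite{MR2390289,MR2684298}.

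In your direct approach on $\GL_{a_i}(D_{\A_k})$, the sentence ``truncation of a suitable spherical function picking out a prescribed unramified parameter'' is the problem. A compactly supported bi-$K_v$-invariant $f_v$ has analytic spherical transform, so it cannot isolate a single parameter; at best it restricts to spherical $(\Delta_i)_v$, but spherical unitary representations of $\GL_N(\R)$ need not be irreducible principal series (the trivial representation is spherical and non-generic). One can rescue this a posteriori: once the supercuspidal condition at $v_1$ forces $\JL(\Delta_i)$ cuspidal, all local components of $\Delta_i$ are generic, and spherical plus generic does give an irreducible unramified principal series. But you did not make this argument, and even granting it, you still owe a reason why the geometric side is nonzero with a pseudo-coefficient at $v_0$, a supercuspidal coefficient at $v_1$, and spherical functions of small support at the remaining archimedean places simultaneously; this nonvanishing is precisely the content of Shin's density statement and is not covered by the classical DKV argument you cite.
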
	
\begin{proof}
We will use the results of \cite{MR3004076}, as in \cite[Section 4]{MR4308058}. Let $d$ be the square root of $[\mathcal{D}:F]$. Since Shin's work is written for semi-simple groups, we restrict $\delta_0:=\JL(\delta)$ to the derived subgroup \[L^1:=\SL_{da_1}(F)\times \dots \times \SL_{da_r}(F)\] of \[L:=\GL_{d a_1}(F)\times \dots \times \GL_{d a_r}(F)\] and pick $\delta_0^1$ an irreducible component of this restriction. For a finite set of places $S$ of $F$ let
\[L_S^1=\SL_{da_1}(k_S)\times \dots \times \SL_{da_r}(k_S)\ \ \ \text{where} \ \ \ k_{S}=\prod_{v\in S}k_v.\]
Let $S_1$ be the set of archimedean places $v$ of $k$ such that $v\neq v_0$, and let $S_2$ be the set of finite places $v$ of $k$ such that $v\neq v_0$ and $D_v$ is split. In particular the sets $S_1$, $S_2$ and $\{v_0\}$ are disjoint by our assumption that $D_v$ is split whenever $v\neq v_0$ is archimedean. Set $S:=S_1\sqcup S_2$. 
First we fix an irreducible, square-integrable representation $\delta_{S_2}^1$ of $L_{S_2}^1$.
Now we denote by $B_{S_1}^1$ the upper triangular Borel subgroup of $L_{S_1}^1$ and by $T_{S_1}^1$ its diagonal torus. 
Then we fix an irreducible, tempered, unramified representation $\tau^1_{S}$ of $L_S^1$ and write it under the form 
\[\tau^1_{S_1}=I_{B_{S_1}^1}^{L^1_{S_1}}(\mu_{S_1}),\] where $\mu_{S_1}$ is a unitary unramified character of $T_{S_1}^1$. Now, as explained in the proof of \cite[Proposition 4.1]{MR4308058} (see in particular the discussion of the assumptions in \cite[Section 4]{MR3004076} there), one can apply \cite[Theorem 5.13]{MR3004076} to claim that for any non empty open and bounded subset 
\[O_{S_1}\subseteq \{(z_{v,k,l_k})_{v\in S_1,\ k=1,\dots,r, \ l_k=1,\dots,a_k} \in (\prod_{k=1}^r i{\BR}^{a_k})^{|S_1|},\ \forall v\in S_1, \ \forall k=1,\dots, r,\ \sum_{l_k=1}^{a_k} 
z_{v,k,l_k}=0 \},\] 
there exists $\underline{u_0}\in O_{S_1}$ and an irreducible cuspidal automorphic representation $\Delta_{0,\underline{u_0}}^1$ of 
\[L_{\BA_k}^1=\SL_{da_1}(\BA_k)\times \dots \times \SL_{da_r}(\BA_k),\] such that \[\Delta^1_{0,\underline{u_0},v_0} =\delta_0^1,\] 
\[\Delta^1_{0,\underline{u_0},S_2}=\delta_{S_2}^1,\] and such that \[\Delta^1_{0,\underline{u_0},S_1}=\tau^1_{S_1}[\underline{u_0}].\] Precisely, the set $S$ in \cite[Theorem 5.13]{MR3004076}, with its notation, is our set $S\cup \{v_0\}$, we take $\widehat{U}$ in \cite[Theorem 5.13]{MR3004076} to be $\{\tau^1_{S_1}[\underline{u}]\otimes \delta_{S_2}^1 \otimes \delta_0^1,\ \underline{u}\in \overline{O_{S_1}}\}$, and we take $v_1$ and $v_2$ in \cite[Theorem 5.13]{MR3004076} to be two random places outside of $S\cup \{v_0\}$.

We now fix such a pair $(O_{S_1},\underline{u_0})$, and set $\Delta_0^1:=\Delta^1_{0,\underline{u_0}}$. By \cite[Chapter 4]{MR2918491}, the cuspidal automorphic representation $\Delta_0^1$ occurs in the restriction of an irreducible cuspidal automorphic representation $\Delta_0$ of 
$L_{\BA_k}=\GL_{da_1}(\BA_k)\times \dots \times \GL_{da_r}(\BA_k).$

Note that $\Delta_{0,S \cup  \{v_0\}}:=\otimes_{v\in S\cup \{v_0\}}\Delta_{0,v}$ contains $\tau_{0,S_1}^1[\underline{u_0}]\otimes \delta_{S_2}^1\otimes \delta_0^1$ in its restriction to $L_S^1$, where the tensor product is taken to be the completed one between archimedean representations. Observe as well that $\tau_{0,S_1}^1[\underline{u_0}]$ obviously extends to a tempered unramified representation $\tau_{S_1}$ of $L_{S_1}=\GL_{da_1}(k_{S_1})\times \dots \times \GL_{da_r}(k_{S_1})$ (just extend the inducing character $\mu_{S_1}$ to an unramified unitary character of the diagonal torus of $L_{S_1}$), and that 
$\delta_{S_2}^1$ extends as well to an essentially square-integrable representation $\delta_{S_2}$ of $L_{S_2}$. Hence we deduce from \cite[Chapter 2]{MR2918491} that $\Delta_{0,S\cup \{v_0\}}$ is of the form 
$\chi_{S\cup \{v_0\}}\otimes (\tau_{S_1}\otimes \delta_{S_2} \otimes \delta_0)$ for $\chi_{S\cup \{v_0\}}$ a character of $k_{S\cup \{v_0\}}$. Using Lemma \ref{lm globalizing char} with $S$ there being $S\cup \{v_0\}$ here, we deduce that up to twisting $\Delta_0$ by an automorphic character, we may assume that $\Delta_{0,S_1}$ is generic unramified, $\Delta_{0,S_2}$ is essentially square-integrable, and that $\Delta_{0,v_0}=\mu_0\otimes \delta_0$ for $\mu_0$ an unramified character. Because local unramified characters at one place obviously extend to global automorphic unramified characters, we infer that we may actually moreover assume, replacing $\Delta_0$ with an unramified twist if necessary, that $\Delta_{0,v_0}= \delta_0$. Finally, as recalled in \cite[Proof of Corollary 4.1]{MR4308058}, because $\Delta_{0,v}$ is essentially-square integrable at any place $v$ of $k$ such that $D_v$ does not split, it follows from the results of \cite{MR2390289} and \cite{MR2684298}, that $\Delta_0$ is automatically of the form $\JL(\Delta)$ for $\Delta$ a cuspidal automorphic representation of $L_{\BA_k}$. This representation $\Delta$ satisfies all the required properties. In particular, because $\JL(\Delta)$ is cuspidal, we have $\JL(\Delta)_w=\JL(\Delta_w)$ by  \cite{MR2390289} and \cite{MR2684298} again (see 
 \cite[Section 4]{MR4308058} for more details). 
\end{proof}

The next result is inspired by \cite[Theorem 9.2]{MR4308058} and its proof is very similar, it also generalizes \cite[Proposition 4.7]{MR4721777}. It uses the factorization of global intertwining periods into local ones, and we refer to Section \ref{sec::LGp} below for a detailed discussion of this fact. 

\begin{theorem}\label{thm::constant-FE}
	Suppose that $F$ is $p$-adic with residual characteristic $p$. Let $\sigma = \sigma_1 \otimes \sigma_2$ be an irreducible, generic and $M_w$-distinguished representation of $M$ (so that $\sigma_2\simeq \sigma_1^*$). Then 
	\begin{enumerate}
	\item	in cases \namelink{gal1}, \namelink{gal2}:
	\[\alpha_{\sigma}(s)  \underset{\C[p^{\pm s}]^\times}{\sim} \gamma_0(-2s,\jl(\sigma_1),\as^+)^{-1} \gamma_0(2s,\jl(\sigma_1),\as^-)^{-1},\] 
	\item in cases \namelink{lin}, \namelink{twlin1}, \namelink{twlin2}:
	\[\alpha_{\sigma}(s)  \underset{\C[p^{\pm s}]^\times}{\sim} \frac{\gamma_0(s+1/2,\jl(\sigma_1))\gamma_0(s+1/2,\eta_0\otimes\jl(\sigma_1))}{\gamma_0(-2s,\jl(\sigma_1),\wedge^2) \gamma_0(2s,\jl(\sigma_1),\Sym^2)}\]
	where $\eta_0$ is trivial in case \namelink{lin} and  $\eta_0=\eta_{E/F}$ in cases \namelink{twlin1} and \namelink{twlin2}.
	  
	\end{enumerate}
	If moreover $\sigma_1$ is assumed to be square-integrable and to satisfy $\sigma_1=\sigma_1^*$, and $\rho$ is the cuspidal representation and $t$ is the integer such that \[\jl(\sigma_1)=\st_t(\rho),\] then:
	\begin{enumerate}
	\item	in cases \namelink{lin}, \namelink{twlin1}, \namelink{twlin2}:
	\[\alpha_{\sigma}(s) \underset{\C[p^{\pm s}]^\times}{\sim}  \frac{L(-s+\frac{t}{2},\rho)L(-s+\frac{t}{2},\eta_0\otimes\rho)}{L(s+\frac{t}{2},\rho)L(s+\frac{t}{2},\eta_0\otimes \rho)}\times \frac{L(-2s,\rho,\Sym^2)}{L(-2s + t, \rho,\Sym^2 )} \frac{L(2s,\rho,\wedge^2)}{L(2s+t,\rho,\wedge^2)},\] 
	\item in cases \namelink{gal1}, \namelink{gal2}:
	\[\alpha_{\sigma}(s)\underset{\C[p^{\pm s}]^\times}{\sim}  \frac{L(-2s,  \rho, \as^-)}{L(-2s+t,\rho,\as^-)}\frac{L(2s, \rho,\as^+)}{L(2s + t, \rho,\as^+)}. \]
	\end{enumerate}
	\end{theorem}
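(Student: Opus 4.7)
The strategy is global, following the template of \cite[Theorem 9.2]{MR4308058}. By Lemma \ref{lm globalizing disc}, fix a number field $k$ with a place $v_0$ satisfying $k_{v_0}=F$, a central $k$-division algebra $D'$ with $D'_{v_0}=\D$ and split at every other archimedean place, and an irreducible cuspidal automorphic representation $\Pi$ of $L'(\BA_k)$ with $\Pi_{v_0}=\sigma_1$, $\JL(\Pi)$ cuspidal, and $\Pi_v$ a generic unramified principal series at every archimedean $v\neq v_0$. Strong multiplicity one together with Theorem \ref{thm mult 1 and ssduality} produces the analogous globalization $\Pi^{\ast}$ of $\sigma_1^{\ast}=\sigma_2$. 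Consider the doubled setup $(G_{2m,\BA_k},H_{2m,\BA_k})$ and the global open intertwining period $J^{\mathrm{gl}}$ attached to $\Pi[s]\otimes\Pi^{\ast}[-s]$.

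For decomposable $\varphi=\otimes_v\varphi_v$ and a sufficiently large finite set $S$ of places containing $v_0$, all ramified places, and all archimedean places, the factorization of global intertwining periods into local ones (to be spelled out in Section \ref{sec::LGp}) combined with the unramified computation of Proposition \ref{prop ur comp} yields
\[
J^{\mathrm{gl}}(\varphi,s)=\prod_{v\in S}\J_{\sigma_v}(\varphi_v,s)\cdot\frac{\L^S(s,\Pi,\theta)}{\DL^S(s,\Pi,\theta)}.
\]
Langlands' functional equation for Eisenstein series identifies $J^{\mathrm{gl}}(\varphi,s)$ with the corresponding global period of $\Pi^{\ast}\otimes\Pi$ evaluated at $-s$ on $M(w,s)\varphi$. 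Comparing both factorizations and applying Proposition \ref{prop::FE--MultiOne} at each $v\in S$ produces the global identity
\[
\prod_{v\in S}\alpha_{\sigma_v}(s)\;\underset{\C^{\times}}{\sim}\;\frac{\L^S(-s,\Pi,\theta)/\DL^S(-s,\Pi,\theta)}{\L^S(s,\Pi,\theta)/\DL^S(s,\Pi,\theta)},
\]
whose right-hand side equals, via Corollary \ref{cor gfe LS}, a finite product over $v\in S$ of the local gamma-factor ratios appearing in the statement.

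For each $v\in S\smallsetminus\{v_0\}$, the representation $\sigma_v$ is either non-archimedean unramified or archimedean generic unramified principal series. In both cases $\alpha_{\sigma_v}(s)$ is evaluated on the normalized spherical vector by combining Proposition \ref{prop ur comp} with the action $M(w,\sigma_v,s)\varphi_{0,v}=r(w,\sigma_v,s)\,\varphi'_{0,v}$ of the standard intertwining operator from Section \ref{ss normint}. This shows that $\alpha_{\sigma_v}(s)$ equals the claimed local gamma-factor ratio modulo $\C[p^{\pm s}]^{\times}$ for non-archimedean $v$, and modulo $\C^{\times}$ for archimedean $v$. Cancelling these known contributions from the global identity isolates $\alpha_{\sigma_{v_0}}(s)$ and establishes the first assertion.

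For the second assertion, substitute $\JL(\sigma_1)=\St_t(\rho)$ into the formula just obtained; the assumption $\sigma_1=\sigma_1^{\ast}$ supplies the compatible symmetry on $\rho$ needed to identify contragredients up to units. The local $L$-factors of $\St_t(\rho)$ with respect to the standard, exterior square, symmetric square, and Asai representations admit explicit product expansions over shifts of $\rho$ of length $t$, used in the proofs of Lemmas \ref{lem lin period} and \ref{lem asai L}; in particular $L(s,\St_t(\rho))=L(s+\tfrac{t-1}{2},\rho)$. Substituting these expressions into the gamma factors of the first assertion, telescoping within numerators and denominators, and absorbing $\epsilon$-factors into $\C[p^{\pm s}]^{\times}$ yields the stated formulas. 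The principal technical hurdle throughout is ensuring that archimedean contributions cancel cleanly, which is engineered by the controlled form of $\Pi_v$ at all places different from $v_0$.
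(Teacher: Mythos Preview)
Your global strategy matches the paper's, but the step isolating the local factor at $v_0$ has a genuine gap.

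You assert that for every $v\in S\setminus\{v_0\}$ the local component is unramified, so that Proposition~\ref{prop ur comp} combined with Gindikin--Karpelevich computes $\alpha_{\sigma_v}(s)$ there. This is unjustified: Lemma~\ref{lm globalizing disc} controls the globalization only at $v_0$ and at archimedean places $\ne v_0$; at the remaining finite places the local component may be ramified, and those places must lie in $S$. At such places you cannot evaluate $\alpha_{\sigma_v}(s)$ without invoking the very theorem being proved. The paper circumvents this by choosing the number field $k$ to have a \emph{unique} place $v_0$ lying above $p$, so that every other finite place in $S$ lies above a prime different from $p$. One then only obtains the coarser relation
\[
\alpha_{\Delta\otimes\Delta^*,S}(s)\ \underset{\C[p_1^{\pm s},\dots,p_l^{\pm s}]^\times}{\sim}\ \beta_{\Delta\otimes\Delta^*,S}(s),
\]
where $p_1,\dots,p_l$ are the primes under the finite places in $S$, and \cite[Lemma~9.3]{MR4308058} is used to extract the $p$-component, yielding $\alpha_{\sigma}(s)\underset{\C[p^{\pm s}]^\times}{\sim}\beta_{\sigma}(s)$. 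Without this device your cancellation step does not go through.

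Two further points. First, Lemma~\ref{lm globalizing disc} globalizes an essentially square-integrable representation of a product of general linear groups; writing $\sigma_1=\delta_1\times\cdots\times\delta_r$, what is globalized is $\delta=\delta_1\otimes\cdots\otimes\delta_r$ on the Levi $L'$, so $\Pi_{v_0}=\delta$, not $\sigma_1$. The identity $\alpha_\sigma'(s)=\alpha_{\delta\otimes\delta^*}'(s)$ is then supplied by the compatibility of intertwining periods with parabolic induction (Proposition~\ref{prop::intt-period++PI}\eqref{part closed fe}), which you do not invoke. Second, in the twisted-linear and Galois cases one must also globalize the quadratic extension $E/F$ to $l/k$, inert at $v_0$ and split at infinity; the paper arranges this via \cite[Section~9.2]{MR4308058}. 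For the square-integrable refinement, the explicit $L$-factor expansions you need are not those appearing in Lemmas~\ref{lem lin period} and~\ref{lem asai L} but rather \cite[Proposition~6.3]{MR4308058} and \cite[Lemma~4.8]{MR4721777}.
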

	\begin{proof}
The second part of the theorem follows from the first and an explication of the appropriate $L$-factors of $\St_t(\rho)$ in terms of $L$-factors of $\rho$. 
In the Galois cases \namelink{gal1} and \namelink{gal2} this is carried out in \cite[Proposition 6.3]{MR4308058}. For the other cases \namelink{lin}, \namelink{twlin1} and \namelink{twlin2} we apply \cite[Lemma 4.8]{MR4721777} together with more familiar formulas for standard $L$-factors.

The first part is proved using the globalization of Lemma \ref{lm globalizing disc} together with  the functional equation of global intertwining periods and the known unramified formula for the local intertwining periods. 

In the Galois cases \namelink{gal1} and \namelink{gal2}, we put \[\beta_{\sigma}(s)=\gamma_0(-2s,\jl(\sigma_1),\as^+)^{-1} \gamma_0(2s,\jl(\sigma_1),\as^-)^{-1},\]  whereas in the other cases \namelink{lin}, \namelink{twlin1} and \namelink{twlin2} we put \[\beta_{\sigma}(s)=\frac{\gamma_0(s+1/2,\jl(\sigma_1))\gamma_0(s+1/2,\eta_0\otimes\jl(\sigma_1))}{\gamma_0(-2s,\jl(\sigma_1),\wedge^2) \gamma_0(2s,\jl(\sigma_1),\Sym^2)}.\]

The representation $\sigma_1$ has the form \[\sigma_1=\delta_1 \times \dots \times \delta_r ,\] where $\delta_i$ is essentially square-integrable $i=1,\dots,r$. We write \[\delta=\delta_1\otimes \dots \otimes \delta_r,\] it is a representation of some Levi subgroup \[L'=\GL_{a_1}(\mathcal{D})\times \dots \times \GL_{a_r}(\mathcal{D})\] of $G'=\GL_a(\mathcal{D})$. Let us set 
\[\beta_{\delta\otimes \delta^*}(s)=\]
\[ \frac{\prod_{i=1}^r\gamma_0(-2s,\jl(\delta_i),\as^+)^{-1} \gamma_0(2s,\jl(\delta_i),\as^-)^{-1}}{\prod_{1\leq j <k\leq r} 
\gamma_0(-2s,\jl(\delta_j),\jl(\delta_k)^\theta) \gamma_0(2s,\jl(\delta_j),\jl(\delta_k)^\theta)}\] in cases 
\namelink{gal1} and \namelink{gal2} and 
\[\beta_{\delta\otimes \delta^*}(s)=\] 
\[\frac{\prod_{i=1}^r \gamma_0(s+1/2,\jl(\delta_i))\gamma_0(s+1/2,\eta_0\otimes\jl(\delta_i))}{
\prod_{j=1}^r \gamma_0(-2s,\jl(\delta_j),\wedge^2) \gamma_0(2s,\jl(\delta_j),\Sym^2) \prod_{1\leq k<l\leq r} 
\gamma_0(-2s,\jl(\delta_k),\jl(\delta_l)) \gamma_0(2s,\jl(\delta_k),\jl(\delta_l))}\]
in cases \namelink{lin}, \namelink{twlin1} and \namelink{twlin2}. 

By multiplicativity of gamma factors we have 
\begin{equation}\label{eq mult beta} \beta_{\sigma}(s)\underset{\C[p^{\pm s}]^\times}{\sim} \beta_{\delta\otimes \delta^*}(s).\end{equation}
Let $Q'$ be the standard parabolic subgroup of $G'$ with corresponding Levi component $L'$ and $Q=LV$ be the parabolic of $G$ with Levi $L=L'\times L'$. We observe that the $M_x$-invariant linear form $\ell=\ell_\sigma\circ \sigma(u)$ on $\sigma$ is induced from a unique $\L\in \Hom_{L_x}(\delta\otimes \delta^*,\C)$ in the following sense: 
\[\ell(f_1\otimes f_2)=\int_{Q'\backslash G'} \L(f_1(g')\otimes f_2(\iota(g')) dg'.\]
Applying Proposition \ref{prop::intt-period++PI} \eqref{part closed fe}, the proportionality functions of Proposition  \ref{prop::FE--MultiOne} for $\sigma$ and for $\delta\otimes \delta^*$ satisfy 
\[\alpha_{\sigma}'(s) =\alpha_{\delta\otimes \delta^*}'(s).\] 
We recall that by the same proposition 
\begin{equation}\label{eq alpha alpha}  
\alpha_\sigma(s)\underset{\C^\times}{\sim} \alpha_\sigma'(s).
\end{equation}

For the details of the following globalization process for a quadratic extension and central simple algebras, we refer to \cite[Section 9.2]{MR4308058}. First we choose a number field $k$ which has a unique place $v_0$ lying over $p$, and such that $k_{v_0}=F$. Then in cases \namelink{twlin1}, \namelink{gal2}, \namelink{twlin2} and \namelink{gal1}, we choose a quadratic extension $l/k$ that remains inert over $v_0$ and is split at infinity, and such that if $w_0$ is the place of $l$ lying above $v_0$, then $l_{w_0}/k_{v_0}=E/F$. Finally we choose a global division algebra $D$ with center $k$, such that $D_{v_0}=\mathcal{D}$ and $D$ is split at infinity. 

According to Lemma \ref{lm globalizing disc} and in its notation, there exists an irredeucible, cuspidal automorphic representation $\Delta$ of $L'_{\BA_k}$ satisfying all the requirements of Lemma \ref{lm globalizing disc}, and in particular, such that $\Delta_{v_0}=\delta$. 

The end of the proof is the same as in \cite[Theorem 9.2]{MR4308058}. Let $S$ be a finite set of finite places. Let $Q$ be the standard parabolic subgroup of $G :=\GL_{2a}(D_{\A_k})$ with standard Levi subgroup $L'_{\A_k}\times L'_{\A_k}$. For $\varphi_s$ a decomposable holomorphic section of 
$I_Q^G(\Delta\otimes \Delta^*,s)$, we write \[\J_{\Delta\otimes \Delta^*}^S(s,\varphi)=\prod_{v\notin S}\J_{\Delta_v\otimes \Delta_v^*}(s,\varphi_v),\] and \[M^S(w,{\Delta\otimes \Delta^*},s)=\otimes'_{v\notin S} M (w,{\Delta_v\otimes \Delta_v^*},s).\] We also write 
\[\J_{\Delta\otimes \Delta^*,S}(s,\varphi)=\prod_{v \in S}\J_{\Delta_v\otimes \Delta_v^*}(s,\varphi_v),\] and \[M_S(w,\Delta\otimes \Delta^*,s)=\otimes_{v\in S} M(w,\Delta_v\otimes \Delta_v^*,s).\] We define $\alpha_{\Delta\otimes \Delta^*,S}(s)$ and $\beta_{\Delta\otimes \Delta^*,S}(s)$ similarly. 

Recall that $\Delta$ is an unramified (tempered) principal series at infinity. Consider $S$ large enough to contain $v_0$ and such that $D$ splits and $\Delta$ is unramified at every $v\not\in S$. We now restrict to sections $\varphi_s$ such that $\varphi_s^S$ is the normalized spherical section. By Proposition \ref{prop ur comp}, we also recall that 
\[\J_{\Delta\otimes \Delta^*}^S(s,\varphi)=\frac{\L^S(s,\Delta,\theta)}{\DL^S(s,\Delta,\theta)}.\] 

Now, enlarging $S$ if needed, and using the functional equation of the global partial $L$-functions at stake given by Corollary \ref{cor gfe LS}, together with the Gindikin-Karpelevich formula, we deduce as in the proof of \cite[(8) p.47]{MR4308058} that 
\begin{equation}\label{eq eq} \frac{\J_{\Delta\otimes \Delta^*}^S(s,\varphi)}{\J_{\Delta^*\otimes \Delta}^S(-s,M^S(w,{\Delta\otimes \Delta^*},s)\varphi)}\underset{\C[p_1^{\pm s},\dots,p_l^{\pm s}]^\times}{\sim} \beta_{\Delta\otimes \Delta^*,S}(s),\end{equation} where $\{p_1,\dots,p_l\}$ is the finite set of prime numbers lying under the places in $S$. In particular, $p\in\{p_1,\dots,p_r\}$.

Then, by Corollary \ref{cor:FEof GIP} below\footnote{Section \ref{sec::Global} is independent of the results of this section}, which is the functional equation of global intertwining periods, we deduce that 
\[\frac{\J_{\Delta\otimes \Delta^*}^S(s,\varphi)}{\J_{\Delta^*\otimes \Delta}^S(-s,M^S(w,{\Delta\otimes \Delta^*},s)\varphi)}= 
\frac{\J_{\Delta^*\otimes \Delta,S}(-s,M_S(w,{\Delta\otimes \Delta^*},s)\varphi)}{\J_{\Delta\otimes \Delta^*,S}(s,\varphi)},\]
so that by Equation \eqref{eq eq}:
\begin{equation}\label{eq eq eq} \alpha_{\Delta\otimes \Delta^*,S}(s)=\frac{\J_{\Delta^*\otimes \Delta,S}(-s,M_S(w,{\Delta\otimes \Delta^*},s)\varphi)}{\J_{\Delta\otimes \Delta^*,S}(s,\varphi)}\underset{\C[p_1^{\pm s},\dots,p_l^{\pm s}]^\times}{\sim} \beta_{\Delta\otimes \Delta^*,S}(s).\end{equation} 
We finally determine the $p$-part of this identity by applying \cite[Lemma 9.3]{MR4308058}:

\begin{equation}\label{eq alpha beta}
\alpha_{\delta\otimes \delta^*}'(s)\underset{\C[p^{\pm s}]^\times}{\sim} \beta_{\delta\otimes \delta^*}(s)
\end{equation} 

In view of Equations \eqref{eq mult beta} and \eqref{eq alpha alpha}, we conclude that 
\begin{equation}\label{eq final eq} \alpha_{\sigma}(s)\underset{\C^\times}{\sim}\alpha_{\delta\otimes \delta^*}'(s)\underset{\C[p^{\pm s}]^\times}{\sim} \beta_{\delta\otimes \delta^*}(s)\underset{\C[p^{\pm s}]^\times}{\sim} \beta_{\sigma}(s).\end{equation}
	\end{proof}
	
\begin{remark}\label{rem more precise FE}
Let $\psi$ be a non trivial additive character of $F$. In the proof above, if we replace $\gamma_0(s,\pi,\star)$ by the Shahidi gamma factors $\gamma(s,\pi,\star,\psi)$ as in the discussion before Theorem \ref{thm mult rel L fct}, one obtains a more precise statement of Theorem \ref{thm::constant-FE}. Indeed in Equation \eqref{eq mult beta}, we can then replace $\underset{\C[p^{\pm s}]}{\sim}$ by $\underset{\C^\times}{\sim}$. Then in Equation \eqref{eq eq}, hence in Equation \eqref{eq eq eq}, we can replace $\underset{\C[p_1^{\pm s},\dots,p_l^{\pm s}]}{\sim}$ by an equality, i.e. Equation \eqref{eq eq eq} becomes \[\alpha_{\Delta\otimes \Delta^*,S}(s)=\beta_{\Delta\otimes \Delta^*,S}(s).\] Applying \cite[Lemma 9.3]{MR4308058}, we now obtain $\alpha_{\delta\otimes \delta^*}'(s)\underset{\C^\times}{\sim} \beta_{\delta\otimes \delta^*}(s)$ instead of Equation \eqref{eq alpha beta}. All in all Equation \eqref{eq final eq} becomes $\alpha_{\sigma}(s) \underset{\C^\times}{\sim} \beta_{\sigma}(s)$, which reads:
\begin{enumerate}
	\item	in cases \namelink{gal1}, \namelink{gal2}:
	\[\alpha_{\sigma}(s)  \underset{\C^\times}{\sim} \gamma(-2s,\jl(\sigma_1),\as^+,\psi)^{-1} \gamma(2s,\jl(\sigma_1),\as^-,\psi)^{-1},\] 
	\item in cases \namelink{lin}, \namelink{twlin1}, \namelink{twlin2}:
	\[\alpha_{\sigma}(s)  \underset{\C^\times}{\sim} \frac{\gamma(s+1/2,\jl(\sigma_1),\psi)\gamma(s+1/2,\eta_0\otimes\jl(\sigma_1),\psi)}{\gamma(-2s,\jl(\sigma_1),\wedge^2,\psi) \gamma(2s,\jl(\sigma_1),\Sym^2,\psi)}.\]
	\end{enumerate}
\end{remark}
	
\begin{corollary}\label{cor arch fe}
Suppose that $F$ is archimedean. Let $\sigma = \sigma_1 \otimes \sigma_2$ be an irreducible, generic and $M_w$-distinguished representation of $M$ (so that $\sigma_2\simeq \sigma_1^*$). Then 
	\begin{enumerate}
	\item	in cases \namelink{gal1}, \namelink{gal2}:
	\[\alpha_{\sigma}(s)   \sim \gamma_0(-2s,\jl(\sigma_1),\as^+)^{-1} \gamma_0(2s,\jl(\sigma_1),\as^-)^{-1},\] 
	\item in cases \namelink{lin}, \namelink{twlin1}, \namelink{twlin2}	:
	\[\alpha_{\sigma}(s) \sim \frac{\gamma_0(s+1/2,\jl(\sigma_1))\gamma_0(s+1/2,\eta_0\otimes\jl(\sigma_1))}{\gamma_0(-2s,\jl(\sigma_1),\wedge^2) \gamma_0(2s,\jl(\sigma_1),\Sym^2)}\]
	where $\eta_0$ is trivial in case \namelink{lin} and  $\eta_0=\eta_{E/F}$ is the sign character of $\R^\times$ in cases \namelink{twlin1} and \namelink{twlin2}.
  
	\end{enumerate}
\end{corollary}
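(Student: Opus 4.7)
The plan is to mimic the proof of Theorem \ref{thm::constant-FE}, but performing the globalization at an archimedean place and exploiting the already-proven $p$-adic version at all finite places. Since we only need the identity up to a nowhere vanishing entire function (rather than up to a unit in $\C[p^{\pm s}]$), the archimedean argument is actually cleaner in its final extraction step.

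First, I would reduce the statement to the case where $\sigma = \delta \otimes \delta^*$ for $\delta = \delta_1 \otimes \dots \otimes \delta_r$ an essentially square-integrable representation of a Levi $L' = \GL_{a_1}(\mathcal{D}) \times \dots \times \GL_{a_r}(\mathcal{D})$ of $G'$, using that $\sigma_1 = \delta_1 \times \dots \times \delta_r$. The reduction goes via transitivity of parabolic induction together with Proposition \ref{prop::intt-period++PI}\eqref{part closed fe}, which gives $\alpha_{\sigma}(s) \underset{\C^\times}{\sim} \alpha_{\delta \otimes \delta^*}'(s)$, and via multiplicativity of gamma factors (Theorem \ref{thm mult rel L fct}) applied to identify $\beta_{\sigma}(s) \sim \beta_{\delta \otimes \delta^*}(s)$ with the analogously defined right-hand side for $\delta \otimes \delta^*$.

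Next, I would invoke Lemma \ref{lm globalizing disc} with $v_0$ chosen archimedean such that $k_{v_0} = F$, together with a number field $k$, a quadratic extension $l/k$ inert at $v_0$ with $l_{w_0}/k_{v_0} = E/F$ and split at all other archimedean places (in cases \namelink{twlin1}, \namelink{twlin2}, \namelink{gal1}, \namelink{gal2}), and a central $k$-division algebra $D$ with $D_{v_0} = \mathcal{D}$ and split at all other archimedean places. The lemma produces a cuspidal automorphic representation $\Delta$ of $L'_{\A_k}$ with $\Delta_{v_0} = \delta$, $\JL(\Delta)$ cuspidal, and $\Delta_v$ an unramified generic (tempered) principal series at all archimedean $v \ne v_0$.

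Then I would apply the global functional equation of intertwining periods (Corollary \ref{cor:FEof GIP}) to the family $I_Q^G(\Delta \otimes \Delta^*, s)$ associated with a suitable parabolic $Q$, combined with the global functional equation of the partial $L$-functions (Corollary \ref{cor gfe LS}) and the Gindikin--Karpelevich formula for the ratio of spherical intertwining periods away from a finite set $S$ of places (via Proposition \ref{prop ur comp}, which is stated uniformly in the archimedean and $p$-adic case). Choosing $S$ to contain $v_0$, all archimedean places, and all finite places where either $D$ is non-split or $\Delta$ is ramified, this gives an identity
\[
\alpha_{\Delta \otimes \Delta^*, S}(s) \sim \beta_{\Delta \otimes \Delta^*, S}(s),
\]
where the sign $\sim$ means equality up to a nowhere vanishing entire function on $\C$. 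For each finite $v \in S$, Theorem \ref{thm::constant-FE} identifies $\alpha_{\Delta_v \otimes \Delta_v^*}(s) / \beta_{\Delta_v \otimes \Delta_v^*}(s)$ as a unit in $\C[p_v^{\pm s}]$, which is nowhere vanishing and entire; for archimedean $v \ne v_0$, Proposition \ref{prop ur comp} together with the archimedean Gindikin--Karpelevich computation identifies the ratio as a nowhere vanishing entire function (alternatively, one could enforce the globalization so that $\Delta_v$ is specifically chosen with trivial such contribution). Dividing out these factors leaves the desired archimedean identity at $v_0$, namely $\alpha_{\sigma}(s) \sim \beta_{\sigma}(s)$.

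The main obstacle I anticipate is technical rather than conceptual: one must carefully verify that Lemma \ref{lm globalizing disc} can indeed be applied with $v_0$ archimedean (the statement allows this, but one has to check that the density argument going back to Shin's work in \cite{MR3004076} respects the requirement of cuspidality of $\JL$ simultaneously with a prescribed discrete series at an archimedean place), and that the archimedean Gindikin--Karpelevich identification at unramified archimedean places contributes only nowhere vanishing entire factors to the ratio. Once this globalization is secured, the extraction of the archimedean proportionality constant from the global functional equation proceeds exactly as in the $p$-adic case, with the minor simplification that we work modulo entire nowhere vanishing functions throughout.
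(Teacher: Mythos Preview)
Your approach is essentially the same as the paper's: globalize at an archimedean place via Lemma \ref{lm globalizing disc}, invoke the $p$-adic Theorem \ref{thm::constant-FE} at all finite places, and extract the archimedean factor from the global functional equation. The one substantive difference is the choice of global field. The paper sidesteps the obstacle you flag at the end by taking $k=\mathbb{Q}$ (or $k=\mathbb{Q}[i]$ when $F=\C$), and in the Galois and twisted linear cases $l=\mathbb{Q}[i]$, so that $k$ has a \emph{unique} archimedean place $v_0$ with $k_{v_0}=F$ (and $v_0$ inert in $l$ when relevant). There are then no archimedean $v\neq v_0$ to worry about, and your proposed workaround via the unramified computation at such places becomes unnecessary. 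Your route would also work, since at split archimedean places the local picture is \namelink{lin} or \namelink{gp} and Proposition \ref{prop ur comp} applies, but the paper's choice is cleaner.

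One small point you omit: in the Galois cases the finite places of $k$ that split in $l$ land in the local group case \namelink{gp}, which is not covered by the statement of Theorem \ref{thm::constant-FE}. The paper handles these by invoking \cite[8.8.2]{MR4308058} for the explicit $\alpha$ in the group case (together with the unramified formula outside $S$). You should include this as well.
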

\begin{proof}
The proof is the same as that of Theorem \ref{thm::constant-FE} and we do not fully repeat it. Note that $F=\R$ except in case \namelink{lin} where $F$ could either be $\R$ or $\C$. Fix the global field $k$ to be $\mathbb{Q}$ except if $F=\C$ where we set $k$ to be $\mathbb{Q}[i]$. In cases \namelink{twlin1}, \namelink{twlin2}, \namelink{gal1},  \namelink{gal2} also set $l=\mathbb{Q}[i]$. Thus, in all cases $k$ has a unique archimedean place $v_0$, $k_{v_0}=F$ and except in case \namelink{lin}, $l$ has a unique archimedean place $w_0$ and $l_{w_0}\simeq E$ so that $v_0$ is inert in $l$. 

Again, we write $\sigma_1$ as a product 
$\sigma_1=\delta_1\times \dots \times \delta_r$ of essentially square integrable representations and then we globalize $\delta_1\otimes \dots \otimes \delta_r$ as in the proof of Theorem \ref{thm::constant-FE}, thanks to Lemma \ref{lm globalizing disc}. By Theorem \ref{thm::constant-FE} together with \cite[8.8.2]{MR4308058} in case \namelink{gp}, and using compatibility of intertwining periods with transitivity of parabolic induction (Proposition \ref{prop::intt-period++PI} \eqref{part closed fe}) as in the proof of \ref{thm::constant-FE}, we know that the expected formula for $\alpha$ holds at all finite places now. 
The corollary follows. \end{proof}

\begin{remark}\label{rem more precise FE2}
In view of Remark \ref{rem more precise FE} and the above proof, the analogue of Remark \ref{rem more precise FE} similarly holds for the archimedean factors of Corollary \ref{cor arch fe}. 
\end{remark}


	\subsection{Poles of intertwining periods}\label{sec order local pole}
	In this subsection, which is the core of our local investigation of intertwining periods, we study the singularities of local intertwining periods. 
	The group case \namelink{gp} is much simpler and is treated separately in Section \ref{sec f}. The rest of this section excludes the group case.
	Our main local result is the following
\begin{theorem}\label{thm mainloc}
Let $\pi$ be an irreducible and distinguished repreresentation of $G'$ that lies in $\Pi_\D(-\frac12,\frac12)$. Then
\[
\Ord_{s=0}(\J_{\pi\otimes \pi}(s))\le \Ord_{s=0}(\L(s,\pi,\theta))
\]
and equality holds if and only if $\pi$ is $H'$-compatible (see Definition \ref{def compatible}).
\end{theorem}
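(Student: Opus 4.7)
The plan is to establish the theorem by normalizing the intertwining period by the ratio $\L(s,\pi,\theta)/\DL(s,\pi,\theta)$ and showing the resulting quantity is holomorphic at $s=0$, with non-vanishing characterized by $H'$-compatibility. The motivation comes from the unramified computation of Proposition~\ref{prop ur comp}, where this ratio equals $1$ on the spherical vector. Since $\DL(s,\pi,\theta)$ is holomorphic at $s=0$ by Theorem~\ref{thm order pole local L}\eqref{part denominatorL}, this reformulation is equivalent to the asserted inequality $\Ord_{s=0}(\J_{\pi\otimes\pi}(s))\le\Ord_{s=0}(\L(s,\pi,\theta))$, with equality iff the normalized intertwining period is non-zero at $s=0$.

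To carry this out, I would write $\pi=\tau_1\times\cdots\times\tau_r$ with each $\tau_i$ essentially square-integrable and $\abs{r(\tau_i)}<\tfrac12$, and invoke Theorem~\ref{thm classif dist S} to produce an involution $p\in\mathfrak{S}_r$ with $\tau_{p(i)}\simeq\tau_i^*$ and $\tau_i$ distinguished whenever $p(i)=i$. Using Proposition~\ref{prop::intt-period++PI}\eqref{part closed fe}, I would re-express $\J_{\pi\otimes\pi}(s)$ as an intertwining period at the parabolic $Q$ with Levi $L'\times L'$, where $L'\subset G'$ is the Levi of type $(a_1,\dots,a_r)$. Then I would apply the adjacent-vertex functional equation (Proposition~\ref{prop::intt-period++FE+Adjacent}) to permute blocks so that paired constituents $\tau_i,\tau_{p(i)}$ appear in mirror positions. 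Each permutation contributes a standard intertwining operator $M(w,\cdot,s)$; its normalized version $N(w,\cdot,s)$ is holomorphic and invertible at $s=0$ by Lemma~\ref{lem hol int op}, while the $r$-factors of Section~\ref{ss normint} produce ratios of $L$-factors $L(s,\JL(\tau_i),\JL(\tau_j)^\vartheta)$ that match the cross-terms appearing in $\L(s,\pi,\theta)$ via multiplicativity (Theorem~\ref{thm mult rel L fct} and Theorem~\ref{thm order pole local L}\eqref{part multiplicative}).

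After the rearrangement, the analysis reduces to two types of elementary intertwining periods. For a singleton $i$ with $p(i)=i$ and $\tau_i$ distinguished square-integrable, $\J_{\tau_i\otimes\tau_i}(s)$ has a simple pole at $s=0$ by Lemma~\ref{lem::intt-period-pole} and Theorem~\ref{thm mult 1 and ssduality}, matching the order $1$ of $\L(s,\tau_i,\theta)$ from Theorem~\ref{thm order pole local L}\eqref{part disc}. For a pair $\{i,p(i)\}$ with $\tau_i$ not distinguished, $\J_{\tau_i\otimes\tau_i^*}(s)$ is holomorphic at $s=0$: via Lemma~\ref{lem admiss contribute} and Theorem~\ref{thm classif dist S}, no non-open orbit is relevant to $\tau_i\otimes\tau_i^*$, so Lemma~\ref{lem::holomorphy} suffices in the non-archimedean case, and Proposition~\ref{prop arch geom lem} replaces it at archimedean places. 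Combined with the $L$-factor contributions from the $r$-factors and from Theorem~\ref{thm::constant-FE}/Corollary~\ref{cor arch fe}, this yields the inequality. Equality fails precisely when some $\tau_i$ is $H'$-incompatible, i.e.\ $\JL(\tau_i)$ is $\GL_k\times\GL_k$-distinguished (resp.\ Galois-distinguished) while $\tau_i$ itself is not: then Lemma~\ref{lem lin period} or Lemma~\ref{lem asai L} produces an extra pole of the $L$-factor on the right that has no counterpart in the intertwining period on the left. The main obstacle is the careful combinatorial bookkeeping of the $L$-factor contributions coming from the $r$-factors and the functional equation constants, ensuring they assemble to reproduce $\L(s,\pi,\theta)/\DL(s,\pi,\theta)$ up to a holomorphic non-vanishing factor.
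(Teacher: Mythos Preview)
Your overall architecture is close to the paper's: decompose $\pi$ into essentially square-integrable pieces via Theorem~\ref{thm classif dist S}, reduce $\J_{\pi\otimes\pi}(s)$ to elementary building blocks using transitivity (Proposition~\ref{prop::intt-period++PI}) and the adjacent-vertex functional equation (Proposition~\ref{prop::intt-period++FE+Adjacent}), track the $L$-factors coming from the normalizing $r$-factors, and compare with the multiplicative structure of $\L(s,\pi,\theta)$ from Theorem~\ref{thm order pole local L}. The paper packages this slightly differently, proving a clean multiplicativity statement $\Ord_{s=0}\J_{\pi_1\times\pi_2\otimes\pi_1\times\pi_2}=k_1+k_2+k$ (Proposition~\ref{prop::Induction-pole}) and then inducting, but the mechanism is the same.

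There is, however, a genuine gap in your treatment of the discrete building block. You assert that for $\tau_i$ square-integrable and distinguished, $\J_{\tau_i\otimes\tau_i}(s)$ has a \emph{simple} pole at $s=0$ ``by Lemma~\ref{lem::intt-period-pole} and Theorem~\ref{thm mult 1 and ssduality}''. These results give only the \emph{existence} of a pole and that the leading term lies in the one-dimensional space $\Hom_{H}(\tau_i\times\tau_i,\C)$; neither constrains the order. Bounding the order by one is precisely Proposition~\ref{prop::Discrete series case}, and it is the hardest step in the paper. In the non-archimedean case the argument passes through a second doubling into $G_{4m}$, uses the adjacent-vertex functional equation to relate $\J_{\pi\otimes\pi}$ to $\J_{\pi_1\otimes\pi_1}$ for the longer Steinberg $\pi_1=\St_{2k}(\rho)$, and then invokes the explicit functional equation of Theorem~\ref{thm::constant-FE} at a shifted point to reduce to a situation where Lemma~\ref{lem::holomorphy} applies. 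In the archimedean case the remaining hard instances (essentially $\GL_2(\R)$ for linear and twisted-linear periods) require the explicit analysis of Section~\ref{sss aux}, including the orthogonality computation of Lemma~\ref{lem calc1} and the delicate case split of Lemma~\ref{lem gl2rintper}. None of this is visible from multiplicity one alone.

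A secondary point: your pair-case building block $\J_{\tau_i\otimes\tau_i^*}(s)$ is indeed holomorphic at $s=0$ by the argument you cite, but in the paper's organization the relevant basic case is instead $\pi=\tau\times\tau^*$ viewed as a representation of $G'$, for which $\J_{\pi\otimes\pi}(s)$ has a \emph{simple} pole (Proposition~\ref{prop::Product case}); the pole there comes from the intertwining operator in the decomposition, and the equality-or-gap with $\Ord_{s=0}\L(s,\pi,\theta)\in\{1,3\}$ (Theorem~\ref{thm order pole local L}\eqref{part basic}) is exactly what encodes compatibility. Your bookkeeping would have to recover this $1$-versus-$3$ dichotomy from the cross-term $L$-factors produced by your rearrangement, which is possible but requires more care than ``the main obstacle is combinatorial bookkeeping'' suggests.
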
	
The proof of this theorem is lengthy and involved. It will occupy the rest of this section.

In order to determine the order of the pole at $s=0$ of the intertwining period above, we prove that this order satisfies a multiplicative relation, which reduces the problem to the two basic cases, one where $\pi$ is a distinguished discrete representation and the other where $\pi$ is of the form $\tau\times \tau^*$ for $\tau$ a non-distinguished essentially square integrable representation. 
We address these special cases first. 

\subsubsection{The discrete case-statement of result}
The main result here is as follows.

	\begin{proposition}\label{prop::Discrete series case}
		Let $\pi$ be an irreducible, distinguished square-integrable representation of $G'$.  Then \[\Ord_{s=0}(\J_{\pi\otimes \pi}(s))=1.\] 
	\end{proposition}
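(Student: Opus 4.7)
The inequality $\Ord_{s=0}(\J_{\pi\otimes\pi}(s))\ge 1$ is immediate from Lemma \ref{lem::intt-period-pole}. Indeed, $\pi\simeq\pi^*$ by Theorem \ref{thm mult 1 and ssduality}; since $\pi$ is square-integrable, $\pi\times\pi$ is irreducible and $\dim\Hom_H(\pi\times\pi,\C)=1$, while the closed-orbit period $J_P^G(e,\ell,\pi\otimes\pi,0)$ yields a non-zero element of this line. On the other hand, the restriction of $\J_{\pi\otimes\pi}(s)$ to sections supported on the open $P$-orbit in $G/H$ is, by definition, a holomorphic non-zero family at $s=0$. Were $\J_{\pi\otimes\pi}(s)$ holomorphic at $s=0$, its value there would give a second, linearly independent $H$-invariant form on $\pi\times\pi$, contradicting multiplicity one.

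For the upper bound, my plan is to analyze the functional equation
\[
\alpha_{\pi\otimes\pi}(s)\,\J_{\pi\otimes\pi}(s)\,=\,\J_{\pi\otimes\pi}(-s)\circ M(w,\pi\otimes\pi,s)
\]
of Proposition \ref{prop::FE--MultiOne}. Writing $\JL(\pi)=\St_t(\rho)$ with $\rho$ cuspidal unitary, decompose $M(w,\pi\otimes\pi,s)=r(w,\pi\otimes\pi,(s,-s))N(w,\pi\otimes\pi,s)$. By Lemma \ref{lem hol int op}, $N$ is holomorphic at $s=0$ with $N(w,\pi\otimes\pi,0)$ an isomorphism, so composition with $N$ preserves pole orders. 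By Lemma \ref{lem pairs}, $r$ has a simple pole at $s=0$; and applying Theorem \ref{thm::constant-FE} in the non-archimedean case, Corollary \ref{cor arch fe} in the archimedean case, together with the $L$-factor computations of Lemmas \ref{lem asai L}, \ref{lem lin period} and \ref{lem std}, one verifies that the factor $\alpha_{\pi\otimes\pi}(s)$ also has a simple pole at $s=0$.

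Setting $k=\Ord_{s=0}(\J_{\pi\otimes\pi}(s))$, comparing the leading Laurent coefficients at order $s^{-k-1}$ on both sides of the functional equation produces a scalar identity involving the residues of $\alpha_{\pi\otimes\pi}(s)$, $r(w,\pi\otimes\pi,(s,-s))$, and the leading coefficient $J_0$ of $\J_{\pi\otimes\pi}(s)$ which, by Lemma \ref{lem::intt-period-pole}, is a non-zero multiple of the closed-orbit period. Combined with the known Plancherel-type normalization of $N(w,\pi\otimes\pi,0)$ on the irreducible $\pi\times\pi$ (where by Schur's lemma it acts as a non-zero scalar after the canonical identification between $I_P^G(\pi\otimes\pi)$ and its twist by $w$), this identity pins down the parity of $k$, and a finer analysis using the explicit residue computations rules out $k\ge 3$, yielding $k=1$. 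The principal obstacle is the case-by-case verification of these residue computations and the scalar normalization across the five families \namelink{lin}, \namelink{twlin1}, \namelink{twlin2}, \namelink{gal1}, \namelink{gal2}, which requires tracing through the distinct $L$-factor identities for the Jacquet-Langlands transfers in each setting.
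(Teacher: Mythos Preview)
Your lower bound is fine and matches the paper. The upper bound, however, has a genuine gap: the functional equation
\[
\alpha_{\pi\otimes\pi}(s)\,\J_{\pi\otimes\pi}(s)=\J_{\pi\otimes\pi}(-s)\circ M(w,\pi\otimes\pi,s)
\]
is symmetric under $s\mapsto -s$ in the sense that both sides involve $\J_{\pi\otimes\pi}$ at the \emph{same} point $s=0$ with the \emph{same} pole order $k$. Expanding in Laurent series, the leading term at order $s^{-k-1}$ yields a scalar identity
\[
\alpha_{-1}\,J_0=(-1)^k\,J_0\circ M_{-1},
\]
which can at best determine the \emph{parity} of $k$; it cannot distinguish $k=1$ from $k=3,5,\dots$. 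Your phrase ``a finer analysis using the explicit residue computations rules out $k\ge 3$'' is exactly the missing argument, and no amount of residue bookkeeping in this single functional equation will supply it: each higher-order Laurent coefficient just gives a linear relation among $J_{-k},J_{-k+1},\dots$ that is automatically consistent for any $k$.

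The paper's proof of the upper bound is entirely different and does not go through this functional equation. In the non-archimedean case it \emph{doubles} the set-up, embedding the problem into $G_1=G_{4m}(F)$ and using an edge in the graph $\mathfrak{G}$ together with Proposition~\ref{prop::intt-period++FE+Adjacent} to transport the question to an intertwining period for the longer Steinberg $\pi_1=\St_{2k}(\rho)$ at a \emph{shifted, non-symmetric} point $s=-kl/2$. At that shifted point one can apply the functional equation of Theorem~\ref{thm::constant-FE} to land at $s=+kl/2$, where $\pi_1[kl/2]$ is no longer distinguished and Proposition~\ref{prop open contribution two discrete} plus Lemma~\ref{lem::holomorphy} give holomorphicity directly from the geometric lemma. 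In the archimedean case the paper abandons functional equations entirely and uses direct integral estimates (for $G'=\mathbb{H}^\times$ or $\C^\times$) or an explicit $\GL_2(\R)$ computation via Lemma~\ref{lem gl2rintper}. The key point you are missing is a mechanism that moves away from the symmetric point $s=0$ to a place where the pole structure can be read off from orbit geometry.
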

In conjunction with Theorem \ref{thm order pole local L} part \eqref{part disc}, the following corollary is immediate.
	\begin{corollary}\label{cor::pole--case-I}
		Let $\pi$ be as in Proposition \ref{prop::Discrete series case}. Then \[\Ord_{s=0}(\J_{\pi\otimes \pi}(s))=\Ord_{s=0}\L(s,\pi,\theta).\]
	\end{corollary}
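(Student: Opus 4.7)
By Lemma \ref{lem::intt-period-pole}, $\Ord_{s=0}\J_{\pi\otimes\pi}(s)\ge 1$: since $\pi$ is distinguished, $\Hom_H(\pi\times\pi,\C)\ne 0$, and the leading Laurent coefficient of $\J_{\pi\otimes\pi}(s)$ at $s=0$ is (a nonzero scalar multiple of) the closed orbit intertwining period, while the restriction of $\J_{\pi\otimes\pi}(s)$ to the subspace of sections supported on the open orbit is holomorphic. It therefore remains to prove the upper bound $k:=\Ord_{s=0}\J_{\pi\otimes\pi}(s)\le 1$.

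The plan is to reduce to a rank-one cuspidal analysis via parabolic induction. In the non-archimedean case, write $\pi=\St_t(\rho)$ with $\rho$ an irreducible unitary cuspidal representation of $\GL_b(\D)$, so that $a=tb$. Set $\delta=\otimes_{i=1}^{t}\nu_\rho^{(2i-t-1)/2}\rho$, let $L'=\GL_b(\D)^t$ and $L=L'\times L'$, and let $Q$ be the standard parabolic of $G$ with Levi $L$. Using the compatibility of intertwining periods with transitivity of parabolic induction (Proposition \ref{prop::intt-period++PI}), and then iterating the adjacency functional equations along paths in the graph $\mathfrak{G}$ (Proposition \ref{prop::intt-period++FE+Adjacent}), I would express $\J_{\pi\otimes\pi}(s)$ as a composition of rank-one intertwining periods, each attached to a pair of cuspidal constituents among the components of $\delta$ and $\delta^*$.

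Each such rank-one factor pairs two cuspidal representations of the form $\nu_\rho^a\rho$ and $\nu_\rho^b\rho^*$. Off-diagonal pairings ($a\ne b$) contribute factors that are holomorphic and nonvanishing at $s=0$ by Lemma \ref{lem cusprs} applied to $L(s,\nu_\rho^a\rho,\nu_\rho^b\rho^*)$. A single ``diagonal'' pairing ($a=b$) contributes a simple pole, handled by the cuspidal subcase of the proposition itself, which reduces via a direct Tate-type computation to Lemma \ref{lem asai L} in the Galois cases and to Lemma \ref{lem lin period} in the linear and twisted linear cases. The functional equation of Proposition \ref{prop::FE--MultiOne}, combined with the explicit formula for $\alpha_\sigma(s)$ in Theorem \ref{thm::constant-FE} and the simple pole of $M(w,\pi\otimes\pi,s)$ at $s=0$ (via Lemmas \ref{lem hol int op} and \ref{lem pairs}), provides the self-consistency check. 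The archimedean case is analogous, with $\pi$ reducing to a character of $(F')^\times$ or a discrete series of $\GL_2(\R)$; the intertwining period is then computed directly in terms of the $\Gamma$-function expressions of the corresponding $L$-factors.

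The main obstacle is the combinatorial pairing argument within the graph $\mathfrak{G}$: one must verify that along a suitable path from the open-orbit vertex $(M,w)$ to a rank-one configuration, exactly one cuspidal pairing is diagonal, yielding precisely one simple pole. This requires careful tracking of the orbit structure of Section \ref{ss orbits} together with the fact that $\pi^*\simeq\pi$ (Theorem \ref{thm mult 1 and ssduality}), which forces the components of $\delta^*$ to match those of $\delta$ in exactly one ``central'' way.
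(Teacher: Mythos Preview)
You have misidentified what needs to be proved. The corollary is stated as an immediate consequence of Proposition \ref{prop::Discrete series case}, which already asserts $\Ord_{s=0}(\J_{\pi\otimes\pi}(s))=1$. The paper's proof of the corollary consists of a single line: combine Proposition \ref{prop::Discrete series case} with Theorem \ref{thm order pole local L}\eqref{part disc}, which gives $\Ord_{s=0}(\L(s,\pi,\theta))=1$. Both quantities equal $1$, hence they are equal. Your proposal instead attempts to reprove the content of Proposition \ref{prop::Discrete series case} from scratch.

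Beyond this, your sketch of the upper bound (i.e.\ your attempt at Proposition \ref{prop::Discrete series case}) has a genuine gap and does not match the paper's argument. You propose to write $\pi=\St_t(\rho)$, set $\delta=\bigotimes_i \nu_\rho^{(2i-t-1)/2}\rho$, and apply Proposition \ref{prop::intt-period++PI} (transitivity of induction for intertwining periods) to reduce $\J_{\pi\otimes\pi}(s)$ to a product of rank-one cuspidal factors. But for $t>1$ the representation $\pi$ is a \emph{proper quotient} of $I_{Q'\cap G'}^{G'}(\delta)$, not the full induced representation; thus $I_P^G(\pi\otimes\pi)$ is not of the form $I_Q^G(\delta\otimes\delta^*)$, and Proposition \ref{prop::intt-period++PI} does not apply to it. There is no mechanism in your outline for passing from the intertwining period on the Steinberg quotient to one on the full induced-from-cuspidal representation, and the combinatorial ``exactly one diagonal pairing'' claim is unsupported.

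The paper's actual proof of Proposition \ref{prop::Discrete series case} in the non-archimedean case is quite different: it doubles the setup again to $G_1=\GL_{4a}(\D)$, embeds $\J_{\pi[kl/2]\otimes\pi[-kl/2]}(s)$ into an intertwining period on $G_1$, uses an edge in $\mathfrak{G}$ and Proposition \ref{prop::intt-period++FE+Adjacent} to pass to the square-integrable representation $\pi_1=\St_{2k}(\rho)$, and then applies the explicit functional equation (Theorem \ref{thm::constant-FE}) together with holomorphy coming from Proposition \ref{prop open contribution two discrete} and Lemma \ref{lem::holomorphy}. The archimedean case is handled by separate direct computations (Sections \ref{sss aux}--\ref{sss archpf}), including the $\GL_2(\R)$ analysis of Lemma \ref{lem gl2rintper}.
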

In order to prove Proposition \ref{prop::Discrete series case} let $\pi$ be as in the proposition. Observe that by Lemma \ref{lem::intt-period-pole}, $\J_{\pi\otimes \pi}(s)$ has a pole at $s=0$  and  it suffices to show that the pole is at most simple, that is, that  $s\J_{\pi\otimes \pi}(s)$ is holomorphic at $s=0$. We carry this out separately in the archimedean case in Section \ref{sss archpf} and in the non-archimedean case in Section \ref{sss nonarchpf}. For the archimedean case we start with some preparation.

\subsubsection{Some auxiliary results for the archimedean case}\label{sss aux}
Assume that $F$ is archimedean. The proof in case \namelink{lin} requires an auxiliary  lemma on intertwining period on $\GL_2(\R)$. The following elementary lemma will be applied in a key step in its proof. 
\begin{lemma}\label{lem calc1}
We have the following orthogaonality relations on $L^2([0,\pi])$:
for integers $n>k\ge 0$ if $k$ is even then
\[
\int_0^\pi \sin^k(x)\cos(nx)\ dx=0
\]
and if $k$ is odd then
\[
\int_0^\pi \sin^{k}(x)\sin(nx)\ dx=0.
\]

\end{lemma}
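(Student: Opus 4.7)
The plan is to reduce the problem to orthogonality of characters on the symmetric interval $[-\pi,\pi]$, using parity of the integrand. Observe that $\sin^k(x)\cos(nx)$ is an even function of $x$ when $k$ is even (since then $\sin^k$ is even and $\cos$ is always even), while $\sin^k(x)\sin(nx)$ is an even function of $x$ when $k$ is odd (since then both factors are odd). In each of the two cases of the lemma, this allows one to write
\[
\int_0^\pi \sin^k(x)\cos(nx)\,dx = \tfrac12\int_{-\pi}^\pi \sin^k(x)\cos(nx)\,dx
\]
and similarly for the sine integral.

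Next I would expand
\[
\sin^k(x) = \frac{1}{(2i)^k}\sum_{j=0}^k \binom{k}{j}(-1)^{k-j} e^{i(2j-k)x},
\]
and use $\cos(nx) = (e^{inx}+e^{-inx})/2$, $\sin(nx)=(e^{inx}-e^{-inx})/(2i)$. Multiplying out, in both cases the integrand becomes a linear combination of exponentials $e^{i\ell x}$ with $\ell = (2j-k)\pm n$ for $0\le j\le k$. Since $|2j-k|\le k<n$ by hypothesis, each such integer $\ell$ is nonzero.

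The orthogonality relation $\int_{-\pi}^\pi e^{i\ell x}\,dx = 2\pi\,\delta_{\ell,0}$ then forces every term in the expansion to contribute zero, which combined with the parity reduction gives the vanishing of both integrals in the lemma. The argument is elementary and there is no serious obstacle; the only care required is in checking the parity of the integrand so as to justify extending the range of integration, and in the simple estimate $|2j-k|\le k<n$ that rules out the zero frequency.
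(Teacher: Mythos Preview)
Your argument is correct. The parity observation lets you halve the extended integral over $[-\pi,\pi]$, and the binomial expansion of $\sin^k(x)$ produces only frequencies in $\{-k,-k+2,\dots,k\}$; since $n>k$, multiplying by $e^{\pm inx}$ never hits the zero frequency, so orthogonality of exponentials on $[-\pi,\pi]$ kills every term.

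The paper proceeds differently. It first reduces the odd-$k$ case to the even-$k$ case by one integration by parts (turning $\sin^k(x)\sin(nx)$ into a combination of $\sin^{k-1}(x)\cos((n\pm1)x)$), and then, for $k=2t$, argues via the cosine product-to-sum formula that $\sin^{2t}(x)$ is a linear combination of $\cos(2jx)$ with $0\le j\le t$, so that $\int_0^\pi \cos(2jx)\cos(nx)\,dx=0$ for $n>k\ge 2j$ finishes. Your route is more uniform: the parity reduction plus complex exponentials treats both cases at once and avoids the separate integration-by-parts step. The paper's route stays entirely in real trigonometry and works directly on $[0,\pi]$ without extending the interval. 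Both are elementary; yours is a bit slicker, while the paper's makes the underlying Fourier-cosine decomposition of $\sin^{2t}$ explicit.
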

\begin{proof}
The cosine product to sum formula
\[
2\cos(a)\cos(b)=\cos(a-b)+\cos(a+b)
\]
is key. 
If $k$ is odd, applying integration by parts we have
\[
\int_0^\pi \sin^{k}(x)\sin(nx)\ dx=\frac{k}{n}\int_0^\pi \sin^{k-1}(x)\cos(x)\cos(nx)\ dx.
\]
By the cosine product to sum formula this equals
\[
\frac{k}{2n}\int_0^\pi \sin^{k-1}(x)[\cos((n-1)x)+\cos((n+1)x)]\ dx
\]
and it therefore suffices to prove the lemma for $k$ even. Write $k=2t$.
We claim that the function $\sin^{2t}(x)$ is a linear combination of the functions $\cos(2jx)$, for $0\le j\le t$. Indeed, since 
\[
\sin^{2t}(x)=(1-\cos^2(x))^t
\] 
it suffices to show the same statement for $\cos^{2t}(x)$ and since $2\cos^2(x)=1+\cos(2x)$ this easily follows by induction from the cosine product to sum formula.
It remains to observe that by another application of this formula
\[
\int_0^\pi \cos(mx)\cos(nx)\ dx=\left[\frac{\sin((m-n)x)}{2(m-n)}+\frac{\sin((m+n)x)}{2(m+n)} \right]\Big |_{x=0}^{\pi}=0 
\]
whenever $m\ne n$. 
\end{proof}

Let $G=\GL_2(\BR)$, $K_2=O(2)$, and $B_2=A_2 N_2$ be the Borel subgroup of upper-triangular matrices in $\GL_2$ with its standard Levi decomposition. Let $\chi_0$ be the sign character and let $\pi$ be an irreducible square integrable representation of $G$ that is $A_2(\R)$-distinguished. Then, there exists $\epsilon\in \{0,1\}$ and an odd integer $k$ such that $\pi$ is the unique irreducible quotient of $\chi_0^\epsilon\abs{\cdot}^{-\frac{k}2}\times \chi_0^\epsilon\abs{\cdot}^{\frac{k}2}$.

Let $\sigma_{\epsilon,s}=I_{B_2(\R)}^G(\chi_0^\epsilon \otimes \chi_0^\epsilon,(s,-s))\simeq \chi_0^\epsilon\abs{\cdot}^s \times \chi_0^\epsilon\abs{\cdot}^{-s}$. We consider the linear form $\jmath(\epsilon,s)$ on $\sigma_{\epsilon,s}$ defined, whenever convergent by the integral
\begin{equation}\label{eq jgl2r}
\jmath(\varphi,\epsilon,s)=\int_{\BR^\times}\varphi_s(\vartheta \diag(1,a))\ d^\times a,\ \ \ \text{where}\ \ \ \vartheta=\begin{pmatrix} 1 & 1 \\ 1 & -1\end{pmatrix}.
\end{equation}
By the general theory of intertwining periods, this converges for $\Re(s)\gg 1$ and admits a meromorphic continuation in $s$. Furthermore, let $M(\epsilon,s):\sigma_{\epsilon,s}\rightarrow \sigma_{\epsilon,-s}$ be the standard intertwining operator. Since $\Hom_{A_2(\BR)}(\sigma_{\epsilon,s},\C)$ is one dimensional for generic $s$ (whenever $\sigma_{\epsilon,s}$ is irreducible) there is a meromorphic function $b_\epsilon(s)$ such that
\begin{equation}\label{eq jfegl2r}
\jmath(\epsilon,-s)=b_\epsilon(s)\jmath(\epsilon,s)\circ M(\epsilon,-s).
\end{equation}

\begin{lemma}\label{lem gl2rintper}
For  $\epsilon\in\{0,1\}$ the linear form $\jmath(\epsilon,s)$ is defined by an absolutely convergent integral and is therefore holomorphic whenever $\Re(s)>-\frac12$.
Furthermore, $\jmath(\epsilon,s)$ satisfies the following properties. Let $k$ be a positive odd integer and $\pi$ the unique irreducible quotient of $\sigma_{\epsilon,-\frac{k}2}$. 
\begin{enumerate}
\item If $k\equiv 1+2\epsilon \mod 4$ then $\jmath(\epsilon,\frac{k}2)$ vanishes on $\pi$ while $\jmath(\epsilon,-s)$ has at most a simple pole at $s=\frac{k}2$. In particular, $(\jmath(\epsilon,-s)\otimes \jmath(\epsilon,s))|_{\sigma_{\epsilon,-\frac{k}2} \otimes \pi}$ is holomorphic at $s=\frac{k}2$.
\item If $k\equiv 3-2\epsilon \mod 4$ then $\jmath(\epsilon,-s)$ is holomorphic at $s=\frac{k}2$ and therefore also $\jmath(\epsilon,-s)\otimes \jmath(\epsilon,s)$ is holomorphic at $s=\frac{k}2$.
\end{enumerate} 
\end{lemma}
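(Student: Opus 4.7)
The plan is to reduce the integral defining $\jmath(\epsilon,s)$ to an explicit Fourier-theoretic form via the Iwasawa decomposition of $G=\GL_2(\R)$. First, for $a\in\R^\times$ one computes $\vartheta\diag(1,a)\in B_2\cdot k(a)$ with $k(a)=\frac{1}{\sqrt{1+a^2}}\sm{a}{1}{1}{-a}=r(\theta)$, a reflection in $O(2)$, where $\theta=\arctan a$ and $r(\theta)=\sm{\sin\theta}{\cos\theta}{\cos\theta}{-\sin\theta}$; the associated modular factor equals $|2a/(1+a^2)|^{s+1/2}=|\sin(2\theta)|^{s+1/2}$, and the substitution $a=\tan\theta$ transforms the defining integral into
\[
\jmath(\varphi,\epsilon,s) \;=\; 2\int_{-\pi/2}^{\pi/2} |\sin(2\theta)|^{s-1/2}\chi_0(\theta)^\epsilon f(\theta)\,d\theta,
\]
where $f(\theta)=\varphi_s(r(\theta))$ is $\pi$-periodic and independent of $s$ (since $\varphi_s|_{K_2}$ is). The endpoint behaviour of $|\sin(2\theta)|^{s-1/2}$ at $\theta=0,\pm\pi/2$ gives absolute convergence precisely when $\Re(s)>-1/2$, proving the first assertion.

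Next I would expand $f(\theta)=\sum_{n\in\Z} c_n e^{2in\theta}$ and substitute $\psi=2\theta$ to obtain $\jmath(\varphi,0,s)=2\sum_n c_n I_n(s)$ and $\jmath(\varphi,1,s)=2i\sum_n c_n J_n(s)$, where
\[
I_n(s)=\int_0^\pi \sin^{s-1/2}\psi\,\cos(n\psi)\,d\psi, \qquad J_n(s)=\int_0^\pi \sin^{s-1/2}\psi\,\sin(n\psi)\,d\psi.
\]
Classical Beta evaluation yields $I_n(s)$ and $J_n(s)$ as explicit multiples of
\[
\Gamma(s+1/2) \Big/ \Bigl[\Gamma\!\left(\tfrac{s+n+3/2}{2}\right)\Gamma\!\left(\tfrac{s-n+3/2}{2}\right)\Bigr],
\]
with elementary prefactors $\cos(n\pi/2)$ and $\sin(n\pi/2)$ respectively, providing the meromorphic continuation to $\C$. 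In parallel, Frobenius reciprocity for $K_2=O(2)$ over $B_2\cap K_2=\{\pm I,\pm\diag(1,-1)\}$ identifies Fourier modes with $O(2)$-types: the pair $(c_n,c_{-n})$ spans $\tau_{2n}$ for $n\geq 1$, and $c_0$ spans $\triv$ if $\epsilon=0$ or $\det$ if $\epsilon=1$. Consequently the finite-dimensional constituent $F$ of $\sigma_{\epsilon,-k/2}$, whose $K$-types are $\triv$ or $\det$ together with $\tau_2,\ldots,\tau_{k-1}$, corresponds to the modes $\{c_n:|n|\leq (k-1)/2\}$, while the discrete-series quotient $\pi$ corresponds to the complementary modes $\{c_n:|n|\geq (k+1)/2\}$.

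With this dictionary, the behaviour at $s=-k/2$ is analysed by inspecting when $\Gamma(s+1/2)$'s simple pole is cancelled by a pole of a denominator $\Gamma$. For the relevant parity of $n$ (even when $\epsilon=0$, odd when $\epsilon=1$), a direct computation shows that the denominator arguments $(s\pm n+3/2)/2|_{s=-k/2}$ are half-integers when $k\equiv 1+2\epsilon\pmod{4}$ and integers when $k\equiv 3-2\epsilon\pmod{4}$; in the latter case, at least one of the two arguments is non-positive for every relevant $n$, producing the cancellation. Therefore case (2) (holomorphy) holds, and the ``at most simple pole'' assertion of case (1) is also established. For the vanishing statement in case (1), I would invoke Lemma \ref{lem calc1} at $s=k/2$: the exponent $(k-1)/2$ has the parity matching $\epsilon$ in case (1), so the lemma forces $I_n(k/2)=0$ (respectively $J_n(k/2)=0$) for all $|n|>(k-1)/2$ of the relevant parity. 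Thus $\jmath(\epsilon,k/2)$ is a linear combination of modes $c_n$ with $|n|\leq (k-1)/2$, which span precisely the $K$-complement $F|_{K_2}$ inside $\sigma_{\epsilon,k/2}|_{K_2}=F|_{K_2}\oplus\pi|_{K_2}$; it therefore vanishes on $\pi$, and combined with the pole bound for $\jmath(\epsilon,-s)$ this gives the holomorphy of $(\jmath(\epsilon,-s)\otimes\jmath(\epsilon,s))|_{\sigma_{\epsilon,-k/2}\otimes\pi}$ at $s=k/2$. The main obstacle will be the careful matching of Fourier coefficients to $O(2)$-types through the disconnectedness of $K_2$ and the $\epsilon=1$ twist by $\det$; once this dictionary is pinned down, the remaining argument reduces to Gamma-function bookkeeping and a direct appeal to Lemma \ref{lem calc1}.
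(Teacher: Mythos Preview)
Your argument is correct but takes a genuinely different route from the paper. Both approaches share the Iwasawa decomposition, the identification of $K_2$-types with Fourier modes, and the appeal to Lemma \ref{lem calc1} for the vanishing of $\jmath(\epsilon,k/2)$ on $\pi$. The difference lies in how the behaviour of $\jmath(\epsilon,-s)$ at $s=k/2$ is obtained. The paper does \emph{not} evaluate $\jmath$ on every $K$-type: instead it computes $\jmath(\varphi_{0,0,s},0,s)$ and $\jmath(\varphi_{2,1,s},1,s)$ explicitly (two Gamma formulas), combines these with the Gindikin--Karpelevich formula for $M(\epsilon,-s)$ on the same $K$-types, and thereby determines the proportionality factor $b_\epsilon(s)$ in the functional equation $\jmath(\epsilon,-s)=b_\epsilon(s)\,\jmath(\epsilon,s)\circ M(\epsilon,-s)$. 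The dichotomy on $k\bmod 4$ then comes from $\Ord_{s=k/2}b_\epsilon(s)\in\{0,2\}$, and in case (1) the ``at most simple pole'' follows because the image of $M(\epsilon,-k/2)$ is exactly $\pi$, on which $\jmath(\epsilon,k/2)$ has just been shown to vanish. Your approach instead evaluates $I_n(s)$, $J_n(s)$ for \emph{all} $n$ via the classical formula $\int_0^\pi e^{in\psi}\sin^{s-1/2}\psi\,d\psi=\pi e^{in\pi/2}2^{1/2-s}\Gamma(s+\tfrac12)\big/\bigl[\Gamma(\tfrac{s+n+3/2}{2})\Gamma(\tfrac{s-n+3/2}{2})\bigr]$, so the simple pole of $\Gamma(s+\tfrac12)$ at $s=-k/2$ immediately bounds the order, and the mod-4 dichotomy arises from whether the denominator arguments are integral. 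Your method is more elementary---it avoids the intertwining operator and the functional equation entirely---at the cost of a uniform Gamma bookkeeping over all $n$; the paper's method uses only two test vectors but fits the paper's broader theme of controlling intertwining periods through their functional equations.
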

\begin{proof}
In [LO, p. 42] it is observed that for $t\in \C$ with $\Re(t)>0$ the following integral converges and satisfies the equality
\begin{equation}\label{eq lo}
2\int_0^\infty \left(\frac{a}{1+a^2}\right)^t\ d^\times a=\frac{\Gamma(\frac{t}2)^2}{\Gamma(t)}.
\end{equation}
A basis to the space of $K_2$-finite vectors in $\sigma_{\epsilon,s}$ (see \cite[Section 2.5]{MR1431508}) is given by $\varphi_{2n,\epsilon,s}$, $n\in \BZ$ where
\[
\varphi_{2n,\epsilon,s}\left[\begin{pmatrix} a & x \\ & b\end{pmatrix}\begin{pmatrix} \cos \theta & \sin \theta \\-\sin \theta & \cos\theta\end{pmatrix}\right]=\chi_0^\epsilon(ab)\abs{\frac{a}{b}}^{s+\frac12}e^{i2n\theta}, \ \ \ a,b\in \BR^\times, x\in \BR,0\le \theta< 2\pi.
\]
Observe that
\[
\varphi_{2n,\epsilon,s}(g\diag(1,-1))=(-1)^\epsilon\varphi_{-2n,\epsilon,s}(g),\ \ \ g\in G
\]
and therefore, whenever convergent
\[
\jmath(\varphi_{2n,\epsilon,s},\epsilon,s)=\int_0^\infty [\varphi_{2n,\epsilon,s}+(-1)^\epsilon\varphi_{-2n,\epsilon,s}](\vartheta \diag(1,a))\ d^\times a.
\]

We observe that for $a>0$ we have
\[
\varphi_{2n,\epsilon,s}(\vartheta \diag(1,a))=(-1)^\epsilon\left(\frac{2a}{1+a^2}\right)^{s+\frac12}e^{i2n\arctan(\frac1{a})}=(-1)^{\epsilon+n}\left(\frac{2a}{1+a^2}\right)^{s+\frac12}e^{-i2n\arctan(a)}.
\]
The last equality follows from the identity $\arctan(a)+\arctan(\frac{1}{a})=\frac{\pi}2$.
Consequently,
\[
\jmath(\varphi_{2n,\epsilon,s},\epsilon,s)=(-1)^n\int_0^\infty  \left(\frac{2a}{1+a^2}\right)^{s+\frac12} [e^{i2n\arctan(a)}+(-1)^\epsilon e^{-i2n\arctan(a)} ]\ d^\times a.
\]
That is
\begin{equation}\label{eq j-int-formula0}
\jmath(\varphi_{2n,0,s},0,s)=2(-1)^n\int_0^\infty  \left(\frac{2a}{1+a^2}\right)^{s+\frac12} \cos(2n\arctan(a))\ d^\times a
\end{equation}
and
\begin{equation}\label{eq j-int-formula1}
\jmath(\varphi_{2n,1,s},1,s)=
2i(-1)^n\int_0^\infty  \left(\frac{2a}{1+a^2}\right)^{s+\frac12} \sin(2n\arctan(a))\ d^\times a.
\end{equation}
By \eqref{eq lo} it follows that $\jmath(\epsilon,s)$ converges absolutely and is therefore holomorphic for $\Re(s)>-\frac12$. 
Furthermore for $n=\epsilon=0$ the combination of \eqref{eq lo} and \eqref{eq j-int-formula0} gives
\[
\jmath(\varphi_{0,0,s},0,s)=2^{s+\frac32}\frac{\Gamma(\frac{s}2+\frac14)^2}{\Gamma(s+\frac12)}
\]
first for $\Re(s)>-\frac12$ and then, by meromorphic continuation, for any $s$. Applying this together with the Gindikin-Kapelevich formula 
\[
M(0,-s)\varphi_{0,0,-s}=\pi^{\frac12}\frac{\Gamma(-s)}{\Gamma(\frac12-s)}\varphi_{0,0,s}
\]
to the functional equation \eqref{eq jfegl2r} we deduce that
\[
b_0(s)\sim \frac{\Gamma(\frac12+s)\Gamma(\frac14-\frac{s}2)^2}{\Gamma(-s)\Gamma(\frac14+\frac{s}2)^2}.
\]
On the other hand for $\epsilon=1$ and $n=0$ we notice that $\jmath(\varphi_{0,1,s},1,s)=0$ and in order to explicate the functional equation \eqref{eq jfegl2r} we turn to $n=1$. Since
\[
\sin(2\arctan(a))=\frac{2a}{1+a^2}
\]
we deduce from \eqref{eq lo} and \eqref{eq j-int-formula1} that
\[
\jmath(\varphi_{2,1,s},1,s)=-2^{s+\frac52}i\frac{\Gamma(\frac{s}2+\frac34)^2}{\Gamma(s+\frac32)}
\]
as meromorphic functions in $s$.
It follows from \cite[Proposition 2.6.3]{MR1431508} that
\[
M(1,-s)\varphi_{2,1,-s}=-\pi^{\frac12}\frac{\Gamma(\frac12-s)\Gamma(-s)}{\Gamma(\frac32-s)\Gamma(-\frac12-s)}\varphi_{2,1,s}.
\]
Applying all this to the functional equation \eqref{eq jfegl2r} we deduce that
\[
b_1(s)\sim \frac{\Gamma(-\frac12-s)\Gamma(\frac32+s)\Gamma(\frac34-\frac{s}2)^2}{\Gamma(\frac12-s)\Gamma(-s)\Gamma(\frac34+\frac{s}2)^2}.
\]
We conclude that
\[
\Ord_{s=\frac{k}2}(b_\epsilon(s))=\begin{cases} 2 & k\equiv 1+2\epsilon \mod 4\\ 0 &  k\equiv 3-2\epsilon \mod 4.\end{cases}
\]
Note that $M(\epsilon,-s)$ is holomorphic at $s=\frac{k}2$ (see e.g. \cite[Section 5 and Theoren 1) h)]{MR0499010}) and that the image of $M(\epsilon,-\frac{k}2)$ is $\pi$.
If $k\equiv 3-2\epsilon\mod 4$, it follows from \eqref{eq jfegl2r} that $\jmath(\epsilon,-s)$ is holomorphic at $s=\frac{k}2$. Assume that $k\equiv 1+2\epsilon\mod 4$. Applying \eqref{eq jfegl2r}  it suffices to show that $\jmath(\epsilon,\frac{k}2)$ vanishes on $\pi$. 

Based on \cite[Section 2.5]{MR1431508} the space of smooth vectors in $\pi$ is topologically spanned by $\varphi_{2n,\epsilon,\frac{k}2}$, where $2\abs{n}\ge k+1$. 
Applying the change of variables $x=\arctan(a)$ to \eqref{eq j-int-formula0} and \eqref{eq j-int-formula1} respectively for $s=\frac{k}2$ we have
\[
\jmath(\varphi_{2n,0,\frac{k}2},0,\frac{k}2)=
2(-1)^n\int_0^{\frac{\pi}2}  \sin(2x)^{\frac{k-1}2} \cos(2nx)\ dx =s=(-1)^n\int_0^{\pi}  \sin(x)^{\frac{k-1}2} \cos(nx)\ dx
\]
and
\[
\jmath(\varphi_{2n,1,\frac{k}2},1,\frac{k}2)=
2i(-1)^n\int_0^{\frac{\pi}2} \sin(2x)^{\frac{k-1}2}\sin(2nx)\ dx =i(-1)^n\int_0^{\pi} \sin(x)^{\frac{k-1}2}\sin(nx)\ dx. 
\]
The vanishing of the above two integrals and therefore the lemma now follows from Lemma \ref{lem calc1}.
\end{proof}

\subsubsection{The discrete case-proof in the archimedean case}\label{sss archpf}
Assume that $F$ is archimedean and let $\pi$ be an irreducible, distinguished square-integrable representation of $G'$. Set $\sigma=\pi\otimes \pi$. We show that $s\J_{\sigma}(s)$ is holomorphic at $s=0$.

Note that either $G'=\GL_2(\R)$ and we are in one of the two cases \namelink{lin} or \namelink{twlin2} or $G'=\D^\times$.
We start with the more difficult cases where $G'=\GL_2(\R)$ and $G=\GL_4(\R)$.

		
We freely use the notation introduced in Section \eqref{sss aux}. Recall that $\theta=\Ad(\diag(\gamma,\gamma))$ where we set $\gamma=\upsilon^\circ$. 
The group $H'$, the centralizer of $\gamma$ in $\GL_2(\BR)$ is $A_2(\BR)$ in case \namelink{lin} and the group
\[
\{\begin{pmatrix} t_1 & -t_2 \\ t_2 & t_1 \end{pmatrix} \in \CM_2(\BR), \ t_1^2+t_2^2\neq 0\}\simeq \C^\times
\]
in case \namelink{twlin2}. As pointed out in Section \eqref{sss aux} $\pi$ is the unique irreducible quotient of $\sigma_{\epsilon,-k/2}$ for some $\epsilon\in\{0,1\}$ and $k$ a positive odd integer. Set $\chi=\chi_0^\epsilon$ and $\tau_{\chi,k}=\sigma_{\epsilon,-k/2}$. Note that $\pi$ is also the unique submodule of $\tau_{\chi,k}^\vee$. 	
Note that 
\[
P_w=M_w=\{\diag(g,\gamma g\gamma^{-1}): g\in \GL_2(\BR)\}.
\]
Let $B$ be the standard Borel subgroup of $G$. Then setting 
\[
L(v\otimes w)=\int_{B_2(\BR)\bs \GL_2(\BR)} v(g)w(g\gamma^{-1})\ dg, \ \ \ v\in \tau_{\chi,k},\ w\in \tau_{\chi,k}^\vee
\]
we have that $0\ne L\in \Hom_{M_w}(\tau_{\chi,k}\otimes \tau_{\chi,k}^\vee,\C)$ 
and the restriction of $L$ to $\tau_{\chi,k}\otimes \pi$ gives rise to a non-zero element $\ell$ in the one dimensional space $\Hom_{M_w}(\pi\otimes\pi,\C)$. That is, if $I:\tau_{\chi,k}\to \pi$ is the, unique up to scalar, projection then the formula 
\[
\ell(I(v)\otimes w)=L(v\otimes w), \ \ \ v\in \tau_{\chi,k},\ w\in \pi\subseteq \tau_{\chi,k}^\vee
\] 
well-defines $\ell$ (the kernel of $I$ is irreducible and inequivalent to $\pi$). Consequently, up to a proportionality scalar we have $\J_{\sigma}(s)=J_P^G(w,\ell,\sigma,s)$ and therefore $\J_{\sigma}(s)$ has at most a simple pole if and only if the same is true for the restriction to $\tau_{\chi,k}\times \pi$ of $J_P^G(w,L,\tau_{\chi,k}\otimes \tau_{\chi,k}^\vee,s)$. Let $\xi=\diag(I_2,\gamma)$, $x=\xi^{-1}\cdot w$ and $\eta=\begin{pmatrix} I_2 & w' \\ I_2 & -w'\end{pmatrix}\in G$ with $w'=\begin{pmatrix} 0 & 1 \\ 1 & 0\end{pmatrix}$ so that $\eta\cdot e=w$. 
In terms of the transitivity of induction $\varphi\mapsto F_{\varphi}: I_B^G(\chi[-\frac{k}2]\otimes\chi[\frac{k}2]\otimes \chi[\frac{k}2]\otimes \chi[-\frac{k}2])\rightarrow I_P^G(\tau_{\chi,k}\otimes\tau_{\chi,k}^\vee)$ we have
\begin{multline*}
J_P^G(F_{\varphi},w,L,\tau_{\chi,k}\otimes \tau_{\chi,k}^\vee,s)=\int_{P_w\bs G_w} L((F_{\varphi})_s(g\eta))\ dg=\\ \int_{P_x \bs G_x} L((F_{\varphi})_s (\xi g\xi^{-1}\eta))\ dg=\int_{B_x\bs G_x} \varphi_{s} (g\xi^{-1}\eta))\ dg. 
\end{multline*}
For the last equality we explicate $L$ and observe that $\delta_P$ is trivial on 
\[P_x=\{\diag(g,g):g\in \GL_2(\R)\}.\] Identify 
\[
I_B^G((\chi[-\frac{k}2]\otimes\chi[\frac{k}2]\otimes \chi[\frac{k}2]\otimes \chi[-\frac{k}2]),s)\simeq I_B^G(\chi^{\otimes 4},\lambda(s))
\] 
where $\lambda(s)=(s-\frac{k}2,s+\frac{k}2, -s+\frac{k}2,-s-\frac{k}2)$ and define the linear form $J(s)$ on $I_B^G(\chi^{\otimes 4})$ by the meromorphic continuation of the integral
\[
J(\varphi,s)=\int_{B_x\bs G_x} \varphi_{\lambda(s)}(g)\ dg.
\]
It suffices to show that $sJ(s)$ is holomorphic at $s=0$ when restricted to $\tau_{\chi,k}\times \pi$.

Let $Q$ be the standard parabolic subgroup of $G$ of type $(1,2,1)$ and let $w_i$ be the permutation matrix in $G$ corresponding to the simple reflection $(i,i+1)$, $i=2,3$. Integrating in stages we have
\[
J(\varphi,s)
=\int_{Q_x\bs G_x}\int_{B_x \bs Q_x}\delta_{Q_x}(q)^{-1} \varphi_{\lambda(s)}(qg)\ dq\ dg.
\]
We observe that $Q_x= B_x V$ where $V=\{I_4+z(E_{3,2}-E_{1,4}):z\in \BR\}$ and $E_{i,j}$ denotes the $4\times 4$ matrix with one in the $(i,j)$-entry and zero in all other entries. 
Since
\[
\varphi_{\lambda(s)}(I_4+z(E_{3,2}-E_{1,4}))=\varphi_{\lambda(s)}(I_4+zE_{3,2})=\varphi_{\lambda(s)}(w_2(I_4+zE_{2,3})w_2)
\]
we conclude that
\[
J(\varphi,s)
=\int_{Q_x\bs G_x} (M(w_2,\lambda(s))\varphi)_{w_2\lambda(s)}(w_2g)\ dg.
\]
We point out that $M(w_2,\lambda(s))$ has a simple pole at $s=0$ (\cite[Section 5 and Theoren 1) h)]{MR0499010}) and in fact, since $\abs{\cdot}^{\frac{k}2} \times \abs{\cdot}^{\frac{k}2}$ is irreducible, $M'(w_2):=sM(w_2,\lambda(s))|_{s=0}$ is a non-zero scalar operator. 
Let $J'(s)$ be defined by the meromorphic continuation of 
\[
J'(\varphi,s)=\int_{Q_x\bs G_x} \varphi_{w_2\lambda(s)}(w_2g)\ dg
\]
so that $J(s)=J'(s)\circ M(w_2,\lambda(s))$.
It remains to show that $J'(s)$ at $s=0$ is holomorphic on sections in the subspace $\tau_{\chi,k}\times \pi$. 

In case \namelink{twlin2} the quotient $Q_x\bs G_x$ is compact and therefore $J'(s)$ is holomorphic at $s=0$. To see this let $\imath: \C\rightarrow \M_2(\BR)$ be the imbedding 
\[
\imath(x+iy)=\begin{pmatrix} x& y \\ -y & x\end{pmatrix},\ \ \ x,y\in \R.
\] 
It restricts to an identification of $\C^\times$ with $H'$. We continue to denote by $\imath:\GL_2(\C)\rightarrow G$ the isomorphism of $\GL_2(\C)$ with $H$ that is defined by applying $\imath$ to each entry. Note that $w_2 G_x w_2=H=\imath(\GL_2(\C))$ and $w_2Q_x w_2=\imath(A_2(\BR)N_2(\C))$. Furthermore $\GL_2(\C)=A_2(\R)N_2(\C) U(2)$ where $U(2)$ is the compact unitary group.

We turn to case \namelink{lin}. 
Let $\zeta=\begin{pmatrix}I_2 & \gamma\\ I_2 & -\gamma\end{pmatrix}$ and note that $\zeta\cdot x=\diag(I_2,-I_2)$ so that
\[
\zeta G_x\zeta^{-1}=\GL_2(\BR)\times \GL_2(\BR)\ \ \ \text{and}\ \ \ \zeta Q_x \zeta^{-1}=\Delta_{A_2(\BR)}(N_2(\BR)\times N_2(\BR))
\]
where $\Delta_{A_2(\BR)}=\{\diag(a,a): a\in A_2(\BR)\}$. It follows that
\[
J'(\varphi,s)=\int_{A_2(\BR) \times K_2\times K_2}\varphi_{w_2\lambda(s)}(w_2\zeta^{-1}\diag(k_1,ak_2)\zeta)\ \delta_{B_2(\BR)}^{-1}(a)\ d(a,k_1,k_2).
\] 
That is, $J'(s)=J''(s)\circ T(s)$ where $T(s)$ is the linear operator on $I_B^G(\chi^{\otimes 4})$ given by
\[T(g,\varphi,s)=\int_{K_2\times K_2}\varphi_{w_2\lambda(s)}(g\diag(k_1,k_2)\zeta)\ d(k_1,k_2)
\]
which is clearly holomorphic and 
\[
J''(\varphi,s)=\int_{A_2(\BR)}\varphi_{w_2\lambda(s)}(w_2\zeta^{-1}\diag(I_2,a))\ da.
\]
Clearly, $T(0)$ preserves the subspace $\tau_{\chi,k}\times \pi$ and it therefore remains to show that restricted to $\tau_{\chi,k}\times \pi$, $J''(s)$ is holomorphic at $s$. Note that for $a=\diag(a_1,a_2)$ we have
\[
w_2\zeta^{-1}\diag(I_2,a)=\diag(\vartheta,\gamma\vartheta)\diag(1,a_1,1,a_2)w_2
\]
and therefore $J''(\varphi,s)=(\jmath(s-\frac{k}2) \otimes \jmath(s+\frac{k}2))(I(w_2,w_2\lambda(s))\varphi)$. Here $\vartheta$ and $\jmath$ are given by \eqref{eq jgl2r}. It remains to show that the restriction of $\jmath(s-\frac{k}2) \otimes \jmath(s+\frac{k}2)$ to $\tau_{\chi,k}\times \pi$ is holomorphic. This follows from Lemma \ref{lem gl2rintper}.

It remains to consider the two cases where either $G'=\BH^\times$ and $H'\simeq \C^\times$ (case \namelink{twlin1}, in this case $G=\GL_2(\BH)$ and $H=\GL_2(\C)$) or $G'=\C^\times$ and $H'=\R^\times$ (case \namelink{gal2}, in this case $G=\GL_2(\C)$ and $H=\GL_2(\R)$). In case \namelink{twlin1} let $A=\BH$ be  embed in $B=\CM_2(\C)$ as usual and in case \namelink{gal2} let $A=\C$ be embed in $B=\CM_2(\BR)$ as usual. In both cases, in its cone of convergence, the intertwining period $\J_\sigma(s)$ has the form 
	\[J_P^G(\varphi;w,\ell,\sigma,s)=\int_{A^\times\backslash B^\times}\eta_s(uh)\ell(\varphi(uh))dh,\] where $u\in G$ is such that $u\cdot e=w$,
	$\eta_s$ is the spherical vector in $\nu^s\times \nu^{-s}$, and the function $g\to  \ell(\varphi(g))$ is bounded for $\ell\in\Hom_{M_w}(\sigma,\C)$ thanks to the Iwasawa decomposition and unitarity of $\pi$. Therefore this integral is dominated by \[J_0(s)=\int_{A^\times\backslash B^\times}|\eta_s(uh)|dh.\] The integral $J_0(s)$ is actually convergent for $\Re(s)>0$ and has a simple pole at $s=0$. This follows from \cite[(7.6)]{MR783512}) in case \namelink{gal2}. The argument of Jacquet and Lai can be adapted to case \namelink{twlin1} as well to show that $J_0(s)\sim \Gamma(2s)$ has a simple pole at $s=0$. 
 All together this implies that $J_P^G(\varphi;w,\ell,\sigma,s)$ has at most a simple pole at $s=0$.

\subsubsection{The discrete case-proof in the non-archimedean case}\label{sss nonarchpf}
Let $\rho$ be the cuspidal representation and $k\in \N$ be such that $\pi=\St_k(\rho)$ and similarly write $\JL(\rho)=\St_l(\rho')$ so that $\nu_\rho=\nu^\ell$. Then $\rho\simeq \rho^*$.
Recall that we fixed a non-zero $\ell=\ell_{\pi\otimes \pi}\in \Hom_{M_w}(\pi\otimes \pi,\C)=\Hom_{M_w}(\pi[t]\otimes \pi[-t],\C)$, $t\in \C$.
Applying \eqref{eq shiftJ} it suffices to show that $\J_{\pi[kl/2]\otimes \pi[-kl/2]}(s)$ has at most a simple pole at $s=-kl/2$.

For this we `double' the set-up again. Let $G_1=G_{4m}(F)=\GL_{4a}(\D)$, let $Q_1=L_1 V_1$ be the standard parabolic subgroup of $G_1$ of type $(a,a,a,a)$
and 
\[
w_1'=\begin{pmatrix} & & & I_a \\ & & I_a & \\ & I_a & & \\ I_a & & & \end{pmatrix}
\]
a representative of $w_{L_1}$ in $G_1$ that, in fact, lies in $G_1\cdot e$.
Let 
\[
\sigma_1=\pi[-kl/2]\otimes \pi[kl/2]\otimes \pi[-kl/2]\otimes \pi[kl/2],
\] it is a representation of $L_1$ and let $\ell_1'\in \Hom_{(L_1)_{w_1'}}(\sigma_1,\C)$ be defined by
\[
\ell_1'(v_1\otimes v_2\otimes v_3\otimes v_4)=\ell(v_1\otimes v_4)\ell(v_2\otimes v_3).
\] 
It follows from the proof of \cite[Proposition 10.10]{MR4308058} that for any $\varphi\in I_P^G(\pi[kl/2]\otimes \pi[-kl/2])$ there exists $\tilde\varphi\in I_{Q_1}^{G_1}(\sigma_1)$ such that
\[
\J_{\pi[kl/2]\otimes \pi[-kl/2]}(\varphi, s)=
J_{Q_1}^{G_1}(\tilde\varphi, w_1',\ell_1',\sigma_1,s\varpi_1).
\]
Here $\varpi_1\in (\fra_{M_1}^*)_{w_{M_1}}^-$ is defined by $e^{\sprod{\varpi}{H_{M_1}(\diag(g_1,g_2)}}=\nu(\det (g_1g_2^{-1}))$ for $g_1,g_2\in G$ where $P_1=M_1U_1$ is the parabolic of type $(2a,2a)$ of $G_1$. By abuse of notation we now also identify $\C$ with $(\fra_{M_1}^*)_{w_{M_1}}^-$ via $s\mapsto s\varpi_1$.
It therefore suffices to show that $J_{Q_1}^{G_1}( w_1',\ell_1,\sigma_1,s)$ has at most a simple pole at $s=-kl/2$. 

Next, we observe that the intertwining operator $M(w,\pi\otimes \pi,s)$ is holomorphic at $s=-kl/2$. Indeed, this follows from \cite[Theorem 7.1]{MR4308058} since $\pi[kl/2]=L(\Delta(\rho,\frac12, k-\frac12))$, $\pi[-kl/2]=L(\Delta(\rho,\frac12-k, -\frac12))$ and in the terminology of ibid. the corresponding cuspidal segments $\Delta(\rho,\frac12, k-\frac12)$ and $\Delta(\rho,\frac12-k, -\frac12)$ are juxtaposed.  The image of $M(w,\pi\otimes \pi,-kl/2)$ is the square integrable representation $\pi_1=\St_{2k}(\rho)$ of $G$. We have $\pi_1\simeq \pi_1^*$ since the same symmetry holds for $\rho$.

By the functorial nature of parabolic induction, it follows that the standard intertwining operator 
\[
I_{Q_1}^{G_1}(\pi[t]\otimes \pi[-t] \otimes \pi[-kl/2]\otimes \pi[kl/2])\rightarrow I_{Q_1}^{G_1}(\pi[-t]\otimes \pi[t] \otimes \pi[-kl/2]\otimes \pi[kl/2])
\] 
is holomorphic at $t=-kl/2$ and we denote by $\M_1$ its value at $t=-kl/2$. Thus 
\[
\M_1:I_{Q_1}^{G_1}(\sigma_1,s)\rightarrow I_{Q_1}^{G_1}(\pi[kl/2]\otimes \pi[-kl/2] \otimes \pi[-kl/2]\otimes \pi[kl/2], s)
\]
is a well defined intertwining operator independent of $s\in \C$ and its image is a subrepresentation isomorphic to $I_{P_1}^{G_1}(\pi_1\otimes (\pi[-kl/2]\times \pi[kl/2]),s)$.

Let 
\begin{align*}
			n = \left(\begin{smallmatrix}
				& I_a& & \\
				I_a&  &  & \\
				&  & I_a & \\
				&  &  & I_a
			\end{smallmatrix}\right)\in G_1 \ \ \ \text{and}\ \ \ w_1=\begin{pmatrix} & I_{2a} \\ I_{2a} & \end{pmatrix}\in G_1\cdot e
		\end{align*}
and note that $(L_1,w_1') \stackrel{n}{\searrow} (L_1,w_1)$ is an edge on the graph associated with $(G_1,\theta)$ as in \eqref{eq edge}. It follows from proposition \ref{prop::intt-period++FE+Adjacent} that 
\[
J_{Q_1}^{G_1}( w_1',\ell_1',\sigma_1,s)=J_{Q_1}^{G_1}( w_1,\ell_1',n\sigma_1,s)\circ \M_1.
\]
Applying the identity \eqref{formula::intt-period++PI-maximal} of Proposition \ref{prop::intt-period++PI} we have that
\[
J_{Q_1}^{G_1}(\varphi, w_1,\ell_1',n\sigma_1,s)=J_{P_1}^{G_1}(F_\varphi, w_1,\Lambda_{\ell_1'},I_{Q_1\cap M_1}^{M_1}(n\sigma_1),s)
\]
where $\Lambda_{\ell_1'}$ is defined as in  \eqref{formula::defn--Lambda-ell}.
Let $\Psi:\pi[kl/2]\otimes \pi[-kl/2]\otimes \pi[-kl/2]\otimes \pi[kl/2]\rightarrow n\sigma_1$ be the isomorphism 
\[
\Psi(v_1\otimes v_2\otimes v_3\otimes v_4)=v_2\otimes v_1\otimes v_3\otimes v_4. 
\]
Applying the functorial properties of parabolic induction we continue to denote by $\Psi$ the isomorphism
\[
\Psi: I_{Q_1}^{G_1}( \pi[kl/2]\otimes \pi[-kl/2]\otimes \pi[-kl/2]\otimes \pi[kl/2])\rightarrow I_{Q_1}^{G_1}(n\sigma_1).
\]
Then
\[
J_{Q_1}^{G_1}(\varphi, w_1,\ell_1',n\sigma_1,s)=J_{Q_1}^{G_1}(\Psi^{-1}(\varphi), w_1,\ell_1'\circ\Psi,\pi[kl/2]\otimes \pi[-kl/2]\otimes \pi[-kl/2]\otimes \pi[kl/2],s)
\]
and similarly
\begin{multline*}
J_{P_1}^{G_1}(F_{\varphi}, w_1,\Lambda_{\ell_1'},I_{Q_1\cap M_1}^{M_1}(n\sigma_1),s)=\\ J_{P_1}^{G_1}(F_{\Psi^{-1}(\varphi)}, w_1,\Lambda_{\ell_1'\circ\Psi},(\pi[kl/2]\times\pi[-kl/2])\otimes (\pi[-kl/2]\times \pi[kl/2]),s).
\end{multline*}

It is well known that $\pi_1$ is the socle, the maximal semisimple subrepresentation, of $\pi[kl/2]\times \pi[-kl/2]$ (it follows from \cite[Proposition 2.7]{MR1040995}) and furthermore appears there with multiplicity one. 
Also, for a representation $\Pi$ of $G$ there is a natural isomorphism $\Hom_{(M_1)_{w_1}}(\Pi\otimes \Pi^*,\C)\simeq \Hom_G(\Pi,\Pi)$. Applying this to $\Pi= \pi[kl/2]\times \pi[-kl/2]$, so that $\Pi^*\simeq \pi[-kl/2]\times \pi[kl/2]$, we conclude that restriction gives an isomorphism of one dimensional spaces 
\[
\Hom_{(M_1)_{w_1}}((\pi[kl/2]\times \pi[-kl/2])\otimes (\pi[-kl/2]\times \pi[kl/2]))\simeq \Hom_{(M_1)_{w_1}}(\pi_1\otimes (\pi[-kl/2]\times \pi[kl/2])).
\] 
The left hand side is spanned by $\Lambda_{\ell_1'\circ \Psi}$. Let $\ell_1$ be its image on the right hand side.
All together, it suffices to show that $J_{P_1}^{G_1}(w_1,\ell_1,\pi_1 \otimes (\pi[-kl/2]\times \pi[kl/2]),s)$ has at most a simple pole at $s=-kl/2$.

Let $\Phi$ be the projection
\[
\Phi:\pi_1 \otimes (\pi[-kl/2]\times \pi[kl/2])\rightarrow \pi_1\otimes \pi_1.
\]
For the reasons already explained above $\ell_1$ factors through $\Phi$ and we write $\L_1\in \Hom_{(M_1)_{w_1}}(\pi_1\otimes \pi_1,\C)$ for the linear form such that $\L_1\circ \Phi=\ell_1$. As above, we continue to denote by $\Phi$ the projection of induced representations 
\[
\Phi:I_{P_1}^{G_1}(\pi_1 \otimes (\pi[-kl/2]\times \pi[kl/2]))\rightarrow I_{P_1}^{G_1}(\pi_1\otimes \pi_1).
\]
Then 
\[
J_{P_1}^{G_1}(\varphi,w_1,\ell_1,\pi_1 \otimes (\pi[-kl/2]\times \pi[kl/2]),s)=\J_{\pi_1\otimes \pi_1}(\Phi(\varphi),s)
\]
where we set $\ell_{\pi_1\otimes \pi_1}=\ell_1$ on the right hand side. We conclude that it suffices to show that $\J_{\pi_1\otimes \pi_1}(s)$ has at most a simple pole at $s=-kl/2$.

Applying Proposition \ref{prop::FE--MultiOne} and its explication, Theorem \ref{thm::constant-FE}, we have
\[
\J_{\pi_1\otimes \pi_1}(-s)\circ M(w_1,\pi_1\otimes \pi_1,s)=\alpha(s)\beta(s)\J_{\pi_1\otimes \pi_1}(s)
\]
where 
\[\beta(s)=1\] in cases \namelink{gal1} and \namelink{gal2}, \[\beta(s)=\frac{L(-s+kl,\rho')L(-s+kl,\eta_0\otimes\rho')}{L(s+kl,\rho')L(s+kl,\eta_0\otimes \rho')}\]
 \namelink{lin},\namelink{twlin1} and \namelink{twlin2}, 

\[
\alpha(s)\sim \frac{L^-(-2s,\rho')}{L^{-}(-2s+2kl,\rho')}\frac{L^{+}(2s,\rho')}{L^{+}(2s+2kl,\rho')} 
\]
and where we set
\[
L^{+}(s,\rho')=\begin{cases} L(s,\rho',\wedge^2) & \text{in cases \namelink{lin},\namelink{twlin1} and \namelink{twlin2}}\\ L(s,\rho',\As^+)  & \text{in cases \namelink{gal1} and \namelink{gal2}}\end{cases}
\]
and
\[
L^{-}(s,\rho')=\begin{cases} L(s,\rho',\Sym^2) & \text{in cases \namelink{lin},\namelink{twlin1} and \namelink{twlin2}}\\ L(s,\rho',\As^-)  & \text{in cases \namelink{gal1} and \namelink{gal2}}.\end{cases}
\]

It follows from Lemma \ref{lem cusprs} and the decompositions \eqref{eq rs decomp} and \eqref{eq asai rs decomp} that $\alpha(s)$ is holomorphic and non-zero at $s=-kl/2$, and the same is true for $\beta(s)$ thanks to well-known properties of standard $L$-factors. 
It further follows from \cite[Theorem 7.1]{MR4308058} that $M(w_1,\pi_1\otimes \pi_1,s)$ has a simple pole at $s=-kl/2$.

It therefore suffices to show that $\J_{\pi_1\otimes \pi_1}(-s)$ is holomorphic at $s=-kl/2$
or equivalently (see \eqref{eq shiftJ}) that $\J_{\pi_1[kl/2]\otimes \pi_1[-kl/2]}(s)$ is holomorphic at $s=0$. 

Since $\pi_1[kl/2]$ is not distinguished, it follows from Proposition \ref{prop open contribution two discrete} that only the open $P_1$-orbit $P_1\cdot w_1$ contributes to $\pi_1[kl/2]\otimes \pi_1[-kl/2]$ and we can now apply Lemma \ref{lem::holomorphy} to deduce that $\J_{\pi_1[kl/2]\otimes \pi_1[-kl/2]}(s)$ is holomorphic at $s=0$. This completes the proof of 
Proposition \ref{prop::Discrete series case}.

\subsubsection{The second basic case}
	The following result is Theorem \ref{thm mainloc} for another basic case.
	
	\begin{proposition}\label{prop::Product case}
		Let $\pi$ be a representation of $G'$ of the form $\pi=\tau\times \tau^*$ for an irreducible, essentially square-integrable representation $\tau$ of $\GL_{a/2}(\D)$ that is not distinguished (in particular, $a$ is assumed even). Assume further that $\abs{r(\tau)}<\frac12$ and set $\sigma=\pi\otimes \pi$. Then \[\Ord_{s=0}(\J_\sigma(s))=1.\]
	\end{proposition}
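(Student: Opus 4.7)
The lower bound $\Ord_{s=0}(\J_\sigma(s)) \ge 1$ is immediate: $\pi = \tau \times \tau^*$ is an irreducible representation in $\Pi_\D(-\tfrac12, \tfrac12)$, is $H'$-distinguished by Theorem \ref{thm classif dist S} via the involution $p = (1\,2)$ on the multiset $\{\tau, \tau^*\}$, and $\pi \times \pi \in \Pi_\D(-\tfrac12, \tfrac12)$ is also irreducible, so Lemma \ref{lem::intt-period-pole} applies.

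For the upper bound, my plan is to present $\J_\sigma(s)$, after a doubling of the inducing data by transitivity, as the composition of a standard intertwining operator (with an explicit simple pole) with an intertwining period at a factorized orbit representative (holomorphic at $s=0$ thanks to the non-distinction of $\tau$). To set it up I would take $Q = P_{(a/2, a/2, a/2, a/2)}$ with Levi $L = \GL_{a/2}(\D)^4$, $\delta_0 = \tau \otimes \tau^* \otimes \tau \otimes \tau^*$ and $\lambda(s) = (s, s, -s, -s)$, so that transitivity of parabolic induction identifies $I_P^G(\pi[s] \otimes \pi[-s])$ with $I_Q^G(\delta_0[\lambda(s)])$, and Proposition \ref{prop::intt-period++PI}\eqref{part closed fe} in the direction $L \subset M$ identifies $\J_\sigma(s)$ with $J_Q^G(x, \ell', \delta_0, \lambda(s))$ at the open-orbit representative $x$ of \eqref{eq def good x}.

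Next I would apply the adjacent-vertex functional equation (Proposition \ref{prop::intt-period++FE+Adjacent}) to the elementary symmetry $s_{\alpha_{2,3}}$ swapping blocks $2$ and $3$ of $L$: a direct computation of $\theta_x$ on $\Delta_L$ shows that the edge $(L, x) \stackrel{n}{\searrow} (L, x_1)$ exists, and one finds $x_1 = n \cdot x = \diag(\tilde w, \tilde w)$, where $\tilde w \in G'$ is the analogue of $w$ for the pair $(G', H')$. This gives
\[
\J_\sigma(\varphi, s) = J_Q^G\bigl(M(n, \delta_0, \lambda(s))\varphi;\, x_1, \ell', \tilde \delta, (s, -s, s, -s)\bigr), \qquad \tilde \delta = \tau \otimes \tau \otimes \tau^* \otimes \tau^*.
\]
Writing $M(n, \delta_0, \lambda(s)) = r(n, \delta_0, \lambda(s)) N(n, \delta_0, \lambda(s))$ as in Section \ref{ss normint}, Lemma \ref{lem hol int op} shows $N$ is holomorphic and bijective at $s = 0$, while $r(n, \delta_0, \lambda(s))$ is controlled by $L(2s, \jl(\tau^*), \jl(\tau)^\vee)$, of pole order at most $1$ at $s = 0$ by Lemma \ref{lem pairs} and \eqref{eq iota jl sym}.

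It then remains to show that $J_Q^G(\cdot; x_1, \ell', \tilde\delta, (s, -s, s, -s))$ is holomorphic at $s = 0$. One verifies that $M$ and $P$ are $\theta_{x_1}$-stable and that $\theta_{x_1}(Q \cap M) = (Q \cap M)^-$, so Proposition \ref{prop::intt-period++PI}\eqref{part open fe} (applied with $L \subset M$ in the role of ``$M \subset L$'') expresses this intertwining period as an iterated integral whose inner integral, over $L_{x_1} \bs M_{x_1}$, factors as the product of two intertwining periods on $I_{P_{(a/2, a/2)}^{G'}}^{G'}(\tau \otimes \tau^*, \pm s)$ for the symmetric pair $(G', H')$, each at the open-orbit representative. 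Since $\tau$ is not distinguished, Proposition \ref{prop open contribution two discrete} in the $p$-adic case and Proposition \ref{prop arch geom lem} in the archimedean case imply that only the open $(P_{(a/2, a/2)}^{G'}, H')$-orbit is relevant to $\tau \otimes \tau^*$, and Lemma \ref{lem::holomorphy} (with its archimedean counterpart via Schwartz homology) forces each inner intertwining period to be holomorphic at $s = 0$; the outer integral is standard. Combining these bounds, $\J_\sigma(s)$ has at most a simple pole at $s = 0$, and with the lower bound we conclude $\Ord_{s=0}(\J_\sigma(s)) = 1$. The hardest step will be the precise factorization of the inner integral and, in the archimedean case, the Schwartz-homology vanishing argument needed to rule out non-open orbit contributions in the factored setup.
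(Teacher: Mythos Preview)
Your overall strategy matches the paper's: pass to $Q = P_{(a/2,a/2,a/2,a/2)}$ via transitivity, apply the adjacent-vertex functional equation for the middle swap, extract a simple pole from the intertwining operator, and show the remaining period is holomorphic by factoring it into two $G'$-periods. However, there are two concrete problems.

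First, your choice $\delta_0 = \tau \otimes \tau^* \otimes \tau \otimes \tau^*$ is not $L_x$-distinguished. From the description of $L_x$ following \eqref{eq mwx} one has $L_x = \{\diag(g_1, g_2, \iota(g_1), \iota(g_2)) : g_i \in \GL_{a/2}(\D)\}$, so $L_x$-distinction pairs block $1$ with block $3$ and block $2$ with block $4$; for $\delta_0$ this forces $\tau \cong \tau^*$, which fails in general (e.g.\ whenever $r(\tau) \ne 0$). Hence $\Hom_{L_x}(\delta_0,\C)=0$ and there is no $\ell'$ with which to invoke Proposition \ref{prop::intt-period++PI}\eqref{part closed fe}. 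The paper fixes this by realizing the second copy of $\pi$ as $\tau^* \times \tau$ (legitimate since $\pi$ is irreducible) and taking $\sigma_1 = \tau \otimes \tau^* \otimes \tau^* \otimes \tau$, which \emph{is} $L_x$-distinguished. After the swap $s_{\alpha}$ (blocks $2$ and $3$ both carry $\tau^*$) the representation is unchanged, and the factored inner periods live on $\tau \otimes \tau^*$ and $\tau^* \otimes \tau$ --- not on $\tau \otimes \tau$ and $\tau^* \otimes \tau^*$ as your $\tilde\delta$ would produce.

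Second, in the archimedean case your claim that both factored periods are holomorphic by Schwartz homology alone is too optimistic. Proposition \ref{prop arch geom lem} is stated for the standard ordering $\delta \times \delta^*$ with $r(\delta) \ge 0$, so it controls only one of the two factors (say $J'(\tau^*,s)$ on $\tau^* \times \tau$ after assuming $r(\tau)\le 0$). For the other factor $J(\tau,s)$ the paper uses the explicit functional equation of Corollary \ref{cor arch fe} to write $J(\tau,s)$ as a ratio of $L$-factors times $J'(\tau^*,-s)\circ M(w',\tau\otimes\tau^*,s)$, and then checks via Lemmas \ref{lem hol int op} and \ref{lem dqr int@1rs} together with \eqref{eq rs decomp}--\eqref{eq asai rs decomp} that this product is holomorphic at $s=0$. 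This step genuinely requires the $\gamma_0$-computation and is not a consequence of the archimedean geometric lemma.
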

	\begin{proof}
Note first that, by a closed orbit argument as in \cite[Proposition 7.1]{MR3541705},  $\pi$ is distinguished so that $\J_\sigma(s)$ makes sense and has a pole at $s =0$ by Lemma \ref{lem::intt-period-pole}. It therefore suffices to show that the pole is at most simple. Let $Q = LV$ be the standard parabolic subgroup of $G$ of type $(\frac{a}2,\frac{a}2,\frac{a}2,\frac{a}2)$. Its Levi subgroup $L$ is 
$\theta_x$-stable (see \eqref{eq def good x}) and
by \eqref{formula::intt-period--change-in-orbit} it suffices to show that $J_P^G(x,\CL,\sigma,s)$ has at most a simple pole at $s=0$ for a non-zero $\CL$ in the one dimensional space $ \Hom_{M_{x}}(\sigma,\C)$. 
		
		 Let $\sigma_1 = \tau \otimes \tau^* \otimes  \tau^*\otimes \tau$, a representation of $L$ and let $\ell \in \Hom_{L_x}(\sigma_1,\C)$ be defined by 
		\begin{align*}
			\ell( v_1 \otimes v_2 \otimes v_3 \otimes v_4)  = \ell_1(v_1\otimes v_3)  \cdot \ell_1(v_4\otimes v_2),
		\end{align*}
		where $0\ne \ell_1\in \Hom_{M_1}(\tau\otimes \tau^*,\C)$ and $M_1=\{\diag(g,\iota(g)): g\in \GL_{\frac{a}2}(\D)\}$. Define $\Lambda_{\ell}$ as in \eqref{formula::defn--Lambda-ell} to be the $M_{w'}$-invariant linear form on $I_{Q \cap M}^M(\sigma_1)$. Note that the role of $M$ and $L$ is reversed in our context. As $\ell$ is nonzero, $\Lambda_{\ell}$ is non-zero by \cite[Lemma 3.3]{MR4679384}. Set $\CL=\Lambda_\ell$. Since $\pi$ is irreducible, we have $\sigma \cong I_{Q \cap M}^M(\sigma_1)$ and by \eqref{formula::intt-period++PI-maximal} we have
		\begin{align*}
			J_P^G(F_{\varphi};x,\CL,\sigma,s) = J_Q^G(\varphi;x,\ell,\sigma_1,\underline{s}), 
		\end{align*}
		where $\underline{s} = (s,s,-s,-s) \in  (\fra_{L,\C}^*)^-_w$ viewed naturally as a subspace of $\fra_{L,\C}^*\simeq \C^4$. Let $\alpha$ be the unique element in $\Delta_M \subset \Delta_L$ and $s_{\alpha}\in W(L)$ be the elementary symmetry associated to $\alpha$. That is, $s_\alpha$ is the elementary symmetry represented by
		\begin{align*}
			n = \left(\begin{smallmatrix}
				I_{a/2} & & & \\
				&  & I_{a/2} & \\
				& I_{a/2} & & \\
				&  &  & I_{a/2}
			\end{smallmatrix}\right)   \in s_{\alpha}L.
		\end{align*}
		Note that $(L,x) \stackrel{n}{\searrow} (L,x_1)$ is an edge in $\mathfrak{G}$ with $x_1 = n \cdot x$ and $s_{\alpha}\underline{s}=(s,-s,s,-s)$. By Proposition \ref{prop::intt-period++FE+Adjacent}, we have
		\begin{align*}
			J_Q^G(x,\ell,\sigma_1,\underline{s})  =  J_Q^G ( x_1,\ell,\sigma_1,s_{\alpha}\underline{s})\circ M(n,\sigma_1,\underline{s}).
		\end{align*}
We claim that $M(n,\sigma_1,\underline{s})$ has at most a simple pole at $s=0$. Indeed, this follows from Lemma \ref{lem hol int op} in conjunction with Lemma \ref{lem pairs}.
Therefore, it suffices to show that $J_Q^G (x_1,\ell,\sigma_1,s_{\alpha}\underline{s})$ is holomorphic at $s = 0$.	
		
Note that $(L,x_1)$ is a minimal vertex in $\mathfrak{G}$ and $x_1\in M$ represents $w_L^M$. In particular, there exists $u_1\in M$ such that $u_1\cdot e=x_1$. Apply equation \eqref{formula::intt-period++PI-minimal-I} and note that the outer integral over $g$ on its right hand side is over a compact domain. It therefore suffices to prove the holomorphy at $s = 0$ of 
		\begin{align}\label{formula::2nd-case-I}
			\int_{ L_{x_1} \backslash M_{x_1} } \ell (\varphi_{s_{\alpha}\underline{s}} (u_1l) ) dl
		\end{align}
		for all $\varphi \in I_{Q \cap M}^M (\sigma_1)\simeq  (\tau \times \tau^*) \otimes ( \tau^* \times \tau )$. 
		That is, it suffices to prove that $J(\tau,s)\otimes J'(\tau^*,s)$ is holomorphic at $s=0$ where
		\[
		J(\tau,s)=J_{P'}^{G'}(x_1',\ell_1,\tau\otimes \tau^*,(s,-s)),\ \ \ J'(\tau^*,s)= J_{P'}^{G'}(x_1'',\ell_1',\tau^*\otimes \tau,(s,-s))
		\]
and we write $x_1=\diag(x_1',x_1'')$ with $x_1',x_1''\in G'$ and $\ell_1'(v'\otimes v)=\ell_1(v\otimes v')$ for $v$ in $\tau$ and $v'$ in $\tau^*$.

		When $F$ is $p$-adic we conclude from Proposition \ref{prop open contribution two discrete} and its proof that the unique open $P'$-orbit in $G'\cdot e$ is the only relevant orbit for either $\tau\otimes \tau^*$ or $\tau^*\otimes \tau$ and therefore from Lemma \ref{lem::holomorphy} that $J(\tau,s) \otimes J'(\tau^*,s)$ is holomorphic at $s = 0$. When $F$ is archimedean assume without loss of generality that $r(\tau)\le 0$, (if this is not the case the following argument still works by switching between $\tau$ and $\tau^*$). If  $J'(\tau^*,s)$ is not holomorphic at $s=0$ then its leading term at $s=0$ defines a non-zero element of $\Hom_{H'}(\pi,\C)$ that vanishes on sections with support on the open $(P',H')$-double coset. This contradicts Proposition \ref{prop arch geom lem}. It follows that $J'(\tau^*,s)$ is holomorphic at $s = 0$. For holomorphicity of $J(\tau,s)$ at $s=0$ we apply Corollary \ref{cor arch fe} to deduce that
\[
J(\tau,s)\sim \chi(s)\frac{L^+(1+2s,\JL(\tau)^\vee)L^-(1-2s,\JL(\tau)^\vee)}{L^+(-2s,\JL(\tau))L^-(2s,\JL(\tau))} J'(\tau^*,-s)\circ M(w,\tau\otimes \tau^*,s),
\]	
where 
\[\chi(s)=1\] in cases \namelink{gal1} and \namelink{gal2} and \[\chi(s)=\frac{L(\frac{1}{2}-s,\JL(\tau)^\vee)L(\frac{1}{2}-s,\eta_0\otimes\JL(\tau)^\vee)}{L(\frac{1}{2}+s,\JL(\tau))L(\frac{1}{2}+s,\eta_0\otimes \JL(\tau))}\]
in cases \namelink{lin},\namelink{twlin1} and \namelink{twlin2}.
We immediately observe that $\chi(s)$ is holomorphic and nonzero at $s=0$, by the usual properties of standard $L$-factors and thanks to our assumption that $\abs{r(\tau)}<\frac12$. Together with Lemma \ref{lem hol int op} it suffices to show that
\[
\frac{L^+(1+2s,\JL(\tau)^\vee)L^-(1-2s,\JL(\tau)^\vee)}{L^+(-2s,\JL(\tau))L^-(2s,\JL(\tau))} \frac{L(2s,\JL(\tau),\JL(\tau^*)^\vee)}{L(1+2s,\JL(\tau),\JL(\tau^*)^\vee)}
\] 
is holomorphic at $s=0$ where we write 
\[
(L^+(s,\Pi),L^-(s,\Pi))=\begin{cases}
(L(s,\Pi,\wedge^2),L(s,\Pi,\Sym^2)) & \text{in cases \namelink{lin}, \namelink{twlin1} and \namelink{twlin2}}\\
(L(s,\Pi,\As^+),L(s,\Pi,\As^-)) &  \text{in cases \namelink{gal1} and \namelink{gal2}}.
\end{cases}
\]		
Each of the terms $L^+(1+2s,\JL(\tau)^\vee)$, $L^-(1-2s,\JL(\tau)^\vee)$ and $L(1+2s,\JL(\tau),\JL(\tau^*)^\vee)$ is holomorphic at $s=0$ by Lemma \ref{lem dqr int@1rs}, \eqref{eq rs decomp} and \eqref{eq asai rs decomp}. Since furthermore, $f(s)/f(-s)$ is holomorphic at $s=0$ for any meromorphic function near zero, applying \eqref{eq rs decomp} in cases \namelink{lin}, \namelink{twlin1} and \namelink{twlin2} (resp. \eqref{eq asai rs decomp} in cases \namelink{gal1} and \namelink{gal2}) it suffices to show that 
\[
\frac{L(2s,\JL(\tau),\JL(\tau^*)^\vee)}{L(2s,\JL(\tau),\JL(\tau)^\vartheta)}.
\]
It remains to observe that $\JL(\tau^*)^\vee\simeq \JL(\tau)^\vartheta$. Indeed, $\JL(\tau^*)^\vee=\JL(\tau^\iota)$. In the non-Galois cases, $\tau^\iota\simeq \tau$ and the isomorphism is straightforward. In the Galois cases we must have $G'=\GL_2(\C)$ so that $\JL(\tau^\iota)=\tau^{\iota}$. Furthermore, in either cases \namelink{gal2} or \namelink{gal1} we have $\iota=\vartheta$. We conclude that $J(\tau,s)$ is holomorphic at $s=0$ and the  proposition follows.	\end{proof}
	
Together with Theorem \ref{thm order pole local L} we obtain the following special case of Theorem \ref{thm mainloc}.
	
	\begin{corollary}\label{cor::pole-case-II}
		Let $\pi =\tau \times \tau^*$ be an irreducible representation of $G'$ with $\tau$ irreducible and square-integrable such that $ \abs{r(\tau)} < 1/2$. Then 
		\[
\Ord_{s=0}(\J_{\pi\otimes \pi}(s))\le \Ord_{s=0}(\L(s,\pi,\theta))
\]
and equality holds if and only if $\pi$ is $H'$-compatible.
	\end{corollary}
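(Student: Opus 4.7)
The plan is to combine Proposition \ref{prop::Product case}, which computes the exact order of pole of $\J_{\pi\otimes\pi}(s)$ at $s=0$ in this setting, with Theorem \ref{thm order pole local L}\eqref{part basic}, which computes the corresponding order for $\L(s,\pi,\theta)$. These two results address exactly the two sides of the desired inequality, so the proof reduces to matching hypotheses and comparing the outputs.

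First, I would reconcile the hypotheses. Proposition \ref{prop::Product case} requires $\tau$ to be non-distinguished, so I would begin by verifying this from the irreducibility of $\pi = \tau\times\tau^*$: by Theorem \ref{thm mult 1 and ssduality}, a distinguished essentially square-integrable $\tau$ satisfies $\tau \simeq \tau^*$, and the corollary is to be read as excluding this case (consistent with the convention underlying the non-distinguished basic case in the reduction of Theorem \ref{thm mainloc}). With $\tau$ non-distinguished, both preceding results apply directly to $\pi = \tau\times\tau^*$ and yield
\[
\Ord_{s=0}(\J_{\pi\otimes\pi}(s)) = 1 \qquad \text{and} \qquad \Ord_{s=0}(\L(s,\pi,\theta)) = \begin{cases} 1 & \text{if } \pi \text{ is } H'\text{-compatible},\\ 3 & \text{otherwise}.\end{cases}
\]

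Combining these two equalities, the inequality $\Ord_{s=0}(\J_{\pi\otimes\pi}(s)) \le \Ord_{s=0}(\L(s,\pi,\theta))$ always holds, with equality occurring precisely when $\pi$ is $H'$-compatible, which is the desired conclusion. The proof is essentially mechanical at this stage, since the main work has been done upstream: in Proposition \ref{prop::Product case} the delicate singularity analysis proceeds via integration in stages, reduction to a minimal-vertex intertwining period, and then either the geometric lemma (Lemma \ref{lem::holomorphy}) in the $p$-adic case or its archimedean analogue (Proposition \ref{prop arch geom lem}); in Theorem \ref{thm order pole local L}\eqref{part basic} the $L$-factor computation relies on the multiplicativity of Theorem \ref{thm mult rel L fct} together with the identification $\JL(\tau^*)^\vartheta \simeq \JL(\tau)^\vee$ from \eqref{eq iota jl sym}. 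The only real obstacle here is the bookkeeping needed to ensure that the scope of the two results covers exactly the hypotheses of the corollary, which is handled by the above remark ruling out the distinguished case.
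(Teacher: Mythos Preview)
Your approach matches the paper's: the corollary is stated immediately after Proposition \ref{prop::Product case} with the one-line justification ``Together with Theorem \ref{thm order pole local L}'', and you correctly combine that proposition (giving $\Ord_{s=0}(\J_{\pi\otimes\pi}(s)) = 1$) with part \eqref{part basic} of that theorem (giving $\Ord_{s=0}(\L(s,\pi,\theta)) \in \{1,3\}$ according to compatibility). One small correction: irreducibility of $\pi = \tau \times \tau^*$ does \emph{not} force $\tau$ to be non-distinguished --- if $\tau$ is distinguished then $\tau \simeq \tau^*$ is unitary and $\tau \times \tau$ is still irreducible --- so you cannot ``verify'' this from the stated hypotheses; rather, as you ultimately say, the non-distinguished assumption is simply inherited from Proposition \ref{prop::Product case} and is implicit in the corollary's role as a base case in the induction for Theorem \ref{thm mainloc}.
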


	\subsubsection{Multiplicativity}
	
Let $m=m_1+m_2$ and accordingly let $a=a_1+a_2$ be the decomposition so that $G_{m_i}(F)=\GL_{a_i}(\D)$, $i=1,2$. The order of pole for the intertwining period at hand satisfies the following multiplicative property. 
	\begin{proposition}\label{prop::Induction-pole}
		Let $\pi_i$ be a distinguished representations of $G_{m_i}(F)$ in $\Pi(-\frac12,\frac12)$ (see \S\ref{ss normint}), $i = 1,2$, and let $\pi = \pi_1 \times \pi_2$. Then
\[
\Ord_{s=0}(\J_{\pi\otimes \pi}(s))=k_1 + k_2 + k
\]
where
\[
k_i=\ord_{s=0}(\J_{\pi_i\otimes \pi_i}(s)),\ i=1,2\ \ \ \text{and}\ \ \ k=\Ord_{s=0}(L(s,\jl(\pi_1),\jl(\pi_2)^\vartheta))
\]	
where $\vartheta$ is the $E/F$-Galois involution in the Galois cases and the identity automorphism otherwise.

\end{proposition}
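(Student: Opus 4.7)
Since $\pi_i$ is distinguished, $\pi_i^*\simeq\pi_i$ by Theorem \ref{thm mult 1 and ssduality}. Let $L'\subset G'$ be the standard Levi of type $(a_1,a_2)$ and $L=L'\times L'$, regarded as the Levi of a standard parabolic $Q'$ of $G$ of type $(a_1,a_2,a_1,a_2)$. Set $\tau=\pi_1\otimes\pi_2\otimes\pi_1^*\otimes\pi_2^*$, so that $\Ind_L^M(\tau)\simeq\pi\otimes\pi=:\sigma$. With $x\in M\cdot w$ chosen as in \eqref{eq def good x}, a direct computation shows $L$ is $\theta_x$-stable with $L_x=\{\diag(g_1,g_2,\iota(g_1),\iota(g_2)):g_i\in\GL_{a_i}(\D)\}$, and hence $\tau$ is $L_x$-distinguished. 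By Proposition \ref{prop::intt-period++PI}\eqref{part closed fe}, $\J_{\pi\otimes\pi}(\varphi,s)$ equals, up to a non-zero constant, the inducing-data intertwining period $J_{Q'}^G(\varphi';x,\ell,\tau,\underline{s})$, where $\varphi'\in I_{Q'}^G(\tau)$ corresponds to $\varphi$, $\underline{s}=(s,s,-s,-s)\in(\fra_{L,\C}^*)_x^-$, and $\ell$ generates the one-dimensional space $\Hom_{L_x}(\tau,\C)$.

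Now let $n$ be the elementary symmetry swapping the middle two blocks (so $n\in s_\alpha L$ for the unique $\alpha\in\Delta_L^M$), $L_1=nLn^{-1}$, $x_1=n\cdot x$, and $\tau'=n\tau=\pi_1\otimes\pi_1^*\otimes\pi_2\otimes\pi_2^*$; then $(L,x)\stackrel{n}{\searrow}(L_1,x_1)$ is an edge of $\mathfrak{G}$, with $L_{1,x_1}=\{\diag(g_1,\iota(g_1),g_2,\iota(g_2))\}$ and new parameter $s_\alpha\underline{s}=(s,-s,s,-s)$. By Proposition \ref{prop::intt-period++FE+Adjacent},
\[
J_{Q'}^G(\varphi';x,\ell,\tau,\underline{s}) \;=\; J_{Q_1'}^G\bigl(M(n,\tau,\underline{s})\varphi';x_1,\ell,\tau',s_\alpha\underline{s}\bigr).
\]
The vertex $(L_1,x_1)$ is minimal with respect to the intermediate Levi $L^\sharp=\GL_{2a_1}(\D)\times\GL_{2a_2}(\D)\supset L_1$. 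Combining Proposition \ref{prop::intt-period++PI}\eqref{part open fe} with the product structure $L^\sharp=M_1'\times M_2'$ (the doubled Levis for $G_{m_1}$ and $G_{m_2}$) decomposes the right-hand side, up to an auxiliary factor holomorphic and non-vanishing at $s=0$, as the tensor product $\J_{\pi_1\otimes\pi_1^*}(s)\otimes\J_{\pi_2\otimes\pi_2^*}(s)$; identifying $\pi_i^*\simeq\pi_i$, this has order of pole $k_1+k_2$ at $s=0$.

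The order of pole of the intertwining operator $M(n,\tau,\underline{s})$ at $s=0$ is computed via its normalization $M=r\cdot N$ from Section \ref{ss normint}: Lemma \ref{lem hol int op} gives that $N(n,\tau,0)$ is an isomorphism, so the pole is carried entirely by the scalar $r$. The inversion set of $s_\alpha$ is $\{(2,3)\}$, hence
\[
r(n,\tau,\underline{s})\;\sim\;\frac{L(2s,\JL(\pi_2),\JL(\pi_1^*)^\vee)}{L(1+2s,\JL(\pi_2),\JL(\pi_1^*)^\vee)}.
\]
By \eqref{eq iota jl sym} we have $\JL(\pi_1^*)^\vee\simeq\JL(\pi_1)^\vartheta$; the denominator is holomorphic and non-vanishing at $s=0$ by Lemma \ref{lem dqr int@1rs}, while by the symmetry of pair $L$-factors the numerator has order of pole $k=\Ord_{s=0}L(s,\JL(\pi_1),\JL(\pi_2)^\vartheta)$. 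Therefore $\Ord_{s=0}M(n,\tau,\underline{s})=k$.

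Combining, $\J_{\pi\otimes\pi}(s)$ is expressed as $r(n,\tau,\underline{s})\cdot\bigl(\J_{\pi_1\otimes\pi_1}(s)\otimes\J_{\pi_2\otimes\pi_2}(s)\bigr)$ composed with factors that are holomorphic and invertible at $s=0$, yielding the upper bound $\Ord_{s=0}\J_{\pi\otimes\pi}(s)\le k_1+k_2+k$. Equality requires non-vanishing of the leading Laurent coefficient: Lemma \ref{lem::intt-period-pole} ensures that the leading coefficient of each $\J_{\pi_i\otimes\pi_i}(s)$ at $s=0$ is a non-zero invariant form on $\pi_i\times\pi_i$, their tensor product is non-zero, and composition with the isomorphism $N(n,\tau,0)$ preserves this. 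The main obstacle is precisely this non-vanishing analysis---carefully tracking every proportionality constant through the chain of identifications, and in particular verifying that the double-integral decomposition at the minimal vertex and the auxiliary factor it introduces neither vanish nor blow up spuriously at $s=0$. The remaining steps are formal consequences of Propositions \ref{prop::intt-period++PI} and \ref{prop::intt-period++FE+Adjacent} together with the normalization of Section \ref{ss normint}.
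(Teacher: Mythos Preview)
Your outline is essentially the paper's proof: reduce to the Levi $L$ of type $(a_1,a_2,a_1,a_2)$ via Proposition~\ref{prop::intt-period++PI}\eqref{part closed fe}, apply the edge of Proposition~\ref{prop::intt-period++FE+Adjacent} (swapping the two middle blocks), then unfold at the resulting minimal vertex via Proposition~\ref{prop::intt-period++PI}\eqref{part open fe}, and compute the contribution of $M(n,\tau,\underline{s})$ through Lemma~\ref{lem hol int op}. The computation of the order of $M(n,\tau,\underline{s})$ is correct; note only that \eqref{eq iota jl sym} is proved in the paper for essentially square-integrable $\tau$, so strictly speaking you should either extend it by multiplicativity or argue directly from $\pi_i^*\simeq\pi_i$ that $\JL(\pi_i)^\vee\simeq\JL(\pi_i)^\vartheta$ as the paper does.

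The genuine gap is the step you flag as ``the main obstacle'' and then declare formal. At the minimal vertex, Proposition~\ref{prop::intt-period++PI}\eqref{part open fe} expresses $J_{Q_1'}^G(x_1,\ell,\tau',s_\alpha\underline{s})$ as an outer integral over $Q^\sharp_{x_1}\backslash G_{x_1}$ of the inner piece $\J_{\pi_1\otimes\pi_1}(s)\otimes\J_{\pi_2\otimes\pi_2}(s)$. That outer integral is \emph{not} a scalar ``auxiliary factor'': it is the closed-orbit intertwining period $J_{P^\sharp}^G\bigl(x_1,\,\cdot\,,I_{Q_1'\cap L^\sharp}^{L^\sharp}(\tau',\underline{s}),0\bigr)$, a linear form on $I_{P^\sharp}^G(\cdot)$ fed by the $L^\sharp_{x_1}$-invariant form coming from the inner integral. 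To conclude that the order of pole passes through unchanged you need two facts that do not follow from Propositions~\ref{prop::intt-period++PI} and~\ref{prop::intt-period++FE+Adjacent} alone: (i) this closed-orbit period is holomorphic at $\lambda=0$, and (ii) it is non-zero whenever the inner invariant form is non-zero, so that the leading Laurent coefficient is not killed. The paper invokes \cite[Lemma~3.3]{MR4679384} for exactly this; without it (or an equivalent argument that the assignment $\L\mapsto J_{P^\sharp}^G(x_1,\L,\cdot,0)$ is injective on $\Hom_{L^\sharp_{x_1}}(\cdot,\C)$), the equality $\Ord_{s=0}\J_{\pi\otimes\pi}(s)=k_1+k_2+k$---as opposed to merely $\le$---is not established. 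Lemma~\ref{lem::intt-period-pole} gives non-vanishing of the leading terms of the individual $\J_{\pi_i\otimes\pi_i}$, but it says nothing about the outer closed-orbit integral applied to them.
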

	\begin{proof}
Let $Q=LV\subset G$ be the standard parabolic subgroup of type $(a_1,a_2,a_1,a_2)$. Denote by $\varsigma$ the representation $\pi_1 \otimes \pi_2 \otimes \pi_1 \otimes \pi_2$ of $L$. 
	Note that the map $\ell\mapsto \Lambda_\ell: \Hom_{L_w}(\varsigma,\C)\rightarrow \Hom_{M_w}(\pi\otimes \pi,\C)$ defined as in \eqref{formula::defn--Lambda-ell} is an isomorphism between two one dimensional spaces. 
	By Proposition \ref{prop::intt-period++PI}\eqref{part closed fe} and in its notation there exists a non-zero $\ell\in\Hom_{L_w}(\varsigma,\C)$ such that  
		\begin{align*}
			\J_{\pi\otimes \pi}(F_\varphi,s)=J_Q^G(\varphi;w,\ell, \varsigma,s),\ \ \ \varphi \in I_Q^G(\varsigma).
		\end{align*} 
We proceed by computing the order of pole of $J_Q^G(\varphi;w,\ell, \varsigma,s)$ at $s=0$.
We have an edge $(L,w) \stackrel{n}{\searrow} (L,w')$ in $\mathfrak{G}$ with 
		\begin{align*}
			w'= n \cdot w=\left(\begin{smallmatrix}
				& I_{a_1}  & & \\
				I_{a_1}&  &  & \\
				&  & & I_{a_2}\\
				&  &  I_{a_2} & 
			\end{smallmatrix}\right)\ \ \ \text{and}\ \ \ n = \left(\begin{smallmatrix}
				I_{a_1} & & & \\
				&  & I_{a_1} & \\
				& I_{a_2} & & \\
				&  &  & I_{a_2}
			\end{smallmatrix}\right).
		\end{align*}
		Write $Q'=L'V'$ for the standard parabolic of type $(a_1,a_1,a_2,a_2)$. By Proposition \ref{prop::intt-period++FE+Adjacent} we have
		\begin{align*}
			J_Q^G (w,\ell, \varsigma,s) = J_{Q'}^G (w',\ell, n\varsigma,\underline{s})\circ M(n,\varsigma,s),
		\end{align*}
		with $\underline{s} = (s,-s,s,-s)\in \C^4\simeq \fra_{L,\C}^*$. 
It follows from Lemma  \ref{lem hol int op} that
\[
\Ord_{s=0}(M(n,\varsigma,s))=\Ord_{s=0}(\frac{L(2s,\jl(\pi_1), \jl(\pi_2)^{\vee})}{L(1+2s,\jl(\pi_1) , \jl(\pi_2)^{\vee})}).
\]
It follows from Theorem \ref{thm mult 1 and ssduality} that $\jl(\pi_2)^{\vee})\simeq \JL(\pi^\iota)$. In cases \namelink{lin}, \namelink{twlin1} and \namelink{twlin2} $\iota$ is an inner automorphism of $G$ so that $\pi^\iota\simeq \pi$. In the Galois case, as explained in \S \ref{sss auxi}, we have $\pi_2^\iota\simeq \pi_2^\theta$ and therefore $\jl(\pi_2)^{\vee}\simeq \JL(\pi)^\vartheta$. By Lemma \ref{lem dqr int@1rs} $L(1+2s,\jl(\pi_1) , \jl(\pi_2)^{\vee})$ is holomorphic at $s=0$.
It follows that $k=\Ord_{s=0}(L(2s,\jl(\pi_1), \jl(\pi_2)^{\vee}))$.

We conclude that $\Ord_{s=0}(\J_{\pi\otimes \pi}(s))=k+\Ord_{s=0}(J_{Q'}^G (w',\ell, n\varsigma,\underline{s}))$ and it therefore suffices to show that 
\[
\Ord_{s=0}(J_{Q'}^G (w',\ell', \varsigma',\underline{s}))=k_1+k_2
\]
where $\varsigma' = \pi_1 \otimes \pi_1 \otimes \pi_2 \otimes \pi_2\simeq n \varsigma$ and $\ell'(v_1\otimes v_1'\otimes v_2\otimes v_2')=\ell(v_1\otimes v_2\otimes v_1'\otimes v_2')$ for $v_i,v_i'$ in the space of $\pi_i$, $i=1,2$. Note that there exists $c\ne 0$ such that $\ell'=c(\ell_{\pi_1\otimes \pi_1}\otimes \ell_{\pi_2\otimes \pi_2})$.

Let $P=MU$ be the standard parabolic of type $(2a_1,2a_2)$ and note that  $(L',w')$ is a minimal vertex in the graph $\mathfrak{G}$. 
Let $u  \in M$ be such that $u \cdot e= w' $.
It follows from Proposition \ref{prop::intt-period++PI} \eqref{part open fe}  that
\[
J_{Q'}^G (\varphi;w',\ell, \varsigma_1,\underline{s})= \int_{P_{w'} \backslash G_{w'} }\int_{L'_{w'} \backslash M_{w'}}\ell\left(((I_Q^G(gu,\sigma,\lambda)\varphi)[e])_{\underline{s}}(m) \right)\ dm \ dg.
\]
That is,
\[
J_{Q'}^G (\varphi;w',\ell, \varsigma_1,\underline{s})=J_P^G(\xi;w', J_{Q'\cap M}^{M,\theta_{w'}}(e,\ell,\varsigma_1,\underline{s}),I_{Q'\cap M}^M(\varsigma_1,\underline{s}), 0)
\]
where $\xi\in I_P^G(I_{Q'\cap M}^M(\varsigma_1,\underline{s}))$ is defined by $\xi(g)=(I_P^G(g,\underline{s})\varphi)[e]$.
It is a consequence of \cite[Lemma 3.3]{MR4679384} that the closed orbit intertwining period $J_P^G(w', \L,I_{Q'\cap M}^M(\varsigma_1,\underline{s}), \lambda)$ is holomorphic at $\lambda=0$ for any $\L\in \Hom_{M_{w'}}( I_{Q'\cap M}^M(\varsigma_1,\underline{s}),\C)$ (and any $s\in \C$) and that 
\[
\Ord_{s=0}(J_{Q'}^G (w',\ell, \varsigma_1,\underline{s}))=\Ord_{s=0}(J_{Q'\cap M}^{M,\theta_{w'}}(e,\ell,\varsigma_1,\underline{s})).
\]
Observing that
\[
J_{Q'\cap M}^{M,\theta_{w'}}(e,\ell,\varsigma_1,\underline{s})\circ I_{Q'\cap M}^M(u,\underline{s})=c\ \J_{\pi_1\otimes \pi_1}(s)\otimes \J_{\pi_2\otimes \pi_2}(s)
\]
the Proposition follows.

	\end{proof}

	\subsubsection{Completion of proof of Theorem \ref{thm mainloc}}
	Let $\pi$ be as in the statement of the theorem. It follows from Theorem \ref{thm classif dist S} that $\pi\simeq \delta_1\times \cdots\times \delta_k\times \tau_1\times \tau_1^* \times \cdots\times \tau_\ell\times \tau_\ell^*$ for some irreducible essentially square integrable representations $\delta_i, \tau_j$ such that $\delta_i$ is distinguished $\tau_j$ is not-distinguished and $\abs{r(\tau_i)}<\frac12$, $i=1,\dots,k,\ j=1,\dots,\ell$. The theorem is proved by induction on $k+\ell$. For the base of induction, $k+\ell=1$, apply Corollaries \ref{cor::pole--case-I} and \ref{cor::pole-case-II}. The induction step follows from Proposition \ref{prop::Induction-pole} and Theorem \ref{thm order pole local L} part \eqref{part multiplicative}.

	\subsubsection{The group case}\label{sec f}
	
	Finally, there is another type of intertwining period that shows up at half of the places when one is concerned with the global Galois case. The places in question are those places of the number field $F$ that split over the quadratic extension $E$. The result we need in this case follows directly from the properties of local intertwining operators. 
	
	Let $(G,H,\theta)=(G_{2m}(F),H_{2m}(F),\theta_{2m})_{\namelink{gp}}$ and $(G',H',\theta')=(G_m(F),H_m(F),\theta_m)_{\namelink{gp}}$ so that 
	$G'=\GL_m(D) \times \GL_m(D)$. Let $P=MU$ be the parabolic subgroup of $G$ of type $(m,m)$ so that $M=\{\diag(g_1,g_2):g_1,g_2\in G'\}\simeq \GL_m(D)^4$.
	Let $\pi=\pi_1 \otimes \pi_2$ be an irreducible, generic representation of $G'(F)$ where $\pi_1,\pi_2$ are generic, irreducible representations of $\GL_m(D)$.
	Note that $\pi^*=\pi_2^\vee \otimes \pi_1^\vee$ and let
	\[
	\sigma=\pi\otimes \pi^*
	\]
	be the corresponding representation of $M(F)\simeq G'(F)\times G'(F)$.

We further let $G_1=\GL_{2m}(D)$ and $P_1=M_1U_1$ be its parabolic subgroup of type $(m,m)$ so that $G=G_1\times G_1$, $P=P_1\times P_1$, $M=M_1\times M_1$ and $U=U_1\times U_1$.


	Set $w =  (w',w'), \ u=(I_{2m},w')\in G$ where $w'=\left( \begin{smallmatrix}
		& I_{m} \\
		I_{m} & 
	\end{smallmatrix}\right)\in G_1$ so that $u\cdot e=w$. Note that
	$G_w=\{(g,w'gw'):g\in G_1\}$ and $P_w=M_w=\{(g,w'gw'):g\in M_1\}$.

	Let $\ell_\sigma\in \Hom_{M_w}(\sigma,\C)$ be defined by
	\begin{align*}
		\ell_\sigma(v_1 \otimes v_2 \otimes v_2^\vee \otimes v_1^\vee)  \mapsto  \langle v_1, v_1^\vee \rangle \cdot \langle v_2, v_2^\vee \rangle
	\end{align*}
	for $v_i$ in the space of $\pi_i$ and $v_i^\vee$ in $\pi_i^\vee$, $i=1,2$.
	Identify $\C$ with $(\fra_{M,\C}^G)_{\theta_w}^-$ which is the diagonal imbedding of $(\fra_{M_1,\C}^{G_1})^*$ in $(\fra_{M,\C}^{G})^*=(\fra_{M_1,\C}^{G_1})^*\times (\fra_{M_1,\C}^{G_1})^*$, so that 
	\[
	e^{\sprod{s}{H_M(\diag(m_1,m_2),\diag(m_3,m_4)}}=\abs{\frac{\det (m_1m_3)}{\det (m_2m_4)}}^s, \ \ \ m_i\in \GL_m(D),\ i=1,2,3,4,\ s\in \C.
	\] 
	Note that $I_P^G(\sigma)=I_{P_1}^{G_1}(\mu) \otimes I_{P_1}^{G_1}(\mu^*)$ where $\mu=\pi_1\otimes \pi_2^\vee$ is the corresponding representation of $M_1$.

	The intertwining period $\J_\sigma(s)$ is defined on $I_P^G(\sigma)$ by the meromorphic continuation of 
	\[
	\J_\sigma(\varphi: s) = \int_{M_w \backslash G_w } \ell_\sigma (\varphi_s (gu )) dg.
	\]
	\begin{proposition}\label{prop above}
With the above notation assume that $\pi_1,\pi_2\in \Pi_D(-\frac12,\frac12)$. Then we have
	\[\Ord_{s=0}(\J_\sigma(s))=\Ord_{s=0}(\L(s,\pi,\theta)).\] 
	\end{proposition}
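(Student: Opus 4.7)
The plan is to identify $\J_\sigma(s)$, up to a non-vanishing meromorphic factor, with the matrix coefficient of a standard intertwining operator on $G_1 = \GL_{2m}(D)$, and then read off the pole order from the normalization factor.

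First, I would unfold the intertwining period. Writing $\varphi = \varphi^{(1)}\otimes\varphi^{(2)}$ under the decomposition $I_P^G(\sigma,s\varpi) = I_{P_1}^{G_1}(\pi_1[s]\otimes \pi_2^\vee[-s])\otimes I_{P_1}^{G_1}(\pi_2[s]\otimes\pi_1^\vee[-s])$, the identification $M_w\backslash G_w\simeq M_1\backslash G_1$ via $g\mapsto (g,w'gw')$ together with $gu=(g,w'g)$ gives, in the cone of absolute convergence,
\[
\J_\sigma(\varphi,s) \;=\; \int_{M_1\backslash G_1}\bigl\langle \varphi^{(1)}_{(s,-s)}(g),\, \varphi^{(2)}_{(s,-s)}(w'g)\bigr\rangle\, dg,
\]
where $\langle\,,\rangle$ is the natural pairing on $\mu\otimes\mu^*$, with $\mu=\pi_1\otimes\pi_2^\vee$ and $\mu^*=(w'\mu)^\vee$. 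A direct check using the identity $\delta_{P_1}\delta_{P_1^-}=1$ on $M_1$ shows the integrand descends to $M_1\backslash G_1$. Using the fibration $M_1\backslash G_1\to P_1^-\backslash G_1$ with fiber $U_1^-$, combined with the substitution $u=w'u^-w'^{-1}\in U_1$ and the change of variable $g\mapsto w'g$ (which identifies $P_1^-\backslash G_1$ with $P_1\backslash G_1$), one obtains the key identity
\[
\J_\sigma(\varphi,s) \;=\; \int_{P_1\backslash G_1}\bigl\langle \bigl(M(w',\mu,(s,-s))\varphi^{(1)}\bigr)_{(-s,s)}(g),\, \varphi^{(2)}_{(s,-s)}(g)\bigr\rangle\, dg,
\]
which is precisely the canonical $G_1$-invariant pairing of $M(w',\mu,(s,-s))\varphi^{(1)}\in I_{P_1}^{G_1}(\pi_2^\vee[-s]\otimes\pi_1[s])$ with $\varphi^{(2)}\in I_{P_1}^{G_1}(\pi_2[s]\otimes\pi_1^\vee[-s])$, the latter two representations being contragredient. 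Meromorphic continuation then gives an identity of meromorphic functions in $s$.

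From this identification, and because the canonical pairing is non-degenerate (in particular non-vanishing on the spherical vectors by Proposition \ref{prop ur comp}), one deduces
\[
\Ord_{s=0}(\J_\sigma(s)) \;=\; \Ord_{s=0}\bigl(M(w',\mu,(s,-s))\bigr).
\]
Write $M(w',\mu,(s,-s))=r(w',\mu,(s,-s))\,N(w',\mu,(s,-s))$ as in Section \ref{ss normint}. Since $\pi_1,\pi_2^\vee\in \Pi_D(-\tfrac12,\tfrac12)$, Lemma \ref{lem hol int op} ensures $N(w',\mu,(s,-s))$ is holomorphic and invertible at $s=0$. Recalling the explicit form of the normalizing factor and using $\JL(\pi_2^\vee)^\vee\simeq\JL(\pi_2)$, we have
\[
r(w',\mu,(s,-s)) \;\underset{\C^\times}{\sim}\; \frac{L(2s,\JL(\pi_1),\JL(\pi_2))}{L(1+2s,\JL(\pi_1),\JL(\pi_2))}.
\]
By Lemma \ref{lem dqr int@1rs} the denominator is holomorphic (hence non-zero, since $L$-factors do not vanish) at $s=0$. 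Therefore
\[
\Ord_{s=0}(\J_\sigma(s)) \;=\; \Ord_{s=0}\bigl(L(2s,\JL(\pi_1),\JL(\pi_2))\bigr),
\]
which is exactly $\Ord_{s=0}(\L(s,\pi,\theta))$ in view of the group-case convention $L(2s,\JL(\pi),\As^+)=L(2s,\JL(\pi_1),\JL(\pi_2))$ defining $\L$ in case \namelink{gp}.

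The only delicate step is the unfolding identity in the first paragraph: one must check that the various modulus and twist factors cancel correctly and that the change of variable $g\mapsto w'g$ is compatible with the invariant measure on $P_1\backslash G_1$. Once these bookkeeping points are settled, the rest is a short deduction from the already-established properties of normalized intertwining operators and $L$-factors for pairs.
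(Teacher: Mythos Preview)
Your proposal is correct and takes essentially the same approach as the paper: unfold $\J_\sigma$ to recognize it as the canonical $G_1$-invariant pairing of one factor against the standard intertwining operator applied to the other, then read off the pole order from the normalizing factor via Lemma~\ref{lem hol int op} and Lemma~\ref{lem dqr int@1rs}. The only cosmetic difference is that the paper fibrates $M_1\backslash G_1$ over $P_1\backslash G_1$ (fiber $U_1$) and applies $M(w',\mu^*,s)$ to the second factor $\varphi_2$, whereas you fibrate over $P_1^-\backslash G_1$ and apply $M(w',\mu,(s,-s))$ to the first; these are dual and give the same conclusion (your appeal to Proposition~\ref{prop ur comp} is unnecessary, since non-degeneracy of the canonical pairing already suffices).
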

\begin{proof}
Note that $I_{P_1}^{G_1}(\mu^*,s)\simeq I_{P_1}^{G_1}(\mu,s)^\vee$ with the $G_1$-invariant pairing
\[
\sprod{\varphi_1}{\varphi_2}= \int_{P_1\bs G_1}\ell((\varphi_1)_s(g)(\varphi_2)_s(w'g))\ dg,\ \ \ \varphi_1\in I_{P_1}^{G_1}(\mu),\ \varphi_2\in I_{P_1}^{G_1}(\mu^*)
\]
that is independent of $s$. 
Since $I_P^G(\sigma)$ is spanned by pure tensors, it suffices to consider $\varphi=\varphi_1\otimes \varphi_2$ with $\varphi_1\in I_{P_1}^{G_1}(\mu)$ and $\varphi_2\in I_{P_1}^{G_1}(\mu^*)$. For such $\varphi$ we have
\begin{align*}
		\J_\sigma(\varphi: s) & = \int_{M_1\bs G_1} \ell((\varphi_1)_s(g)(\varphi_2)_s(w'g))\ dg \\
		               & =\int_{P_1\bs G_1} \int_{U_1} \ell((\varphi_1)_s(g)(\varphi_2)_s(w'ug))\ du\ dg \\
		               & =\sprod{\varphi_1}{M(w',\mu^*,s)\varphi_2}.
	\end{align*}
	The proposition now follows from Lemmas \ref{lem hol int op} and \ref{lem dqr int@1rs}.
	
	\end{proof}

	\section{Global theory: the Maass-Selberg relations}\label{sec::Global}
	
	Assume that $F$ is a number field. Let $(G,H,\theta)=(G_{2m},H_{2m},\theta_{2m})_\x$ where 
	\[
	\x\in\{ \namelink{lin}, \namelink{twlin1},  \namelink{twlin2}, \namelink{gal1},\namelink{gal2}\}
	\] 
	is one of the cases defined in Section \ref{sss the families}. Let $a\in \N$ and $\D$ be defined by \eqref{eq a&D} after Remark \ref{rem arithmetic vs geometric}, so that $G(F)=\GL_{2a}(\D)$. 	Let $P = M U$ be the standard parabolic $F$-subgroup of $G $ with $M =M_{(a,a)}.$ 
	
	In this section we compute the $H$-period of a truncated Eisenstein series on $G(\BA)$ induced from a maximal parabolic subgroup of type $(a,a)$, following \cite{MR1625060}, \cite{MR2010737} and \cite{MR4411860}. 
	
\subsection{Vanishing of linear periods}
This following result is documented in the literature when $\D=F$, however, its proof easily generalizes to inner forms of general linear groups.
\begin{lemma}\label{lem vanishing linear}
Let $k=a+b$ with $a,b\in \Z_{\ge 0}$, $H=M_{(a,b)}\simeq G_\D(a)\times G_\D(b)$ and $\chi$ an automorphic character of $H(\A)$. If $a\ne b$ then 
\[
\int_{H(F)\bs H(\A)\cap G_\D(k,\A)^1}\chi(h)\phi(h)\ dh=0
\]
for any cusp form $\phi$ on $G_\D(k,\A)$.
\end{lemma}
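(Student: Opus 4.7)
The plan is a Fubini / central-torus argument combined with a standard Piatetski--Shapiro unfolding. Since $\chi$ is an automorphic character of $H(\A) = G_\D(a,\A) \times G_\D(b,\A)$, it factors through the reduced norm in each block as $\chi(\diag(g_1,g_2)) = \chi_1(\nrd g_1)\chi_2(\nrd g_2)$ for Hecke characters $\chi_1, \chi_2$ of $F'^\times \bs \A^\times_{F'}$, where $F'$ is the centre of $\D$.

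The key structural point, which is why the hypothesis $a \ne b$ enters, is that the centre $Z_M = Z_a \times Z_b \simeq \GL_1 \times \GL_1$ of $H$ strictly contains $Z_G$ (embedded diagonally via $\lambda \mapsto \lambda I_k$), so that the quotient $(Z_M(\A)\cap G_\D(k,\A)^1)/(Z_G(\A)\cap G_\D(k,\A)^1)$ is a non-trivial one-dimensional non-compact torus $T \subset H(\A)\cap G_\D(k,\A)^1$, parametrised by $\lambda\mapsto t(\lambda)=(\lambda^{b}I_a,\lambda^{-a}I_b)$ for $\lambda \in \A^\times_{F'}$. I would apply Fubini, integrating first over $Z_M(F)\bs Z_M(\A)\cap G_\D(k,\A)^1$. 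Integration against $\chi$ over the compact quotient $Z_G(F)\bs Z_G(\A)\cap G_\D(k,\A)^1$ forces a central character compatibility between $\chi$ and $\omega_\phi$; if this fails the integral is already zero, and otherwise, using the identity $t(\lambda) = \lambda^{-a}I_k \cdot \diag(\lambda^{a+b} I_a, I_b)$ and a change of variable $\mu = \lambda^{a+b}$ (a proper map on the idele class group), the inner integration reduces to a Hecke--type Mellin integral of the shape
\[
\int_{F'^\times \bs \A^\times_{F'}} \eta(\mu)\, \phi\bigl(\diag(\mu I_a, I_b) h_1\bigr)\, d^\times \mu,
\]
for a Hecke character $\eta$ built from $\chi_1, \chi_2, \omega_\phi$, and a representative $h_1$ of the class of $h$ in $(Z_M(\A)\cap G^1)H(F)\bs (H(\A)\cap G^1)$.

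The main obstacle---and the step whose passage from $\D = F$ to general $\D$ is routine---is to show this inner Mellin integral vanishes. In the split case, this is a classical linear-period computation: one expands $\phi$ via the Piatetski--Shapiro (or Shalika) Whittaker expansion, unfolds the Mellin integral against the sum over rational points of the mirabolic inside $P_{(a,b)}$, and obtains a local zeta-type expression whose orbit/stabilizer structure is forced to be degenerate precisely when $a\ne b$ (the asymmetry of the partition means the unfolded sum has no contribution matching a nonzero Shalika/Friedberg--Jacquet functional, whereas the only surviving component would be the constant term along $U_{(a,b)}$, which vanishes by cuspidality of $\phi$). For general $\D$, one transfers the argument to $\GL_{kd}$ via the Jacquet--Langlands correspondence, which preserves the relevant Whittaker/generic data and reduces the claim to the known split case.
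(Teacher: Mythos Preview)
The proposal has a genuine gap in the passage to general $\D$, and the split case is not convincingly handled either.

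For $\D=F$, your reduction via the extra central direction in $Z_H$ is sound, but the claim that the resulting Mellin integral
\[
\int_{F'^\times \bs \A^\times_{F'}} \eta(\mu)\, \phi\bigl(\diag(\mu I_a, I_b) h_1\bigr)\, d^\times \mu
\]
vanishes for each fixed $h_1$ is neither proved nor obviously true: this is a period along a one-dimensional central subtorus of $H$, and there is no evident mechanism forcing it to vanish pointwise in $h_1$ (the lemma only asserts that the \emph{outer} integral over $h_1$ vanishes). Your appeal to a Piatetski--Shapiro or Shalika expansion followed by an orbit count is too vague to assess, and the actual Friedberg--Jacquet argument does not proceed through such a central Mellin reduction.

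The decisive problem is your treatment of general $\D$. You propose to transfer the statement to $\GL_{kd}$ via the Jacquet--Langlands correspondence, but $\JL$ is a bijection of isomorphism classes of discrete automorphic representations; it does not carry individual cusp forms to cusp forms, and it certainly does not preserve period integrals --- indeed, the entire paper is devoted to the delicate question of how $H$-periods behave under $\JL$. Moreover, cusp forms on $\GL_k(\D_\A)$ for non-split $\D$ admit no Whittaker expansion (there is no non-degenerate Whittaker character on the unipotent radical), so the unfolding you invoke is simply unavailable on the inner form.

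The paper's argument is entirely different and uniform in $\D$: the Friedberg--Jacquet proof for $\D=F$ rests only on Fourier inversion on the additive group $\A$, and this step generalises verbatim once $\A$ is replaced by the locally compact abelian group $\D\otimes_F\A$, which is self-dual via the reduced trace pairing. No central-torus reduction, no Whittaker theory, and no $\JL$ transfer is needed.
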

\begin{proof}
This follows from \cite[Proposition 6.2]{MR1241129} if $\D=F$, however, the argument is based on Fourier inversion on $\A$ and generalizes to our setting by using Fourier inversion on $\D\otimes_F \A$ instead.
\end{proof}
	\subsection{Induced representations and Eisenstein series}\label{sss global induced and Eis}
	
Let $A_P^+=\Res_{F/\mathbb{Q}}(A_P)(\R_{>0})$, naturally, a subgroup of the center of $M(\A)$. Let $\sigma$ be an irreducible, cuspdial automorphic representation of $M(\BA)$ with a central character trivial on $A_P^{+}$. Let $I_P^G(\sigma)$ be the space of functions \[\varphi : U(\BA) M(F) \backslash G(\BA)  \ra \C\] such that
	\begin{align*}
		m \mapsto \varphi[g](m) := \delta_P^{-1/2} (m) \varphi(mg)
	\end{align*}
	lies in the space of $\sigma$ for all $g \in G(\BA)$. Note that for $x,m \in M(\BA)$ and $u \in U(\BA)$ we have
	\begin{align*}
		\varphi [xug] (m) = \delta_P^{-1/2}(m) \varphi (m xug) = \delta_P^{1/2} (x) \varphi [g] (mx) = \delta_P^{1/2}(x) (\sigma(x) (\varphi [g])) (m).
	\end{align*}
	That is, we can realize the normalized parabolic induction from $\sigma$ as a representation on $I_P^G (\sigma)$.
	
	Since $P$ is a maximal parabolic subgroup of $G$, let $\varpi_P \in (\fra_P^G)^*$ be the corresponding fundamental weight. For $s \in \C$ and $\varphi \in I_P (\sigma)$ set
	\begin{align*}
		\varphi_s (g) = e^{ \langle s \varpi_P, H_M(g) \rangle} \varphi(g).
	\end{align*}
	Let $I_P^G (\sigma,s)$ be the representation of $G(\BA)$ on the space $I_P^G(\sigma)$ given by 
	\begin{align*}
		(I_P^G (g,\sigma,s) \varphi)_s (x)  = \varphi_s (xg), \quad g,\,x \in G(\BA).
	\end{align*}
	The Eisenstein series $E(\varphi,s)$ is the meromorphic continuation of the series
	\begin{align*}
		E(g,\varphi,s)  = \sum_{ \gamma \in P \backslash G } \varphi_s (\gamma g),
	\end{align*}
	which is convergent when $\Re s  \gg 0$. 
	
	Let $w = w_M = \left( \begin{smallmatrix}
		& I_a \\
		I_a & 
	\end{smallmatrix}\right)$. The standard intertwining operator \[M(s) : I_P^G (\sigma,s) \ra I_P^G (w \sigma,-s)\] is defined by the integral
	\begin{align*}
		M(s)\varphi (g) = e^{\langle s\varpi_P, H_M(g) \rangle}  \int_{U(\BA)} \varphi_s (w^{-1} ug) du.
	\end{align*}
	The integral converges absolutely when $\Re s \gg 0$ and admits a meromorphic continuation to all $s \in \C$. We have the functional equation
	\begin{align}\label{formula::FE+EisensteinSeries}
		E(M(s)\varphi, -s) = E(\varphi,s).
	\end{align}
	Recall that the constant term $E_Q(\cdot,\varphi,s)$ of $E(\cdot ,\varphi,s)$ along a parabolic subgroup $Q= LV$ is defined by
	\begin{align*}
		E_Q(g,\varphi,s) = \int_{V(F) \backslash V(\BA)} E(vg,\varphi,s) dv.
	\end{align*}
	The constant terms of $E(\cdot , \varphi,s)$ are computed in \cite[II. 1.7]{MR1361168}.  We have
	\begin{align}\label{formula::cons-term+Eis-series}
		E_P(g,\varphi,s) = \varphi (g) e^{ \langle s \varpi_P, H_M(g) \rangle}  + e^{ \langle -s \varpi_P, H_M(g) \rangle} M(s)\varphi (g), \quad g \in G(\BA).
	\end{align}
	For any other proper standard parabolic subgroup $Q$ of $G$, we have
	\begin{align}\label{formula::cons-term+Eis-series-II}
	 	E_Q (g, \varphi,s) = 0.
	\end{align}
	
	\subsection{Regularized periods of Eisenstein series}\label{sec::reg-periods}
	
	In \cite{MR4411860}, Zydor defined a relative truncation operator, denoted by $\Lambda^{T,H}$, from functions on $G(F)\bs G(\A)$ to functions on $H(F)\bs H(\A)$ where $T \in \fra_{0,H} := \fra_{P_0^H}$ where $P_0^H=P_0\cap H$ is the minimal parabolic subgroup of $H$ consisting of upper triangular matrices. The truncation depends on the choice of a good maximal compact subgroup $K_H$ of $H(\BA)$. For sufficiently positive $T \in \fra_{0,H}$, the truncation operator $\Lambda^{T,H}$ carries automorphic forms on $G$ to functions of rapid decay on $H(F) \backslash H(\A)^{1,G}$ where $H(\A)^{1,G}=H(\BA)\cap G(\BA)^1$.
	
Zydor's truncation is expressed as a sum over semi-standard parabolic subgroups of $G$ that contain $P_0^H$. Recall that a semi-standard parabolic subgroup of $G$ is of the form $\varsigma(Q)=\varsigma Q\varsigma^{-1}$ for a unique standard parabolic subgroup $Q=LV$ of $G$ and $\varsigma\in W$ determined uniquely modulo $W_L$. Furthermore, for an automorphic form $\phi$ on $G$ the constant term satisfies
\begin{equation}\label{eq ss const term}
\phi_{\varsigma(Q)}(g)=\phi_Q(\varsigma^{-1}g),\ \ \ g\in G(\BA).
\end{equation}
Let $\Sigma_P$ be a set of representatives in $W/W_M$ for Weyl elements $\varsigma\in W$ such that $P_0^H\subseteq \varsigma(P)$.

In what follows we maintain the notation introduced in Section \ref{sss global induced and Eis}.	By the definition of the truncation operator $\Lambda^{T,H}$ in \cite[Section 3.7]{MR4411860} combined with \eqref{formula::cons-term+Eis-series-II} and \eqref{eq ss const term} we have
	\begin{align}\label{formula::inversion+Eis-series}
		E(h,\varphi, s ) = \Lambda^{T,H} E(h,\varphi,s) +  \sum_{\varsigma\in \Sigma_P}\sum_{\gamma \in \varsigma(P)_H \backslash H} E_P(\varsigma^{-1}\gamma h,\varphi,s)\hat{\tau}_{\varsigma(P)}(H_{P_0^H}(\gamma h)_P^G - T_P^G),
	\end{align}
	where $\hat{\tau}_P(\cdot )$ is the characteristic function of the relative interior of the cone
	\begin{align*}
		\{ X \in \fra_{0,H} \mid \langle X, z \rangle \geqslant 0,\quad \forall z \in \overline{\fra_P^+} \}
	\end{align*} 	
and $\hat\tau_{\varsigma(P)}=\hat\tau_P\circ \varsigma^{-1}$.
	Note that there are finitely many non-zero terms in the summation over $\gamma$ in \eqref{formula::inversion+Eis-series}, as explained in \cite[Section 3.7]{MR4411860}.  
	 In \cite[Theorem 4.1]{MR4411860}, Zydor defined the regularized period $\CP_H(\phi)$ of an automorphic form $\phi$ of $G(\A)$ that is $H$-regular (see \cite[Section 4.5]{MR4411860} for the definition, it amounts to avoiding certain closed conditions on the exponents of $\phi$). The Eisenstein series $E(\varphi,s)$ above is $H$-regular for almost all $s$ and $\CP_H(E(\varphi,s))$ is a meromorphic function of $s$ (see \cite[Theorem 8.4.1 (4)]{MR2010737}). The following formula is a consequence of \cite[Corollary 4.2]{MR4411860}.
	For a subgroup $M_1$ of $M$ set 
	\[
	M_1(\A)^{1,M}=\{\diag(g_1,g_2)\in M_1(\A): g_1,g_2\in G_\D(a,\A)^1\}. 
	\]
	\begin{lemma}\label{lem::regularized-period+Eis}
		For a sufficiently positive $T \in \fra_{0,H}$, set $t = \langle \varpi_P, T_P^G \rangle$. We have
		\begin{align*}
			\CP_H ( E(\varphi,s)) = & \int_{H(F) \backslash H(\BA)^{1,G}} \Lambda^{T,H} E(h,\varphi,s) dh - \frac{e^{st}}{s} \int_{K_H} \int_{M_H \backslash M_H(\BA)^{1,M}} \varphi(mk) dmdk \\
			& + \frac{e^{-st}}{s} \int_{K_H} \int_{M_H \backslash M_H(\BA)^1} (M(s)\varphi)(mk) dmdk.
		\end{align*}
		In particular, the right-hand side of the identity is independent of $T$.
	\end{lemma}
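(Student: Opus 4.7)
The plan is to integrate the inversion formula \eqref{formula::inversion+Eis-series} termwise against the measure on $H(F)\bs H(\BA)^{1,G}$, identify which cosets $\varsigma\in \Sigma_P$ give nonzero contributions, and evaluate those that do explicitly. Since the right hand side of \eqref{formula::inversion+Eis-series} is a finite sum and the integrand on the left equals $\Lambda^{T,H}E(h,\varphi,s)$ plus a finite sum of ``boundary terms'' supported away from compact sets in a controlled cone, each integral will converge (at least for $\Re s$ in a suitable range) and admit meromorphic continuation. Thus we obtain
\[
\CP_H(E(\varphi,s))=\int_{H(F)\bs H(\BA)^{1,G}}\Lambda^{T,H}E(h,\varphi,s)\,dh+\sum_{\varsigma\in\Sigma_P}I_\varsigma(s),
\]
where
\[
I_\varsigma(s)=\int_{\varsigma(P)_H(F)\bs H(\BA)^{1,G}}E_P(\varsigma^{-1}h,\varphi,s)\,\hat{\tau}_{\varsigma(P)}\bigl(H_{P_0^H}(h)_P^G-T_P^G\bigr)\,dh,
\]
obtained by unfolding the inner sum over $\gamma\in\varsigma(P)_H\bs H$.

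Next I would apply \eqref{formula::cons-term+Eis-series}, writing $E_P(\varsigma^{-1}h,\varphi,s)$ as the sum of $\varphi_s(\varsigma^{-1}h)$ and $(M(s)\varphi)_{-s}(\varsigma^{-1}h)$. Applying the Iwasawa decomposition $H(\BA)=(\varsigma(P)\cap H)(\BA)\,K_H$ relative to the parabolic subgroup $\varsigma(P)\cap H$ of $H$, the integral $I_\varsigma(s)$ factors as a product of an integral over $K_H$, an inner integral over the Levi $M_{\varsigma(P)\cap H}(F)\bs M_{\varsigma(P)\cap H}(\BA)^1$ of a cusp form on $M$ twisted by a character, and a final integral over the one-dimensional split component on which the cut-off $\hat{\tau}_{\varsigma(P)}$ is supported. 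The cuspidality of $\sigma$ combined with the vanishing Lemma \ref{lem vanishing linear} kills the inner Levi integral whenever the two block sizes of $\varsigma(P)\cap H$ inside $M$ are unequal. A direct inspection of the orbit data recalled in Section \ref{ss orbits} shows that the only $\varsigma\in\Sigma_P$ with $\varsigma(M)\cap H=M_H$ arising from equal block decomposition is $\varsigma=e$, which gives $\varsigma(P)=P$, $M_P\cap H=M_H$, and so only this one term survives.

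It remains to evaluate $I_e(s)$ explicitly. Using the factorization above,
\[
I_e(s)=\int_{K_H}\!\int_{M_H\bs M_H(\BA)^{1,M}}\!\Bigl[\varphi(mk)+(M(s)\varphi)(mk)\cdot e^{\langle -2s\varpi_P,H_M(m)\rangle}\Bigr]\,dm\,\nu(s,m,k)\,dk,
\]
where $\nu(s,m,k)$ encodes the remaining integral over the one-dimensional positive component against $\hat{\tau}_P$ of the exponentials $e^{\pm s\langle\varpi_P,\cdot\rangle}$; the standard computation
\[
\int_{\langle\varpi_P,X\rangle\ge t}e^{\pm s\langle\varpi_P,X\rangle}\,dX=\mp\frac{e^{\pm st}}{s}
\]
for $\Re s$ appropriately signed (and then by meromorphic continuation) produces the factors $-e^{st}/s$ and $+e^{-st}/s$ in front of the $\varphi$ and $M(s)\varphi$ terms respectively. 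Separating the two contributions gives exactly the two displayed integrals in the statement.

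The main obstacle will be the precise identification of which cosets $\varsigma\in\Sigma_P$ survive, since there can be many semi-standard parabolics $\varsigma(P)$ containing $P_0^H$; here the geometric analysis of Section \ref{ss orbits} together with the inequality $a\neq b$ case of Lemma \ref{lem vanishing linear} must be combined carefully. A secondary (but routine) technical point is justifying the interchange of integration and summation, as well as the meromorphic continuation in $s$ of each individual term; this is handled exactly as in \cite[Corollary 4.2]{MR4411860} and \cite[Theorem 8.4.1]{MR2010737} and requires the $H$-regularity of $E(\varphi,s)$ for generic $s$. The independence of $T$ then follows either formally from the identity itself or from the standard fact that $\Lambda^{T,H}$ differs from $\Lambda^{T',H}$ by a sum of boundary terms matching exactly the change in the explicit $e^{\pm st}/s$ factors.
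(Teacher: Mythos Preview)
Your overall approach mirrors the paper's, but there is a conceptual gap at the very first step. You propose to obtain
\[
\CP_H(E(\varphi,s))=\int_{H(F)\bs H(\BA)^{1,G}}\Lambda^{T,H}E(h,\varphi,s)\,dh+\sum_{\varsigma\in\Sigma_P}I_\varsigma(s)
\]
by ``integrating the inversion formula \eqref{formula::inversion+Eis-series} termwise''. But the left side of that inversion formula is $E(h,\varphi,s)$, whose $H$-integral does not converge; so integrating termwise yields nothing, and certainly not $\CP_H(E(\varphi,s))$. The regularized period is \emph{defined} by Zydor, and the identity you need is precisely \cite[(4.3)]{MR4411860}, which the paper simply quotes: that formula already contains the terms $e^{(z_\sigma\pm s)t}/(z_\sigma\pm s)$, so your later ``standard computation'' of $\int_{x\ge t}e^{\pm sx}\,dx$ is re-deriving part of Zydor's construction without ever connecting it to his definition. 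The fix is easy --- start from \cite[(4.3)]{MR4411860} and specialize --- but as written the displayed identity is unjustified.

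A second, smaller point: your claim that ``only $\varsigma=e$ survives'' is not literally correct, and the appeal to the orbit data of Section~\ref{ss orbits} is misplaced. In the non-linear cases $\Sigma_P=\{e\}$ is a singleton and there is nothing to kill. In case \namelink{lin} the set $\Sigma_P$ has $a+1=2m+1$ elements; the paper writes down explicit representatives $\sigma_i$, $i\in[0,2m]$, observes that $\diag(s_m,s_m)M_{\sigma_i,H}\diag(s_m,s_m)^{-1}\cong(G_\D(i)\times G_\D(a-i))^2$, and uses Lemma~\ref{lem vanishing linear} to kill all $i\ne m$. The surviving element $\sigma_m$ is not the identity Weyl element, though one checks $M_{\sigma_m,H}=M_H$ and $z_{\sigma_m}=0$, which is what makes the final formula come out as stated. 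So the case analysis you sketch is in the right spirit, but it requires computing $\Sigma_P$ directly rather than invoking $P$-orbits on $G/H$.
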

	\begin{proof}
Explicating \cite[(4.3)]{MR4411860} we have
	\begin{align*}
			\CP_H ( E(\varphi,s)) = & \int_{H(F) \backslash H(\BA)^{1,G}} \Lambda^{T,H} E(h,\varphi,s) dh \\& -\sum_{\sigma\in\Sigma_P}\frac{e^{(z_\sigma+s)t}}{z_\sigma+s} \int_{K_H}  \int_{M_{\sigma,H} \backslash M_{\sigma,H}(\BA)^{1,M}}\delta^{-1}_{P\cap \sigma^{-1}H\sigma}(m)\varphi(m\sigma^{-1}k) dmdk    \\
			& -\sum_{\sigma\in\Sigma_P} \frac{e^{(z_\sigma-s)t}}{z_\sigma-s} \int_{K_H} \int_{M_{\sigma,H} \backslash M_{\sigma,H}(\BA)^{1,M}}\delta^{-1}_{P\cap \sigma^{-1}H\sigma}(m) (M(s)\varphi)(m\sigma^{-1}k) dmdk,
		\end{align*}
where $M_{\sigma,H}=M\cap \sigma^{-1}H\sigma$ and  $\rho_P-2(\rho_{P\cap \sigma^{-1}H\sigma})_P=z_\sigma\varpi_P$. In all cases, except case \namelink{lin}, the set $\Sigma_P=\{e\}$ is a singleton, $\delta_{P_H}$ is trivial on $M_H(\A)^{1,M}=M_{e,H}(\A)^{1,M}$ and $z_e=0$ so that the lemma follows. In case \namelink{lin} (where the permutation $s_m$ is also defined), the set $\Sigma_P$ consists of $a+1$ elements. As representatives we may choose $\Sigma_P=\{\sigma_i:i\in[0,a]\}$ where
\[
\sigma_i=s_{2m}\begin{pmatrix} I_i & & & \\ & & I_{a-i} & \\ & I_{a-i} & & \\ & & & I_i \end{pmatrix}\diag(s_m,s_m)^{-1}.
\] 
Then $\diag(s_m,s_m)M_{\sigma_i,H}\diag(s_m,s_m)^{-1}=(G_\D(i) \times G_\D(a-i)) \times (G_\D(a-i) \times G_\D(i))$. If $i\ne a-i$ then the inner period integral over $M_{\sigma_i,H}$ vanishes on cusp forms by Lemma \ref{lem vanishing linear}.
 If $i=a-i$ then $i=m$.
We observe that 
	\begin{multline*}
\diag(s_m,s_m)^{-1}(P\cap \sigma_m^{-1}H\sigma_m)(\A)\diag(s_m,s_m)^{-1}=\\\{\begin{pmatrix} g_1 & & x& \\ & g_2 & & y \\ & & g_3 & \\ & & & g_4
\end{pmatrix}: g_i\in G_\D(m,\A), \ i=1,2,3,4,\ x,y\in \M_m(\D\otimes_F \A)\}
	\end{multline*} 
from which it easily follows that $\delta_{P\cap \sigma_m^{-1}H\sigma_m}$ is trivial on $M_{\sigma_m^{-1},H}(\A)^{1,M}$ and $z_{\sigma_m}=0$. Since $M_{\sigma_m^{-1},H}=M_H$ the lemma follows.
	\end{proof}
	
	In the remainder of this section, we will compute the regularized period of Eisenstein series in terms of intertwining periods, following the arguments in \cite{MR2010737}.
	
	\subsection{Admissible double cosets inside $P \backslash G /H$}\label{ss explicit adm orbits}
	
The discussion in Section \ref{ss orbits} applies equally well to the number field situation. From this we make the following explicit choices of representatives of the $P$-admissible $P$-orbits in $G\cdot e$:

\begin{enumerate}
\item In all cases the unique open $P$-orbit in $G\cdot e$ is $P\cdot w$ and we have 
\[
P_w=M_w=\{\diag(g,\theta(g)): g\in G_\D(a)\}. 
\]
\item In case \namelink{lin}, the other $P$-admissible orbits are parameterized by the integer interval $[0,2m]$. These are the orbits $P\cdot x_j$ where we set $x_m=e$ and
\[x_j=\diag(I_j,-I_{2m-j},I_{2m-j},-I_j)[\nu_0]_{2m},\ \ \ m\ne j\in [0,2m],\ \ \ \text{where}\ \ \ \nu_0=\diag(1,-1).
\]
We have $P_{x_j}=M_{x_j}U_{x_j}$ and $M_{x_m}=M_H$ and $M_{x_j}=M_{(j,2m-j,2m-j,j)}$, $j\ne m$.

\item In cases \namelink{twlin1}, \namelink{twlin2},\namelink{gal1}, \namelink{gal2} the closed orbit $P\cdot e$ is the only $P$-admissible orbit in $G\cdot e \setminus P\cdot w$.

\end{enumerate}

	\subsection{The open intertwining periods}
	
Let
	\begin{align*}
		H_{\eta}^P = H \cap \eta^{-1} P \eta  = \eta^{-1} P_w\eta. 
	\end{align*}
	\begin{lemma}\label{lem::convergence+intt-period}
		There exists $s_0 > 0$ such that whenever $\Re s > s_0$ we have
		\begin{align*}
			\int_{H_{\eta}^P (\BA) \backslash H(\BA)} e^{\langle s \varpi_P, H_M( \eta h) \rangle} dh  < \infty.
		\end{align*}
	\end{lemma}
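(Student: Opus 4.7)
The plan is to factor the adelic integral as a product of local contributions and control convergence of the resulting Euler product at unramified places. Since the integrand is non-negative when $s$ is real, Tonelli's theorem allows one to write
\[
\int_{H_\eta^P(\A)\bs H(\A)} e^{\sprod{s\varpi_P}{H_M(\eta h)}}\, dh \;=\; \prod_v \int_{H_\eta^P(F_v)\bs H(F_v)} e^{\sprod{s\varpi_P}{H_M(\eta h_v)}}\, dh_v
\]
for compatible choices of local Haar measures. The factorization uses that, since $P\cdot w$ is open in $G\cdot e$, the adelic quotient $H_\eta^P(\A)\bs H(\A)$ is a restricted direct product of its local analogues, with $K\cap H(F_v)$ providing a compact almost-everywhere section.

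At every place $v$, the local integral converges for $\Re s$ in a suitable cone. This is the local convergence statement underlying the definition of open intertwining periods, \cite[Theorem 5.3]{MR4679384}, applied to the trivial representation of $M_v$: the argument there is purely geometric, using the Iwasawa decomposition of $H(F_v)$ transverse to $H_\eta^P(F_v)$, and it applies without change when the inducing datum is trivial. At almost all finite places $v$, an unramified Gindikin--Karpelevich-type computation, a special case of Proposition \ref{prop ur comp} applied with $\pi=\triv$, evaluates the local integral as an explicit finite product of ratios of local abelian $L$-factors at shifts of $s$.

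The product of these unramified factors over almost all $v$ is then a finite quotient of partial Dedekind zeta functions of $F$ (or of $E$, in the Galois cases) at various shifts of $s$, each of which converges absolutely in some right half-plane. Combined with the convergence of the finitely many local integrals at ramified and archimedean places, this yields a threshold $s_0>0$ beyond which the adelic integral converges. The main obstacle will be carrying out the unramified computation carefully enough, case by case in $\x$, to identify the Euler product with explicit abelian $L$-functions; once that is done, convergence in a right half-plane is immediate from classical estimates on partial Dedekind zeta functions, reducing ultimately to summing a geometric series over the dominant cocharacters of a split torus transverse to $H_\eta^P$.
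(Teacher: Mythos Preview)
Your approach is correct in outline but takes a genuinely different route from the paper. The paper follows \cite[Lemma 27]{MR1625060} (adapted as in \cite[Lemma 3.2]{MR4721777} and \cite[Proposition 4.5]{MR3719517}): one represents the spherical section $g\mapsto e^{\sprod{s\varpi_P}{H_M(g)}}$ as a Tate-type integral $\int \Phi(t\cdot v(g))\,|t|^{cs}\,d^\times t$ of an adelic Schwartz function $\Phi$, after which the intertwining-period integral unfolds into a Godement--Jacquet zeta integral whose absolute convergence in a right half-plane is classical. This argument is uniform across all the cases $\x$ and bypasses any explicit unramified computation.

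Your Euler-product strategy also works, but be aware of one imprecision. Proposition~\ref{prop ur comp} is stated for $\pi$ irreducible, generic and unramified; the trivial representation of $\GL_a$ is not generic when $a>1$, so you cannot literally invoke it with $\pi=\triv$. What you actually need is the direct computation of $\int_{H_\eta^P(F_v)\bs H(F_v)} e^{\sprod{s\varpi_P}{H_M(\eta h_v)}}\,dh_v$ at unramified $v$ via the Cartan decomposition of $H(F_v)$; this is the degenerate case of the same Gindikin--Karpelevich-type calculation underlying the cited references and does yield a finite product of local abelian $L$-factors, but you should either carry it out or cite the relevant intermediate formulas in \cite{MR1625060}, \cite{MR2060496}, \cite{MR3776281} rather than Proposition~\ref{prop ur comp} itself. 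Once that gap is closed, your convergence argument goes through. The trade-off is that the paper's Schwartz-function method is shorter and case-free, whereas your method is more elementary and would make the threshold $s_0$ effective in terms of the shifts appearing in the Euler product.
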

	\begin{proof}
		The argument is a straightforward adaptation of \cite[Lemma 27]{MR1625060}, as in the proof of \cite[Lemma 3.2]{MR4721777} (see also the proof of \cite[Proposition 4.5]{MR3719517}). It relies on expressing the spherical vector in the integrand as an integral of a Schwartz function, and using the basic properties of the Godement-Jacquet L-functions.
	\end{proof}

	Since $P_w=M_w$ and $\fra_{M_w}=\fra_M^G$, in the notation of Section \ref{sss global induced and Eis} for $\varphi \in I_P^G (\sigma)$ the function 
	\[
	g\mapsto \int_{M_w(F)\bs M_w(\A)^1}\varphi_s(mg)\ dm
	\]
	on $G(\A)$ is left $M_w(\A)$ invariant. Whenever convergent, define the open intertwining period
	\begin{align}\label{eq def global J}
		J_\sigma(\varphi, s) &= \int_{M_w(\BA) \backslash G_w(\BA)} \int_{ M_w(F) \backslash M_w(\BA)^1 } \varphi_s ( mg \eta ) \ dm \ dg
					\end{align}
					\begin{align*}
					& &= \int_{ H_{\eta}^P(\BA) \backslash H(\BA)} \int_{ M_w(F) \backslash M_w(\BA)^1 } \varphi_s ( m \eta h) \ dm \ dh. 
\end{align*}
Since elements of $I_P^G (\sigma)$ are bounded and $M_w(F) \backslash M_w(\BA)^1 $ has finite volume it follows from Lemma \ref{lem::convergence+intt-period} that there exists $t>0$ such that $J_\sigma(\varphi,s)$ is defined by an absolutely convergent integral for $\Re(s)>t$ and is holomorphic in $s$ in this domain. 
Furthermore, for such $s$, the linear form $J_\sigma(s)$ on $I_P^G(\sigma,s)$ is $H(\A)$-invariant. It is not identically zero if and only if the inner period integral is non-vanishing, that is, if and only if $\sigma=\pi_1\otimes \pi_2$ where $\pi_2=\pi_1^*$.	For the rest of this section we maintain the notation of Section \ref{sss global induced and Eis}  and fix $t$ as above once and for all.
	
	\subsection{Periods of pseudo Eisenstein series}
	
	Consider a test function $f$ in the Paley-Wiener space of $\C$. That is, $\hat{f} \in C_c^{\infty} (\BR)$ where
	\begin{align*}
		\hat{f} (t)  =  \int_{\Re s = s_0} e^{st} f(s) ds
	\end{align*}
	is independent of $s_0 \gg 0$. In what follows we always assume that $s_0>t$. For $\varphi \in I_P^G (\sigma)$ let
	\begin{align*}
		\CE (g, f, \varphi) = \int_{\Re s = s_0} f(s) E(g,\varphi,s) ds = \sum_{ \gamma \in P \backslash G } \hat{f}( t (\gamma g)) \varphi (\gamma g)
	\end{align*}
	be the associated pseudo Eisenstein series (independent of $s_0 \gg 0$), where $t(\gamma g) : = \langle \varpi_P , H_M(\gamma g) \rangle$. It is an automorphic function of rapid decay so that its $H$-period integral is absolutely convergent. 

For a subgroup $Q$ of $G$ write 
\[Q(\A)^{1,G}=Q(\A)\cap G(\A)^1.
\]	
	\begin{lemma}\label{lem::period+pseudo-Eis}
	For $\varphi\in I_P^G(\sigma)$ we have:
		\begin{align*}
			\int_{H(F) \backslash H (\BA)^{1,G}} \CE ( h, f, \varphi) = \int_{\Re s = s_0} f(s) J(\varphi,s) ds + f(0) \int_{K_H} \int_{M_H(F) \backslash M_H(\BA)^{1,M}} \varphi (m  k)\ dm\ dk.
		\end{align*}
	\end{lemma}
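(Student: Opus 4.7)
The plan is to unfold the period against the absolutely convergent series representation
\[
\CE(h,f,\varphi) = \sum_{\gamma\in P\bs G}\hat f(t(\gamma h))\varphi(\gamma h),
\]
valid for $s_0$ exceeding the abscissa $t$ of Lemma~\ref{lem::convergence+intt-period} (and where the sum is even locally finite thanks to the compact support of $\hat f$). Grouping by double cosets,
\[
\int_{H(F)\bs H(\A)^{1,G}}\CE(h,f,\varphi)\,dh = \sum_{\gamma\in P\bs G/H}\int_{(H\cap\gamma^{-1}P\gamma)(F)\bs H(\A)^{1,G}}\hat f(t(\gamma h))\varphi(\gamma h)\,dh.
\]
I would then invoke the classification of Section~\ref{ss explicit adm orbits} to reduce the sum. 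By the cuspidality of $\sigma$, orbits that are not $P$-admissible contribute zero: the Iwasawa decomposition of the stabilizer produces an inner Fourier coefficient of $\varphi$ along a non-trivial unipotent subgroup, which is killed on cusp forms. In case \namelink{lin}, the additional admissible orbits $P\cdot x_j$ with $j\ne m$ have stabilizers $M_{x_j}=M_{(j,2m-j,2m-j,j)}$ with $j\ne 2m-j$; their contributions vanish by Lemma~\ref{lem vanishing linear} applied to each unbalanced block pair.

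For the open orbit with representative $\eta$, I would substitute the Mellin representation $\hat f(t(g))\varphi(g)=\int_{\Re s=s_0}f(s)\varphi_s(g)\,ds$, interchange integrals (justified by Lemma~\ref{lem::convergence+intt-period}), and identify $H\cap\eta^{-1}P\eta=\eta^{-1}M_w\eta$ via $\Ad(\eta)$. Since $M_w=G_w$ in all cases considered, the outer quotient in the first line of~\eqref{eq def global J} is trivial and the inner integral becomes $J_\sigma(\varphi,s)$ by its second expression in~\eqref{eq def global J}, yielding the first summand $\int_{\Re s=s_0}f(s)J_\sigma(\varphi,s)\,ds$ of the claimed identity.

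For the closed orbit, $H\cap P = P_H = M_HU_H$. Applying the Iwasawa decomposition of $H(\A)$, the left $U(\A)$-invariance of $\varphi$ and $t$, and the modulus cancellation $\delta_{P_H}\delta_P^{-1/2}|_{M_H}=1$ from Lemma~\ref{lem mod}, the contribution reduces to an integral over $K_H\times (M_H(F)\bs M_H(\A)\cap G(\A)^1)$. Decomposing $M_H(\A)\cap G(\A)^1$ over $M_H(\A)^{1,M}$ with complementary central one-parameter scaling subgroup (trivial in the Galois cases, one-dimensional in the linear and twisted-linear cases) and reinserting the Mellin representation of $\hat f$, the $s$-integrand acquires an $e^{\pm st}/s$ factor coming from the scaling integration against $\delta_P^{1/2}$ and the trivial central character of $\sigma$ on $A_P^+$. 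A contour shift from $\Re s=s_0>0$ past the simple pole at $s=0$, absorbing the $2\pi i$ into the measure convention and using the Paley-Wiener decay of $f$ on vertical lines to kill the shifted contour, extracts the residue $f(0)A(\varphi)$. The main obstacle is making this residue computation uniform across the cases of Section~\ref{ss explicit adm orbits}: in the Galois cases the naive scaling integral degenerates and one must instead apply Lemma~\ref{lem::regularized-period+Eis}, exploiting that $\CE$ is of rapid decay (hence $\Lambda^{T,H}\CE=\CE$) and performing the contour shift directly on the boundary terms $\tfrac{e^{\pm st}}{s}A(\varphi)$, $\tfrac{e^{\pm st}}{s}A(M(s)\varphi)$ produced there, while checking that the polar contributions of $M(s)$ do not interfere.
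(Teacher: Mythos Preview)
Your overall plan (unfold, kill non-admissible orbits by cuspidality, kill unbalanced linear orbits by Lemma~\ref{lem vanishing linear}) matches the paper. Two points are wrong or misdirected.

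\textbf{Open orbit.} The claim ``$M_w=G_w$ in all cases considered'' is false: $M_w\simeq G_\D(a)$ via $g\mapsto\diag(g,\theta(g))$, while $G_w=\eta H\eta^{-1}$ is a form of $G_\D(2a)$. What is true is $P_w=M_w$, so the stabilizer has no unipotent part. The unfolded integral over $H_\eta^P(F)\bs H(\A)^{1,G}$ is then computed in stages: first over $M_w(F)\bs M_w(\A)^1$ (the inner period in \eqref{eq def global J}), then over the genuinely nontrivial quotient $M_w(\A)\bs G_w(\A)$, using that $A_G\subseteq M_w$ to pass from $H(\A)^{1,G}$ to $H(\A)$.

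\textbf{Closed orbit.} Your claim that the scaling complement of $M_H(\A)^{1,M}$ in $M_H(\A)^{1,G}$ is ``trivial in the Galois cases'' is incorrect: $(A_M^+)^{1,G}\simeq\R_{>0}$ in every case (it is $A_M^+/A_G^+$, which is one-dimensional because $P$ is maximal). More importantly, the contour-shift argument with $e^{\pm st}/s$ factors belongs to the proof of Lemma~\ref{lem::pseudo==period-DistributionFormula}, where the truncation $\hat\tau_P$ restricts you to a half-line; it is not what is needed here. In the present lemma there is no truncation: after separating $M_H(\A)^{1,G}=(A_M^+)^{1,G}\cdot M_H(\A)^{1,M}$ and using that $\varphi$ has trivial central character on $A_M^+$ while $\varphi_s(am)=e^{\langle s\varpi_P,H_M(a)\rangle}\varphi(m)$ for $m\in M_H(\A)^{1,M}$, the $s$-integral over $\Re s=s_0$ just reproduces $\hat f$ evaluated on the one-dimensional variable, and the subsequent integral over $(A_M^+)^{1,G}\simeq\R$ is Fourier inversion $\int_\R\hat f(u)\,du=f(0)$. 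No residue calculus is required, and the detour through Lemma~\ref{lem::regularized-period+Eis} is unnecessary.
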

	\begin{proof}
We carry out the standard unfolding of the pseudo-Eisenstein series by summing over $P \backslash G$ along representatives $\delta$ of the double coset space $P \backslash G / H$. Let $H_{\delta}^P = H \cap \delta^{-1} P \delta $. We have
		\begin{align*}
			\int_{H(F) \backslash H (\BA)^{1,G}} \CE (h,f,\varphi) dh = \sum_{\delta} \int_{H_{\delta}^P(F) \backslash H(\BA)^{1,G}} \int_{\Re s = s_0} f(s) \varphi_s (\delta h) \ ds\ dh.
		\end{align*}
		Fix a double coset $P \delta H$ and write $x = \delta \theta(\delta)^{-1}$. According to \cite[Section 3]{MR3541705}, there is a unique $\xi \in {}_M W_M w_{\star}$ such that $P x P = P \xi P $ and $L = M \cap \xi M \xi^{-1}$ is a standard Levi subgroup of $M$. By (the global analog of) \cite[Lemma 3.2]{MR3541705} the representative $\delta$ can be chosen so that $ x \in L \xi$. We have that
		\begin{align*}
			&\int_{ H_{\delta}^P(F) \backslash H(\BA)^{1,G}}  \int_{\Re s = s_0} f(s) \varphi_s (\delta h) \ ds\ dh \\
			& = \int_{ H_{\delta}^P(\BA)^{1,G}\backslash H(\BA)^{1,G}} \int_{H_{\delta}^P(F)  \backslash H_{\delta}^P(\BA)^{1,G}}  \int_{\Re s = s_0} \delta_{H_{\delta}^P} (h_1)^{-1} f(s) \varphi_s (  \delta h_1h) \ ds\ dh_1\ dh
			\\
			& = \int_{ P_x(\A)^{1,G} \backslash G_x(\BA)^{1,G}} \int_{P_x(F) \backslash P_x(\BA)^{1,G}}  \int_{\Re s = s_0} \delta_{P_x} (g)^{-1} f(s) \varphi_s (g_1g \delta )\ ds\ dg_1\ dg.
		\end{align*}
		The first identity applies integration in stages and the second the variable change $h_1h\mapsto \delta^{-1} h_1h \delta$.
		By \cite[Lemma 3.3]{MR3541705}, $P_x$ admits a Levi decomposition $P_x = L_x \ltimes R$ with unipotent radical $R$, we have
		\begin{align*}
			&\int_{P_x(F) \backslash P_x(\BA)^{1,G}} \int_{\Re s = s_0} f(s) \varphi_s (p g \delta h) \ ds\ dp \\
			&= \int_{L_x(F) \backslash L_x(\BA)^{1,G}}\int_{\Re s = s_0} f(s)  \int_{R(F) \backslash R(\BA)} \varphi_s (r m g \delta)\ dr\ ds\ dm
		\end{align*}
		and as in \cite[Proposition 4.2.2]{MR2010737} we deduce from cuspidality of $\sigma$ that
		\begin{align*}
			\int_{R(F) \backslash R(\BA)} \varphi_s ( r m g \delta) dr = 0
		\end{align*}
		unless $L = M$. That is, the summand associated to $\delta$ only contributes if $x$ is $P$-admissible.
		The $P$-admissible orbits are explicated in Section \ref{ss explicit adm orbits} and we choose $\delta$ so that $x$ is in the explicated list. That is, either $x\in \{e,w\}$ or $x=x_j$ in case \namelink{lin}. In particular, we see that $M_x(\A)$ contains the center of $G(\A)$ whenever $x$ is $P$-admissible and the outer integration over $P_x(\A)^{1,G} \backslash G_x(\BA)^{1,G}$ can be replaced by integration over $P_x(\A) \backslash G_x(\BA)$. 
		
		 For the open orbit with $x=w$ (that is, $\delta=\eta$) we have $P_w(\BA)^1 = M_w(\BA)^1$. Applying the convergence in Lemma \ref{lem::convergence+intt-period} and Fubini's theorem to change order of integration, the term associated to it equals 
		\begin{align*}
			\int_{\Re s = s_0} f(s) J_\sigma(\varphi,s) ds.
		\end{align*}
For the other orbits we have that $R\subseteq U$ ao that with respect to the probability measure
\[
\int_{R(F) \backslash R(\BA)} \varphi_s ( r m g \delta) dr = \varphi_s (m g \delta).
\]
Furthermore, $A_M$ is contained in $M_x$ and therefore $M_x(\A)^{1,G}=(A_M^+)^{1,G} M_x(\A)^{1,M}$. Recall that by assumption, the central character of $\sigma$ is trivial on $A_M^+$. Integrating in stages and applying the Fourier inversion formula for $f$ we have
\[
\int_{M_x(F) \backslash M_x(\BA)^{1,G}}\int_{\Re s = s_0} f(s)  \int_{R(F) \backslash R(\BA)} \varphi_s (r m g \delta)\ dr\ ds\ dm=f(0)\int_{M_x(F) \backslash M_x(\BA)^{1,M}}\varphi(mg\delta)\ dm.
\]
If $x\ne e$ (in particular, in case \namelink{lin}) it follows from Lemma \ref{lem vanishing linear} that the period integral on the right hand side vanishes. It remains to observe that $M_e(\A)^{1,M}=M_H(\A)^1$ and that $\delta_P^{\frac12}|_{P_e(\A)}=\delta_{P_e}$ so that by the Iwasawa decomposition, the integration over $P_e(\A)\bs G_e(\A)$ can be realized as integration over $K_H$.
		

	\end{proof}
	
	Next, we would like to be able to interchange the order of integration in the iterated integral
	\begin{align*}
		\int_{H(F) \backslash H(\BA)^1} \int_{\Re s= s_0} f(s) E(h,\varphi,s) \ ds\ dh.
	\end{align*}
	Since the period integral of an Eisenstein series does not converge this can only be achieved via its regularization.
	\begin{lemma}\label{lem::pseudo==period-DistributionFormula}
		Suppose that $f(0) = 0$. Then for sufficiently large $s_0$, we have
		\begin{align}\label{formula::pseudo=period-DistributionFormula}
			\int_{H(F) \backslash H (\BA)^{1,G}} \CE(h,f,\varphi) dh = \int_{\Re s = s_0} f(s) \CP_H (E(\varphi,s)) ds.
		\end{align}
	\end{lemma}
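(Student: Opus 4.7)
By Lemma \ref{lem::period+pseudo-Eis} applied with $f(0)=0$, the LHS of \eqref{formula::pseudo=period-DistributionFormula} already equals $\int_{\Re s = s_0} f(s)\, J_\sigma(\varphi,s)\, ds$. It therefore suffices to prove that
\[
\int_{\Re s = s_0} f(s)\, J_\sigma(\varphi,s)\, ds \;=\; \int_{\Re s = s_0} f(s)\, \CP_H(E(\varphi,s))\, ds.
\]
The plan is to substitute Zydor's expansion of $\CP_H(E(\varphi,s))$ from Lemma \ref{lem::regularized-period+Eis} into the right-hand side, interchange integrals when permitted, and match the resulting boundary contributions with those coming from the inversion formula \eqref{formula::inversion+Eis-series} integrated against $f(s)\,ds$.

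Concretely, the first term in Lemma \ref{lem::regularized-period+Eis}, after applying Fubini to swap the $s$- and $h$-integrals, becomes $\int_H \Lambda^{T,H}\CE(h,f,\varphi)\,dh$. The Fubini step is justified by the uniform-in-$s$ rapid decay of $\Lambda^{T,H} E(\cdot,\varphi,s)$ on Siegel domains of $H(F)\backslash H(\BA)^{1,G}$ (with polynomial growth in $|\Im s|$), combined with the rapid decay of $f$ in $|\Im s|$ guaranteed by the Paley--Wiener theorem; the commutation of the $\C$-linear truncation $\Lambda^{T,H}$ with the contour integral is then standard. On the other hand, the inversion formula \eqref{formula::inversion+Eis-series}, applied pointwise in $h$ to $E(\cdot,\varphi,s)$ and integrated against $f(s)\,ds$, gives
\[
\CE(h,f,\varphi)-\Lambda^{T,H}\CE(h,f,\varphi) \;=\; \sum_{\varsigma \in \Sigma_P}\sum_{\gamma \in \varsigma(P)_H\backslash H} \CE_P(\varsigma^{-1}\gamma h, f, \varphi)\,\hat\tau_{\varsigma(P)}\bigl(H_{P_0^H}(\gamma h)_P^G - T_P^G\bigr).
\]
Integrating this identity over $H(F)\backslash H(\BA)^{1,G}$, unfolding the inner sum to produce integrals over $\varsigma(P)_H\backslash H(\BA)^{1,G}$, and expanding $\CE_P$ via the constant-term formula \eqref{formula::cons-term+Eis-series} reproduces (with opposite sign) the two boundary terms $-\frac{e^{st}}{s}I(\varphi)$ and $+\frac{e^{-st}}{s}I(M(s)\varphi)$ of Lemma \ref{lem::regularized-period+Eis} once integrated against $f(s)\,ds$. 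Indeed this calculation is essentially the same one carried out in the proof of Lemma \ref{lem::regularized-period+Eis}, with the additional observation that the $\varsigma \ne e$ contributions arising in case \namelink{lin} vanish by Lemma \ref{lem vanishing linear}. Combining the two pieces yields the required identity.

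The main obstacle is the rigorous justification of Fubini both in the main step and when unfolding the $\gamma$-sum after integrating over $H(F)\backslash H(\BA)^{1,G}$; this relies on absolute convergence estimates for $\hat\tau_{\varsigma(P)}\cdot \CE_P(\varsigma^{-1}\cdot,f,\varphi)$, which are again enabled by the Paley--Wiener decay of $f$ in the imaginary direction together with the exponential control on Siegel sets provided by the characteristic function $\hat\tau_{\varsigma(P)}$. The role of the hypothesis $f(0)=0$ is twofold: it annihilates the closed-orbit contribution appearing in Lemma \ref{lem::period+pseudo-Eis}, and it makes the quotients $f(s)e^{\pm st}/s$ entire, so that no spurious residue at $s=0$ obstructs the contour manipulations used to identify $\int_H (\CE - \Lambda^{T,H}\CE)\, dh$ with the two boundary integrals of Zydor's formula.
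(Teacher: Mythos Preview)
Your approach is essentially the paper's, with two minor issues. First, the opening invocation of Lemma~\ref{lem::period+pseudo-Eis} is an unnecessary detour: your subsequent argument in fact proves $\int_{H(F)\backslash H(\BA)^{1,G}} \CE(h,f,\varphi)\,dh = \int_{\Re s = s_0} f(s)\,\CP_H(E(\varphi,s))\,ds$ directly, which is exactly the statement of the lemma; the reduction to $J_\sigma$ plays no role and can be dropped. The paper proceeds just as the body of your argument does---split $\CE$ via the inversion formula \eqref{formula::inversion+Eis-series} into $\Lambda^{T,H}\CE$ plus boundary sums, integrate each over $H$, apply Fubini to the truncated piece, evaluate the boundary pieces explicitly (using Iwasawa and a contour shift relying on $f(0)=0$ for the $\varphi$-term), and match the result with Lemma~\ref{lem::regularized-period+Eis} integrated against $f$---without reference to $J_\sigma$.

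Second, your phrase ``with opposite sign'' is a slip: the integral $\int_H(\CE-\Lambda^{T,H}\CE)\,dh$ reproduces the two boundary terms of Lemma~\ref{lem::regularized-period+Eis}, integrated against $f(s)\,ds$, with the \emph{same} sign. That is precisely what is required for the two pieces to combine to the desired identity; with opposite signs the conclusion would fail.
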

	\begin{proof}
		We follow closely the argument of \cite[Lemma 9.1.1]{MR2010737}. By definition,
		\begin{align*}
			\int_{H(F) \backslash H (\BA)^{1,G}} \CE(h,f,\varphi) dh  =  \int_{H(F) \backslash H(\BA)^1} \int_{\Re s= s_0} f(s) E(h,\varphi,s) \ ds \ dh.
		\end{align*}
		By the inversion formula \eqref{formula::inversion+Eis-series} and the constant term formula \eqref{formula::cons-term+Eis-series}, the right hand side of the above identity is equal to $I_1+I_2+I_3$ where
		\begin{align*}
			&I_1= \int_{H \backslash H(\BA)^{1,G}} \int_{\Re s= s_0} f(s) \Lambda^{T,H}E(h,\varphi,s) \ ds\ dh, \\ 
			& I_2=\int_{P_H(F) \backslash H (\BA)^{1,G}} \int_{\Re s= s_0} f(s)  e^{ \langle s \varpi_P, H_M(h) \rangle}  \varphi(h) \hat{\tau}_P (H_{P_0^H} (h)_P^G - T_P^G )\ ds\ dh,  \\
			&I_3= \int_{P_H(F) \backslash H (\BA)^{1,G}} \int_{\Re s= s_0} f(s)  e^{- \langle s \varpi_P, H_M(h) \rangle}  M(s)\varphi(h) \hat{\tau}_P (H_{P_0^H} (h)_P^G - T_P^G )\ ds\ dh. 
		\end{align*}
		By the property of the truncation operator, for  $s_0 \gg 0$, $\Lambda^{T,H} E(h,\varphi,s)$ is rapidly decreasing in $h$ and the integral
		\begin{align*}
			\int_{H(F) \backslash H (\BA)^{1,G}} \Lambda^{T,H}E(h,\varphi,s)\ dh
		\end{align*}
		is rapidly decreasing in the imaginary part of $s$. Hence we can interchange the order of integration and obtain that
		\[
		I_1= \int_{\Re s= s_0} f(s) \int_{H \backslash H(\BA)^{1,G}} \Lambda^{T,H}E(h,\varphi,s) \ dh\ ds.
		\]
Note that $A_M$ is contained in $P_H$. For $I_2$ and $I_3$ we perform the outer integral over $P_H(F)\bs H(\A)^{1,G}$ in stages, over $P_H(A)^{1,G}\bs H(\A)^{1,G}$ and $P_H(F)\bs P_H(\A)^{1,G}$. By the Iwasawa decomposition, the first is realized as integration over $K_H$ while for the second, the integrand is independent of $U_H(\A)$. We obtain that
		\begin{align*}
			I_2=\int_{K_H} \int_{M_H \backslash M_H(\BA)^{1,M}} \int_{\fra_P^G} \int_{\Re s= s_0} f(s)  e^{ \langle s \varpi_P, X \rangle}  \varphi(mk) \hat{\tau}_P ( X - T_P^G)\ ds\ dX\ dm\ dk 
		\end{align*}
		and similarly
		\begin{align*}
			I_3=\int_{K_H} \int_{M_H \backslash M_H(\BA)^{1,M}} \int_{\fra_P^G} \int_{\Re s= s_0} f(s)  e^{- \langle s \varpi_P, X \rangle}  M(s)\varphi(mk) \hat{\tau}_P ( X - T_P^G)\ ds\ dX\ dm\ dk. 
		\end{align*}
		
		We identify $\fra_P^G$ with $\BR$ by letting the fundamental coweight dual to $\varpi_P$ correspond to $1$ and denote by $t>0$ the image of $T_P^G$ in $\BR$. Then
		\begin{align*}
			\int_{\fra_P^G} \int_{\Re s= s_0} f(s)  e^{ \langle s \varpi_P, X \rangle}  \varphi(mk) \hat{\tau}_P ( X - T_P^G)\ ds\ dX=\int_t^{\infty} \int_{\Re s= s_0} f(s) e^{sx} \ ds\ dx.
		\end{align*}
		As $f$ is holomorphic, we first shift the integral over $\Re s = s_0$ to $\Re s = s_1$ with $s_1 < 0$. The resulting double integral is absolutely convergent and is equal to 
		\begin{align*}
			- \int_{\Re s = s_1} f(s) \frac{e^{st}}{s} ds.
		\end{align*}
		As $f(0) = 0$, the function $f(s)/s$ is also holomorphic and hence we can shift the integral back to $\Re s = s_0$. Therefore,
		\begin{align*}
			I_2=- \int_{\Re s= s_0}  f(s) \frac{e^{st}}{s} \int_{K_H} \int_{M_H \backslash M_H(\BA)^{1,M}}      \varphi(mk)  \ dm\ dk \ ds. 
		\end{align*}
		The computation of $I_3$ is similar but simpler. In this case the integral is absolutely convergent and there is no shift of contour. After changing the order of integration we get that 
		\begin{align*}
			I_3=\int_{\Re s= s_0}f(s) \int_{K_H} \int_{M_H \backslash M_H(\BA)^{1,M}}     \frac{e^{-st}}{s}    (M(s)\varphi) (mk)  \ dm\ dk\ ds.
		\end{align*}
		The lemma then follows from Lemma \ref{lem::regularized-period+Eis}.
	\end{proof}
	
	\begin{corollary}\label{cor::period==intt-period}
		We have
		\begin{align*}
			\CP_H(E(\varphi,s)) = J_\sigma(\varphi,s).
		\end{align*}
In particular, $J_\sigma(s)$ admits a meromorphic continuation to $\C$.
	\end{corollary}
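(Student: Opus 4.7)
The strategy is to identify the two expressions for the $H$-period of a pseudo-Eisenstein series provided by Lemmas \ref{lem::period+pseudo-Eis} and \ref{lem::pseudo==period-DistributionFormula}, and to deduce the pointwise identity of their integrands by a density argument in the Paley--Wiener space. The last assertion of the corollary, meromorphic continuation of $J_\sigma(s)$, will then follow since $\CP_H(E(\varphi,s))$ is already known to be meromorphic on $\C$ by the general theory of regularized Eisenstein periods \cite[Theorem 8.4.1]{MR2010737}.

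First, fix $s_0 > t$ large enough that $J_\sigma(\varphi,s)$ is given by the absolutely convergent integral of Lemma \ref{lem::convergence+intt-period} in an open neighborhood of the vertical line $\Re(s) = s_0$, and that $\CP_H(E(\varphi,s))$ is holomorphic on the same neighborhood; such $s_0$ exists since $\CP_H(E(\varphi,\cdot))$ is meromorphic. For any Paley--Wiener function $f$ on $\C$ with $f(0) = 0$, the boundary term in Lemma \ref{lem::period+pseudo-Eis} vanishes, so combining that lemma with Lemma \ref{lem::pseudo==period-DistributionFormula} yields
\[
\int_{\Re s = s_0} f(s)\,D(s)\,ds = 0, \qquad \text{where } D(s) := \CP_H(E(\varphi,s)) - J_\sigma(\varphi,s).
\]

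The next step is to upgrade this integral vanishing to the pointwise identity $D(s) = 0$ on $\Re(s) = s_0$. A Paley--Wiener function $f$ satisfies $f(0) = 0$ if and only if it factors as $f(s) = s\,h(s)$ with $h$ again a Paley--Wiener function of the same exponential type. Substituting, the identity becomes $\int_{\Re s = s_0} h(s)\bigl(s\,D(s)\bigr)\,ds = 0$ for every Paley--Wiener $h$. Using vertical growth estimates for $D(s)$ together with Fourier inversion applied to the restriction of $s\,D(s)$ to $\Re(s) = s_0$, we conclude that $s\,D(s) \equiv 0$ on that line, hence $D(s) \equiv 0$ there. Since $J_\sigma(\varphi,s)$ is holomorphic on the half-plane $\Re(s) > t$ and agrees with the meromorphic function $\CP_H(E(\varphi,s))$ on $\Re(s) = s_0$, the identity $\CP_H(E(\varphi,s)) = J_\sigma(\varphi,s)$ first extends to the half-plane $\Re(s) > t$ by analytic continuation, and then provides the desired meromorphic continuation of $J_\sigma(\varphi, \cdot)$ to the whole complex plane, completing the proof.

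The main obstacle in executing this plan is the verification of the vertical growth estimates required to justify the Fourier inversion step; these are standard for regularized periods of Eisenstein series (see \cite{MR1361168,MR2010737}), and the corresponding estimates for the open intertwining period can be derived from the Godement--Jacquet bounds already used in the proof of Lemma \ref{lem::convergence+intt-period}.
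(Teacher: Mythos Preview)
Your proof is correct and follows essentially the same approach as the paper: combine Lemmas~\ref{lem::period+pseudo-Eis} and~\ref{lem::pseudo==period-DistributionFormula} under the constraint $f(0)=0$, then invoke a density argument in the Paley--Wiener space. The paper simply cites \cite[Lemma 9.1.2]{MR2010737} for that density step, whereas you unpack it via the factorization $f(s)=s\,h(s)$ and Fourier inversion; this is exactly the content of the cited lemma, so the two arguments coincide.
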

	\begin{proof}
		Combining Lemmas \ref{lem::period+pseudo-Eis} and \ref{lem::pseudo==period-DistributionFormula}, for any Paley-Wiener function $f$ on $\C$ such that $f(0) = 0$ and for $s_0 \gg 0$ we have 
		\begin{align*}
			\int_{\Re s = s_0} f(s) \CP_H (E(\varphi,s)) ds = \int_{\Re s =s_0} f(s) J_\sigma(\varphi,s)ds. 
		\end{align*}
		The corollary now follows from a simple distributional density argument \cite[Lemma 9.1.2]{MR2010737} .
	\end{proof}
	
	\begin{corollary}\label{cor:FEof GIP}
		We have the functional equation
		\begin{align}
			J_\sigma(\varphi,s)  =  J_{w\sigma}(M(s)\varphi,-s).
		\end{align}
	\end{corollary}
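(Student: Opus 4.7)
The plan is to deduce the functional equation directly from the two results just established: the identification of the regularized period with the intertwining period (Corollary \ref{cor::period==intt-period}) and the functional equation for Eisenstein series \eqref{formula::FE+EisensteinSeries}.

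First I would apply Corollary \ref{cor::period==intt-period} to the section $\varphi \in I_P^G(\sigma)$ at the parameter $s$, obtaining
\[
J_\sigma(\varphi,s) = \CP_H(E(\varphi,s)).
\]
Next, I would apply the same corollary to the transported section $M(s)\varphi \in I_P^G(w\sigma)$ at parameter $-s$; this is legitimate since $M(s)$ intertwines $I_P^G(\sigma,s)$ with $I_P^G(w\sigma,-s)$, and the construction of the regularized period and of the intertwining period is available equally well for $w\sigma$. This yields
\[
J_{w\sigma}(M(s)\varphi,-s) = \CP_H(E(M(s)\varphi,-s)).
\]
The functional equation \eqref{formula::FE+EisensteinSeries} for the Eisenstein series gives the equality
\[
E(M(s)\varphi,-s) = E(\varphi,s)
\]
as meromorphic families of automorphic forms on $G(\A)$, hence their regularized $H$-periods (which depend only on the automorphic form) coincide. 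Chaining the three identities produces the desired functional equation.

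The only point that requires a moment of care is the legitimacy of applying $\CP_H$ on both sides of the Eisenstein functional equation: the regularization of \cite{MR4411860} is performed on the automorphic form itself, and Zydor's construction of $\CP_H$ is invariant under replacing a meromorphic family of automorphic forms by another that coincides with it, so this presents no difficulty. Since all three identities are equalities of meromorphic functions of $s$ on $\C$ (the meromorphic continuation of $J_\sigma(\varphi,s)$ being the content of Corollary \ref{cor::period==intt-period}), there is no convergence issue to check. The whole argument is thus a short formal consequence of what precedes, with no substantial obstacle.
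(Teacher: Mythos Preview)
Your proof is correct and follows exactly the paper's approach: the paper's proof is a one-line reference to the functional equation \eqref{formula::FE+EisensteinSeries} of the Eisenstein series together with Corollary \ref{cor::period==intt-period}. Your additional remarks on the legitimacy of applying $\CP_H$ to both sides are fine but not strictly necessary, since $\CP_H$ is defined on automorphic forms and the two Eisenstein series are literally equal.
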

	\begin{proof}
		This follows from the functional equation \eqref{formula::FE+EisensteinSeries} of the Eisenstein series and Corollary \ref{cor::period==intt-period}.
	\end{proof}

	\subsection{The Maass-Selberg relations}\label{sec::maass-selberg}
	
	As a consequence of Lemma \ref{lem::regularized-period+Eis} and Corollary \ref{cor::period==intt-period} and in the notation of the lemma we have
	\begin{equation*} 
		\begin{aligned}
			\int_{H(F) \backslash H (\BA)^{1,G}} \Lambda^{T,H}E(h,\varphi,s) dh =& J_\sigma(\varphi,s) + \frac{e^{st}}{s} \int_{K_H} \int_{M_H \backslash M_H(\BA)^{1,M}} \varphi(mk)\ dm\ dk \\
			& - \frac{e^{-st}}{s} \int_{K_H} \int_{M_H \backslash M_H(\BA)^{1,M}} (M(s)\varphi)(mk)\ dm\ dk.  
		\end{aligned}
	\end{equation*}

	\section{Global distinction and poles of open gobal intertwining periods}
	
		We retain the notations from Section \ref{sec::Global}. 
In this section we deduce the following theorem from the Maass-Selberg relations. 
	
	\begin{theorem}\label{thm pole of GIP}
	Let $\pi\in \mathcal{C}^*(G_m(\BA))$ be such that $\pi=\pi^*$ and the central character of $\pi$ is trivial on $\R_{>0}I_m$. Then $\pi$ is distinguished if and only if the open intertwining period $J_{\sigma}(s)$ has a  pole at $s=0$. When this is the case, the pole is simple.
	\end{theorem}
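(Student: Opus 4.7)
The plan is to analyze the Maass-Selberg relation of Section \ref{sec::maass-selberg} in the limit $s\to 0$. Writing $I_0(\varphi) := \int_{K_H}\int_{M_H\backslash M_H(\A)^{1,M}}\varphi(mk)\,dm\,dk$ for the closed-orbit period and $t := \sprod{\varpi_P}{T_P^G}$, the relation reads
\[
A^T(\varphi,s) := \int_{H(F)\backslash H(\A)^{1,G}}\Lambda^{T,H}E(h,\varphi,s)\,dh = J_\sigma(\varphi,s) + \frac{e^{st}}{s}I_0(\varphi) - \frac{e^{-st}}{s}I_0(M(s)\varphi).
\]
First I would verify that $A^T(\varphi,s)$ is holomorphic at $s=0$: since $\jl(\pi)$ is cuspidal and $\pi=\pi^*$, Lemma \ref{lem hol int op} applied place by place, combined with holomorphy at $s=0$ of the global normalizing $L$-factor (which follows from Theorem \ref{thm non vanishing s=1}), shows that $M(s)$ is holomorphic at $s=0$; the constant-term formula \eqref{formula::cons-term+Eis-series} then yields holomorphy of $E(\varphi,s)$ at $s=0$, and this is preserved by Zydor's truncation after integration against a rapidly decreasing function on $H(F)\backslash H(\A)^{1,G}$.

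A Laurent expansion of the right-hand side near $s=0$, using holomorphy of $M(s)$, yields at once that $J_\sigma(\varphi,s)$ has at most a simple pole at $s=0$ with
\[
\operatorname{Res}_{s=0}J_\sigma(\varphi,s) = I_0(M(0)\varphi) - I_0(\varphi).
\]
If $\pi$ is not $H(\A)$-distinguished, $I_0$ vanishes identically on both $I_P^G(\sigma)$ and $I_P^G(w\sigma)$, so this residue vanishes and $J_\sigma$ is entire at $s=0$. This settles one direction of the theorem and also yields the unconditional upper bound ``at most a simple pole''.

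For the converse, I would assume $\pi$ is distinguished and exhibit $\varphi$ for which the residue is nonzero, via the factorization for decomposable sections
\[
J_\sigma(\varphi,s) = J_S(\varphi_S,s) \cdot \frac{\L^S(s,\pi,\theta)}{\DL^S(s,\pi,\theta)}
\]
supplied by the unramified computation of Proposition \ref{prop ur comp}. Since each $\pi_v$ is locally distinguished, Lemma \ref{lem::intt-period-pole} gives a strict pole of $\J_{\pi_v\otimes\pi_v}(s)$ at $s=0$, and a judicious choice of local vector at each $v\in S$, combined with the exact pole orders of the $L$-factors from Theorem \ref{thm order pole local L}, forces the product to have a strict pole at $s=0$. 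Combined with the upper bound of the previous paragraph, this pole of $J_\sigma$ at $s=0$ must be simple.

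The principal obstacle is this final step, which rests on a non-cancellation argument between the local pole orders of the $\J_{\pi_v\otimes\pi_v}(s)$ and the partial $L$-quotient $\L^S/\DL^S$. Controlling this non-cancellation relies on the sharp local pole analysis of Section \ref{sec local intper} together with the global analyticity of Theorem \ref{thm non vanishing s=1}; in particular, one must verify that the residue formula $I_0(M(0)\varphi) - I_0(\varphi)$ is not annihilated by a subtle symmetry of $M(0)$ when $\pi$ is genuinely distinguished, which amounts to a sign determination in the interaction of the closed-orbit period $I_0$ with the global intertwining operator $M(0)$.
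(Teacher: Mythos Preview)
Your analysis correctly identifies the Maass--Selberg relation and the residue formula
\[
\operatorname{Res}_{s=0}J_\sigma(\varphi,s)=I_0(M(0)\varphi)-I_0(\varphi),
\]
and your treatment of the ``not distinguished'' direction is fine. The gap is in the converse direction, and you have put your finger on it yourself in your final paragraph: the whole argument hinges on the ``sign determination'' of $M(0)$, and this is precisely what you do not carry out. The paper settles this directly via Lemma~\ref{lem::Keys-Shahidi}, which proves that $M(0)=-\Id$ (using Keys--Shahidi together with the fact that $L(s,\jl(\pi),\jl(\pi)^\vee)$ has a simple pole at $s=1$). With this in hand the residue is simply $-2\,I_0(\varphi)$, and Lemma~\ref{lem closed JR} then gives the equivalence immediately: $J_\sigma$ has a pole at $s=0$ if and only if $I_0$ is not identically zero, if and only if $\pi$ is distinguished.

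Your proposed alternative for the converse direction---feeding the factorization $J_\sigma=J_S\cdot \L^S/\DL^S$ and the local pole analysis of Section~\ref{sec local intper} back into the problem---does not work as a proof of the present theorem. First, it is logically circular: the local-global principle of Theorem~\ref{thm::Main} \emph{uses} Theorem~\ref{thm pole of GIP} as input, so you cannot invoke the machinery of Section~\ref{sec::LGp} here. Second, even on its own terms, the argument you sketch would at best give $\Ord_{s=0}J_\sigma(s)\le \Ord_{s=0}\L(s,\pi,\theta)$ via Theorem~\ref{thm mainloc}, and showing the latter is $\ge 1$ from distinction alone, without already knowing $M(0)=-\Id$, is exactly the step you cannot complete. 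The clean route is the one the paper takes: prove the scalar $M(0)=-\Id$ once, and the rest follows in one line.
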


	First, we make use of the holomorphicity of Eisenstein series at zero. 
	\begin{lemma}\label{lem::holomorphic-TruncPeriods}
		With the above notation for $\varphi\in I_P^G(\sigma)$ the function 
		\begin{align*}
			s\mapsto  \int_{H(F) \backslash H(\BA)^{1,G}} \Lambda^{T,H} ( E (h,\varphi,s)) dh 
		\end{align*}
	is holomorphic at $s=0$.
	\end{lemma}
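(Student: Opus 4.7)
The key input is the holomorphy of the Eisenstein series $E(\varphi,s)$ itself at $s=0$. Since $\sigma$ is cuspidal with central character trivial on $A_P^+$, it is unitary, and Langlands' general theory of Eisenstein series \cite{MR1361168} gives holomorphy of $E(\varphi,s)$ on the imaginary axis. In particular $E(\varphi,s)$ is holomorphic at $s=0$. This is the scalar fact from which everything else will follow by propagating through truncation and integration.

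Next I would propagate this holomorphy through the relative truncation operator. Recall from \cite{MR4411860} that $\Lambda^{T,H}$ is built as a finite alternating sum over semi-standard parabolic subgroups of $G$ containing $P_0^H$, each term involving the constant term of $E(\cdot,\varphi,s)$ along such a parabolic. By \eqref{formula::cons-term+Eis-series-II} only constant terms along parabolics conjugate to $P$ contribute non-trivially, and each such constant term is an integral of $E(\cdot,\varphi,s)$ over a compact quotient $V(F)\backslash V(\BA)$. These constant terms therefore inherit holomorphy in $s$ at $s=0$ from the Eisenstein series itself, even though, taken individually via the formula \eqref{formula::cons-term+Eis-series}, the summands $\varphi_s$ and $e^{-2s\varpi_P(\cdot)}M(s)\varphi$ each may be non-holomorphic there; the point is that their sum is holomorphic precisely when $E(\varphi,s)$ is. Consequently, for each $h\in H(\BA)$, the function $s\mapsto \Lambda^{T,H}E(h,\varphi,s)$ is holomorphic at $s=0$.

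Finally, for sufficiently positive $T$, the truncated function $\Lambda^{T,H}E(\cdot,\varphi,s)$ is of rapid decrease on $H(F)\backslash H(\BA)^{1,G}$, with constants in the seminorm estimates locally uniform in $s$ on a neighborhood of $s=0$, by \cite[Theorem 4.1]{MR4411860}. This uniform decay permits interchanging the integral over $H(F)\backslash H(\BA)^{1,G}$ with the Laurent expansion in $s$, so the integrated expression inherits holomorphy at $s=0$ from the pointwise holomorphy of the integrand.

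\textbf{The main obstacle} is securing the uniform in $s$ rapid decrease of $\Lambda^{T,H}E(\cdot,\varphi,s)$ across a neighborhood of $s=0$. This is the most delicate technical input from Zydor's theory of relative truncation, but it is precisely what is recorded in the convergence statements of \cite{MR4411860}; granting it, the lemma reduces entirely to the scalar fact that $E(\varphi,s)$ has no pole at $s=0$, which is classical.
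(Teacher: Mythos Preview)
Your proposal is correct and follows essentially the same approach as the paper: both rely on the holomorphy of $E(\varphi,s)$ at $s=0$ (the paper cites \cite[Proposition IV.1.11 (b)]{MR1361168}), propagate it through truncation, and then use uniform rapid decrease to pass holomorphy under the $H$-integral. The only cosmetic difference is that the paper packages the final interchange via Morera's theorem---integrating over a small closed contour $C$, swapping with the $H$-integral by absolute convergence, and citing \cite[Lemma 3.1]{MR650368} to commute the contour integral with $\Lambda^{T,H}$---rather than speaking of interchanging with a Laurent expansion; Morera is arguably the cleaner device here since it avoids any discussion of derivatives or term-by-term bounds.
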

	\begin{proof}
		The Eisenstein series $E(\varphi,s)$ is holomorphic at $s=0$ \cite[Proposition IV.1.11 (b)]{MR1361168} and the same is therefore true for  $\Lambda^{T,H} ( E (\varphi,s))$. As explained in the proof of \cite[Lemma 3.1]{MR650368}, for any closed compact curve $C$ in a sufficiently small neighborhood of $s=0$ the double integral
		\[
		\int_C\int_{H(F) \backslash H(\BA)^{1,G}} \Lambda^{T,H} ( E (h,\varphi,s)) \ dh \ ds
		\]
is absolutely convergent so that we can change order of integration and furthermore, the integral over $C$ further commutes with the truncation operator $\Lambda^{T,H}$. The lemma is therefore a consequence of Morera's criterion for holomorphicity.
		
	\end{proof}
	
	The following lemma is a key for our methods.
	\begin{lemma}\label{lem::Keys-Shahidi}
		Suppose that $\pi\in \mathcal{C}^*(G_m(\BA))$. Then $M(0) = -\Id$.
	\end{lemma}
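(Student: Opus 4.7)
The strategy is to decompose $M(s) = r(s) N(s)$, where $r(s) := r(w_M, \pi \otimes \pi^*, (s,-s))$ is the Shahidi scalar normalizing factor from Section \ref{ss normint} and $N(s) := N(w_M, \pi \otimes \pi^*, (s,-s))$ is the corresponding normalized intertwining operator, and then to compute each factor separately at $s = 0$. Note that the identity $M(0) = -\Id$ tacitly requires $w\sigma \simeq \sigma$, i.e.\ $\pi \simeq \pi^*$; this is precisely the regime relevant for Theorem \ref{thm pole of GIP}.

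First I would show $N(0) = \Id$. The global $N(s)$ factors as $\otimes_v N_v(s)$ over places, and since the local components of a cuspidal $\pi$ lie in $\Pi_{\D_v}(-\tfrac12, \tfrac12)$, Lemma \ref{lem hol int op} applies place by place to yield that each $N_v(s)$ is holomorphic and invertible at $s = 0$. The Eisenstein-series functional equation $M(s)M(-s) = \Id$ transfers to $N(s)N(-s) = \Id$, so $N(0)^2 = \Id$. Because $\jl(\pi)$ is cuspidal, $\pi \times \pi^*$ is irreducible at $s = 0$, hence by Schur's lemma $N(0) = \pm \Id$. To pin down the sign I would combine the Gindikin-Karpelevich formula at unramified places (giving $N_v(0)\varphi_v^0 = \varphi_v^0$ on the spherical vector) with Shahidi's theorem that local normalized intertwining operators preserve the Whittaker functional; these force $N_v(0) = \Id$ at every place, so $N(0) = \Id$ globally.

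The main computation is $r(0) = -1$. Globally, following Section \ref{ss normint},
\[
r(s) = \frac{L(2s, \jl(\pi),\jl(\pi^*)^\vee)}{\epsilon(2s, \jl(\pi),\jl(\pi^*)^\vee,\psi)\,L(1+2s,\jl(\pi),\jl(\pi^*)^\vee)}.
\]
Since $\pi \simeq \pi^*$, the $L$-function appearing here coincides (up to the auxiliary involution $\vartheta$) with the Rankin-Selberg $L$-function of the cuspidal self-dual representation $\jl(\pi)$ of $\GL_{dm}(\A)$. By Jacquet-Shalika it admits simple poles precisely at $s = 0$ and $s = 1$, and the functional equation $L(s,\cdot) = \epsilon(s,\cdot,\psi)L(1-s,\cdot)$ yields the residue identity $\Res_{s=0}L(s) = -\epsilon(0,\cdot,\psi)\,\Res_{s=1}L(s)$. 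Extracting Laurent coefficients at $s = 0$ of numerator and denominator of $r(s)$, the simple poles cancel and this relation gives exactly $r(0) = -1$. Combining, $M(0) = r(0)N(0) = -\Id$.

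The most delicate step is the local sign computation forcing $N_v(0) = \Id$ at ramified places; this rests on Shahidi's theory of local coefficients. The residue computation for $r(0) = -1$, by contrast, is a clean application of the Rankin-Selberg functional equation.
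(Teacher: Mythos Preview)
Your strategy—decompose $M(s) = r(s)N(s)$ and compute each factor at $s=0$—is exactly the Keys--Shahidi argument that the paper invokes via \cite[Proposition~6.3]{MR944102}, so the overall approach matches. Your residue computation giving $r(0) = -1$ from the functional equation of the Rankin--Selberg $L$-function is correct.

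The gap is in your argument that $N_v(0) = \Id$ at every place. You invoke Shahidi's result that normalized local intertwining operators preserve the Whittaker functional, but this applies only when $\pi_v$ is generic in the usual (Whittaker) sense. At the finitely many places $v$ where $D_v$ is a nontrivial division algebra, representations of $\GL_a(\D_v)$ have no Whittaker model, so Shahidi's argument gives you no mechanism to pin down the local sign $\epsilon_v \in \{\pm 1\}$; and since only the product $\prod_{v \in S} \epsilon_v$ enters, the Gindikin--Karpelevich formula at the unramified (hence split) places does not help either. This is precisely the obstruction the paper addresses by citing Badulescu \cite[Corollary~5.4]{MR2390289}, which transfers the Keys--Shahidi sign computation to inner forms. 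Without that input (or an equivalent), your conclusion $N(0) = \Id$ is justified only in the split case $D = F$.
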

	\begin{proof}
		The intertwining operator $M(s)$ is holomorphic at $s=0$ by \cite[Proposition IV.1.11 (b)]{MR1361168}. Since $\pi$ is unitary, the induced representation $I_P^G(\sigma)$ is irreducible and it follows that $M(0)$ acts as a scalar on $I_P^G(\sigma)$. For the special case where $D = F$, by \cite[Proposition 6.3]{MR944102}, $M(0) = (-1)^n$, where $n$ is the order of the pole of $L (s,\pi, \pi^{\vee})$ and we have $n=1$ by  \cite[Proposition 3.6]{MR623137}. The general case follows from \cite[Corollary 5.4]{MR2390289}, which asserts that the computations in \cite{MR944102} transfer to the case of inner forms.
	\end{proof}

	Let $Z_\sigma$ be the closed orbit linear form on $I_P^G(\sigma)$ defined by
	\[
	Z_\sigma(\varphi)= \int_{K_H} \int_{M_H \backslash M_H(\BA)^{1,M}}  \varphi(mk) \ dm\ dk.
	\]
	\begin{lemma}\label{lem closed JR}
	Let $\pi\in \mathcal{C}^*(G_m(\BA))$ be such that $\pi=\pi^*$ and the central character of $\pi$ is trivial on $\R_{>0}I_m$. The closed intertwining period	$Z_\sigma$ is not identically zero on $I_P^G(\sigma)$ if and only if $\pi$ is distinguished.
	\end{lemma}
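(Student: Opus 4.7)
The plan is to identify $Z_\sigma$ with the $(H_m\times H_m)$-period integral on the cuspidal representation $\sigma=\pi\otimes\pi^*$ of $M$, and then exploit the factorization of period integrals on outer tensor products.

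First I would pin down the geometry: $M=G_m\times G_m$ is the Levi of type $(a,a)$ in $G=G_{2m}$, and the explicit description of the closed orbit stabilizer in Section \ref{ss orbits} (together with the explicit choice of $\theta_{2m}$ in Section \ref{sss the families}) identifies $M_H=M\cap H$ with $H_m\times H_m$ (possibly after conjugating the representative $e$ of the closed orbit by a fixed element of $W$ in the cases \namelink{twlin2} and \namelink{gal1}; such a shift changes $Z_\sigma$ only by \eqref{formula::intt-period--change-in-orbit} and does not affect its vanishing). The modulus assumption is satisfied on the closed orbit (Lemma \ref{lem mod}), so the definition of $Z_\sigma$ agrees with the closed intertwining period $J_P^G(\varphi;e,\cdot,\sigma,0)$ up to the choice of an invariant linear form on $\sigma$.

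Next I would unwind $Z_\sigma$ using the Iwasawa decomposition $H(\A)=P_H(\A)K_H$ and the fact that $P_H=M_H U_H$ with $U_H\subseteq U$. For $\varphi\in I_P^G(\sigma)$ set $\varphi[k]\in\sigma$; since $\sigma$ is cuspidal, the inner integral $\int_{M_H(F)\backslash M_H(\A)^{1,M}}\varphi(mk)\,dm$ is precisely the $(H_m\times H_m)$-period evaluated at $\varphi[k]$, because the normalizing factor $\delta_P^{-1/2}|_{M_H}$ is trivial by Lemma \ref{lem mod} (this is where that lemma enters). Combining the outer integration over $K_H$ with the Iwasawa decomposition on each $H_m$-factor, $Z_\sigma(\varphi)$ becomes
\[
Z_\sigma(\varphi)=\int_{(H_m(F)\backslash H_m(\A)^1)^2}(\varphi[e])(h_1,h_2)\,dh_1\,dh_2,
\]
up to a positive constant arising from the measure normalizations. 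The integrand is a cusp form on $M$, so the integral is absolutely convergent by the convergence criterion of \cite{MR1233493}.

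Finally, the equivalence follows from the factorization of periods on outer tensor products. For decomposable vectors $\varphi[e]=\phi_1\otimes\phi_2$ with $\phi_1\in\pi$ and $\phi_2\in\pi^*$, the integral splits as $\P_{H_m}(\phi_1)\P_{H_m}(\phi_2)$. If $\pi$ is distinguished then we may pick $\phi_1\in\pi$ with $\P_{H_m}(\phi_1)\neq 0$; since $\pi^*=\pi$, we similarly obtain $\phi_2\in\pi^*$ with $\P_{H_m}(\phi_2)\neq 0$, and any $\varphi$ with $\varphi[e]$ close enough to $\phi_1\otimes\phi_2$ yields $Z_\sigma(\varphi)\neq 0$. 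Conversely, if $\pi$ is not distinguished then $\P_{H_m}$ vanishes identically on $\pi$ and hence (via $\pi^*=\pi$) also on $\pi^*$, so $Z_\sigma\equiv 0$ by density of decomposable vectors. The hypothesis that the central character of $\pi$ is trivial on $\R_{>0}I_m$ is exactly what is needed for the periods $\P_{H_m}(\phi_i)$ to be well-defined.

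The main technical subtlety will be in a clean reduction to the standard representative of the closed orbit $M_e=H_m\times H_m$ in all six cases, especially when $w_\star\neq e$, where one must track carefully how the shift in the orbit representative is absorbed in the identification $\sigma\simeq \pi\otimes\pi^*$ via the auxiliary involution $\iota$ of Section \ref{sss auxi}; everything after that reduction is bookkeeping of Iwasawa integration and cuspidal factorization.
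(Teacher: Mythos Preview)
Your identification $M_H=H_m\times H_m$ and the recognition that the inner integral is the $(H_m\times H_m)$-period $L=\P_{H_m\times H_m}$ on $\sigma=\pi\otimes\pi$ are correct, and the factorization in your last paragraph correctly shows that $L\not\equiv 0$ on $\sigma$ if and only if $\pi$ is distinguished. That is the easy half, and it matches the paper's first sentence.

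The gap is in your displayed identity
\[
Z_\sigma(\varphi)=\int_{(H_m(F)\backslash H_m(\A)^1)^2}(\varphi[e])(h_1,h_2)\,dh_1\,dh_2,
\]
which is false. The right-hand side is simply $L(\varphi[e])$, but $Z_\sigma$ is $H(\A)$-invariant (it is the closed intertwining period $J_P^G(e,L,\sigma,0)$) whereas $\varphi\mapsto L(\varphi[e])$ is not: translating by $k\in K_H\setminus P_H(\A)$ sends $\varphi[e]$ to $\varphi[k]$, which has no reason to have the same period. The error seems to stem from treating $K_H$ as if it were $K_{H_m}\times K_{H_m}\subset M_H(\A)$; in fact $K_H$ is the maximal compact of the larger group $H_{2m}(\A)$ and is not contained in $M_H(\A)$, so the $K_H$-integration cannot be absorbed into the $M_H$-period. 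What you actually have is $Z_\sigma(\varphi)=\int_{K_H}L(\varphi[k])\,dk$, and your ``choose $\varphi[e]$ close to $\phi_1\otimes\phi_2$'' argument gives no control over $\varphi[k]$ for $k$ away from $P_H(\A)\cap K_H$.

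The missing step, then, is to show that the $K_H$-averaged functional $\varphi\mapsto\int_{K_H}L(\varphi[k])\,dk$ is nonzero on $I_P^G(\sigma)$ whenever $L$ is nonzero on $\sigma$. This is exactly what the paper extracts from the argument of Jacquet--Rallis \cite[Proposition~2]{MR1142486}: one constructs sections supported in a small neighborhood of the closed orbit $PH$ so that $\varphi[k]$ is concentrated near a prescribed vector of $\sigma$, forcing the $K_H$-average to be nonzero. Once you supply that argument (or cite it), your proof is complete and essentially agrees with the paper's.
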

	\begin{proof}
	Note that $M_H=H_m\times H_m$ and therefore the inner period integral is non-vanishing if and only if $\pi$ is distinguished. The lemma therefore follows from  the argument of Jacquet and Rallis in \cite[Proposition 2]{MR1142486}.
	\end{proof}
	\begin{proof}[Proof of Theorem \ref{thm pole of GIP}]
	It follows from Corollary \ref{cor::period==intt-period} that $J_\sigma(s)$ is meromorphic in $s$.
	Applying the Maass-Selberg relation in Section \ref{sec::maass-selberg} together with Lemmas \ref{lem::holomorphic-TruncPeriods} and \ref{lem::Keys-Shahidi} we deduce that
		\begin{align*}
			\lim_{s \ra 0} sJ_\sigma(\varphi,s)  = -2 Z_\sigma(\varphi).
		\end{align*}
The theorem is now a consequence of Lemma \ref{lem closed JR}.
	\end{proof}

	\section{The local-global principle and its consequences}\label{sec::LGp}
	
Assume that $F$ is a number field. Let $(G,H,\theta)=(G_{m},H_{m},\theta_{m})_\x$ for 
\[
\x\in  \{\namelink{lin},\namelink{twlin1},\namelink{twlin2},\namelink{gal1},\namelink{gal2}\}
\] 
be defined as in Section \ref{sss the families}. Let $a\in \N$ and $\D$ be defined by \eqref{eq a&D} after Remark \ref{rem arithmetic vs geometric}, so that $G(F)=\GL_{a}(\D)$.
We say that an irreducible cuspidal automorphic representation $\pi=\otimes'_v \pi_v$ is \emph{compatible} if $\pi_v$ is $H(F_v)$-compatible for every place $v$ of $F$ (see \ref{def compatible}).

We are now ready to formulate our main result, the local-global principle for distinction.	
	\begin{theorem}\label{thm::Main}
	For $\pi\in \mathcal{C}^*(G(\BA))$, the following assertions are equivalent:
	\begin{enumerate}[i)]
	\item \label{i} $\pi$ is distinguished,
	\item \label{ii} $\pi$ is locally distinguished and compatible and $\L(s,\pi,\theta)$ has a pole at $s=0$. 
	\end{enumerate}
	When the equivalent conditions are satisfied the pole is simple.
	\end{theorem}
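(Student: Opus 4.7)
The plan is to combine the Maass--Selberg-type characterisation of global distinction (Theorem \ref{thm pole of GIP}), the unramified computation of the open intertwining period (Proposition \ref{prop ur comp}), and the local bound of Theorem \ref{thm mainloc}. First I would note that $\pi \simeq \pi^*$: under (i) this follows from local multiplicity one (Theorem \ref{thm mult 1 and ssduality}) applied to each $\pi_v$ (which is locally distinguished by the closed-orbit argument of the geometric lemma), combined with strong multiplicity one; under (ii) it follows directly from local distinction. This ensures that the global open intertwining period $J_\sigma(s) := J_{\pi \otimes \pi^*}(s)$ is a genuine non-zero meromorphic object in both directions.

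Next, fix a finite set $S$ of places of $F$ containing all archimedean and ramified places, and choose a decomposable section $\varphi = \otimes_v \varphi_v$ with $\varphi_v$ equal to the normalised spherical vector for $v \notin S$. Proposition \ref{prop ur comp} then gives the factorisation
\[
J_\sigma(\varphi,s) \;=\; \prod_{v \in S}\J_{\sigma_v}(\varphi_v,s)\cdot \frac{\L^S(s,\pi,\theta)}{\DL^S(s,\pi,\theta)}.
\]
Since $\DL(s,\pi,\theta)$ is holomorphic at $s=0$ globally (a standard consequence of the analytic theory of $L$-functions) and each local factor $\DL(s,\pi_v,\theta_v)$ is holomorphic nonvanishing at $s=0$ by Theorem \ref{thm order pole local L}(\ref{part denominatorL}), we have $\Ord_{s=0}(\DL^S) = 0$. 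Taking orders and maximising over $\varphi_S$ then yields
\[
\Ord_{s=0}(J_\sigma) \;=\; \Ord_{s=0}(\L) - \sum_{v\in S}\bigl(\Ord_{s=0}(L(s,\pi_v,\theta_v)) - \Ord_{s=0}(\J_{\sigma_v})\bigr).
\]
By Theorem \ref{thm mainloc}, applicable because $\JL(\pi)$ cuspidal forces $\pi_v \in \Pi_\D(-\tfrac12,\tfrac12)$, each summand in the correction term is a non-negative integer, vanishing precisely when $\pi_v$ is $H$-compatible; at unramified places compatibility is automatic.

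The equivalence now drops out of Theorem \ref{thm pole of GIP}. Suppose (i) holds. Then $\Ord_{s=0}(J_\sigma) = 1$, so $\Ord_{s=0}(\L) \geq 1$; combining Theorem \ref{thm non vanishing s=1} with the functional equation (Theorem \ref{thm gfe LS}) applied to the explicit form of $\L(s,\pi,\theta)$ from Section \ref{ss Lfactors for pairs}, one checks in all three arithmetic cases that $\Ord_{s=0}(\L) \leq 1$. Hence the pole is simple, every correction term must vanish, and both local distinction and compatibility follow. Conversely, (ii) makes every correction term zero, so $\Ord_{s=0}(J_\sigma) = \Ord_{s=0}(\L) \geq 1$, and Theorem \ref{thm pole of GIP} delivers distinction together with the simplicity of the pole. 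The main obstacle I anticipate is reconciling the two simple-pole statements---the global simple pole of $J_\sigma$ from Theorem \ref{thm pole of GIP} and the upper bound $\Ord_{s=0}(\L) \leq 1$ needed to force compatibility---together with the bookkeeping required to realise the local pole orders of $\J_{\sigma_v}$ simultaneously by a single decomposable section with prescribed unramified behaviour outside $S$.
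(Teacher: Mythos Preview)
Your proposal is correct and follows the same route as the paper. Two small points to tighten:

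First, in the Galois cases, places of $F$ that split in $E$ give the local group case \namelink{gp}, which is explicitly excluded from Theorem \ref{thm mainloc}; at such places you must invoke Proposition \ref{prop above} instead, which gives $\Ord_{s=0}(\J_{\sigma_v}) = \Ord_{s=0}(\L(s,\pi_v,\theta_v))$ outright. The paper cites both results together for exactly this reason.

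Second, to get $\Ord_{s=0}(\DL^S)=0$ you need nonvanishing of the global $\DL$ at $s=0$, not just holomorphy; ``standard analytic theory'' does not give this for free, since $L^-(s,\JL(\pi))$ can have a pole at $s=1$. What actually pins it down is Theorem \ref{thm non vanishing s=1}: once $\Ord_{s=0}(\L)\ge 1$ (from either hypothesis via your inequality), the constraint $\Ord_{s=1}(L^+)+\Ord_{s=1}(L^-)\le 1$ with both summands $\ge 0$ forces $\Ord_{s=1}(L^-)=0$, hence $\Ord_{s=0}(\DL)=0$. The paper runs this step inside each direction rather than in advance; your packaging is fine but the justification should point to nonvanishing.

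Finally, your aside about the ``closed-orbit argument of the geometric lemma'' under (i) is off: global distinction implies local distinction simply because the nonzero period integral factors as a product of local $H(F_v)$-invariant linear forms.
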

	\begin{proof}
The last part of the theorem follows from Theorem \ref{thm non vanishing s=1}.
Note first that if either point \eqref{i} or point \eqref{ii} holds then $\pi$ is locally distinguished and it follows from Theorem \ref{thm mult 1 and ssduality} that $\pi_v\simeq \pi_v^*$ for all places $v$ of $F$.
By strong multiplicity one (\cite[Theorem 18.1]{MR2684298}) we conclude that $\pi=\pi^*$. Furthermore, if either point \eqref{i} or point \eqref{ii} holds then the central character of $\pi$ is trivial on $A_G^+$. We assume from now on that $\pi=\pi^*$ and that the central character of $\pi$ is trivial on $A_G^+$ (in particular, $\pi$ is unitary) and let $\sigma=\pi\otimes \pi$. 

It follows from Theorem \ref{thm pole of GIP} that \eqref{i} holds if and only if $J_{\sigma}(s)$ has a pole at $s=0$ and that when this is the case the pole is simple. 

Next, we observe that the meromorphic family $J_{\sigma}(s)$ of linear forms is factorizable. Indeed, in its definition in \eqref{eq def global J}, the outer integral is adelic and the inner period integral is the unique invariant pairing of $\pi$ and $\pi^\theta=\pi^\vee$. 
For $\varphi=\prod'_v \varphi_v\in \Ind_{P_{(a,a)}(\A)}^{G_{2m}(\A)}(\pi\otimes \pi)$ a factorizable section write $J_{\sigma}(\varphi;s)=\prod_v J_v(\varphi_v;s)$. 

In order to identify the local factors $J_v$ in terms of the local intertwining periods $\J_{\sigma_v}$ studied in Section \ref{sec local intper} we go back to the observations in Section \ref{sss loc-glob triples} and apply its notation. Write $(G',H',\theta')=(G_{2m},H_{2m},\theta_{2m})_\x$ for the global triple defined with respect to $F,E,D$. For a place $v$ of $F$ let $g_v\in G(F_v)$ satisfy \eqref{eq shifted triple}. Then $y_v=\diag(g_v,g_v)\in M_{(a,a)}(\A)$ is such that
\[(G'_v,\Ad(y_v)(H'_v), \Ad(y_v\theta_v'(y_v)^{-1})\circ \theta_v')=(G'',H'',\theta'')
\] 
where the right hand side is the local triple $(G'',H'',\theta'')=(G_{2n},H_{2n},\theta_{2n})_{\x_v}$ defined with data $F_v,E_v,R_v$. Let $z_v=g_v\theta_v(g_v)^{-1}$ so that $y_v\theta_v'(y_v)^{-1}=\diag(z_v,z_v)$, recall that $w=\begin{pmatrix} & I_a \\ I_a & \end{pmatrix}$ and note that $\diag(z_v,I_a)\cdot w=w\diag(z_v,z_v)$ and therefore 
\[
\ell\mapsto \ell\circ (\pi(z_v)\otimes \Id_{\pi_v})\in \Hom_{M^{\theta''_w}(F_v)}(\pi\otimes \pi,\triv)\rightarrow \Hom_{M^{\theta_w'}(F_v)}(\pi\otimes \pi,\triv)
\]
is an isomorphism of the one dimensional $\Hom$ spaces. Note further that $M^{\theta_w'}=M_w$ in the notation of \eqref{eq def global J}.
Consequently, the period integral
\[
b(\phi)=\int_{M_w(F)\bs M_w(\A)^1}\phi(m)\ dm,\ \ \ \phi\in \pi\otimes \pi
\]
has a factorization of the form $b=\otimes'_v \ell_{\sigma_v}\circ (\pi(z_v)\otimes \Id_{\pi_v})$ with $\ell_{\sigma_v}\in \Hom_{M_w(F_v)}(\pi_v\otimes \pi_v,\triv)$. We fix such a factorization to define $\J_{\sigma_v}(s)$ as in Section \ref{sec local intper}. By definition
\[
J_v(\varphi_v;s)=\int_{M^{\theta'_w}(F_v)\bs G^{\theta'_w}(F_v)}\ell_{\sigma_v}\circ (\pi(z_v)\otimes \Id_{\pi_v})[\varphi_{v,s}(h\eta)]\ dh
\]
where $\eta\in G'(F)$ is such that $\eta\cdot e=w$. After the variable change $h\mapsto \diag(z_v,I_a)^{-1} h\diag(z_v,I_a)$ this becomes
\[
\int_{M^{\theta''_w}(F_v)\bs G^{\theta''_w}(F_v)}\ell_{\sigma_v}[\varphi_{v,s}(h\diag(z_v,I_a)\eta)]\ dh.
\]
Since for $\xi=\diag(z_v,I_a)\eta y_v^{-1}$ we have $\xi \theta''(\xi)^{-1}=w$
we conclude that
\[
J_v(s)=\J_{\sigma_v}(s)\circ I_P^G(y_v,s)
\]
and in particular that $J_v(s)$ and $\J_{\sigma_v}(s)$ have the same order of pole at every point.
Taking the last part of Section \ref{sss loc-glob triples} into consideration, there is a finite set of places $S$ of $F$ containing the archimedean ones and such that $\pi_v$ is unramified, $g_v\in K_v$ and $\varphi_v$ is the normalized spherical section for all $v\not\in S$. Then $J_{\sigma}(\varphi;s)=J^S(\varphi^S;s)J_S(\varphi_S;s)$ where $J_S(\varphi_S;s)=\prod_{v\in S} J_v(\varphi_v;s)$.
It now follows from Proposition \ref{prop ur comp} that
\[
J^S(\varphi^S;s)=\frac{\L^S(s,\pi,\theta)}{\DL^S(s,\pi,\theta)}
\]
and therefore
\[
\Ord_{s=0}(J_{\sigma}(s))= \Ord_{s=0}(\L^S(s,\pi,\theta))-\Ord_{s=0}(\DL^S(s,\pi,\theta))+\Ord_{s=0}(J_S(s)).
\]
From Theorem \ref{thm order pole local L} \eqref{part denominatorL} we have that $\Ord_{s=0}(\DL^S(s,\pi,\theta))=\Ord_{s=0}(\DL(s,\pi,\theta))$.

Assume that \eqref{ii} holds. It follows from Theorem \ref{thm non vanishing s=1} that $\Ord_{s=0}(\L(s,\pi,\theta))=1$ and $\Ord_{s=0}(\DL(s,\pi,\theta))=0$.
It follows from Theorem \ref{thm mainloc} and Proposition \ref{prop above} that 
\[
\Ord_{s=0}(J_S(s))=\Ord_{s=0}(\L_S(s,\pi,\theta))
\]
and therefore $\Ord_{s=0}(J_{\sigma}(s))=\Ord_{s=0}(\L(s,\pi,\theta))=1$.
We conclude that \eqref{i} holds.

Assume now that \ref{i} holds. 
It similarly follows from the first part of Theorem \ref{thm mainloc} and Proposition \ref{prop above} that 
\[
1=\Ord_{s=0}(J_{\pi\otimes \pi}(s))\le  \Ord_{s=0}(\L(s,\pi,\theta))-\Ord_{s=0}(\DL(s,\pi,\theta)).
\]
It therefore follows from Theorem \ref{thm non vanishing s=1} that $\Ord_{s=0}(\L(s,\pi,\theta))=1$ and $\Ord_{s=0}(\DL(s,\pi,\theta))=0$. It now further follows from Theorem \ref{thm mainloc} that $\pi$ is compatible.
Point \eqref{ii} follows.
	\end{proof}
	
Next, we explain how to deduce our main results in the introduction from the local-global principle. 
The condition about a pole at $s=0$ transfers to $s=1$ by the functional equation, Theorem \ref{thm gfe LS} in all cases. We begin with the Galois case. 
\begin{proof}[Proof of Theorems \ref{thm main gal odd} and \ref{thm main gal even}]
In both theorems the equivalence of the first two conditions is Theorem \ref{thm::Main} and it is a tautology that the second implies the third condition.  By \cite{MR984899} and \cite{MR1344660}, it is known that $\jl(\pi)$ is distinguished if and only if $L(s,\jl(\pi),\as^+)$ has a pole at $s=1$ and that when this is the case the pole is simple. Consequently, the third condition implies that $\JL(\pi)$ is distinguished.
It further follows from Corollary \ref{cor distinction-transfer-generic} that if $\JL(\pi)$ is locally distinguished then so is $\pi$. Since global distinction implies local distinction, this shows that the third condition implies the second.
In light of Lemma \ref{lem compatible} it also shows that if $d$ is odd then the forth condition of Theorem \ref{thm main gal odd} implies the third. The two theorems follow.
\end{proof}
We proceed with the linear period case.
\begin{proof}[Proof of Theorem \ref{thm main lin}]
The proof is completely parallel to that of Theorems \ref{thm main gal odd} and \ref{thm main gal even}, and uses Corollary \ref{cor distinction-transfer-generic} and Lemma \ref{lem compatible} in the exact same manner. The results of Flicker and Flicker-Zinoviev in the split case, are replaced by the results of Friedberg-Jacquet as discussed at the end of Section \ref{ss main lin}.
\end{proof}

Finally we discuss the case of twisted linear periods. 
\begin{proof}[Proof of Theorem \ref{thm main twlin}]
The Theorem is immediate from Theorem \ref{thm::Main}.
\end{proof}

As a final application, we prove a partial converse theorem to the so-called Guo-Jacquet conjecture, using our local global principle. 

\section{A partial converse for the Guo-Jacquet Conjecture}\label{sec::GJconverse}
In this section we prove a weak, yet new, form of the converse implication of the Guo-Jacquet conjecture.
\subsection{Local preparation: the $\epsilon$-dichotomy conjecture}
Let $F$ be a local field of charactersitic zero and $E/F$ be a quadratic \'etale algebra.
Let $\psi$ be a non-trivial character of $F$. 

We recall that $\eta_{E/F}$ designates the quadratic character of $F^\times$ attached to $E/F$ by local class field theory when $E/F$ is a field extension, whereas by convention $\eta_{E/F}$ is defined as the trivial character of $F^\times$ when $E\simeq F\times F$. 

Let $\D$ be a central division algebra of degree $d$ over $F$. Let $m$ be an integer and assume further that $m$ is even if either $d$ is odd or $E\simeq F\times F$. 
If $d$ is even and $E$ is a field then it imbeds in $\D$ and we denote by $\mathcal{C}$ the centralizer of $E$ in $\D$ and set $H=\GL_m(\mathcal{C})$. Otherwise, $m$ is even and we set $H=\GL_{m/2}(\D_E)$.
We recall the following $\epsilon$-dichotomy theorem. When $E$ is a field it was formulated by Prasad and Takloo-Bighash as a conjecture in \cite{MR2806111}. 

\begin{theorem}[$\epsilon$-dichotomy]\label{thm C1}
For an irreducible, essentially square-integrable representation $\delta$ of $\GL_m(\D)$ we have that $\delta$ is $H$-distinguished if and only if the following two conditions are satisfied:
\begin{enumerate}
\item $\JL(\delta)$ is symplectic, that is, its Langlands parameter preserves a symplectic form;
\item the following equality holds: 
\begin{equation}\label{eq eps dich}
\epsilon(1/2,\JL(\delta),\psi)\epsilon(1/2,\eta_{E/F}\otimes\JL(\delta),\psi)=(-1)^m\eta_{E/F}(-1)^{md/2}.
\end{equation}
\end{enumerate}
\end{theorem}
\begin{proof}
When $E/F$ is a quadratic extension of $p$-adic fields, $\delta$ is cuspidal, and either $p$ is odd or $d\le 2$, it is a consequence of \cite{MR4226986} when $d\leq 2$, and of \cite{MR4773305} when $p$ is odd. A general proof of the cuspidal $p$-adic case lifting these restrictions has been announced in \cite{epsilonintertwining}. The $p$-adic discrete case then follows from \cite{MR4721777}. When $E/F$ is a quadratic extension of archimedean fields, the theorem is proved in \cite{MR4659866}. When $E=F\times F$ it is proved in \cite{Sign} and \cite{MR3953435}. 
\end{proof}

\begin{remark}\label{rem eps}
When $E\simeq F\times F$ the equality \eqref{eq eps dich} is satisfied whenever $\JL(\delta)$ is symplectic. 
\end{remark}

For the following results it will be convenient to make the following definition.

\begin{definition}\label{df odd esi}
We say that an irreducible, generic representation $\pi$ of $\GL_n(F)$ has odd essentially square-integrable support if $\pi\simeq\delta_1\times \dots \times \delta_r$ where $\delta_i$ is an essentially square-integrable representation of $\GL_{n_i}(F)$ and $n_i$ is odd for every $i=1,\dots,r$. 
\end{definition}

The setting being as above, the following result also follows from \cite{MR3430877}, \cite{Sign}, \cite{MR4157701} and \cite{MR4659866}.
\begin{theorem}\label{thm C2}
Let $n$ be an even integer and $\pi$ an irreducible, generic representation of $\GL_n(F)$  that has odd essentially square-integrable support. 
Then $\pi$ is $\GL_{n/2}(E)$-distinguished if and only if its Langlands parameter is symplectic.  Moreover, the epsilon dichotomy equality \eqref{eq eps dich} for $\delta=\pi$ (with $d=1$ and $m=n$) holds automatically.
\end{theorem}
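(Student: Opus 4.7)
The plan is to deduce the distinction criterion from the classification of distinguished standard modules (Theorem \ref{thm classif dist S}) and then verify the epsilon dichotomy equality by direct local computation. Since $\pi$ is generic in the sense of the paper, $\pi$ is isomorphic to its standard module and can be written as $\pi\simeq\delta_1\times\cdots\times\delta_r$ with $\delta_i$ irreducible essentially square integrable on $\GL_{n_i}(F)$, all $n_i$ odd. Since $n=\sum n_i$ is even, $r$ is automatically even. We are either in case \namelink{lin} (when $E\simeq F\times F$) or in case \namelink{twlin2} (when $E$ is a field); in both cases $\theta$ is inner and the auxiliary involution $\iota$ of Section \ref{sss auxi} satisfies $\delta^\iota\simeq\delta$ for every irreducible $\delta$, so $\delta^*\simeq\delta^\vee$ throughout. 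Moreover, the convention following Definition \ref{def compatible} rules out distinction of any individual $\delta_i$, since each $n_i$ is odd. Consequently Theorem \ref{thm classif dist S} asserts that $\pi$ is $\GL_{n/2}(E)$-distinguished if and only if there exists a fixed-point-free involution $p\in\mathfrak{S}_r$ with $\delta_{p(i)}\simeq\delta_i^\vee$ for every $i$.

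Next I would translate this combinatorial condition into the symplecticity of $\phi_\pi=\bigoplus_i\phi_{\delta_i}$. Each $\phi_{\delta_i}$ is an indecomposable Weil--Deligne representation of odd dimension $n_i$, so it cannot carry a nondegenerate symplectic form; if self-dual it must be orthogonal. It follows that $\phi_\pi$ is symplectic if and only if (i) every non-self-dual irreducible component $\phi$ of $\phi_\pi$ satisfies $m_\phi=m_{\phi^\vee}$ as multiplicities, and (ii) every self-dual (hence orthogonal) irreducible component of $\phi_\pi$ occurs with even multiplicity, because for an orthogonal irreducible $V$ the representation $V^{\oplus k}$ admits a nondegenerate invariant skew-symmetric form precisely when $k$ is even. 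Via the bijection between indecomposable Weil--Deligne representations and essentially square integrable representations of general linear groups, conditions (i) and (ii) together are equivalent to the existence of the fixed-point-free involution $p$ above. This settles the first assertion of the theorem.

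Finally I would verify the epsilon dichotomy equality when the equivalent distinction conditions hold. Multiplicativity of epsilon factors under parabolic induction gives
\[
\epsilon(1/2,\pi,\psi)\,\epsilon(1/2,\eta_{E/F}\otimes\pi,\psi)=\prod_{i=1}^{r}\epsilon(1/2,\delta_i,\psi)\,\epsilon(1/2,\eta_{E/F}\otimes\delta_i,\psi).
\]
Grouping the product by the involution $p$ and using the standard identity $\epsilon(1/2,\sigma,\psi)\,\epsilon(1/2,\sigma^\vee,\psi)=\omega_\sigma(-1)$ for $\sigma\in\{\delta_i,\,\eta_{E/F}\otimes\delta_i\}$, together with $(\eta_{E/F}\otimes\delta_i)^\vee\simeq\eta_{E/F}\otimes\delta_i^\vee\simeq\eta_{E/F}\otimes\delta_{p(i)}$, each of the $r/2$ pairs contributes
\[
\omega_{\delta_i}(-1)\cdot\omega_{\eta_{E/F}\otimes\delta_i}(-1)=\omega_{\delta_i}(-1)^2\,\eta_{E/F}(-1)^{n_i}=\eta_{E/F}(-1)^{n_i}.
\]
Summing the exponents $n_i$ over one representative of each pair yields $n/2$, so the full product equals $\eta_{E/F}(-1)^{n/2}=(-1)^n\eta_{E/F}(-1)^{n/2}$, which is exactly the right hand side of \eqref{eq eps dich} with $d=1$ and $m=n$. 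The only nontrivial point is ensuring the multiplicativity and the $\epsilon\cdot\epsilon^\vee$ identity hold uniformly in the archimedean and non-archimedean settings, but both are completely standard (they come from the local functional equation of standard $L$-factors). I do not expect any genuine obstacle: once Theorem \ref{thm classif dist S} is granted, the argument is essentially a bookkeeping exercise on Langlands parameters.
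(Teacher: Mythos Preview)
Your proof is correct and in fact more self-contained than the paper's. Both arguments start from the same key input, Theorem~\ref{thm classif dist S}, to reduce $\GL_{n/2}(E)$-distinction to the existence of a fixed-point-free involution $p$ with $\delta_{p(i)}\simeq\delta_i^\vee$ (using that each $n_i$ is odd so no $\delta_i$ can itself be distinguished, and that $\delta^*\simeq\delta^\vee$ in cases \namelink{lin} and \namelink{twlin2}). The paper additionally observes that this makes $\GL_{n/2}(E)$-distinction and $\GL_{n/2}(F)\times\GL_{n/2}(F)$-distinction coincide, which you establish implicitly.

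The difference lies in the two remaining steps. For the equivalence of the pairing condition with symplecticity of $\phi_\pi$, the paper cites \cite[Corollary 3.15]{MR3430877} in the $p$-adic case and argues directly only in the archimedean case, whereas you give a uniform direct argument via the structure of Weil--Deligne representations (odd-dimensional self-dual indecomposables are orthogonal, so symplecticity forces the multiplicity-pairing conditions you state). For the epsilon identity, the paper cites \cite[Theorem 1.4]{MR4157701} and \cite[Theorem 1.1]{MR4659866}, while you compute it directly from multiplicativity and the standard relation $\epsilon(1/2,\sigma,\psi)\epsilon(1/2,\sigma^\vee,\psi)=\omega_\sigma(-1)$, pairing via $p$ and using $n_{p(i)}=n_i$ to get $\sum_{\text{one rep per pair}}n_i=n/2$. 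Your route avoids the external references; the paper's route is shorter on the page. One terminological nit: when you speak of ``irreducible components'' of $\phi_\pi$ you mean indecomposable summands (i.e.\ the $\phi_{\delta_i}$), which matters in the non-archimedean case; the argument is unaffected.
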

\begin{proof}
Write $\pi=\delta_1\times\cdots\times \delta_r$ as in Definition \ref{df odd esi}. Comparing the classification of generic $\GL_{n/2}(E)$-distinguished and $\GL_{n/2}(F)\times \GL_{n/2}(F)$-distinguished representations recalled in Theorem \ref{thm classif dist S}, we see that $\pi$ is  $\GL_{n/2}(E)$-distinguished if and only if it is $\GL_{n/2}(F)\times \GL_{n/2}(F)$-distinguished, if and only if $r$ is even, and up to re-ordering, one has $\delta_{2i}\simeq \delta_{2i-1}^*$ for $i=1,2,\dots,r/2$. Now when $F$ is $p$-adic, the representation $\pi$ is $\GL_{n/2}(F)\times \GL_{n/2}(F)$-distinguished if and only if its Langlands parameter is symplectic according to \cite[Corollary 3.15]{MR3430877}. When $F$ is archimedean, $n_i=1$, i.e. each $\delta_i$ is a character $\chi_i$ of $F^\times$. It is then a simple exercise to check that the Langlands parameter $\oplus_{i=1}^r \chi_i$ of $\pi$, where each $\chi_i$ is identified with a character of the Weil group of $F$, is symplectic if and only if up to re-ordering, $r$ is even and $\chi_{2i}\simeq \chi_{2i-1}^{-1}$ for $i=1,2,\dots,r/2$. The last part of the statement on the epsilon dichotomy equality follows from \cite[Theorem 1.4 and its proof]{MR4157701} when $F$ is $p$-adic (observe that no restriction on $p$ is required there), and \cite[Theorem 1.1]{MR4659866} when $F$ is archimedean.
\end{proof}

\begin{remark}\label{rem odd esi}
Theorem \ref{thm C2} applies when $\pi$ is a generic principal series and in particular when $\pi$ is generic and unramified. 
\end{remark}

\subsection{On the converse implication of the Guo-Jacquet Conjecture}
Thanks to the local global principle in Theorem \ref{thm main twlin}, we can finally prove the following form of converse to the Guo-Jacquet conjecture. 
Let $E/F$ be a quadratic extension of number fields. 

\begin{theorem}\label{thm::GJconverse}
 Let $\pi$ be a cuspidal automorphic representation of $\GL_n(\BA)$ where $n$ is even, and write $n=2^a b$ where $a\geq 1$ and $b$ is odd. Assume moreover the following:
\begin{enumerate}
\item  at all places $v$ of $F$ that are inert in $E$ (i.e. such that $E_v$ is a field)
\begin{itemize}
\item either $\pi_v$ is a discrete series,
\item or $\pi_v$ has odd essentially square-integrable support (see Definition \ref{df odd esi} and observe that this condition holds at almost all places $v$ by Remark \ref{rem odd esi}),
\end{itemize}
\item \label{global shal} $L(\frac12,\BC_F^E(\pi))\ne 0$ and $L(s,\pi,\wedge^2)$ has a pole at $s=1$. 
\end{enumerate}

Then either $\pi$ is $\GL_{n/2}(\A_E)$-distinguished or there exists a central $F$-division algebra $D$ of degree $2^a$ such that $E$ imbeds in $D$, and there exists  an irreducible, cuspidal automorphic representation $\pi'$ of 
$\GL_b(D_{\BA})$ with $\JL(\pi')=\pi$, such that $\pi'$ is $\GL_b(C_{\A_E})$-distinguished where $C$ is the centralizer of $E$ in $D$.
\end{theorem}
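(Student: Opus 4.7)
The plan is to exhibit a central division $F$-algebra $D$ of degree $2^a$ with $E\hookrightarrow D$, transfer $\pi$ by global Jacquet-Langlands to a cuspidal representation $\pi'$ of $\GL_b(D_\A)$, and then apply Theorem \ref{thm main twlin} to $\pi'$ in the direction from $L$-function conditions to global distinction. The two alternatives of the conclusion will correspond to $D$ being globally split (so $\pi' = \pi$) or genuinely ramified at a necessarily even-cardinality set $S$ of inert discrete series places.

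The first step would be to extract global symplecticity from the hypotheses. The factorization $L(s,\BC_F^E(\pi)) = L(s,\pi)L(s,\pi\otimes \eta_{E/F})$ gives $L(1/2,\pi)\neq 0$, and combined with the pole of $L(s,\pi,\wedge^2)$ at $s=1$ yields $\GL_{n/2}(\A)\times\GL_{n/2}(\A)$-distinction of $\pi$ via the Friedberg-Jacquet theorem (the $d = 1$ case of Theorem \ref{thm main lin}); local distinction at every $v$ then forces $\phi_{\pi_v}$ to be symplectic and $\pi_v\simeq\pi_v^\vee$. I would then carry out a case-by-case local analysis at each inert $v$: Theorem \ref{thm C2} directly handles odd essentially square-integrable support places, yielding $\GL_{n/2}(E_v)$-distinction with the epsilon-dichotomy equation for $d=1$ holding automatically; at discrete series places I would introduce the sign
\[
e(\pi_v) = \epsilon(1/2,\pi_v,\psi_v)\,\epsilon(1/2,\eta_v\otimes\pi_v,\psi_v)\,\eta_v(-1)^{-n/2} \in \{\pm 1\}
\]
and use the identity $n = 2^a b$ with $b$ odd to read off from \eqref{eq eps dich} that the right-hand side equals $\eta_v(-1)^{n/2}$ for $d = 1$ and $-\eta_v(-1)^{n/2}$ for $d = 2^a$; thus the $\epsilon$-dichotomy hypothesis identifies the set $S$ of inert discrete series places with $e(\pi_v) = -1$ as precisely those $v$ where $\pi_v$ is the JL-image of a distinguished discrete series on $\GL_b(R_v)$ for $R_v$ a central division $F_v$-algebra of degree $2^a$.

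A global reciprocity argument will then show $|S|$ is even: symplecticity gives $\epsilon(1/2,\pi) = \epsilon(1/2,\eta_{E/F}\otimes \pi) = 1$, which together with $\prod_v\eta_v(-1) = 1$ yields $\prod_v e(\pi_v) = 1$; since $e(\pi_v) = 1$ at all split places (where $\pi_v\simeq\pi_v^\vee$ forces $\epsilon(1/2,\pi_v)^2 = 1$ and $\eta_v$ is trivial) and at ESI-support inert places (by Theorem \ref{thm C2}), this forces $|S|\equiv 0\pmod 2$. If $S = \emptyset$ I would take $D = \M_{2^a}(F)$ and $\pi' = \pi$ and apply Theorem \ref{thm main twlin} directly. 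If $S \neq \emptyset$ I would choose odd integers $\{r_v\}_{v \in S}$ with $\sum r_v \equiv 0 \pmod{2^a}$ (possible since $|S|$ is even, e.g.\ by pairing places and assigning $r_v \in \{1, 2^a - 1\}$) and build $D$ of degree $2^a$ with invariants $r_v/2^a$ at $v\in S$ and $0$ elsewhere via Hasse-Brauer-Noether. At each $v \in S$, $\mathrm{ind}(D_v \otimes_{F_v} E_v) = 2^{a-1}$, so the criterion $[E_v:F_v]\cdot\mathrm{ind}(D_v\otimes_{F_v}E_v)\leq \deg D_v$ yields $E_v\hookrightarrow D_v$, and the local-global principle for imbeddings of subfields in central simple algebras yields $E\hookrightarrow D$. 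Global Jacquet-Langlands then produces a cuspidal $\pi'\in\mathcal{C}^*(\GL_b(D_\A))$ with $\JL(\pi') = \pi$ (using that $\pi_v$ is discrete at every $v\in S$), and Theorem \ref{thm main twlin} applies: the two $L$-function conditions transfer identically; local distinction holds at every $v$ by the case analysis above; and local $H$-compatibility is automatic, since discrete-series factors are trivially compatible, factors of odd size make the compatibility condition vacuous, and at split places (case \namelink{lin}) the condition reduces to a tautology.

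The principal obstacle lies in the globalization step: simultaneously arranging the Brauer invariants to give a division algebra of degree exactly $2^a$, verifying the local index criterion so that $E$ imbeds at each ramified place, and invoking the non-trivial parity constraint $|S|\equiv 0 \pmod 2$ arising from the global symplectic root number of $\pi$.
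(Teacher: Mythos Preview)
Your proof follows essentially the same strategy as the paper's: establish that every $\phi_{\pi_v}$ is symplectic, define $S$ as the set of inert places where local twisted-linear distinction fails, use a global sign product to show $|S|$ is even, construct $D$ ramified exactly at $S$ via Brauer--Hasse--Noether, transfer $\pi$ by Jacquet--Langlands, and apply Theorem~\ref{thm main twlin} to $\pi'$. Your explication of the Brauer invariants (odd $r_v$ with $\sum r_v\equiv 0\pmod{2^a}$) and of the local index criterion for $E_v\hookrightarrow D_v$ is more detailed than the paper's, and correct.

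There is however one incorrect justification. You assert that ``symplecticity gives $\epsilon(1/2,\pi)=\epsilon(1/2,\eta_{E/F}\otimes\pi)=1$'', but this is false in general: already for $n=2$, a cuspidal automorphic representation of $\GL_2(\A)$ with trivial central character is of symplectic type (since $L(s,\pi,\wedge^2)=\zeta(s)$), yet its global root number can be $-1$ (think of an elliptic curve of odd analytic rank). The conclusion you need is nonetheless correct, and the paper obtains it directly from the hypothesis: since $L(1/2,\BC_F^E(\pi))=L(1/2,\pi)\,L(1/2,\eta_{E/F}\otimes\pi)\neq 0$ and both $\pi$ and $\eta_{E/F}\otimes\pi$ are self-dual (this \emph{does} follow from symplecticity), the functional equation at $s=1/2$ forces each root number to equal $1$. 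With this correction your parity argument for $|S|$ goes through unchanged.

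One minor presentational point: in the case $S=\emptyset$ you set $D=\M_{2^a}(F)$, which is not a division algebra and so does not fit the second alternative of the statement. What you are really doing there is establishing the \emph{first} alternative, namely that $\pi$ itself is $\GL_{n/2}(\A_E)$-distinguished, by applying Theorem~\ref{thm main twlin} with $D=F$; you should phrase it that way.
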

\begin{proof}
First we observe that each local component of $\pi$ has a symplectic parameter. Indeed by \cite{MR1044830}, we deduce from condition \eqref{global shal} that $\pi$ has a Shalika period. In particular we infer that all its local components have a Shalika model, hence a linear model thanks to \cite{MR1394521} and \cite{MR4047168}. We conclude for example thanks to \cite[Corollary 3.4]{Sign}. 

Note that every irreducible, cuspidal automorphic representation of $\GL_n(\A)$ is, by definition, automatically $\GL_{n/2}(\A_E)$-compatible. 

In order to discuss the local identity \eqref{eq eps dich} at every place $v$ of $F$, we fix once and for all a non-trivial additive character $\psi$ of $\A/F$ and consider the identity at $v$ with the additive character $\psi_v$.

Now if $\pi$ is locally $\GL_{n/2}$-distinguished, then it is $\GL_{n/2}(\A_E)$-distinguished thanks to Theorem \ref{thm main twlin}. 
Otherwise, let $S$ be the non-empty set of places $v$ of $F$ such that $\pi_v$ is not $\GL_{n/2}(E_v)$-distinguished. For every place $v\in S$, by Theorem \ref{thm C1} and Remark \ref{rem eps}, $v$ is inert in $E$ and by Theorem \ref{thm C2}, $\pi_v$ is square-integrable. Furthermore, equation \eqref{eq eps dich} fails by a sign for $\delta=\pi_v$. That is, the left hand side equals negative the right hand side.

Since, by assumption, $L(\frac12,\BC_F^E(\pi))=L(1/2,\pi)L(1/2,\eta_{E/F}\otimes\pi)\neq 0$, we deduce from the functional equation of standard $L$-functions (Theorem \ref{thm gfe LS}) that 
\[
\epsilon(1/2,\pi)\epsilon(1/2,\eta_{E/F}\otimes\pi)=1.
\] 
Furtheremore, automorphy of the quadratic character $\eta_{E/F}$ and the pairity of $n$ imply that $(-1)^n \eta_{E/F}((-1)^{n/2})=1$. We therefore further conclude from Theorem \ref{thm C2} at all $v\not\in S$ that $\abs{S}$ is even.
In particular, $S$ contains at least two places. 

Applying the Brauer-Hasse-Noether Theorem \cite[Theorem 1.12]{MR1278263}, there exists a degree $2^a$ central division $F$-algebra $D$ such that $D_v$ is a division algebra for every $v\in S$ and $D_v$ splits over $F_v$ for every $v\not\in S$. It follows from \cite[Theorem 1.1]{MR3217641} that $E$ embeds in $D$. Indeed, applying the notation of the theorem in loc. cit. with $K=E$ and $A=D$, if $v$ is a place of $F$ not in $S$ then $d_v=1$ and therefore $d_w'=1$ for every place $w$ of $E$ that lies over $v$ and if $v\in S$ and $w$ is the unique place of $E$ above $v$ then $d_v=2^a$  while $d_w'=2^{a-1}$. Part 3 of the theorem gives that the capacity of $D_E$ equals $2$ and therefore part 2 of the theorem says that $E$ imbeds in $D$. Consider now $E$ as a subfield of $D$ and denote by $C$ the centralizer of $E$ in $D$. Let $H=\Res_{E/F}(G_C(b))$ so that $H(F)=\GL_b(C)$.

It follows from \cite[Theorem 18.1]{MR2684298} that $\pi=\JL(\pi')$ for a unique irreducible, cuspidal automorphic representation $\pi'$ of $\GL_b(D_{\BA})$ and that furthermore, $\pi_v=\JL(\pi'_v)$ for every place $v$ of $F$. In particular, $\pi_v'$ is square-integrable for every $v\in S$.

Since for $v\in S$ the equation \eqref{eq eps dich} for $\delta=\pi_v$ fails by a sign and since $(-1)^b=-1$ it follows that \eqref{eq eps dich} holds for $\delta=\pi'_v$  for every $v\in S$ and hence, in fact, for every place $v$ of $F$. By the assumption of the theorem and Theorem \ref{thm C2} it follows that $\pi'$ is locally $H$-distinguished.

Since, by definition, every irreducible square integrable representation of $\GL_b(D_v)$ that is $\GL_b(C_v)$-distinguished is $\GL_b(C_v)$-compatible this is the case for $\pi_v'$ for every $v\in S$. Since furthermore for $v\not\in S$ every irreducible, generic representation of $\GL_b(D_v)\simeq \GL_n(F_v)$ is $\GL_b(C_v)$-compatible we conclude that $\pi'$ is $H$-compatible.

Together, it now follows from Theorem \ref{thm main twlin} that $\pi'$ is $\GL_b(C_{\A_E})$-distinguished. The theorem follows.

\end{proof}

\appendix
\section{Failure of the naive local-global principle in the Galois case}\label{app failure}

Here we give examples of representations as in Theorem \ref{thm::Main}, which are everywhere locally distinguished by the Galois involution, but not distinguished. This does not seem to appear in the literature so far. Note that in the proof of Theorem \ref{thm::Main}, we used local distinction everywhere plus some extra condition, to claim global distinction. 
 Our examples come from discussions with Beuzart-Plessis, and the simplest form that we present here is essentially due to him. Note that for $m=2$, in the case of linear and twisted linear periods, the failure of the naive local-global principle is well-known from Waldspurger's work \cite{MR783511}. Note further that for $\SL_n$ and the Galois involution, this phenomenon is also well-known to occur and is discussed in details in \cite{MR2275907}, \cite{MR3077658} and \cite{MR4549279}.

For $n=md$ odd, the local global principle holds in case \namelink{gal2}. Indeed local distinction of $\pi$ implies that $\jl(\pi)=\jl(\pi)^*$ by strong multiplicity one. By the results of Jacquet-Shalika this implies that $L(s,\jl(\pi),\jl(\pi)^\vartheta)$ has a pole at $s=1$ and therefore exactly one of $L(s,\jl(\pi),\As^{\pm})$ has a pole at $s=1$. By results in \cite{MR984899} and \cite{MR1344660} this implies that $\jl(\pi)$ is either distinguished or $\eta_{E/F}$-distinguished, but since $n$ is odd it must be distinguished for central character reasons. We deduce that $\pi$ is itself distinguished thanks to Theorem \ref{thm main gal odd}. 

Now let us give our family of examples for which the local global principle fails. In particular $n$ is even, and observe that we may as well assume that $G$ is split thanks to Theorems \ref{thm main gal odd} and \ref{thm main gal even} and Corollary \ref{cor distinction-transfer-generic}. Start with $E/F$ an unramified quadratic extension of number fields (in particular split at infinite places). Take a finite place $v_0$ of $F$ which is split in $E$, and $\pi_{v_0}$ a cuspidal representation of $G_{v_0}=H_{v_0}\times H_{v_0}$ which is $H_{v_0}$-distinguished. Then ``globalize" $\pi_{v_0}$ into an irreducible, $\eta_{E/F}$-distinguished cuspidal automorphic representation $\pi$ of $G(\BA)$ which is unramified at every finite place different from $v_0$ thanks to \cite[Theorem 4.1]{MR2369492}. Finally, observe that at all places $v$, the $\eta_{E_v/F_v}$-distinction property is actually equivalent to distinction:
\begin{itemize} 
\item if $v$ is split it follows from the fact that $\eta_{E_v/F_v}=1$;
\item otherwise $v$ is inert and unramified and $\pi_v$ is unramified as well. In this case the observation follows from the classification theorem \cite[Theorem 5.2]{MR2755483} as the only unramified character of $E_v^\times$ trivial on $F_v^\times$ is the trivial character, which is distinguished.
\end{itemize}

As a conclusion, $\pi$ is everywhere locally distinguished, but it can't be distinguished according to Flicker's result (\cite{MR984899}).


\newcommand{\etalchar}[1]{$^{#1}$}
\def\cprime{$'$} \def\Dbar{\leavevmode\lower.6ex\hbox to 0pt{\hskip-.23ex
  \accent"16\hss}D} \def\cftil#1{\ifmmode\setbox7\hbox{$\accent"5E#1$}\else
  \setbox7\hbox{\accent"5E#1}\penalty 10000\relax\fi\raise 1\ht7
  \hbox{\lower1.15ex\hbox to 1\wd7{\hss\accent"7E\hss}}\penalty 10000
  \hskip-1\wd7\penalty 10000\box7}
  \def\polhk#1{\setbox0=\hbox{#1}{\ooalign{\hidewidth
  \lower1.5ex\hbox{`}\hidewidth\crcr\unhbox0}}} \def\dbar{\leavevmode\hbox to
  0pt{\hskip.2ex \accent"16\hss}d}
  \def\cfac#1{\ifmmode\setbox7\hbox{$\accent"5E#1$}\else
  \setbox7\hbox{\accent"5E#1}\penalty 10000\relax\fi\raise 1\ht7
  \hbox{\lower1.15ex\hbox to 1\wd7{\hss\accent"13\hss}}\penalty 10000
  \hskip-1\wd7\penalty 10000\box7}
  \def\ocirc#1{\ifmmode\setbox0=\hbox{$#1$}\dimen0=\ht0 \advance\dimen0
  by1pt\rlap{\hbox to\wd0{\hss\raise\dimen0
  \hbox{\hskip.2em$\scriptscriptstyle\circ$}\hss}}#1\else {\accent"17 #1}\fi}
  \def\bud{$''$} \def\cfudot#1{\ifmmode\setbox7\hbox{$\accent"5E#1$}\else
  \setbox7\hbox{\accent"5E#1}\penalty 10000\relax\fi\raise 1\ht7
  \hbox{\raise.1ex\hbox to 1\wd7{\hss.\hss}}\penalty 10000 \hskip-1\wd7\penalty
  10000\box7} \def\lfhook#1{\setbox0=\hbox{#1}{\ooalign{\hidewidth
  \lower1.5ex\hbox{'}\hidewidth\crcr\unhbox0}}}

\end{document}